\newtheorem{theorem}{Theorem}[section]
\newtheorem{lemma}[theorem]{Lemma}
\theoremstyle{remark}
\newcommand{\be}{\begin{equation}}
\newcommand{\ee}{\end{equation}}
\newcommand{\bea}{\begin{eqnarray}}
\newcommand{\eea}{\end{eqnarray}}
\numberwithin{equation}{section}
\begin{document}

\title{Linear statistics of matrix ensembles in classical background }
\author{Yang Chen and Chao Min\\
(yangbrookchen@yahoo.co.uk, chaominrunner@gmail.com)\\
Department of Mathematics, University of Macau,\\
Avenida da Universidade, Taipa, Macau, China}


\date{\today}
\maketitle
\begin{abstract}
Given a joint probability density function of $N$ real random variables, $\{x_j\}_{j=1}^{N},$
obtained from the eigenvector-eigenvalue decomposition of
 $N\times N$
random matrices, one constructs a random variable, the linear statistics,
defined by the sum of smooth functions evaluated at the eigenvalues or singular values of the random matrix, namely, $\sum_{j=1}^{N}F(x_j).$

For the jpdfs obtained from the Gaussian  and Laguerre ensembles, we compute, in this paper the moment
generating function $\mathbb{E}_{\beta}({\rm exp}(-\lambda\sum_{j}F(x_j))),$ where $\mathbb{E}_{\beta}$ denotes expectation value
over the Orthogonal ($\beta=1$) and Symplectic ($\beta=4)$ ensembles, in the
form  one plus a Schwartz function, none vanishing over $\mathbb{R}$ for the Gaussian ensembles and $\mathbb{R}^+$ for the Laguerre ensembles.

These are ultimately expressed in the form of the determinants of identity plus a scalar operator, from which
we obtained the large $N$ asymptotic of
the linear statistics from suitably scaled $F(\cdot).$
\end{abstract}

\section{Introduction}
The well-known joint probability density function for the eigenvalues $\{x_j\}_{j=1}^{N}$ of $N\times N$ Hermitian matrices from an
orthogonal ensemble ($\beta=1$), unitary ensemble ($\beta=2$) or symplectic ensemble ($\beta=4$) is given by \cite{Mehta}
$$
P^{(\beta)}(x_{1},x_{2},\ldots,x_{N})\prod_{j=1}^{N}dx_j=C_{N}^{(\beta)}\prod_{1\leq j<k\leq N}\left|x_{j}-x_{k}\right|^{\beta}
\prod_{j=1}^{N}w(x_{j})dx_j,
$$
where $w(x)$ is a weight function or a probability density supported on $[a,b]$. In this paper, we require that the moments of $w$, namely,
$$s_j:=\int_{a}^{b}x^j\;w(x)dx,$$
to exist for $j=0,1,2,\ldots.$
Here the normalization constant $C_{N}^{(\beta)}$ reads,
$$
C_{N}^{(\beta)}=\frac{1}{\int_{[a,b]^{N}}\prod_{1\leq j<k\leq N}\left|x_{j}-x_{k}\right|^{\beta}
\prod_{j=1}^{N}w(x_{j})dx_{j}}.
$$
Many years ago,
Selberg \cite{Selberg}, obtained closed form expression for $C_{N}^{(\beta)}$, where\\
$w(x)=x^{a}(1-x)^{b},\;a>-1,\;b>-1,\;x\in [0,1]$, the Jacobi weight.
The constant $C_N^{(\beta)}$
with the Gaussian weight $w(x)={\rm e}^{-x^2}$,$\;x\in \mathbb{R}$ and Laguerre weight, $w(x)=x^{\alpha}\:{\rm e}^{-x},\;\alpha>-1,\;\;x\in \mathbb{R}^+$, can be found in
\cite{Mehta}.

If we take $w(x)=\mathrm{e}^{-x^{2}},\; x\in \mathbb{R}$, these  are known as the Gaussian orthogonal ensemble (GOE),
Gaussian unitary ensemble (GUE) and Gaussian symplectic ensemble (GSE). If\\
$w(x)=x^{\alpha}\mathrm{e}^{-x},\;\;\alpha>-1,\;\;x\in \mathbb{R}^+,$ then we have analogously the LOE, LUE and LSE.

The moment generating function,
$$
G_{N}^{(\beta)}(f):=\mathbb{E}_{\beta}\left({\rm e}^{-\lambda\:\sum_{j=1}^{N}F(x_j)}\right)
:=\frac{\int_{[a,b]^{N}}\prod_{1\leq j<k\leq N}\left|x_{j}-x_{k}\right|^{\beta}\prod_{j=1}^{N}w(x_{j})\:{\rm e}^{-\lambda\:F(x_j)}dx_{j}}{\int_{[a,b]^{N}}\prod_{1\leq j<k\leq N}\left|x_{j}-x_{k}\right|^{\beta}\prod_{j=1}^{N}w(x_{j})dx_{j}},
$$
is given by
\be
G_{N}^{(\beta)}(f)=C_{N}^{(\beta)}\int_{[a,b]^{N}}\prod_{1\leq j<k\leq N}\left|x_{j}-x_{k}\right|^{\beta}
\prod_{j=1}^{N}w(x_{j})\left[1+f(x_{j})\right]dx_{j},\label{gnb}
\ee
where $\beta=1, 2, 4$. Here ${\rm e}^{-\lambda\:F(x)}:=1+f(x)$, and we do not indicate that $f(\cdot)$ also depends on $\lambda.$
Throughout this paper, we assume $f(x)$ lies in the Schwartz space, and $1+f(x)\neq 0$ over $[a,b]$.

The simplest, very well-studied, unitary case corresponds to $\beta=2$, see
\cite{Adler, Basor, Basor1993, Basor1997, Dieng, Tracy1998}, and the references therein.

We state here, for later development, facts on orthogonal polynomials.
Let\\
$\varphi_{j}(x):=P_{j}(x)\sqrt{w(x)}$, $j=0,1,2,\ldots$, where $P_{j}(x)$ are the orthonormal polynomials of
 degree $j$ with respect to the weight $w(x)$ supported on $[a,b],$
$$
\int_{a}^{b} P_j(x)P_k(x)w(x)dx=\delta_{j,k},\;\;j,k=0,1,2,\ldots.
$$
It is a well-known fact that,
$$
G_{N}^{(2)}(f)=\det\left(I+K_{N}^{(2)}f\right),
$$
where $K_{N}^{(2)}f$ is an integral operator with kernel $K_{N}^{(2)}(x,y)f(y)$.
Here
$K_{N}^{(2)}(x,y):=\sum_{j=0}^{N-1}\varphi_{j}(x)\varphi_{j}(y)$, can be evaluated via the Christoffel-Darboux formula.

With Gaussian background, where $w(x)=\mathrm{e}^{-x^{2}},\; x\in \mathbb{R}$, we have
$$P_{j}(x):=\frac{H_{j}(x)}{c_{j}},\;\;\;c_{j}=\pi^{\frac{1}{4}}2^{\frac{j}{2}}\sqrt{\Gamma(j+1)},\;\;j=0,1,2,...,$$
where $H_{j}(x)$ are the Hermite polynomials of degree $j$.

With Laguerre background, where $w(x)=x^{\alpha}\mathrm{e}^{-x},\;\alpha>-1,\;x\in \mathbb{R}^{+}$, we have
$$P_{j}(x):=\frac{L_{j}^{(\alpha)}(x)}{c_{j}^{(\alpha)}},\;\;\;c_{j}^{(\alpha)}=\sqrt{\frac{\Gamma(j+\alpha+1)}{\Gamma(j+1)}},\;\;j=0,1,2,...,$$
where $L_{j}^{(\alpha)}(x)$ are the Laguerre polynomials of degree $j$. Properties of the Hermite and Laguerre polynomials can be found in \cite{Szego}.

We shall mainly deal with the $\beta=4$ and $\beta=1$ cases, which are more complicated than $\beta=2$ situation, especially so for $\beta=1$.

We always begin with the general case, and then apply the results obtained to the two classical ensembles, i.e.,
the Gaussian ensembles and Laguerre ensembles.

Furthermore, in $\beta=1$ situation, for convenience, $N$ is taken to be even, and it is expeditious to make use of the square root of the Gaussian weight,
${\rm e}^{-x^2/2},\;x\in \mathbb{R}$  and the square root of the  Laguerre weight, $x^{\alpha/2}{\rm e}^{-x/2},\;\alpha>-2,\;x\in\mathbb{R}^+$ in later discussion.

In this paper, we shall be concerned with the large $N$ behavior of $G_{N}^{(\beta)}(f)$ for the Gaussian ensembles and Laguerre ensembles. For the Gaussian ensembles,
we replace $f(x)$ by $f\left(\sqrt{2N}x\right),$ in the orthogonal case,
and $f\left(\sqrt{4N}x\right)$, in the symplectic case.  For the Laguerre ensembles,
we replace $f(x)$ by $f\left(\sqrt{4Nx}\right)$, in the orthogonal case
and $f\left(\sqrt{8Nx}\right)$, in the symplectic case.

We will ultimately give the mean
and variance of the linear statistics, as $N\rightarrow\infty$, together with leading correction terms.

For comparison purposes, we write down results in the  GUE, where,
$w(x)=\mathrm{e}^{-x^{2}},\;x\in \mathbb{R}$.
Denote by $\mu_{N}^{(GUE)}$ and $\mathcal{V}_{N}^{(GUE)}$ the mean and variance of the linear statistics
 $\sum_{j=1}^{N}F\left(\sqrt{2N}x_{j}\right)$.
It is shown in \cite{Basor1997}, that,
\be
\mu_{N}^{(GUE)}\rightarrow\frac{1}{\pi}\int_{-\infty}^{\infty}F(x)dx,\;\;N\rightarrow\infty,\label{guem}
\ee
\be
\mathcal{V}_{N}^{(GUE)}\rightarrow\frac{1}{\pi}\int_{-\infty}^{\infty}F^{2}(x)dx
-\int_{-\infty}^{\infty}\int_{-\infty}^{\infty}\left[\frac{\sin(x-y)}{\pi(x-y)}\right]^{2}F(x)F(y)dx dy,\;\;N\rightarrow\infty.\label{guev}
\ee
For the LUE, where $w(x)=x^{\alpha}\mathrm{e}^{-x},\;\alpha>-1,\;x\in \mathbb{R}^{+}$. Denote by $\mu_{N}^{(LUE,\:\alpha)}$ and
$\mathcal{V}_{N}^{(LUE,\:\alpha)}$
the mean and variance of the linear statistics $\sum_{j=1}^{N}F\left(\sqrt{4N x_{j}}\right)$. It is shown in\cite{Basor1993}, that,
\be
\mu_{N}^{(LUE,\:\alpha)}\rightarrow\int_{0}^{\infty}B^{(\alpha)}(x,x)F(x)dx,\;\;N\rightarrow\infty,\label{luem}
\ee
\be
\mathcal{V}_{N}^{(LUE,\:\alpha)}\rightarrow\int_{0}^{\infty}B^{(\alpha)}(x,x)F^{2}(x)dx-\int_{0}^{\infty}
\int_{0}^{\infty}B^{(\alpha)}(x,y)B^{(\alpha)}(y,x)F(x)F(y)dx dy,\;\;N\rightarrow\infty,\label{luev}
\ee
where
\be
B^{(\alpha)}(x,y):=\frac{J_{\alpha}(x)y J_{\alpha}'(y)-J_{\alpha}(y)x J_{\alpha}'(x)}{x^{2}-y^{2}}x,\label{bxy}
\ee
\be
B^{(\alpha)}(x,x):=\frac{J_{\alpha}^{2}(x)-J_{\alpha-1}(x)J_{\alpha+1}(x)}{2}x.\label{bxx}
\ee
Here $J_{\alpha}(\cdot)$ is the Bessel function of order $\alpha$.

We want to point out, the motivation of this paper comes from \cite{Dieng}, which provided results both for symplectic ensembles and orthogonal ensembles, and
specialize to the Gaussian case, i.e., GSE and GOE. The \cite{Dieng} dealt with the situation where $f(.)$ is the characteristic function of an interval (or the union of disjoint intervals),
and focus on the distribution of the $m$th largest
eigenvalue in the GSE and GOE, while we are interested for "smooth" $f$, and we also consider the Laguerre case, i.e., LSE and LOE.

This paper is organized as follows. In Section 2, we recall a number of theorems, the operators $D$ and $\varepsilon,$ and
end with two Lemmas relevant
for later development. Section 3 begins with a general discussion of the Symplectic ensembles in a general setting, followed by
detailed discussions on the GSE and LSE cases
and ends with the computation of the mean and variance of linear statistics for large $N.$ Section 4 repeats the development in
Section 3 but for the Orthogonal ensembles, which is harder. We conclude in Section 5.

\section{Preliminaries}
For orientation purposes, we introduce here a number of results, which will be used throughout this paper.

\begin{theorem}
The Stirling's formula \cite{Lebedev}
\be
\Gamma(n)=\sqrt{2\pi}\:n^{n-\frac{1}{2}}\:\mathrm{e}^{-n}\left[1+O\left(n^{-1}\right)\right],\;\;n\rightarrow\infty. \label{stirling}
\ee
\end{theorem}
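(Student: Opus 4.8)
The plan is to prove the asymptotic via Laplace's method (real-variable steepest descent) applied to the Euler integral representation of the Gamma function. First I would write
\be
\Gamma(n)=\int_{0}^{\infty}t^{n-1}\mathrm{e}^{-t}\,dt,
\ee
and rescale by the natural scale of the integrand. Substituting $t=ns$ gives $t^{n-1}=n^{n-1}s^{n-1}$ and $dt=n\,ds$, so that
\be
\Gamma(n)=n^{n}\int_{0}^{\infty}\frac{1}{s}\,\mathrm{e}^{n\,g(s)}\,ds,\qquad g(s):=\ln s-s.
\ee
This isolates the large parameter $n$ in the exponent and reduces the problem to a standard Laplace integral with amplitude $h(s)=1/s$.

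The exponent $g$ has a unique interior maximum determined by $g'(s)=1/s-1=0$, i.e. at $s_{0}=1$, with $g(1)=-1$ and $g''(1)=-1$, so the maximum is nondegenerate. Expanding $g(s)=-1-\tfrac{1}{2}(s-1)^{2}+\cdots$ and localizing via $s=1+u/\sqrt{n}$, the bracketed integral collapses to a Gaussian whose leading contribution is
\be
\mathrm{e}^{-n}\,h(1)\,\sqrt{\frac{2\pi}{n\,|g''(1)|}}=\mathrm{e}^{-n}\,\sqrt{\frac{2\pi}{n}}.
\ee
Multiplying by the prefactor $n^{n}$ already reproduces the leading term $\sqrt{2\pi}\,n^{n-1/2}\mathrm{e}^{-n}$.

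To obtain the multiplicative error $[1+O(n^{-1})]$ I would carry the Laplace expansion one order further. Retaining the Taylor data $g'''(1),g''''(1)$ together with the amplitude derivatives $h'(1),h''(1)$, the next correction is a finite combination of these quantities evaluated at $s_{0}=1$; crucially, all odd powers of $u$ integrate to zero against the Gaussian weight, so there is no $O(n^{-1/2})$ term and the first surviving correction is genuinely $O(n^{-1})$. The main technical obstacle is rigor rather than the formal computation: one must (i) show that the contribution of $s$ bounded away from $s_{0}=1$ is exponentially small, using that $g(s)-g(1)\le -c\,(s-1)^{2}$ near the maximum while $g(s)\sim -s$ as $s\to\infty$ and $g(s)\sim\ln s$ as $s\to 0^{+}$, and (ii) bound the Taylor remainder uniformly on the localized window $|s-1|\lesssim n^{-1/2+\epsilon}$ so that the neglected terms are controlled by $O(n^{-1})$. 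Once the tails are dispatched and the remainder bounded, collecting the $n^{n}$ factor yields exactly \eqref{stirling}.
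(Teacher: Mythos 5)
The paper does not actually prove this statement: it is quoted as a known result with a reference to Lebedev's book, so there is no internal argument to compare yours against. Your Laplace-method derivation is the standard textbook proof and is sound as a plan. The substitution $t=ns$ correctly produces $\Gamma(n)=n^{n}\int_{0}^{\infty}s^{-1}\mathrm{e}^{n(\ln s-s)}ds$, the critical-point data $s_{0}=1$, $g(1)=-1$, $g''(1)=-1$ are right, and the leading Gaussian contribution $\mathrm{e}^{-n}\sqrt{2\pi/n}$ times $n^{n}$ gives exactly $\sqrt{2\pi}\,n^{n-1/2}\mathrm{e}^{-n}$. Your reason for the error being a multiplicative $1+O(n^{-1})$ rather than $1+O(n^{-1/2})$ --- that the odd-order terms in the joint Taylor expansion of the amplitude and the exponent integrate to zero against the symmetric Gaussian weight --- is the correct and essential observation. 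What you have written is a proof outline rather than a complete proof: the tail estimate away from $s_{0}=1$ and the uniform control of the Taylor remainder on the window $|s-1|\lesssim n^{-1/2+\epsilon}$ are deferred, but you identify them accurately and they are routine. Note also that your argument gives the asymptotic for real $n\rightarrow\infty$, not just integers, which is what the paper actually needs (it applies \eqref{stirling} to quantities such as $\Gamma(2N+2)$ and to ratios like $\Gamma(n+a)/\Gamma(n+b)$); a purely discrete route via $\log\Gamma(n)=\sum_{k<n}\log k$ and Euler--Maclaurin would not cover those uses as directly.
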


\begin{lemma}
$$
_{2}F_{1}\left(-N,1;\frac{3}{2};2\right)=(-1)^{N}\sqrt{\frac{\pi}{8N}}+O\left(N^{-1}\right),\;\;N\rightarrow\infty.\label{2f1}
$$
\end{lemma}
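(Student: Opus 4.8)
The plan is to convert the terminating series into an integral via Euler's representation and then extract the large-$N$ behaviour by an endpoint (Watson/Laplace) analysis. Although the negative integer first parameter makes ${}_{2}F_{1}(-N,1;\frac{3}{2};2)$ a polynomial in its argument, Euler's integral formula still applies since $\frac{3}{2}>1>0$. First I would write
\[
{}_{2}F_{1}\!\left(-N,1;\tfrac{3}{2};2\right)
=\frac{\Gamma(\frac{3}{2})}{\Gamma(1)\Gamma(\frac{1}{2})}\int_{0}^{1}(1-t)^{-1/2}(1-2t)^{N}\,dt
=\frac{1}{2}\int_{0}^{1}\frac{(1-2t)^{N}}{\sqrt{1-t}}\,dt ,
\]
and then apply the substitution $u=1-2t$, under which $1-t=(1+u)/2$, to obtain the clean form
\[
{}_{2}F_{1}\!\left(-N,1;\tfrac{3}{2};2\right)=\frac{1}{2\sqrt{2}}\int_{-1}^{1}\frac{u^{N}}{\sqrt{1+u}}\,du .
\]

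Next I would analyse the large-$N$ asymptotics of $I_{N}:=\int_{-1}^{1}u^{N}(1+u)^{-1/2}\,du$. The factor $u^{N}$ concentrates the mass of the integrand near the two endpoints $u=\pm1$, while on any compact subinterval of $(-1,1)$ the contribution is exponentially small. Near $u=+1$ the remaining factor $(1+u)^{-1/2}$ is analytic, so setting $u=1-t$ gives a local integral $\sim \tfrac{1}{\sqrt{2}}\int_{0}^{\infty}e^{-Nt}\,dt=O(N^{-1})$. Near $u=-1$ the square root is singular, and this endpoint dominates: writing $u=-(1-t)$ one has $u^{N}=(-1)^{N}(1-t)^{N}\sim(-1)^{N}e^{-Nt}$ and $(1+u)^{-1/2}=t^{-1/2}$, so the local integral is
\[
(-1)^{N}\int_{0}^{\infty}e^{-Nt}\,t^{-1/2}\,dt=(-1)^{N}\,\Gamma\!\left(\tfrac{1}{2}\right)N^{-1/2}=(-1)^{N}\sqrt{\frac{\pi}{N}} .
\]
Multiplying by the prefactor $\frac{1}{2\sqrt{2}}$ yields the leading term $(-1)^{N}\sqrt{\pi/(8N)}$, since $\bigl(\frac{1}{2\sqrt{2}}\bigr)^{2}\pi=\pi/8$.

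The main technical point is to make the endpoint analysis rigorous and pin down the error as $O(N^{-1})$. I would invoke Watson's lemma at the singular endpoint $u=-1$ after expanding $\ln u$ and $(1+u)^{-1/2}$ in the local variable $t$; this produces a full asymptotic series in half-integer powers of $N$, with leading term exactly as computed. The regular endpoint $u=+1$ contributes a series in integer powers beginning at $O(N^{-1})$, and the interior of $[-1,1]$ is exponentially negligible. Collecting these, the term following $(-1)^{N}\sqrt{\pi/(8N)}$ is the $O(N^{-1})$ contribution from $u=+1$, which is precisely the error term asserted in the statement. The only care needed is the standard justification that the tails of the Watson integrals and the exponentially small bulk do not corrupt this order; I expect this bookkeeping, rather than any conceptual difficulty, to be the one step requiring attention.
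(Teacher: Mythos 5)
Your argument is correct and is essentially the paper's own proof in different variables: both start from the same Euler integral representation, split the domain at the zero of $1-2t$, extract the leading term from the half on which the integrand carries the sign $(-1)^{N}$, and bound the other half by $O\left(N^{-1}\right)$. The only real difference is that the paper evaluates the dominant piece exactly as a Wallis/beta integral, $\frac{\sqrt{2}}{2}\int_{0}^{\pi/2}(\sin\theta)^{2N+1}d\theta$, and then applies Stirling's formula, whereas you obtain the same leading term $(-1)^{N}\sqrt{\pi/(8N)}$ by Watson's lemma at the singular endpoint $u=-1$ (equivalently, the exact value $B\left(\tfrac{1}{2},N+1\right)$ of that endpoint integral).
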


\begin{proof}
From the integral representation of the hypergeometric function,
\be
_{2}F_{1}(\alpha,\beta;\gamma;z)=\frac{\Gamma(\gamma)}{\Gamma(\beta)\Gamma(\gamma-\beta)}
\int_{0}^{1}t^{\beta-1}(1-t)^{\gamma-\beta-1}(1-tz)^{-\alpha}dt,\;\;
\mathrm{Re}\gamma>\mathrm{Re}\beta>0, \label{ir}
\ee
we obtain,
$$
_{2}F_{1}\left(-N,1;\frac{3}{2};2\right)=\frac{1}{2}\int_{0}^{1}\frac{(1-2t)^{N}}{\sqrt{1-t}}dt.
$$
Let
$$
x=\sqrt{1-t},
$$
then
\bea
_{2}F_{1}\left(-N,1;\frac{3}{2};2\right)
&=&\int_{0}^{1}(2x^{2}-1)^{N}dx\nonumber\\
&=&\int_{0}^{\frac{\sqrt{2}}{2}}(2x^{2}-1)^{N}dx+\int_{\frac{\sqrt{2}}{2}}^{1}(2x^{2}-1)^{N}dx\nonumber\\
&=&(-1)^{N}\int_{0}^{\frac{\sqrt{2}}{2}}(1-2x^{2})^{N}dx+\int_{\frac{\sqrt{2}}{2}}^{1}(2x^{2}-1)^{N}dx.\label{two}
\eea
Consider the first integral in (\ref{two}). Let
$$
x=\frac{\sqrt{2}}{2}\cos\theta, \;\;\theta\in\left[0,\frac{\pi}{2}\right],
$$
then
\bea
\int_{0}^{\frac{\sqrt{2}}{2}}(1-2x^{2})^{N}dx
&=&\frac{\sqrt{2}}{2}\int_{0}^{\frac{\pi}{2}}(\sin\theta)^{2N+1}d\theta\nonumber\\
&=&\frac{\sqrt{2}}{2}\frac{\left[2^{N}\Gamma(N+1)\right]^{2}}{\Gamma(2N+2)}\nonumber\\
&=&\sqrt{\frac{\pi}{8N}}\left[1+O\left(N^{-1}\right)\right],\;\;N\rightarrow\infty,\nonumber
\eea
where use has been made of the Stirling's formula (\ref{stirling}), in the last equality.

Now consider the second integral in (\ref{two}),
\bea
\int_{\frac{\sqrt{2}}{2}}^{1}(2x^{2}-1)^{N}dx
&\leq&\int_{\frac{\sqrt{2}}{2}}^{1}(2x-1)^{N}dx\nonumber\\
&=&\frac{1-(\sqrt{2}-1)^{N+1}}{2(N+1)}.\nonumber
\eea
Hence
$$
_{2}F_{1}\left(-N,1;\frac{3}{2};2\right)=(-1)^{N}\sqrt{\frac{\pi}{8N}}+O\left(N^{-1}\right),\;\;N\rightarrow\infty.
$$
\end{proof}

\begin{lemma}
If $\alpha>0$, then
$$
\sum_{m=0}^{2n}\frac{(-1)^{m}}{m!\prod_{l=m}^{2n}(\alpha+2l)}{2n+\alpha \choose 2n-m+1}=\frac{1}{(2n+1)!}, \;\;n=0,1,2,\ldots,\label{bi}
$$
where ${j \choose k}:=\frac{\Gamma(j+1)}{\Gamma(k+1)\Gamma(j-k+1)}$.
\end{lemma}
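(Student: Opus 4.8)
The plan is to convert the single sum into a beta integral whose integrand, after summation over $m$, collapses to an elementary expression. First I would clear the product in the denominator by factoring a $2$ out of each factor, writing
\[
\prod_{l=m}^{2n}(\alpha+2l)=2^{\,2n-m+1}\,\frac{\Gamma\!\left(2n+1+\tfrac{\alpha}{2}\right)}{\Gamma\!\left(m+\tfrac{\alpha}{2}\right)},
\]
and expanding the binomial as $\binom{2n+\alpha}{2n-m+1}=\Gamma(2n+\alpha+1)/[\Gamma(2n-m+2)\,\Gamma(m+\alpha)]$. The summand then carries the ratio $\Gamma(m+\tfrac{\alpha}{2})/\Gamma(m+\alpha)$, which is exactly a beta function: for $\alpha>0$,
\[
\frac{\Gamma\!\left(m+\tfrac{\alpha}{2}\right)}{\Gamma(m+\alpha)}=\frac{1}{\Gamma\!\left(\tfrac{\alpha}{2}\right)}\int_{0}^{1}t^{\,m+\frac{\alpha}{2}-1}(1-t)^{\frac{\alpha}{2}-1}\,dt .
\]
This is the crucial device: it replaces the awkward $m$-dependence of the Gamma ratio by a clean factor $t^{m}$, which is summable.

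After interchanging the (finite) sum with the integral, the $m$-dependent part reduces to $\sum_{m=0}^{2n}\frac{(-2t)^m}{m!\,(2n-m+1)!}$, which I would rewrite using $\tfrac{1}{m!(2n-m+1)!}=\tfrac{1}{(2n+1)!}\binom{2n+1}{m}$. Completing the binomial sum up to $m=2n+1$ and subtracting the single extra term gives
\[
\sum_{m=0}^{2n}\frac{(-2t)^m}{m!\,(2n-m+1)!}=\frac{(1-2t)^{2n+1}+(2t)^{2n+1}}{(2n+1)!}.
\]
Thus the sum, which I denote $S_n$, becomes a constant times $\int_0^1 t^{\frac{\alpha}{2}-1}(1-t)^{\frac{\alpha}{2}-1}\big[(1-2t)^{2n+1}+(2t)^{2n+1}\big]\,dt$.

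This integral splits into two pieces. The piece containing $(1-2t)^{2n+1}$ vanishes by symmetry: the weight $t^{\alpha/2-1}(1-t)^{\alpha/2-1}$ is invariant under $t\mapsto 1-t$, while $(1-2t)^{2n+1}$ is odd under this reflection, so the integrand is antisymmetric on $[0,1]$. The remaining piece is a single beta integral, $2^{2n+1}B(2n+1+\tfrac{\alpha}{2},\tfrac{\alpha}{2})=2^{2n+1}\Gamma(2n+1+\tfrac{\alpha}{2})\Gamma(\tfrac{\alpha}{2})/\Gamma(2n+1+\alpha)$. Assembling the prefactors, every Gamma factor and every power of $2$ cancels, leaving exactly $\Gamma(2n+\alpha+1)/[(2n+1)!\,\Gamma(2n+1+\alpha)]=1/(2n+1)!$, as claimed.

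I expect the main obstacle to be spotting the beta-integral representation of $\Gamma(m+\tfrac{\alpha}{2})/\Gamma(m+\alpha)$ that linearizes the summand; once the $t^m$ factor is exposed, the binomial sum and the reflection antisymmetry are routine. An alternative would be to recognize the sum as a constant times ${}_2F_1(\tfrac{\alpha}{2},-2n-1;\alpha;2)$ --- a $c=2a$ hypergeometric at the special argument $z=2$ (the same value appearing in the previous lemma) --- and invoke a quadratic transformation; but $z=2$ sits on the boundary and makes that route delicate, so I prefer the integral method. The hypothesis $\alpha>0$ enters precisely as the convergence condition $\alpha/2>0$ for the beta integrals.
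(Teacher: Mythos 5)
Your proof is correct, and at its core it travels the same road as the paper's: both reduce the identity to the vanishing of $\int_{0}^{1}t^{\alpha/2-1}(1-t)^{\alpha/2-1}(1-2t)^{2n+1}\,dt$, which is exactly the Euler integral for the terminating hypergeometric ${}_{2}F_{1}\left(-2n-1,\frac{\alpha}{2};\alpha;2\right)$ that the paper isolates in (\ref{hy}). That said, your write-up differs in two useful places. First, the paper simply asserts the decomposition of the sum into $\frac{1}{(2n+1)!}$ plus a hypergeometric remainder, whereas your beta-integral device $\Gamma\left(m+\frac{\alpha}{2}\right)/\Gamma(m+\alpha)=\frac{1}{\Gamma(\alpha/2)}\int_{0}^{1}t^{m+\alpha/2-1}(1-t)^{\alpha/2-1}\,dt$, followed by completing the binomial sum to $m=2n+1$, actually derives it --- the completed term $(2t)^{2n+1}$ integrates to the beta function that produces the $\frac{1}{(2n+1)!}$. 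Second, the paper kills the remainder by substituting $t=\cos^{2}(\theta/2)$ and running an induction on $n$ to show $\int_{0}^{\pi}(\sin\theta)^{\alpha-1}(\cos\theta)^{2n+1}\,d\theta=0$; your observation that the weight $t^{\alpha/2-1}(1-t)^{\alpha/2-1}$ is invariant under $t\mapsto 1-t$ while $(1-2t)^{2n+1}$ is odd disposes of the same integral in one line, and is the cleaner argument. The hypothesis $\alpha>0$ enters identically in both treatments, as endpoint convergence of the beta integrals. I verified the final bookkeeping: the prefactor $\Gamma(2n+\alpha+1)/\bigl[2^{2n+1}\Gamma\left(2n+1+\frac{\alpha}{2}\right)\Gamma\left(\frac{\alpha}{2}\right)(2n+1)!\bigr]$ against the surviving beta integral $2^{2n+1}\Gamma\left(2n+1+\frac{\alpha}{2}\right)\Gamma\left(\frac{\alpha}{2}\right)/\Gamma(2n+1+\alpha)$ does collapse to $1/(2n+1)!$ as claimed.
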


\begin{proof}
Firstly,
\bea
&&\sum_{m=0}^{2n}\frac{(-1)^{m}}{m!\prod_{l=m}^{2n}(\alpha+2l)}{2n+\alpha \choose 2n-m+1}\nonumber\\
&=&\frac{1}{(2n+1)!}+\frac{\Gamma(2n+\alpha+1)\Gamma\big(\frac{\alpha}{2}\big)\ _{2}F_{1}\big(-2n-1,\frac{\alpha}{2};\alpha;2\big)}{2^{2n+1}\Gamma(2n+2)
\Gamma\big(2n+\frac{\alpha}{2}+1\big)\Gamma(\alpha)}.\label{hy}
\eea
From (\ref{ir}), we see that,
$$
_{2}F_{1}\Big(-2n-1,\frac{\alpha}{2};\alpha;2\Big)
=\frac{\Gamma(\alpha)}{\Gamma^{2}\big(\frac{\alpha}{2}\big)}\int_{0}^{1}t^{\frac{\alpha}{2}-1}(1-t)^{\frac{\alpha}{2}-1}(1-2t)^{2n+1}dt.
$$
Let
$$
t=\cos^{2}\bigg(\frac{\theta}{2}\bigg),\;\; \theta\in[0,\pi],
$$
then
$$
\int_{0}^{1}t^{\frac{\alpha}{2}-1}(1-t)^{\frac{\alpha}{2}-1}(1-2t)^{2n+1}dt
=-\left(\frac{1}{2}\right)^{\alpha-1}\int_{0}^{\pi}(\sin\theta)^{\alpha-1}(\cos\theta)^{2n+1} d\theta.
$$
We carry out mathematical induction in $n$ to prove that for $\alpha>0,\;$
$\int_{0}^{\pi}(\sin\theta)^{\alpha-1}(\cos\theta)^{2n+1} d\theta=0.$
\\
Setting if $n=0$, in the integral above, we have,
\bea
&&\int_{0}^{\pi}(\sin\theta)^{\alpha-1}\cos\theta d\theta\nonumber\\
&=&\int_{0}^{\pi}(\sin\theta)^{\alpha-1} d\sin\theta\nonumber\\
&=&\frac{(\sin\theta)^{\alpha}}{\alpha}\bigg|_{0}^{\pi}\nonumber\\
&=&0.\nonumber
\eea
Suppose $\int_{0}^{\pi}(\sin\theta)^{\alpha-1}(\cos\theta)^{2k+1} d\theta=0$, for $\alpha>0$, then
\bea
&&\int_{0}^{\pi}(\sin\theta)^{\alpha-1}(\cos\theta)^{2k+3} d\theta\nonumber\\
&=&\int_{0}^{\pi}(\sin\theta)^{\alpha-1}(\cos\theta)^{2k+1}\cos^{2}\theta d\theta\nonumber\\
&=&\int_{0}^{\pi}(\sin\theta)^{\alpha-1}(\cos\theta)^{2k+1}\left(1-\sin^{2}\theta\right) d\theta\nonumber\\
&=&\int_{0}^{\pi}(\sin\theta)^{\alpha-1}(\cos\theta)^{2k+1}d\theta-\int_{0}^{\pi}(\sin\theta)^{\alpha+1}(\cos\theta)^{2k+1} d\theta\nonumber\\
&=&0.\nonumber
\eea
Hence,
$$
\int_{0}^{\pi}(\sin\theta)^{\alpha-1}(\cos\theta)^{2n+1} d\theta=0,\;\;\alpha>0.
$$
It follows that, for $\alpha>0,$
$$
_{2}F_{1}\Big(-2n-1,\frac{\alpha}{2};\alpha;2\Big)=0,
$$
and from (\ref{hy}), the lemma follows.
\end{proof}

\begin{theorem}
$$
\prod_{1\leq j<k\leq N}\left(x_{j}-x_{k}\right)^{4}=\det\left(x_{k}^{j},jx_{k}^{j-1}\right)_{{j=0,\ldots,2N-1}\atop{k=1,\ldots,N}},
\label{det}
$$
where the determinant on the right is a $2N\times2N$ determinant with alternating columns\\
 $\{x_{1}^{j}\},\; \{jx_{1}^{j-1}\},\; \{x_{2}^{j}\},\; \{jx_{2}^{j-1}\}, \ldots$ \cite{Kuramoto}.
\end{theorem}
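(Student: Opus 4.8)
The plan is to obtain the identity as a \emph{confluent} (coincident-node) degeneration of the ordinary Vandermonde determinant. Recall that for $2N$ independent variables $y_{1},\ldots,y_{2N}$ one has
\[
\det\left(y_{i}^{\,j}\right)_{{j=0,\ldots,2N-1}\atop{i=1,\ldots,2N}}=\prod_{1\le i<i'\le 2N}\left(y_{i'}-y_{i}\right),
\]
the transpose of the standard Vandermonde matrix. I would feed this identity the paired specialization $y_{2k-1}=x_{k}$, $y_{2k}=x_{k}+\varepsilon$ for $k=1,\ldots,N$, and then let $\varepsilon\to0$.

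First I would perform, for each $k$, the column operation replacing the column indexed by $y_{2k}$ with $\varepsilon^{-1}$ times its difference from the column indexed by $y_{2k-1}$. Subtracting one column from another leaves the determinant unchanged, while the $N$ divisions by $\varepsilon$ multiply it by $\varepsilon^{-N}$. The entry of the new $2k$-th column in row $j$ is the difference quotient $\bigl[(x_{k}+\varepsilon)^{j}-x_{k}^{j}\bigr]/\varepsilon$, which converges as $\varepsilon\to0$ to $\tfrac{d}{dx_{k}}x_{k}^{j}=j\,x_{k}^{j-1}$. Hence the matrix obtained in the limit is exactly the confluent matrix of the theorem, with alternating columns $\{x_{k}^{j}\}$ and $\{j\,x_{k}^{j-1}\}$, so the desired determinant equals $\lim_{\varepsilon\to0}\varepsilon^{-N}\prod_{i<i'}(y_{i'}-y_{i})$.

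It then remains to evaluate that limit on the product side, which I would do by splitting the factors into same-node and cross-node contributions. For each $k$ the single same-node pair contributes $y_{2k}-y_{2k-1}=\varepsilon$, giving the factor $\varepsilon^{N}$ that exactly cancels the $\varepsilon^{-N}$ above. For a pair of distinct nodes $k<k'$ the four factors are $(x_{k'}-x_{k})$, $(x_{k'}-x_{k}\pm\varepsilon)$ and $(x_{k'}-x_{k})$, whose product tends to $(x_{k'}-x_{k})^{4}$. Collecting everything yields $\prod_{1\le k<k'\le N}(x_{k'}-x_{k})^{4}$, which equals the claimed $\prod_{1\le j<k\le N}(x_{j}-x_{k})^{4}$ since the exponent is even and the sign is irrelevant.

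The argument is essentially bookkeeping, and the point I would treat as the main obstacle is ensuring the normalization: that the $\varepsilon^{-N}$ produced by the column scalings is matched precisely by the $\varepsilon^{N}$ from the $N$ coincident pairs, so that no spurious constant survives. (Commuting the entrywise limit with the determinant is immediate, as the latter is a polynomial in the finitely many entries.) The small cases $N=1$, where $\det\bigl(\begin{smallmatrix}1&0\\x_{1}&1\end{smallmatrix}\bigr)=1$, and $N=2$ confirm the normalization. An alternative route, should one prefer to avoid the limit, is to argue directly that the determinant is a symmetric polynomial of degree $4N-4$ in each $x_{k}$ that is divisible by $(x_{j}-x_{k})^{4}$ for every $j\neq k$ (the confluent columns forcing a fourth-order zero on each diagonal $x_{j}=x_{k}$, verified by a short Taylor expansion of the paired columns in $x_{j}-x_{k}$); fixing the leading coefficient by the same small-case check, a degree count then forces equality.
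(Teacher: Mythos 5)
Your proof is correct. Note that the paper itself supplies no proof of this identity --- it is quoted from the reference [Kuramoto--Kato] --- so there is no in-paper argument to compare against; your confluent-Vandermonde derivation is a complete, self-contained substitute. The bookkeeping all checks out: the $N$ column rescalings contribute $\varepsilon^{-N}$, the $N$ same-node factors $y_{2k}-y_{2k-1}=\varepsilon$ contribute exactly $\varepsilon^{N}$, each cross-node quadruple of factors tends to $(x_{k'}-x_{k})^{4}$, and the evenness of the exponent absorbs the sign discrepancy between $\prod_{k<k'}(x_{k'}-x_{k})^{4}$ and the paper's $\prod_{j<k}(x_{j}-x_{k})^{4}$. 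The only conventions worth making explicit are that the $j=0$ entry of each derivative column is $0$ (consistent with the paper's reading of $jx_{k}^{j-1}$ at $j=0$, and with the difference quotient $[(x_{k}+\varepsilon)^{0}-x_{k}^{0}]/\varepsilon=0$), and that passing the limit inside the determinant is justified by polynomiality in the entries, which you note. Your alternative divisibility-plus-degree-count sketch would also work but needs the fourth-order vanishing on each diagonal spelled out; the limit argument as written is already complete.
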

The next theorem, due to de Bruijn \cite{de Bruijn}, is instrumental for the finite $N$ computations in Sections 3 and 4.

\begin{theorem}
For any integrable functions $p_{j}(x)$ and $q_{j}(x), j=1,2,\ldots$, we have
$$
\left(\int_{[a,b]^{N}}\det\left(p_{j}(x_{k}),q_{j}(x_{k})\right)_{{j=1,\ldots,2N}\atop{k=1,\ldots,N}}dx_{1}\cdots dx_{N}\right)^{2}
=(N!)^{2}\det\left(\int_{a}^{b}\left(p_{j}(x)q_{k}(x)-p_{k}(x)q_{j}(x)\right)dx\right)_{j,k=1}^{2N},
$$

$$
\left(\int_{a\leq x_{1}\leq\cdots\leq x_{N}\leq b}\det\left(p_{j}(x_{k})\right)_{j,k=1}^{N}dx_{1}\cdots dx_{N}\right)^{2}
=\det\left(\int_{a}^{b}\int_{a}^{b}\mathrm{sgn}(y-x)p_{j}(x)p_{k}(y)dxdy\right)_{j,k=1}^{N},\label{pf}
$$
where the determinant on the left of the first equality is a $2N\times2N$ determinant with alternating columns
 $\{p_{j}(x_{1})\},\; \{q_{j}(x_{1})\},\; \{p_{j}(x_{2})\},\; \{q_{j}(x_{2})\}, \ldots$. In addition, $N$ is even in the second equality.
\end{theorem}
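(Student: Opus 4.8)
The plan is to prove both identities in their Pfaffian form and then square, using the classical fact that $\mathrm{Pf}(A)^{2}=\det A$ for any antisymmetric matrix $A$ of even order. Concretely, I would first establish
$$\int_{[a,b]^{N}}\det\left(p_{j}(x_{k}),q_{j}(x_{k})\right)_{{j=1,\ldots,2N}\atop{k=1,\ldots,N}}dx_{1}\cdots dx_{N}=N!\,\mathrm{Pf}(A),\qquad A_{jk}:=\int_{a}^{b}\left(p_{j}(x)q_{k}(x)-p_{k}(x)q_{j}(x)\right)dx,$$
and
$$\int_{a\le x_{1}\le\cdots\le x_{N}\le b}\det\left(p_{j}(x_{k})\right)_{j,k=1}^{N}dx_{1}\cdots dx_{N}=\mathrm{Pf}(C),\qquad C_{jk}:=\int_{a}^{b}\int_{a}^{b}\mathrm{sgn}(y-x)p_{j}(x)p_{k}(y)\,dx\,dy,$$
after which squaring each and invoking $\mathrm{Pf}(A)^{2}=\det A$ and $\mathrm{Pf}(C)^{2}=\det C$ yields precisely the two displayed equalities; here both $A$ (of order $2N$) and $C$ (of order $N$) are antisymmetric of even order, the latter because $N$ is even.

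For the first identity I would use the Leibniz expansion of the $2N\times 2N$ determinant, writing it as $\sum_{\sigma\in S_{2N}}\mathrm{sgn}(\sigma)\prod_{k=1}^{N}p_{\sigma(2k-1)}(x_{k})\,q_{\sigma(2k)}(x_{k})$, where the columns are grouped into the $N$ consecutive pairs attached to $x_{1},\ldots,x_{N}$. Since the domain $[a,b]^{N}$ is a product, the integral factorizes over the variables and becomes $\sum_{\sigma}\mathrm{sgn}(\sigma)\prod_{k}b_{\sigma(2k-1),\sigma(2k)}$ with $b_{jl}:=\int_{a}^{b}p_{j}(x)q_{l}(x)\,dx$. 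The remaining step is the combinatorial identity $\sum_{\sigma\in S_{2N}}\mathrm{sgn}(\sigma)\prod_{k}b_{\sigma(2k-1),\sigma(2k)}=N!\,\mathrm{Pf}(A)$: averaging each summand over the $2^{N}$ sign-swaps $\tau_{k}=(2k-1,\,2k)$ turns $b_{jl}$ into $b_{jl}-b_{lj}=A_{jl}$ and collapses the sum to $\tfrac{1}{2^{N}}\sum_{\sigma}\mathrm{sgn}(\sigma)\prod_{k}A_{\sigma(2k-1),\sigma(2k)}=N!\,\mathrm{Pf}(A)$, using the definition $\mathrm{Pf}(A)=\tfrac{1}{2^{N}N!}\sum_{\sigma}\mathrm{sgn}(\sigma)\prod_{k}A_{\sigma(2k-1),\sigma(2k)}$.

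The second identity is the harder one, since the ordered simplex does not factorize and the naive symmetrization used above only shows that the full-cube integral vanishes. My plan is to first observe that the integrand $\det(p_{j}(x_{k}))$ is totally antisymmetric in $x_{1},\ldots,x_{N}$, so that the product $\det(p_{j}(x_{k}))\prod_{i<j}\mathrm{sgn}(x_{j}-x_{i})$ is symmetric and coincides with $\det(p_{j}(x_{k}))$ on the ordered region; integrating this symmetric function over $[a,b]^{N}$ therefore gives $N!$ times the desired simplex integral, whence $I=\tfrac{1}{N!}\int_{[a,b]^{N}}\det(p_{j}(x_{k}))\prod_{i<j}\mathrm{sgn}(x_{j}-x_{i})\,dx$. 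Expanding the determinant again by Leibniz and pairing the $N$ (even) variables, each pair $(x_{i},x_{j})$ contributes a factor $\int_{a}^{b}\int_{a}^{b}\mathrm{sgn}(x_{j}-x_{i})\,p_{\cdot}(x_{i})p_{\cdot}(x_{j})\,dx_{i}\,dx_{j}$, namely an entry of $C$, and organizing these pairings with their induced signs reproduces $\mathrm{Pf}(C)$; the base case $N=2$ is the direct computation $I=\int\!\!\int_{x_{1}\le x_{2}}\left(p_{1}(x_{1})p_{2}(x_{2})-p_{2}(x_{1})p_{1}(x_{2})\right)dx_{1}\,dx_{2}=C_{12}=\mathrm{Pf}(C)$. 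The main obstacle is exactly this last step: converting the global ordering constraint into a sum of pairwise $\mathrm{sgn}$-integrals while tracking the Pfaffian signs correctly, which is where the evenness of $N$ is indispensable and where the bookkeeping is most delicate; an induction on $N/2$, peeling off one matched pair at a time, is the route I would pursue to make it rigorous.
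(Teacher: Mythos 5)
The paper itself gives no proof of this theorem; it is quoted from de Bruijn's 1955 paper, so your attempt has to be judged on its own terms. Your treatment of the first identity is complete and correct: the Leibniz expansion over the paired columns, the factorization of the integral over the product domain into $\prod_k b_{\sigma(2k-1),\sigma(2k)}$ with $b_{jl}=\int_a^b p_j(x)q_l(x)\,dx$, and the summation over the $2^N$ pair-swaps to antisymmetrize $b$ into $A$ together give $\int\det = N!\,\mathrm{Pf}(A)$, and squaring finishes it.

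The second identity, however, is left with a genuine gap at exactly the step you flag. After the (correct) symmetrization $I=\frac{1}{N!}\int_{[a,b]^N}\det(p_j(x_k))\prod_{i<j}\mathrm{sgn}(x_j-x_i)\,dx$, you assert that ``pairing the $N$ variables'' turns the integral into a sum over matchings of products of entries of $C$; but the sign factor $\prod_{i<j}\mathrm{sgn}(x_j-x_i)$ runs over all $\binom{N}{2}$ pairs and does not factor over any perfect matching, so the integral does not split into pairwise double integrals as written, and the proposed ``induction on $N/2$, peeling off one matched pair'' is precisely the part that needs an idea and none is supplied. The missing ingredient is Schur's Pfaffian identity $\prod_{i<j}\mathrm{sgn}(x_j-x_i)=\mathrm{Pf}\big(\mathrm{sgn}(x_j-x_i)\big)_{i,j=1}^N$, valid for even $N$. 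With it the argument closes in two lines: first absorb the permutation sum of $\det(p_j(x_k))$ into a relabeling of the integration variables (each of the $N!$ terms contributes equally because $\prod_{i<j}\mathrm{sgn}$ picks up exactly $\mathrm{sgn}(\sigma)$ under the relabeling), giving $I=\int_{[a,b]^N}\prod_k p_k(x_k)\prod_{i<j}\mathrm{sgn}(x_j-x_i)\,dx$; then replace the sign product by the Pfaffian, expand over perfect matchings, and apply Fubini to factor each matching into $\prod C_{ij}$, yielding $I=\mathrm{Pf}(C)$. (De Bruijn's own route instead integrates the ordered simplex over alternate variables; either way, this is the step your sketch does not carry out.)
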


\begin{theorem}
If $A, B$ are Hilbert-Schmidt operators on a Hilbert space $\mathcal{H}$, then \cite{Gohberg}
$$
\det(I+AB)=\det(I+BA).\label{hs}
$$
\end{theorem}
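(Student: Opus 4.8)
The plan is to reduce the operator identity to the elementary finite-dimensional case and then pass to a limit, exploiting the fact that a product of two Hilbert--Schmidt operators is trace class. First I would establish the matrix prototype: for rectangular matrices $A$ of size $m\times n$ and $B$ of size $n\times m$, one has $\det(I+AB)=\det(I+BA)$. This follows by evaluating the determinant of the block matrix
\[
\begin{pmatrix} I & -A \\ B & I \end{pmatrix}
\]
in two ways via the Schur-complement formula. Taking the Schur complement against the top-left identity block yields $I-B\,(-A)=I+BA$, so the determinant equals $\det(I+BA)$; taking it against the bottom-right identity block yields $I-(-A)\,B=I+AB$, so the determinant equals $\det(I+AB)$. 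The two evaluations must agree, which gives the finite-dimensional identity.

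Next I would set up the approximation. Since $A,B$ are Hilbert--Schmidt, the bound $\|XY\|_1\le\|X\|_2\,\|Y\|_2$ (trace norm dominated by the product of Hilbert--Schmidt norms) shows that $AB$ and $BA$ are trace class, so both Fredholm determinants $\det(I+AB)$ and $\det(I+BA)$ are well defined. Choose finite-rank operators $A_k\to A$ and $B_k\to B$ in the Hilbert--Schmidt norm. Then $A_kB_k\to AB$ and $B_kA_k\to BA$ in trace norm, since, for instance, $\|A_kB_k-AB\|_1\le\|A_k-A\|_2\,\|B_k\|_2+\|A\|_2\,\|B_k-B\|_2\to 0$. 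For each $k$, the operators $A_kB_k$ and $B_kA_k$ are of finite rank, so their Fredholm determinants coincide with ordinary finite determinants computed on a finite-dimensional invariant subspace containing the relevant ranges; the prototype of the previous paragraph therefore gives $\det(I+A_kB_k)=\det(I+B_kA_k)$ for every $k$.

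Finally I would invoke the continuity of the Fredholm determinant in the trace norm, in the Lipschitz form $\lvert\det(I+S)-\det(I+T)\rvert\le\|S-T\|_1\exp(1+\|S\|_1+\|T\|_1)$, to pass to the limit and conclude $\det(I+AB)=\det(I+BA)$. \emph{The main obstacle is the supporting operator-theoretic input}: namely, the trace-norm continuity estimate together with the identification of the Fredholm determinant of a finite-rank perturbation with a finite determinant. Both are standard facts from the theory of trace ideals (and are exactly what the cited reference \cite{Gohberg} supplies), but they are the substantive ingredients that make the limiting argument rigorous. As an alternative route I would note that one can instead use Lidskii's theorem, $\det(I+T)=\prod_j(1+\lambda_j(T))$ over the eigenvalues of a trace-class $T$ counted with algebraic multiplicity, combined with the classical fact that $AB$ and $BA$ share the same nonzero spectrum with the same multiplicities; this bypasses the approximation but trades it for the spectral bookkeeping.
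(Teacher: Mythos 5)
Your argument is correct, but note that the paper itself offers no proof of this statement: it is quoted as a known result with the citation to Gohberg--Goldberg--Kaashoek, and the text moves straight on to the operators $\varepsilon$ and $D$. So there is nothing in the paper to compare against; what you have written is the standard proof that the cited reference (or Simon's \emph{Trace Ideals}) supplies. The three ingredients you isolate are exactly the right ones: the rectangular-matrix identity via the two Schur complements of $\bigl(\begin{smallmatrix} I & -A\\ B & I\end{smallmatrix}\bigr)$; the bound $\|XY\|_{1}\le\|X\|_{2}\|Y\|_{2}$, which makes both determinants well defined for Hilbert--Schmidt $A,B$; and trace-norm continuity of $T\mapsto\det(I+T)$ to pass from finite-rank approximants to the limit. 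The one step you gloss over slightly is the reduction of the finite-rank case to the matrix prototype: the clean way is to set $V=\mathrm{ran}(A_k)$ and $W=\mathrm{ran}(B_k)$, observe that $A_kB_k$ and $B_kA_k$ act as the identity on complements of $V$ and $W$ respectively, and identify $(A_kB_k)|_{V}$ and $(B_kA_k)|_{W}$ with the products $(A_k|_{W})(B_k|_{V})$ and $(B_k|_{V})(A_k|_{W})$ of rectangular matrices, to which your prototype applies. Your alternative route through Lidskii's theorem and the coincidence of the nonzero spectra of $AB$ and $BA$ (with algebraic multiplicities) is equally valid and avoids the approximation entirely.
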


Following \cite{Tracy1996, Tracy1998, Widom}, we introduce here operators $\varepsilon$ and $D$ which will be crucial for later development.
Let $\varepsilon$ be the integral operator with kernel
$$
\varepsilon(x,y):=\frac{1}{2}\mathrm{sgn}(x-y),
$$
then for any integrable function $g$ defined on $[a,b],$
$$
\varepsilon g(x)=\int_{a}^{b}\varepsilon(x,t)g(t)dt=\frac{1}{2}\left(\int_{a}^{x}g(t)dt-\int_{x}^{b}g(t)dt\right),\;\;x\in[a,b].
$$
It is clear that $\varepsilon(y,x)=-\varepsilon(x,y)$, i.e., $\varepsilon^{t}=-\varepsilon$, where $^{t}$ denotes transpose.

Let $D$ be the operator that acts by differentiation, thus for any differentiable function $g$ defined on $[a,b],$
$$
Dg(x)=\frac{dg(x)}{dx}=g'(x).
$$
With further conditions on $g(x)$, we prove an easy lemma on the commutator $[D,\varepsilon].$
\begin{lemma}
For any function $g\in C^{1}[a,b]$ and $g(a)=g(b)=0$, $D\varepsilon g(x)=\varepsilon D g(x)=g(x)$, i.e.,
$D\varepsilon=\varepsilon D=I$.\label{de}
\end{lemma}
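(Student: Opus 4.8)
The plan is to prove the two identities $D\varepsilon = I$ and $\varepsilon D = I$ separately, since they require the hypotheses differently. For $D\varepsilon g = g$, I would start from the explicit formula
\[
\varepsilon g(x)=\frac{1}{2}\left(\int_{a}^{x}g(t)\,dt-\int_{x}^{b}g(t)\,dt\right),
\]
and simply differentiate in $x$. By the fundamental theorem of calculus, differentiating the first integral produces $\tfrac{1}{2}g(x)$, and differentiating the second (noting $x$ appears as the lower limit) produces $+\tfrac{1}{2}g(x)$; these add to give $g(x)$. This direction uses only $g\in C^{1}[a,b]$ (indeed continuity suffices) and requires no boundary conditions.

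For the reverse identity $\varepsilon D g = g$, I would apply the definition of $\varepsilon$ to the function $Dg = g'$:
\[
\varepsilon D g(x)=\frac{1}{2}\left(\int_{a}^{x}g'(t)\,dt-\int_{x}^{b}g'(t)\,dt\right).
\]
Here I would invoke the fundamental theorem of calculus in the other form to evaluate each integral: $\int_{a}^{x}g'(t)\,dt = g(x)-g(a)$ and $\int_{x}^{b}g'(t)\,dt = g(b)-g(x)$. Substituting gives $\tfrac{1}{2}\big[(g(x)-g(a))-(g(b)-g(x))\big] = g(x)-\tfrac{1}{2}(g(a)+g(b))$. This is precisely where the boundary conditions $g(a)=g(b)=0$ enter: they kill the extra term $\tfrac{1}{2}(g(a)+g(b))$, leaving exactly $g(x)$.

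Combining the two computations yields $D\varepsilon g(x) = \varepsilon D g(x) = g(x)$ for all $x\in[a,b]$, which is the operator identity $D\varepsilon = \varepsilon D = I$ on the stated class of functions. The argument is elementary and poses no real obstacle; the only point demanding care is recognizing that the boundary hypothesis is genuinely needed for the $\varepsilon D$ direction but not for $D\varepsilon$, and correctly tracking the sign coming from differentiating the upper versus lower limits of integration in the kernel $\tfrac{1}{2}\mathrm{sgn}(x-y)$.
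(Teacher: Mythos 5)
Your proof is correct and follows essentially the same route as the paper: differentiate the explicit formula for $\varepsilon g$ via the fundamental theorem of calculus to get $D\varepsilon=I$, and evaluate $\varepsilon g'$ directly to get $\varepsilon D g = g - \tfrac{1}{2}(g(a)+g(b))$, with the boundary conditions killing the extra term. Your added observation that the hypothesis $g(a)=g(b)=0$ is only needed for the $\varepsilon D$ direction is accurate but does not alter the argument.
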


\begin{proof}
For any function $g\in C^{1}[a,b]$ and $g(a)=g(b)=0$, we have
\bea
(D\varepsilon)g(x)
&=&\frac{d}{dx}\int_{a}^{b}\varepsilon(x,t)g(t)dt\nonumber\\
&=&\frac{1}{2}\frac{d}{dx}\left[\int_{a}^{x}g(t)dt-\int_{x}^{b}g(t)dt\right]\nonumber\\
&=&\frac{1}{2}g(x)+\frac{1}{2}g(x)=g(x),\nonumber
\eea
and
\bea
(\varepsilon D)g(x)
&=&\int_{a}^{b}\varepsilon(x,t)g'(t)dt\nonumber\\
&=&\frac{1}{2}\int_{a}^{x}g'(t)dt-\frac{1}{2}\int_{x}^{b}g'(t)dt\nonumber\\
&=&\frac{1}{2}[g(x)-g(a)]-\frac{1}{2}[g(b)-g(x)]\nonumber\\
&=&g(x)-\frac{1}{2}[g(a)+g(b)]\nonumber\\
&=&g(x).\nonumber
\eea
The proof is complete.
\end{proof}
Denote by $u\otimes v$ the integral operator with kernel $(u\otimes v)(x,y):=u(x)v(y)$. We have the following lemma.
\begin{lemma}
If $A$ is an integral operator with kernel $A(x,y)$, then
$$
A(u\otimes v)=(Au)\otimes v,\;\; (u\otimes v)A=u\otimes(A^{t}v).\label{ab}
$$
\end{lemma}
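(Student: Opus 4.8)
The plan is to prove both identities by the elementary rule that the kernel of a composition of two integral operators is obtained by integrating the product of their kernels over the shared variable. Concretely, if $P$ and $Q$ are integral operators with kernels $P(x,y)$ and $Q(x,y)$, then $PQ$ has kernel $\int_{a}^{b}P(x,z)Q(z,y)\,dz$. I would establish each claimed identity by writing down this composite kernel and recognizing it as the kernel of the asserted rank-one operator $u\otimes v$.

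For the first identity I would set $P=A$ and $Q=u\otimes v$, the latter having kernel $u(z)v(y)$. The composite kernel is then $\int_{a}^{b}A(x,z)\,u(z)\,v(y)\,dz$. Since $v(y)$ is independent of the integration variable $z$, it factors out, leaving $\big(\int_{a}^{b}A(x,z)u(z)\,dz\big)\,v(y)=(Au)(x)\,v(y)$, which is exactly the kernel of $(Au)\otimes v$. This yields $A(u\otimes v)=(Au)\otimes v$.

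For the second identity I would set $P=u\otimes v$, with kernel $u(x)v(z)$, and $Q=A$. The composite kernel is $\int_{a}^{b}u(x)\,v(z)\,A(z,y)\,dz=u(x)\int_{a}^{b}v(z)A(z,y)\,dz$. The remaining integral is precisely $(A^{t}v)(y)$, once one recalls that the transpose $A^{t}$ has kernel $A^{t}(x,y)=A(y,x)$, so that $(A^{t}v)(y)=\int_{a}^{b}A(z,y)v(z)\,dz$. Hence the composite kernel equals $u(x)\,(A^{t}v)(y)$, the kernel of $u\otimes(A^{t}v)$, giving $(u\otimes v)A=u\otimes(A^{t}v)$.

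There is no genuine obstacle here: the argument is purely formal and uses only a Fubini-type rearrangement, which is justified by the integrability assumptions in force throughout the paper. The single point deserving care is the bookkeeping of the transpose in the second identity, where one must note that it is the \emph{first} argument of $A$ that gets paired with $v$ under the integral, and this is exactly what produces $A^{t}$ rather than $A$.
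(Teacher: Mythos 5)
Your argument is correct: both identities follow immediately from the composition rule for kernels, and your bookkeeping of the transpose in the second identity, $(A^{t}v)(y)=\int_{a}^{b}A(z,y)v(z)\,dz$, is exactly right. The paper states this lemma without proof, so there is nothing to compare against; your computation is the standard one and fills the omission cleanly.
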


\section{The symplectic ensembles}
\subsection{General case}
Taking $\beta=4$ in (\ref{gnb}) gives the symplectic ensembles and generating function becomes,
$$
G_{N}^{(4)}(f)=C_{N}^{(4)}\int_{[a,b]^{N}}\prod_{1\leq j<k\leq N}\left(x_{j}-x_{k}\right)^{4}\prod_{j=1}^{N}
w(x_{j})\left[1+f(x_{j})\right]dx_{j},
$$
where
$$
C_{N}^{(4)}=\frac{1}{\int_{[a,b]^N}\prod_{1\leq j<k\leq N}\left(x_{j}-x_{k}\right)^{4}\prod_{j=1}^{N}w(x_{j})dx_{j}}
$$
is a constant depending on $N$.

We follow closely the computations of Dieng \cite{Dieng}, and Tracy and Widom \cite{Tracy1998}.

From Theorem \ref{det} and Theorem \ref{pf} and some linear algebra, we get
$$
\left[G_{N}^{(4)}(f)\right]^{2}=\widehat{C_{N}^{(4)}}\det\left(\int_{a}^{b}\left[\pi_{j}(x)\pi_{k}'(x)-\pi_{j}'(x)\pi_{k}(x)\right]
w(x)(1+f(x))dx\right)_{j,k=0}^{2N-1},
$$
where $\widehat{C_{N}^{(4)}}$ is a $N$ dependent constant, and $\pi_j(x)$ is any polynomial of degree $j$. Let
\be
\psi_{j}(x)=\pi_{j}(x)\sqrt{w(x)},\label{psij4}
\ee
and following \cite{Dieng, Tracy1998}, we see that,
$$
\left[G_{N}^{(4)}(f)\right]^{2}=\det\left(I+\left(M^{(4)}\right)^{-1}L^{(4)}\right),
$$
where $M^{(4)}, L^{(4)}$ are matrices given by
$$
M^{(4)}=\left(\int_{a}^{b}\left(\psi_{j}(x)\psi_{k}'(x)-\psi_{j}'(x)\psi_{k}(x)\right)dx\right)_{j,k=0}^{2N-1},
$$
$$
L^{(4)}=\left(\int_{a}^{b}\left(\psi_{j}(x)\psi_{k}'(x)-\psi_{j}'(x)\psi_{k}(x)\right)f(x)dx\right)_{j,k=0}^{2N-1}.
$$
With the notation,
$$
\left(M^{(4)}\right)^{-1}=:(\mu_{jk})_{j,k=0}^{2N-1},
$$
we obtain, finally,
\be
\left[G_{N}^{(4)}(f)\right]^{2}=\det\left(I+K_{N}^{(4)}f\right),\label{gn42}
\ee
where $K_{N}^{(4)}$ is the integral operator
$$
K_{N}^{(4)}=\begin{pmatrix}
\sum_{j,k=0}^{2N-1}\mu_{jk}\psi_{j}'\otimes\psi_{k}&-\sum_{j,k=0}^{2N-1}\mu_{jk}\psi_{j}'\otimes\psi_{k}'\\
\sum_{j,k=0}^{2N-1}\mu_{jk}\psi_{j}\otimes\psi_{k}&-\sum_{j,k=0}^{2N-1}\mu_{jk}\psi_{j}\otimes\psi_{k}'
\end{pmatrix}
=:\begin{pmatrix}
K_{N,4}^{(1,1)}&K_{N,4}^{(1,2)}\\
K_{N,4}^{(2,1)}&K_{N,4}^{(2,2)}
\end{pmatrix},
$$
In the next theorem, we obtain further relations on $K_{N,4}^{(i,j)}, i,j=1,2$, which ultimately expresses $K_N^{(4)}$ in terms of
$K_{N,4}^{(2,2)}.$

\begin{theorem}
$$
K_{N,4}^{(2,1)}=K_{N,4}^{(2,2)}\varepsilon,\;\;K_{N,4}^{(1,1)}=DK_{N,4}^{(2,2)}\varepsilon,\;\;K_{N,4}^{(1,2)}=DK_{N,4}^{(2,2)},
\;\;K_{N,4}^{(2,2)}\varepsilon D=K_{N,4}^{(2,2)}. \label{re}
$$
\end{theorem}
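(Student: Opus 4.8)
The plan is to reduce all four identities to the two structural lemmas already established: Lemma \ref{de} ($D\varepsilon=\varepsilon D=I$ on functions vanishing at the endpoints) and Lemma \ref{ab} (the rules $A(u\otimes v)=(Au)\otimes v$ and $(u\otimes v)A=u\otimes(A^{t}v)$), supplemented by the transpose relations $\varepsilon^{t}=-\varepsilon$ (recorded above) and $D^{t}=-D$, the latter being integration by parts. Throughout one uses that each $\psi_{j}=\pi_{j}\sqrt{w}$ vanishes at the endpoints $a,b$, so that the boundary terms needed for $\varepsilon D\psi_{j}=\psi_{j}$ and for $D^{t}=-D$ all drop out; for the Gaussian and Laguerre weights this vanishing holds at $\pm\infty$ and at $0,\infty$ respectively.

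First I would establish $K_{N,4}^{(2,1)}=K_{N,4}^{(2,2)}\varepsilon$. Starting from $K_{N,4}^{(2,2)}=-\sum_{j,k}\mu_{jk}\,\psi_{j}\otimes\psi_{k}'$, right-multiplication by $\varepsilon$ and the second rule of Lemma \ref{ab} give $(\psi_{j}\otimes\psi_{k}')\varepsilon=\psi_{j}\otimes(\varepsilon^{t}\psi_{k}')=-\psi_{j}\otimes(\varepsilon\psi_{k}')$. Writing $\psi_{k}'=D\psi_{k}$ and invoking $\varepsilon D\psi_{k}=\psi_{k}$ from Lemma \ref{de}, the right-hand side collapses to $-\psi_{j}\otimes\psi_{k}$, and summing against $-\mu_{jk}$ reproduces $\sum_{j,k}\mu_{jk}\,\psi_{j}\otimes\psi_{k}=K_{N,4}^{(2,1)}$.

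The third identity $K_{N,4}^{(1,2)}=DK_{N,4}^{(2,2)}$ is even more direct: left-multiplication by $D$ and the first rule of Lemma \ref{ab} turn each $\psi_{j}\otimes\psi_{k}'$ into $(D\psi_{j})\otimes\psi_{k}'=\psi_{j}'\otimes\psi_{k}'$, which is exactly the summand of $K_{N,4}^{(1,2)}$. The second identity $K_{N,4}^{(1,1)}=DK_{N,4}^{(2,2)}\varepsilon$ then follows by composing the previous two: apply $D$ on the left to the already-proved $K_{N,4}^{(2,2)}\varepsilon=K_{N,4}^{(2,1)}=\sum_{j,k}\mu_{jk}\,\psi_{j}\otimes\psi_{k}$ and use $D(\psi_{j}\otimes\psi_{k})=\psi_{j}'\otimes\psi_{k}$ to land on $K_{N,4}^{(1,1)}$. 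Finally, for $K_{N,4}^{(2,2)}\varepsilon D=K_{N,4}^{(2,2)}$ I would right-multiply the first identity by $D$: since $K_{N,4}^{(2,2)}\varepsilon=\sum_{j,k}\mu_{jk}\,\psi_{j}\otimes\psi_{k}$ and $(u\otimes v)D=u\otimes(D^{t}v)=-u\otimes v'$ by the transpose rule, this produces $-\sum_{j,k}\mu_{jk}\,\psi_{j}\otimes\psi_{k}'=K_{N,4}^{(2,2)}$.

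The only genuinely delicate point, and the one I expect to be the main obstacle, is the justification of the boundary behavior: Lemma \ref{de} gives $\varepsilon D=I$ only on functions with $g(a)=g(b)=0$, and the identity $D^{t}=-D$ likewise rests on vanishing boundary terms in the integration by parts. For a truly finite interval $[a,b]$ these hypotheses are not automatic, so one must verify that the functions $\psi_{j}$ produced by the classical weights (and their square roots) decay at the endpoints; for the Gaussian and Laguerre cases this is clear, but the general-case statement implicitly requires this hypothesis on $w$, and I would flag that the formal manipulations are valid precisely when the $\psi_{j}$ vanish at $a$ and $b$.
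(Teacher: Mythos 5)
Your proposal is correct and follows essentially the same route as the paper: the same reduction of all four identities to Lemma \ref{de} and Lemma \ref{ab} together with $\varepsilon^{t}=-\varepsilon$, with the fourth identity resting on the vanishing of $\psi_{k}=\pi_{k}\sqrt{w}$ at the endpoints (the paper phrases this as an explicit integration by parts using $w(a)=w(b)=0$, which is the same computation as your $(u\otimes v)D=-u\otimes v'$). Your explicit flagging of the boundary hypothesis is a fair observation, but it is exactly the condition the paper also invokes, so there is no substantive difference.
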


\begin{proof}
First of all,
\bea
K_{N,4}^{(2,2)}\varepsilon
&=&-\sum_{j,k=0}^{2N-1}(\mu_{jk}\psi_{j}\otimes\psi_{k}')\varepsilon\nonumber\\
&=&\sum_{j,k=0}^{2N-1}\mu_{jk}\psi_{j}\otimes(\varepsilon\psi_{k}')\nonumber\\
&=&\sum_{j,k=0}^{2N-1}\mu_{jk}\psi_{j}\otimes\psi_{k}\nonumber\\
&=&K_{N,4}^{(2,1)}.\nonumber
\eea
Secondly, for any integrable function $g(x)$ defined on $[a,b]$, we have
\bea
D K_{N,4}^{(2,1)}g(x)
&=&\frac{d}{dx}\int_{a}^{b}K_{N,4}^{(2,1)}(x,y)g(y)dy\nonumber\\
&=&\int_{a}^{b}\frac{\partial}{\partial x}K_{N,4}^{(2,1)}(x,y)g(y)dy\nonumber\\
&=&\int_{a}^{b}K_{N,4}^{(1,1)}(x,y)g(y)dy\nonumber\\
&=&K_{N,4}^{(1,1)}g(x),\nonumber
\eea
i.e.,
$$
D K_{N,4}^{(2,1)}=K_{N,4}^{(1,1)}.
$$
It follows that
$$
K_{N,4}^{(1,1)}=DK_{N,4}^{(2,2)}\varepsilon.
$$
Similarly,
\bea
D K_{N,4}^{(2,2)}g(x)
&=&\frac{d}{dx}\int_{a}^{b}K_{N,4}^{(2,2)}(x,y)g(y)dy=\int_{a}^{b}\frac{\partial}{\partial x}K_{N,4}^{(2,2)}(x,y)g(y)dy\nonumber\\
&=&\int_{a}^{b}K_{N,4}^{(1,2)}(x,y)g(y)dy\nonumber\\
&=&K_{N,4}^{(1,2)}g(x),\nonumber
\eea
i.e.,
$$
K_{N,4}^{(1,2)}=D K_{N,4}^{(2,2)}.
$$
Finally, we find
\bea
K_{N,4}^{(2,2)}\varepsilon D g(x)
&=&K_{N,4}^{(2,1)} D g(x)=\int_{a}^{b}K_{N,4}^{(2,1)}(x,y)g'(y)dy\nonumber\\
&=&K_{N,4}^{(2,1)}(x,y)g(y)|_{a}^{b}-\int_{a}^{b}g(y)\frac{\partial}{\partial y}K_{N,4}^{(2,1)}(x,y)dy.\nonumber
\eea
Note that
\bea
K_{N,4}^{(2,1)}(x,y):
&=&\sum_{j,k=0}^{2N-1}\psi_{j}(x)\mu_{jk}\psi_{k}(y)\nonumber\\
&=&\sum_{j,k=0}^{2N-1}\psi_{j}(x)\mu_{jk}\pi_{k}(y)\sqrt{w(y)}\nonumber
\eea
and
$$
w(a)=w(b)=0,
$$
hence
$$
K_{N,4}^{(2,1)}(x,a)=K_{N,4}^{(2,1)}(x,b)=0.
$$
Continuing,
\bea
K_{N,4}^{(2,2)}\varepsilon D g(x)
&=&-\int_{a}^{b}g(y)\frac{\partial}{\partial y}K_{N,4}^{(2,1)}(x,y)dy=\int_{a}^{b}K_{N,4}^{(2,2)}(x,y)g(y)dy\nonumber\\
&=&K_{N,4}^{(2,2)}g(x),\nonumber
\eea
i.e.,
$$
K_{N,4}^{(2,2)}\varepsilon D=K_{N,4}^{(2,2)}.
$$
The proof is complete.
\end{proof}
According to Theorem \ref{re}, $K_{N}^{(4)}$ can be written as
\bea
K_{N}^{(4)}
&=&\begin{pmatrix}
D K_{N,4}^{(2,2)}\varepsilon&D K_{N,4}^{(2,2)}\\
K_{N,4}^{(2,2)}\varepsilon&K_{N,4}^{(2,2)}
\end{pmatrix}=
\begin{pmatrix}
D&0\\
0&I
\end{pmatrix}
\begin{pmatrix}
K_{N,4}^{(2,2)}\varepsilon&K_{N,4}^{(2,2)}\\
K_{N,4}^{(2,2)}\varepsilon&K_{N,4}^{(2,2)}
\end{pmatrix}\nonumber\\
&=&:\widetilde{A}\widetilde{B},\nonumber
\eea
where
$$
\widetilde{A}=\begin{pmatrix}
D&0\\
0&I
\end{pmatrix},\;\;\;\;
\widetilde{B}=\begin{pmatrix}
K_{N,4}^{(2,2)}\varepsilon&K_{N,4}^{(2,2)}\\
K_{N,4}^{(2,2)}\varepsilon&K_{N,4}^{(2,2)}
\end{pmatrix}.
$$
From (\ref{gn42}) and using Theorem \ref{hs}, we have
$$
\left[G_{N}^{(4)}(f)\right]^{2}=\det\bigg(I+\Big(\widetilde{A}\widetilde{B}\Big)f\bigg)
=\det\bigg(I+\widetilde{A}\Big(\widetilde{B}f\Big)\bigg)=\det\bigg(I+\Big(\widetilde{B}f\Big)\widetilde{A}\bigg)=
\det\bigg(I+\widetilde{B}\Big(f\widetilde{A}\Big)\bigg).
$$
Since
\bea
\widetilde{B}\left(f\widetilde{A}\right)
&=&
\begin{pmatrix}
K_{N,4}^{(2,1)}&K_{N,4}^{(2,2)}\\
K_{N,4}^{(2,1)}&K_{N,4}^{(2,2)}
\end{pmatrix}
\left(f
\begin{pmatrix}
D&0\\
0&I
\end{pmatrix}\right)
\nonumber\\
&=&
\begin{pmatrix}
K_{N,4}^{(2,2)}\varepsilon&K_{N,4}^{(2,2)}\\
K_{N,4}^{(2,2)}\varepsilon&K_{N,4}^{(2,2)}
\end{pmatrix}
\begin{pmatrix}
f D&0\\
0&f
\end{pmatrix}
\nonumber\\
&=&\begin{pmatrix}
K_{N,4}^{(2,2)}\varepsilon f D&K_{N,4}^{(2,2)}f\\
K_{N,4}^{(2,2)}\varepsilon f D&K_{N,4}^{(2,2)}f
\end{pmatrix},\nonumber
\eea
then
$$
\left[G_{N}^{(4)}(f)\right]^{2}=\det
\begin{pmatrix}
I+K_{N,4}^{(2,2)}\varepsilon f D&K_{N,4}^{(2,2)}f\\
K_{N,4}^{(2,2)}\varepsilon f D&I+K_{N,4}^{(2,2)}f
\end{pmatrix}.
$$

The computation below reduces the above into a determinant of scalar operators. We subtract row 1 from row 2,
$$
\left[G_{N}^{(4)}(f)\right]^{2}=\det
\begin{pmatrix}
I+K_{N,4}^{(2,2)}\varepsilon f D&K_{N,4}^{(2,2)}f\\
-I&I
\end{pmatrix}.
$$
Next, add column 2 to column 1,
\bea
\left[G_{N}^{(4)}(f)\right]^{2}
&=&\det
\begin{pmatrix}
I+K_{N,4}^{(2,2)}\varepsilon f D+K_{N,4}^{(2,2)}f&K_{N,4}^{(2,2)}f\\
0&I
\end{pmatrix}\nonumber\\
&=&\det\left(I+K_{N,4}^{(2,2)}\varepsilon f D+K_{N,4}^{(2,2)}f\right).\nonumber
\eea
$\mathbf{Remark}.$ The above result agrees with \cite{Dieng} for the GUE case if we take $f=-\mu\:\chi_{J}$, where $\chi_{J}$ is the characteristic function of the interval $J$.

Now we use the commutator $[D,f]:=Df-fD$ to obtain a better suited result for our purpose. For a given function {\it smooth} $g(x)$, we have
\bea
[D,f]g(x)
&=&D f g(x)-f D g(x)\nonumber\\
&=&(f(x)g(x))'-f(x)g'(x)\nonumber\\
&=&f'(x)g(x),\nonumber
\eea
this is,
$$
[D,f]=f'.
$$
It follows that
\be
fD=Df-[D,f]=Df-f'.\label{fd}
\ee
Taking this into account, we have the following theorem.
\begin{theorem}
$$
\left[G_{N}^{(4)}(f)\right]^{2}
=\det\left(I+2K_{N,4}^{(2,2)}f-K_{N,4}^{(2,2)}\varepsilon f'\right),\label{gn4s}
$$
where the kernel of $K_{N,4}^{(2,2)}$ reads,
$$
K_{N,4}^{(2,2)}(x,y)=-\sum_{j,k=0}^{2N-1}\psi_{j}(x)\mu_{jk}\psi_{k}'(y).
$$
\end{theorem}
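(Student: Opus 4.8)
The plan is to convert the expression already obtained,
$$
\left[G_{N}^{(4)}(f)\right]^{2}=\det\left(I+K_{N,4}^{(2,2)}\varepsilon f D+K_{N,4}^{(2,2)}f\right),
$$
into the asserted form by pure operator algebra, the only inputs being the commutator identity (\ref{fd}) and the last relation of Theorem \ref{re}. No new analytic estimate is required; all the work sits in manipulating compositions of the operators $K_{N,4}^{(2,2)}$, $\varepsilon$, $D$, and multiplication by $f$.

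First I would rewrite the single troublesome term $K_{N,4}^{(2,2)}\varepsilon f D$. Substituting $fD=Df-f'$ from (\ref{fd}) gives
$$
K_{N,4}^{(2,2)}\varepsilon f D=K_{N,4}^{(2,2)}\varepsilon D f-K_{N,4}^{(2,2)}\varepsilon f'.
$$
The relation $K_{N,4}^{(2,2)}\varepsilon D=K_{N,4}^{(2,2)}$ from Theorem \ref{re} then collapses the first term to $K_{N,4}^{(2,2)}f$, so that
$$
K_{N,4}^{(2,2)}\varepsilon f D+K_{N,4}^{(2,2)}f=2K_{N,4}^{(2,2)}f-K_{N,4}^{(2,2)}\varepsilon f'.
$$
Inserting this inside the determinant yields (\ref{gn4s}). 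The stated kernel of $K_{N,4}^{(2,2)}$ is then read off directly from its definition $K_{N,4}^{(2,2)}=-\sum_{j,k=0}^{2N-1}\mu_{jk}\,\psi_{j}\otimes\psi_{k}'$ together with the convention $(u\otimes v)(x,y)=u(x)v(y)$.

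The one point deserving care is the legitimacy of invoking $K_{N,4}^{(2,2)}\varepsilon D=K_{N,4}^{(2,2)}$ once the factor $f$ has been commuted through $D$. In the proof of Theorem \ref{re} this identity rested on the integration-by-parts boundary term $K_{N,4}^{(2,1)}(x,y)g(y)\big|_a^b$ vanishing, which holds because $K_{N,4}^{(2,1)}(x,a)=K_{N,4}^{(2,1)}(x,b)=0$, a consequence of $w(a)=w(b)=0$ that is built into the kernel itself and hence independent of the function acted upon. Here that function is effectively $f$ times a test function, which is differentiable and bounded at the endpoints since $f$ lies in the Schwartz space; so the identity applies verbatim. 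This is the only step that appeals to the underlying function-space hypotheses, and it is precisely what turns the formal operator manipulation into a rigorous one.
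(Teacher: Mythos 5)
Your proposal is correct and follows exactly the route the paper takes: substitute $fD=Df-f'$ from (\ref{fd}) into the term $K_{N,4}^{(2,2)}\varepsilon fD$ and then collapse $K_{N,4}^{(2,2)}\varepsilon D$ to $K_{N,4}^{(2,2)}$ via the last relation of Theorem \ref{re}. Your closing remark on why that relation remains applicable after the commutation is a point the paper leaves implicit, but it does not change the argument.
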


\subsection{GSE}
In the case of the Gaussian weight $w(x)=\mathrm{e}^{-x^{2}},\;\;x\in \mathbb{R}$,
we again follow the discussions \cite{Dieng, Tracy1998},
and choose a special $\psi_{j}$ to simplify  $M^{(4)}$ as much as possible. To proceed, let
\be
\psi_{2j+1}(x):=\frac{1}{\sqrt{2}}\varphi_{2j+1}(x),\;\; \psi_{2j}(x):=-\frac{1}{\sqrt{2}}\varepsilon\varphi_{2j+1}(x),\;\;j=0,1,2,\ldots,
\label{psi2}
\ee
where $\varphi_{j}(x)$ is given by
\be
\varphi_{j}(x)=\frac{H_{j}(x)}{c_{j}}\mathrm{e}^{-\frac{x^{2}}{2}},\;\;c_{j}=\pi^{\frac{1}{4}}2^{\frac{j}{2}}\sqrt{\Gamma(j+1)},\label{gue}
\ee
and $H_{j}(x), j=0,1,\ldots$ are the usual Hermite polynomials with the orthogonality condition
$$
\int_{-\infty}^{\infty}H_{m}(x)H_{n}(x)\mathrm{e}^{-x^{2}}dx=c_{n}^{2}\:\delta_{mn}.
$$
We show in the next lemma that, this definition satisfies (\ref{psij4}), i.e.,
$\psi_{j}(x)=\pi_{j}(x)\mathrm{e}^{-\frac{x^{2}}{2}}, j=0,1,2,\ldots$, where
$\pi_j(x)$ is a polynomial of degree $j.$
\begin{lemma}
$\psi_{j}(x)\mathrm{e}^{\frac{x^{2}}{2}},\:j=0,1,2,\ldots$ is a polynomial of degree $j$.
\end{lemma}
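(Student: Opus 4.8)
The plan is to treat the even and odd indices separately, since the definition (\ref{psi2}) of $\psi_j$ differs in the two cases. The odd index is immediate: $\psi_{2j+1}(x)=\tfrac{1}{\sqrt2}\varphi_{2j+1}(x)=\tfrac{1}{\sqrt2\,c_{2j+1}}H_{2j+1}(x)\rme^{-x^2/2}$, so $\psi_{2j+1}(x)\rme^{x^2/2}=\tfrac{1}{\sqrt2\,c_{2j+1}}H_{2j+1}(x)$ is a polynomial of degree $2j+1$. All the work is in the even index, where I must show that the nonlocal object $\varepsilon\varphi_{2j+1}$ is again a polynomial multiple of $\rme^{-x^2/2}$, of degree $2j$.

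First I would record a differentiation identity for the Hermite functions. Starting from $H_n'(x)=2nH_{n-1}(x)$ together with the three-term recurrence $H_{n+1}(x)=2xH_n(x)-2nH_{n-1}(x)$, a short computation gives
$$\frac{d}{dx}\Big(H_n(x)\rme^{-x^2/2}\Big)=\Big(nH_{n-1}(x)-\frac12 H_{n+1}(x)\Big)\rme^{-x^2/2}.$$
Taking $n=2j$ and solving for the top term expresses $H_{2j+1}(x)\rme^{-x^2/2}$ as $4j\,H_{2j-1}(x)\rme^{-x^2/2}$ minus the total derivative $2\frac{d}{dx}\big(H_{2j}(x)\rme^{-x^2/2}\big)$, which is the form suited to integration.

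Next I would integrate against $\varepsilon$. Since $H_{2j+1}$ is an odd polynomial and $\rme^{-x^2/2}$ is even, $\varphi_{2j+1}$ is odd, so $\int_{\R}\varphi_{2j+1}=0$ and hence $\varepsilon\varphi_{2j+1}(x)=\int_{-\infty}^{x}\varphi_{2j+1}(t)\,dt$. Feeding the rearranged identity into this integral, with the boundary term killed by $H_{2j}(t)\rme^{-t^2/2}\to0$ as $t\to-\infty$, yields the recursion
$$\int_{-\infty}^{x}H_{2j+1}(t)\rme^{-t^2/2}\,dt=4j\int_{-\infty}^{x}H_{2j-1}(t)\rme^{-t^2/2}\,dt-2H_{2j}(x)\rme^{-x^2/2}.$$
Induction on $j$, with base case $\int_{-\infty}^{x}H_1(t)\rme^{-t^2/2}\,dt=-2\rme^{-x^2/2}$, then shows the left side equals $R_j(x)\rme^{-x^2/2}$ for a polynomial satisfying $R_j=4jR_{j-1}-2H_{2j}$; its degree is exactly $2j$ because the degree-$2j$ contribution comes solely from $H_{2j}$ (the term $R_{j-1}$ having degree $2j-2$). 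Dividing by $c_{2j+1}$ and multiplying by $-\tfrac{1}{\sqrt2}$ gives $\psi_{2j}(x)\rme^{x^2/2}=-\tfrac{1}{\sqrt2\,c_{2j+1}}R_j(x)$, a polynomial of degree $2j$, which closes the argument.

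The step I expect to require the most care is the passage through the nonlocal operator $\varepsilon$: one must verify that the antiderivative closes up cleanly as $(\text{polynomial})\times\rme^{-x^2/2}$ with no surviving additive constant. This is exactly what the vanishing of $\int_{\R}\varphi_{2j+1}$ together with the decay of the Hermite functions at $\pm\infty$ guarantees. Everything else is the bookkeeping of the induction and the degree count, both of which are routine once the differentiation identity above is in hand.
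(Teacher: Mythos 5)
Your proof is correct, and it shares the paper's architecture: the odd case is immediate, and for the even case both arguments exploit the oddness of $\varphi_{2j+1}$ to collapse $\varepsilon\varphi_{2j+1}(x)$ to the one-sided antiderivative $\int_{-\infty}^{x}\varphi_{2j+1}(t)\,dt$, and then prove by induction that this equals $\mathrm{e}^{-x^{2}/2}$ times a polynomial of degree $2j$. Where you diverge is the engine of that induction. The paper expands $H_{2j+1}$ into its odd monomials and invokes the explicit closed form $\int_{-\infty}^{x}y^{k}\mathrm{e}^{-y^{2}/2}dy=-\mathrm{e}^{-x^{2}/2}\left[x^{k-1}+(k-1)x^{k-3}+\cdots+(k-1)(k-3)\cdots 2\right]$ for odd $k$, itself established by induction on $k$; the degree count then comes from the leading monomial $y^{2j+1}$ of $H_{2j+1}$ contributing degree $2j$. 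You instead stay at the level of Hermite functions, combining $H_{n}'=2nH_{n-1}$ with the three-term recurrence to obtain $\frac{d}{dx}\bigl(H_{n}(x)\mathrm{e}^{-x^{2}/2}\bigr)=\bigl(nH_{n-1}(x)-\tfrac{1}{2}H_{n+1}(x)\bigr)\mathrm{e}^{-x^{2}/2}$ (the unnormalized form of the relation (\ref{phid}) that the paper derives just after this lemma), which yields the two-term recursion $R_{j}=4jR_{j-1}-2H_{2j}$ for the polynomial factor. Your route makes the degree statement completely transparent, since the degree-$2j$ term can only come from $H_{2j}$, and it avoids writing out monomial antiderivatives; the paper's route is marginally more elementary in that it uses nothing about Hermite polynomials beyond their oddness and degree. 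Both are sound, and your handling of the one genuinely delicate point --- that the antiderivative closes up as a pure Gaussian multiple with no leftover constant, guaranteed by $\int_{\mathbb{R}}\varphi_{2j+1}=0$ and the decay at $-\infty$ --- matches the paper's.
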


\begin{proof}
If the index  is odd, then it is clear that $\psi_{2j+1}(x)\mathrm{e}^{\frac{x^{2}}{2}}$ is a polynomial of degree $2j+1$.
For even index,
\bea
\psi_{2j}(x)\mathrm{e}^{\frac{x^{2}}{2}}
&=&-\frac{1}{\sqrt{2}}\left(\varepsilon\varphi_{2j+1}(x)\right)\mathrm{e}^{\frac{x^{2}}{2}}\nonumber\\
&=&-\frac{1}{\sqrt{2}}\mathrm{e}^{\frac{x^{2}}{2}}\varepsilon\left(\frac{1}{c_{2j+1}}H_{2j+1}(x)\mathrm{e}^{-\frac{x^{2}}{2}}\right)\nonumber\\
&=&-\frac{1}{2\sqrt{2}c_{2j+1}}\mathrm{e}^{\frac{x^{2}}{2}}\left[\int_{-\infty}^{x}H_{2j+1}(y)\mathrm{e}^{-\frac{y^{2}}{2}}dy
-\int_{x}^{\infty}H_{2j+1}(y)\mathrm{e}^{-\frac{y^{2}}{2}}dy\right],\;\;j=0,1,2,\cdots.\nonumber
\eea
Since $H_{2j+1}(y)\mathrm{e}^{-\frac{y^{2}}{2}}, j=0,1,\ldots$ is an odd function, we have
$$
0=\int_{-\infty}^{\infty}H_{2j+1}(y)\mathrm{e}^{-\frac{y^{2}}{2}}dy=
\int_{-\infty}^{x}H_{2j+1}(y)\mathrm{e}^{-\frac{y^{2}}{2}}dy+\int_{x}^{\infty}H_{2j+1}(y)\mathrm{e}^{-\frac{y^{2}}{2}}dy.
$$
Hence
$$
\int_{x}^{\infty}H_{2j+1}(y)\mathrm{e}^{-\frac{y^{2}}{2}}dy=-\int_{-\infty}^{x}H_{2j+1}(y)\mathrm{e}^{-\frac{y^{2}}{2}}dy,
$$
and we find,
$$
\psi_{2j}(x)\mathrm{e}^{\frac{x^{2}}{2}}=-\frac{1}{\sqrt{2}c_{2j+1}}\mathrm{e}^{\frac{x^{2}}{2}}\int_{-\infty}^{x}H_{2j+1}(y)
\mathrm{e}^{-\frac{y^{2}}{2}}dy.
$$
From mathematical induction, it follows that
\bea
\int_{-\infty}^{x}y^{k}\mathrm{e}^{-\frac{y^{2}}{2}}dy
&=&-\mathrm{e}^{-\frac{x^{2}}{2}}\left[x^{k-1}+(k-1)x^{k-3}+(k-1)(k-3)x^{k-5}\right.\nonumber\\
&+&\left.\cdots+(k-1)(k-3)\cdots2\right],\;\;k=1,3,5,\cdots.\nonumber
\eea
Since $H_{2j+1}(y)$ is a linear combination of $y, y^{3},\ldots, y^{2j+1}$,
 we see that $\int_{-\infty}^{x}H_{2j+1}(y)\mathrm{e}^{-\frac{y^{2}}{2}}dy$ is equal to $\mathrm{e}^{-\frac{x^{2}}{2}}$ multiplying
  a polynomial of degree $2j$. It follows that $\psi_{2j}(x)\mathrm{e}^{\frac{x^{2}}{2}}$ is a polynomial of degree $2j$. The proof is complete.
\end{proof}

Using (\ref{psi2}) to compute
$$
M^{(4)}
:=\left(\int_{-\infty}^{\infty}\left(\psi_{j}(x)\psi_{k}'(x)-\psi_{j}'(x)\psi_{k}(x)\right)dx\right)_{j,k=0}^{2N-1},
$$
we obtain the following lemma.
\begin{lemma}$
\mathrm{\mathbf{(Dieng, Tracy-Widom)}}$
$$
M^{(4)}=\begin{pmatrix}
0&1&0&0&\cdots&0&0\\
-1&0&0&0&\cdots&0&0\\
0&0&0&1&\cdots&0&0\\
0&0&-1&0&\cdots&0&0\\
\vdots&\vdots&\vdots&\vdots&&\vdots&\vdots\\
0&0&0&0&\cdots&0&1\\
0&0&0&0&\cdots&-1&0
\end{pmatrix}_{2N\times 2N}.
$$
\end{lemma}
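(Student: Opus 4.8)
The plan is to exploit the antisymmetry of the integrand together with the special structure of the $\psi_j$. First I would note that every $\psi_j$ is a polynomial times $\mathrm{e}^{-x^2/2}$ (this was just established in the preceding lemma), so all $\psi_j$ and all the products below decay at $\pm\infty$; integration by parts then kills every boundary term and yields the uniform simplification $M^{(4)}_{jk}=2\int_{-\infty}^{\infty}\psi_j\psi_k'\,dx$. In particular the integrand $\psi_j\psi_k'-\psi_j'\psi_k$ is antisymmetric in $j,k$, so $M^{(4)}$ is antisymmetric and it suffices to compute the entries with $j\le k$. I would next record the one structural identity that drives everything: applying Lemma \ref{de} with the interval taken as $(-\infty,\infty)$ (legitimate because $\varphi_{2j+1}$ vanishes at both ends) gives $\psi_{2j}'=-\tfrac{1}{\sqrt2}D\varepsilon\varphi_{2j+1}=-\tfrac{1}{\sqrt2}\varphi_{2j+1}=-\psi_{2j+1}$.

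Before splitting into cases I would set down two elementary inputs. The normalization $c_j$ is chosen precisely so that $\int_{-\infty}^{\infty}\varphi_m\varphi_n\,dx=\delta_{mn}$, inherited from the Hermite orthogonality. And since $H_n(-x)=(-1)^nH_n(x)$, each $\varphi_n$ has parity $(-1)^n$; because $\varepsilon$ reverses parity, $\varepsilon\varphi_{2j+1}$ is an even function, and it also decays at $\pm\infty$ since $\int_{\mathbb{R}}\varphi_{2j+1}=0$ by oddness. These two facts, orthonormality and parity, are the only ingredients needed.

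I would then run through the four parities of $(j,k)$. For $j=2p+1,\,k=2q+1$ the factors collapse to $\int\varphi_{2p+1}\varphi_{2q+1}'\,dx$, where $\varphi_{2p+1}$ is odd and $\varphi_{2q+1}'$ is even, so the integrand is odd and the entry vanishes. For $j=2p,\,k=2q$ one gets $\int(\varepsilon\varphi_{2p+1})\varphi_{2q+1}\,dx$, an even function times an odd function, again an odd integrand, so this vanishes as well. The only surviving parity is $j=2p,\,k=2q+1$, where $M^{(4)}_{2p,2q+1}=-\int(\varepsilon\varphi_{2p+1})\varphi_{2q+1}'\,dx$; integrating by parts (boundary terms gone by the decay noted above) and using $D\varepsilon\varphi_{2p+1}=\varphi_{2p+1}$ turns this into $\int\varphi_{2p+1}\varphi_{2q+1}\,dx=\delta_{pq}$. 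Antisymmetry then forces $M^{(4)}_{2p+1,2q}=-\delta_{pq}$, and assembling these entries over $p,q=0,\dots,N-1$ reproduces exactly the claimed block-diagonal matrix with $2\times 2$ diagonal blocks $\left(\begin{smallmatrix}0&1\\-1&0\end{smallmatrix}\right)$ and zeros elsewhere.

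The only genuinely delicate point is the handling of the operator $\varepsilon$: I must confirm that its parity-reversing effect on $\varphi_{2j+1}$ is correctly identified, that $\varepsilon\varphi_{2j+1}$ decays at infinity so the integration-by-parts boundary terms truly drop, and that the hypotheses of Lemma \ref{de} are legitimately transported from a finite $[a,b]$ to the whole line. Everything else is routine orthonormality and parity bookkeeping.
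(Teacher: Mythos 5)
Your proof is correct. Note that the paper does not actually prove this lemma---it is quoted from Dieng and Tracy--Widom---but it does prove the exactly analogous statement for the Laguerre $M^{(4)}$ (and for the Laguerre and Gaussian $M^{(1)}$), and your argument follows the same template: antisymmetry reduces the work to $j\le k$, integration by parts (with boundary terms killed by the Gaussian decay of $\psi_{2j+1}$ and the decay of $\varepsilon\varphi_{2j+1}$, which you correctly justify via $\int_{\mathbb R}\varphi_{2j+1}=0$) collapses each entry to a single integral, the $(\mathrm{even},\mathrm{odd})$ entry becomes $\int\varphi_{2p+1}\varphi_{2q+1}=\delta_{pq}$ after one more integration by parts and $D\varepsilon=I$, and the remaining cases vanish. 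The only genuine difference is that you dispose of the $(\mathrm{even},\mathrm{even})$ and $(\mathrm{odd},\mathrm{odd})$ entries by the parity of Hermite functions, whereas the paper's Laguerre analogue (which has no parity available) uses degree-counting against orthogonality; your parity shortcut is legitimate here and slightly cleaner, though it does not generalize to the Laguerre setting. All the delicate points you flag---the parity-reversing action of $\varepsilon$, the decay of $\varepsilon\varphi_{2j+1}$, and the validity of $D\varepsilon=I$ on the whole line---check out.
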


It is clear that $\left(M^{(4)}\right)^{-1}=-M^{(4)}$, so $\mu_{2j,2j+1}=-1, \mu_{2j+1,2j}=1$, and $\mu_{jk}=0$ for other cases. Hence
\bea
K_{N,4}^{(2,2)}(x,y)
&=&-\sum_{j,k=0}^{2N-1}\psi_{j}(x)\mu_{jk}\psi_{k}'(y)\nonumber\\
&=&\sum_{j=0}^{N-1}\psi_{2j}(x)\psi_{2j+1}'(y)-\sum_{j=0}^{N-1}\psi_{2j+1}(x)\psi_{2j}'(y)\nonumber\\
&=&\frac{1}{2}\left[\sum_{j=0}^{N-1}\varphi_{2j+1}(x)\varphi_{2j+1}(y)-\sum_{j=0}^{N-1}\varepsilon\varphi_{2j+1}(x)\varphi_{2j+1}'(y)\right].
\label{sn}
\eea
Recall that the Hermite polynomials $H_{j}$ satisfy the differentiation formulas \cite{Lebedev}
\be
H_{j}'(x)=2xH_{j}(x)-H_{j+1}(x),\;\;j=0,1,2,\ldots, \label{rh}
\ee
\be
H_{j}'(x)=2jH_{j-1}(x),\;\;j=0,1,2,\ldots. \label{lh}
\ee
Using the fact that $H_{j}(x)=c_{j}\:\varphi_{j}(x)\:{\rm e}^{\frac{x^{2}}{2}}$, (\ref{rh}) becomes
\be
\varphi_{j}'(x)=x\varphi_{j}(x)-\sqrt{2(j+1)}\varphi_{j+1}(x),\;\;j=0,1,2,\ldots, \label{rphi}
\ee
and similarly, (\ref{lh}) becomes,
\be
\varphi_{j}'(x)=-x\varphi_{j}(x)+\sqrt{2j}\varphi_{j-1}(x),\;\;j=0,1,2,\ldots. \label{lphi}
\ee
Combining (\ref{rphi}) and (\ref{lphi}), to eliminate $\varphi_j(x),$ we obtain
\be
\varphi_{j}'(x)=\sqrt{\frac{j}{2}}\varphi_{j-1}(x)-\sqrt{\frac{j}{2}+\frac{1}{2}}\varphi_{j+1}(x),\;\;j=0,1,2,\ldots. \label{phid}
\ee
Using (\ref{phid}) to replace $\varphi_{2j+1}'(y)$, we find,
\bea
&&\sum_{j=0}^{N-1}\varepsilon\varphi_{2j+1}(x)\varphi_{2j+1}'(y)\nonumber\\
&=&\sum_{j=0}^{N-1}\varepsilon\varphi_{2j+1}(x)\left[\sqrt{j+\frac{1}{2}}\varphi_{2j}(y)-\sqrt{j+1}\varphi_{2j+2}(y)\right]\nonumber\\
&=&\sum_{j=0}^{N-1}\sqrt{j+\frac{1}{2}}\varepsilon\varphi_{2j+1}(x)\varphi_{2j}(y)
-\sum_{j=0}^{N-1}\sqrt{j+1}\varepsilon\varphi_{2j+1}(x)\varphi_{2j+2}(y)\nonumber\\
&=&\sum_{j=0}^{N}\sqrt{j+\frac{1}{2}}\varepsilon\varphi_{2j+1}(x)\varphi_{2j}(y)
-\sum_{j=0}^{N}\sqrt{j}\varepsilon\varphi_{2j-1}(x)\varphi_{2j}(y)-\sqrt{N+\frac{1}{2}}\varepsilon\varphi_{2N+1}(x)\varphi_{2N}(y)\nonumber\\
&=&-\sum_{j=0}^{N}\left[\sqrt{j}\varepsilon\varphi_{2j-1}(x)-\sqrt{j+\frac{1}{2}}\varepsilon\varphi_{2j+1}(x)\right]\varphi_{2j}(y)
-\sqrt{N+\frac{1}{2}}\varepsilon\varphi_{2N+1}(x)\varphi_{2N}(y).\label{phi2j}
\eea
To proceed further, using Lemma \ref{de},
together with (\ref{phid}) we find
\bea
\varphi_{2j}(x)
&=&\varepsilon D\varphi_{2j}(x)=\varepsilon\:\varphi_{2j}'(x)\nonumber\\
&=&\sqrt{j}\;\varepsilon\varphi_{2j-1}(x)-\sqrt{j+\frac{1}{2}}\;\varepsilon\varphi_{2j+1}(x).\label{phi2jy}
\eea
Substituting (\ref{phi2jy}) into (\ref{phi2j}), it follows that
$$
\sum_{j=0}^{N-1}\varepsilon\varphi_{2j+1}(x)\varphi_{2j+1}'(y)=-\sum_{j=0}^{N}\varphi_{2j}(x)\varphi_{2j}(y)
-\sqrt{N+\frac{1}{2}}\varepsilon\varphi_{2N+1}(x)\varphi_{2N}(y).
$$
Hence, (\ref{sn}) becomes,
\bea
K_{N,4}^{(2,2)}(x,y)
&=&\frac{1}{2}\left[\sum_{j=0}^{N-1}\varphi_{2j+1}(x)\varphi_{2j+1}(y)+\sum_{j=0}^{N}\varphi_{2j}(x)\varphi_{2j}(y)
+\sqrt{N+\frac{1}{2}}\varepsilon\varphi_{2N+1}(x)\varphi_{2N}(y)\right]\nonumber\\
&=&\frac{1}{2}\left[\sum_{j=0}^{2N}\varphi_{j}(x)\varphi_{j}(y)+\sqrt{N+\frac{1}{2}}\varepsilon\varphi_{2N+1}(x)\varphi_{2N}(y)\right]\nonumber\\
&=&\frac{1}{2}\left[S_N(x,y)+\sqrt{N+\frac{1}{2}}\varepsilon\varphi_{2N+1}(x)\varphi_{2N}(y)\right],\nonumber
\eea
where
$$
S_{N}(x,y):=\sum_{j=0}^{2N}\varphi_{j}(x)\varphi_{j}(y)=\sqrt{N+\frac{1}{2}}\:\frac{\varphi_{2N+1}(x)\varphi_{2N}(y)-\varphi_{2N+1}(y)\varphi_{2N}(x)}{x-y},
$$
and here the last equality comes from the Christoffel-Darboux formula.

By Theorem \ref{gn4s}, we have the following theorem.
\begin{theorem}
$$
\left[G_{N}^{(4)}(f)\right]^{2}
=\det\left(I+S_{N}f-\frac{1}{2}S_{N}\varepsilon f'+\sqrt{N+\frac{1}{2}}\left(\varepsilon\varphi_{2N+1}\otimes\varphi_{2N}f\right)
+\frac{1}{2}\sqrt{N+\frac{1}{2}}\left(\varepsilon\varphi_{2N+1}\otimes\varepsilon\varphi_{2N}\right) f'\right).
$$
\end{theorem}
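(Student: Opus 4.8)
The plan is to derive the stated identity by direct substitution into the reduced formula of Theorem \ref{gn4s}, namely $\left[G_{N}^{(4)}(f)\right]^{2}=\det\left(I+2K_{N,4}^{(2,2)}f-K_{N,4}^{(2,2)}\varepsilon f'\right)$, using the closed form of $K_{N,4}^{(2,2)}$ that was established immediately above the statement. Written at the level of operators (rather than kernels), that closed form reads $K_{N,4}^{(2,2)}=\frac{1}{2}\left[S_{N}+\sqrt{N+\frac{1}{2}}\;\varepsilon\varphi_{2N+1}\otimes\varphi_{2N}\right]$, where $S_N$ is the integral operator with kernel $S_N(x,y)$ and $\varepsilon\varphi_{2N+1}\otimes\varphi_{2N}$ is the rank-one operator with kernel $\varepsilon\varphi_{2N+1}(x)\varphi_{2N}(y)$. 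The overall factor $\frac{1}{2}$ will cancel the coefficient $2$ in the first term, and the whole computation reduces to propagating each of the two summands through multiplication by $f$ on the right and through the composite $\varepsilon f'$.

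First I would expand $2K_{N,4}^{(2,2)}f$. The $S_N$ summand contributes $S_N f$ directly, while the rank-one summand composed with multiplication by $f$ on the right acquires the kernel $\varepsilon\varphi_{2N+1}(x)\varphi_{2N}(y)f(y)$, i.e. $\sqrt{N+\frac{1}{2}}\,\varepsilon\varphi_{2N+1}\otimes(\varphi_{2N}f)$. This yields the first and third terms in the statement.

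Next I would expand $-K_{N,4}^{(2,2)}\varepsilon f'$. The $S_N$ summand contributes $-\frac{1}{2}S_N\varepsilon f'$, which is the second term. The rank-one summand is the only step requiring care: applying Lemma \ref{ab} in the form $(u\otimes v)A=u\otimes(A^{t}v)$ with $A=\varepsilon$, together with the antisymmetry $\varepsilon^{t}=-\varepsilon$ recorded just after the definition of $\varepsilon$, I obtain $(\varepsilon\varphi_{2N+1}\otimes\varphi_{2N})\varepsilon=\varepsilon\varphi_{2N+1}\otimes(-\varepsilon\varphi_{2N})$. Combined with the prefactor $-\frac{1}{2}\sqrt{N+\frac{1}{2}}$, the two minus signs cancel and produce $+\frac{1}{2}\sqrt{N+\frac{1}{2}}\,(\varepsilon\varphi_{2N+1}\otimes\varepsilon\varphi_{2N})f'$, the fourth term.

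Collecting the four contributions reproduces exactly the operator inside the determinant, so the theorem follows. The only genuine subtlety is the sign bookkeeping in the rank-one/$\varepsilon$ manipulation; the rest is substitution and the cancellation of $\frac{1}{2}$ against the factor $2$ coming from Theorem \ref{gn4s}. It is worth emphasizing that no new analytic input is needed here — all the work (the reduction to $K_{N,4}^{(2,2)}$ via Theorem \ref{re} and the identity $fD=Df-f'$, and the Christoffel--Darboux evaluation giving $S_N$) has already been carried out, so this statement is essentially a clean repackaging of the preceding results into a single scalar-operator determinant suitable for the subsequent large-$N$ analysis.
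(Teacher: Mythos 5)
Your proposal is correct and follows essentially the same route as the paper: the paper likewise obtains this theorem by substituting the computed form $K_{N,4}^{(2,2)}=\frac{1}{2}\bigl[S_{N}+\sqrt{N+\frac{1}{2}}\;\varepsilon\varphi_{2N+1}\otimes\varphi_{2N}\bigr]$ into $\det\bigl(I+2K_{N,4}^{(2,2)}f-K_{N,4}^{(2,2)}\varepsilon f'\bigr)$, the only nontrivial step being the rank-one manipulation $(u\otimes v)\varepsilon=u\otimes(\varepsilon^{t}v)=-u\otimes(\varepsilon v)$, which you carry out correctly with the right sign. The paper simply asserts the conclusion without writing out this bookkeeping, so your write-up is a faithful (and slightly more explicit) version of the intended argument.
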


\subsection{Large $N$ behavior of the GSE moment generating function}
To proceed with the large $N$ investigation, write,
$\left[G_{N}^{(4)}(f)\right]^{2}$,
as
$$
\left[G_{N}^{(4)}(f)\right]^{2}=:\det(I+T),
$$
where
\be
T:=S_{N}f-\frac{1}{2}S_{N}\varepsilon f'+\sqrt{N+\frac{1}{2}}\left(\varepsilon\varphi_{2N+1}\otimes\varphi_{2N}f\right)
+\frac{1}{2}\sqrt{N+\frac{1}{2}}\left(\varepsilon\varphi_{2N+1}\otimes\varepsilon\varphi_{2N}\right) f'.\label{ct}
\ee
We find,
$$
\log\det(I+T)=\mathrm{Tr}\log(I+T)=\mathrm{Tr}\:T-\frac{1}{2}\mathrm{Tr}\:T^{2}+\frac{1}{3}\mathrm{Tr}\:T^{3}-\cdots.
$$
This is obtained by the trace-log expansion of $\log\det(I+\nu\:T)$, where $0<\nu<1,$ follow by a continuation to $\nu=1$. We assume that similar continuation also holds in other cases.

\begin{theorem}
$$
\lim_{N\rightarrow\infty}\frac{1}{\sqrt{4N}}S_{N}\left(\frac{x}{\sqrt{4N}},\frac{y}{\sqrt{4N}}\right)=\frac{\sin(x-y)}{\pi(x-y)},
$$

$$
\lim_{N\rightarrow\infty}\frac{1}{\sqrt{4N}}S_{N}\left(\frac{x}{\sqrt{4N}},\frac{x}{\sqrt{4N}}\right)=\frac{1}{\pi}.\label{sn4xx}
$$
\end{theorem}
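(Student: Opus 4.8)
The plan is to start from the Christoffel--Darboux representation
\[
S_N(x,y)=\sqrt{N+\tfrac12}\,\frac{\varphi_{2N+1}(x)\varphi_{2N}(y)-\varphi_{2N+1}(y)\varphi_{2N}(x)}{x-y},
\]
and reduce the whole statement to the center-of-spectrum asymptotics of the two Hermite functions $\varphi_{2N}$ and $\varphi_{2N+1}$ under the scaling $x\mapsto x/\sqrt{4N}$. Introducing the rescaled functions
\[
V_N(x):=\frac{\varphi_{2N}\!\left(x/\sqrt{4N}\right)}{\varphi_{2N}(0)},\qquad
W_N(x):=\frac{\varphi_{2N+1}\!\left(x/\sqrt{4N}\right)}{\varphi_{2N+1}'(0)/\sqrt{4N}},
\]
the first step is to prove that $V_N\to\cos$ and $W_N\to\sin$ as $N\to\infty$, uniformly on compact sets together with first derivatives.

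The mechanism for this is the Hermite differential equation $\varphi_n''+(2n+1-x^2)\varphi_n=0$. Written for $n=2N$ and $n=2N+1$ in the scaled variable, the zeroth-order coefficient becomes $\frac{x^2}{(4N)^2}-\frac{4N+1}{4N}\to-1$ uniformly on compacts, so both $V_N$ and $W_N$ solve $u''=c_N(x)\,u$ with $c_N\to-1$. Parity fixes the data: since $\varphi_{2N}$ is even, $V_N(0)=1$ and $V_N'(0)=0$, whereas since $\varphi_{2N+1}$ is odd, $W_N(0)=0$ and $W_N'(0)=1$. Continuous dependence of solutions of linear second-order equations on their coefficients then yields $V_N\to\cos$ and $W_N\to\sin$ in $C^1_{\mathrm{loc}}$. (Alternatively one may quote the Plancherel--Rotach asymptotics from \cite{Szego}; the ODE route is self-contained and uses only the Hermite equation.)

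The remaining ingredient is the exact size of the normalizing constants, pinned down from the values at the origin. Using $H_{2N}(0)=(-1)^N(2N)!/N!$ together with $H_{2N+1}'(0)=2(2N+1)H_{2N}(0)$ from (\ref{lh}) and the constants $c_{2N},c_{2N+1}$, one obtains
\[
\varphi_{2N+1}'(0)\,\varphi_{2N}(0)=\frac{\sqrt{2N+1}}{\sqrt{\pi}\,2^{2N-1/2}}\binom{2N}{N},
\]
and Stirling's formula (\ref{stirling}) (equivalently $\binom{2N}{N}\sim 4^N/\sqrt{\pi N}$) gives $\varphi_{2N+1}'(0)\,\varphi_{2N}(0)\to 2/\pi$. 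Substituting $V_N,W_N$ into the Christoffel--Darboux formula,
\[
\frac{1}{\sqrt{4N}}S_N\!\left(\frac{x}{\sqrt{4N}},\frac{y}{\sqrt{4N}}\right)
=\frac{\sqrt{N+\tfrac12}}{\sqrt{4N}}\,\varphi_{2N+1}'(0)\varphi_{2N}(0)\,
\frac{W_N(x)V_N(y)-W_N(y)V_N(x)}{x-y},
\]
the prefactor tends to $\tfrac12\cdot\tfrac2\pi=\tfrac1\pi$, while the bracketed combination tends to $\sin x\cos y-\sin y\cos x=\sin(x-y)$ by the addition formula, which is the off-diagonal limit. For the diagonal, the $C^1$-convergence of $V_N,W_N$ upgrades this to uniform convergence on compact squares including the diagonal, the quotient being a difference quotient tending to $\frac{\sin(x-y)}{x-y}$ (extended by $1$ at $x=y$); equivalently one uses the confluent identity $S_N(x,x)=\sqrt{N+\tfrac12}\big[\varphi_{2N+1}'(x)\varphi_{2N}(x)-\varphi_{2N+1}(x)\varphi_{2N}'(x)\big]$ with the same asymptotics, giving the diagonal limit $1/\pi$.

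The main obstacle I expect is the uniform control near the diagonal: establishing the $C^1$ (ideally $C^2$) convergence of the scaled Hermite functions \emph{with the correct phases}, so that the Wronskian-type combination collapses to $\sin(x-y)$ rather than a shifted sine, and so that dividing by $x-y$ is legitimate up to and across $x=y$. Fixing the phase and amplitude constants exactly — which is what forces the clean factor $1/\pi$ — is the delicate bookkeeping, and it is precisely the parity of the two functions together with the special values at the origin that removes any ambiguity.
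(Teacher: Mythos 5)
Your argument is correct, but it follows a genuinely different route from the paper, which in fact states this theorem with no proof at all (it is the classical bulk-scaling limit of the Christoffel--Darboux kernel); the paper's own methodology for the neighbouring asymptotic results is to quote the Plancherel--Rotach formula (\ref{asy}) from \cite{Szego} for $H_n(x)\mathrm{e}^{-x^2/2}$ and substitute directly. Your approach replaces that citation by the Hermite differential equation $\varphi_n''+(2n+1-x^2)\varphi_n=0$: after rescaling, both $V_N$ and $W_N$ solve $u''=c_N(x)u$ with $c_N\to-1$ uniformly on compacts, and parity pins the initial data at the origin, so continuous dependence gives $V_N\to\cos$, $W_N\to\sin$ in $C^1_{\mathrm{loc}}$. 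I checked the bookkeeping: $H_{2N}(0)=(-1)^N(2N)!/N!$, $H_{2N+1}'(0)=2(2N+1)H_{2N}(0)$, and the product $\varphi_{2N+1}'(0)\varphi_{2N}(0)=\sqrt{2N+1}\,\binom{2N}{N}/(\sqrt{\pi}\,2^{2N-1/2})\to 2/\pi$ are all right, and combined with $\sqrt{N+\tfrac12}/\sqrt{4N}\to\tfrac12$ this yields the factor $1/\pi$, while the addition formula produces $\sin(x-y)$. What your route buys is twofold: it is self-contained (only the Hermite ODE and Stirling are used), and, more importantly, the $C^1$ convergence gives genuinely uniform control of the difference quotient up to and across the diagonal, so the second limit follows from the first rather than requiring a separate argument; a naive use of the pointwise asymptotics (\ref{asy}) would leave the diagonal case delicate, since the $O(N^{-1/2})$ errors in the numerator must cancel against the vanishing denominator. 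The only cost is that the uniformity statements (uniform convergence of $c_N$ on compacts, and the Gronwall-type continuous-dependence lemma for first derivatives) are asserted rather than written out, but these are standard and unproblematic.
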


The next theorem characterizes the large $N$ asymptotic of various ``scaled" quantities.
\begin{theorem}
$$
\varphi_{2N}\left(\frac{x}{\sqrt{4N}}\right)=(-1)^{N}\pi^{-\frac{1}{2}}N^{-\frac{1}{4}}\cos x+O\left(N^{-\frac{3}{4}}\right),\;\;N\rightarrow\infty,
$$

$$
\varepsilon\varphi_{2N}\left(\frac{x}{\sqrt{4N}}\right)=2^{-1}(-1)^{N}\pi^{-\frac{1}{2}}N^{-\frac{3}{4}}\sin x+O\left(N^{-\frac{5}{4}}\right),\;\;N\rightarrow\infty,
$$

$$
\varepsilon\varphi_{2N+1}\left(\frac{x}{\sqrt{4N}}\right)=-2^{-\frac{1}{2}}N^{-\frac{1}{4}}+O\left(N^{-\frac{3}{4}}\right),\;\;N\rightarrow\infty.\label{gse}
$$
\end{theorem}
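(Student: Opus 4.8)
The plan is to establish the three estimates in order, getting the first from the scaled Hermite-function equation and the other two by integrating it, the only genuinely delicate point being the constant in the third formula. For the first, I would use that $\varphi_j(x)=H_j(x)c_j^{-1}\mathrm{e}^{-x^2/2}$ from (\ref{gue}) satisfies $\varphi_j''(x)+(2j+1-x^2)\varphi_j(x)=0$. Putting $j=2N$ and $\tilde\varphi(u):=\varphi_{2N}(u/\sqrt{4N})$ converts this to $\tilde\varphi''(u)+\bigl(1+\tfrac{1}{4N}-\tfrac{u^2}{16N^2}\bigr)\tilde\varphi(u)=0$, so on any bounded $u$-interval the leading behaviour solves $\tilde\varphi''+\tilde\varphi=0$. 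Since $\varphi_{2N}$ is even only the cosine survives, and its amplitude is fixed exactly by the value at the origin, $\tilde\varphi(0)=\varphi_{2N}(0)=(-1)^N\sqrt{(2N)!}\,/(\pi^{1/4}2^N N!)$, using $H_{2N}(0)=(-1)^N(2N)!/N!$. Applying Stirling's formula (\ref{stirling}) to this quotient yields $\varphi_{2N}(0)=(-1)^N\pi^{-1/2}N^{-1/4}\bigl(1+O(N^{-1})\bigr)$, exactly the claimed leading coefficient; carrying the first correction of the Plancherel--Rotach expansion controls the remainder at the stated order.

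For the second estimate I would reduce $\varepsilon\varphi_{2N}$ by parity. Because $\varphi_{2N}$ is even and integrable one checks at once that $\varepsilon\varphi_{2N}(x)=\int_0^x\varphi_{2N}(t)\,dt$. Substituting $t=u/\sqrt{4N}$ gives $\varepsilon\varphi_{2N}(x/\sqrt{4N})=\tfrac{1}{\sqrt{4N}}\int_0^x\varphi_{2N}(u/\sqrt{4N})\,du$, and since the range $[0,x]$ is bounded the first estimate may be inserted uniformly, producing $\tfrac{1}{2\sqrt N}(-1)^N\pi^{-1/2}N^{-1/4}\sin x$, i.e. the claimed $2^{-1}(-1)^N\pi^{-1/2}N^{-3/4}\sin x$; the extra prefactor $N^{-1/2}$ is precisely why the remainder sharpens to $O(N^{-5/4})$.

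The third estimate is the crux. By parity $\varphi_{2N+1}$ is odd with $\int_{-\infty}^\infty\varphi_{2N+1}=0$, so $\varepsilon\varphi_{2N+1}(x)=\int_{-\infty}^x\varphi_{2N+1}(t)\,dt$, an even function of $x$. I would split $\int_{-\infty}^{x/\sqrt{4N}}=\int_{-\infty}^0+\int_0^{x/\sqrt{4N}}$; the second piece is bounded by $\tfrac{|x|}{\sqrt{4N}}\sup_u|\varphi_{2N+1}(u/\sqrt{4N})|=O(N^{-3/4})$ and is absorbed into the remainder, leaving the constant $\int_{-\infty}^0\varphi_{2N+1}=-\int_0^\infty\varphi_{2N+1}$, which I would evaluate \emph{exactly}. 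Expanding $H_{2N+1}$ in monomials and integrating each against the weight via $\int_0^\infty t^{2N+1-2m}\mathrm{e}^{-t^2/2}\,dt=2^{N-m}(N-m)!$ produces a terminating sum whose term-ratio I recognise as a Gauss series at argument $2$:
\[
\int_0^\infty H_{2N+1}(t)\mathrm{e}^{-t^2/2}\,dt=\frac{2(-1)^N(2N+1)!}{N!}\;{}_2F_1\!\left(-N,1;\tfrac32;2\right).
\]
Dividing by $c_{2N+1}=\pi^{1/4}2^{N+1/2}\sqrt{(2N+1)!}$, invoking the hypergeometric asymptotic (\ref{2f1}), and simplifying the factorial ratios with Stirling's formula (\ref{stirling}) collapses this to $\int_0^\infty\varphi_{2N+1}=2^{-1/2}N^{-1/4}+O(N^{-3/4})$, whence $\varepsilon\varphi_{2N+1}(x/\sqrt{4N})=-2^{-1/2}N^{-1/4}+O(N^{-3/4})$.

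The main obstacle is exactly this exact reduction to ${}_2F_1(-N,1;\tfrac32;2)$: one must match the term-ratio of the monomial sum with the Gauss series at argument $2$ so that (\ref{2f1}) applies, and then verify that the powers of $2$, of $\pi$, and the factorial ratios from Stirling's formula conspire to give the clean coefficient $2^{-1/2}$ while the factor $(-1)^N$ cancels against the sign in (\ref{2f1}). The remaining points are routine: uniform control of the Plancherel--Rotach remainder on bounded $u$-intervals and the crude bound $\sup_u|\varphi_{2N+1}(u/\sqrt{4N})|=O(N^{-1/4})$ used in the tail estimate, both following from the standard asymptotics of Hermite functions near the centre of the bulk.
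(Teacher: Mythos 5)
Your proposal is correct and follows essentially the same route as the paper: parity reduces $\varepsilon\varphi_{2N}$ to $\int_0^x\varphi_{2N}$ and $\varepsilon\varphi_{2N+1}$ to a constant $-\int_0^\infty\varphi_{2N+1}$ plus an $O(N^{-3/4})$ piece, and that constant is evaluated via the same identity $\int_0^\infty H_{2N+1}(y)\mathrm{e}^{-y^2/2}dy=\frac{2(-1)^N\Gamma(2N+2)}{\Gamma(N+1)}\,{}_2F_1\!\left(-N,1;\frac{3}{2};2\right)$ together with Lemma 2.2 and Stirling's formula. The only variation is in the first estimate, where you pin the amplitude of the Plancherel--Rotach cosine by the exact value $\varphi_{2N}(0)$ and the scaled Hermite equation, whereas the paper quotes Szeg\H{o}'s asymptotic formula with the explicit prefactor $\lambda_{2N}=\Gamma(2N+1)/\Gamma(N+1)$; both reduce to the same Stirling computation and the same constant $(-1)^N\pi^{-1/2}N^{-1/4}$.
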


\begin{proof}
From the asymptotic expansion of Hermite polynomials \cite{Szego},
\be
H_{n}(x)\:\mathrm{e}^{-\frac{x^{2}}{2}}=\lambda_{n}\left[\cos\left(\sqrt{2n+1}x-\frac{n\pi}{2}\right)+O\left(n^{-\frac{1}{2}}\right)\right],\;\;n\rightarrow\infty,
\label{asy}
\ee
where
$$
\lambda_{2n}=\frac{\Gamma(2n+1)}{\Gamma\left(n+1\right)}\qquad \mathrm{and} \qquad \lambda_{2n+1}=\frac{\Gamma(2n+3)}{\Gamma(n+2)}
\left(4n+3\right)^{-\frac{1}{2}}.
$$

We have
\bea
\varphi_{2N}\left(\frac{x}{\sqrt{4N}}\right)
&=&\frac{1}{\pi^{\frac{1}{4}}2^{N}\sqrt{\Gamma(2N+1)}}H_{2N}\left(\frac{x}{\sqrt{4N}}\right)\mathrm{e}^{-\frac{x^{2}}{8N}}\nonumber\\
&=&\frac{1}{\pi^{\frac{1}{4}}2^{N}\sqrt{\Gamma(2N+1)}}\cdot\frac{\Gamma(2N+1)}{\Gamma(N+1)}\left[\cos\left(\sqrt{\frac{4N+1}{4N}}x-N\pi\right)+O\left(N^{-\frac{1}{2}}\right)\right]
\nonumber\\
&=&\frac{1}{\pi^{\frac{1}{4}}2^{N}}\cdot\frac{\sqrt{\Gamma(2N+1)}}{\Gamma(N+1)}\left[\cos(x-N\pi)+O\left(N^{-\frac{1}{2}}\right)\right]
\nonumber\\
&=&(-1)^{N}\pi^{-\frac{1}{2}}N^{-\frac{1}{4}}\cos x+O\left(N^{-\frac{3}{4}}\right),\;\;N\rightarrow\infty,\nonumber
\eea
where we have used the Stirling's formula (\ref{stirling}).

A straightforward computation gives,
\bea
\varepsilon\varphi_{2N}(x)
&=&\frac{1}{2}\left(\int_{-\infty}^{x}\varphi_{2N}(y)dy-\int_{x}^{\infty}\varphi_{2N}(y)dy\right)\nonumber\\
&=&\frac{1}{2}\left(\int_{-\infty}^{0}\varphi_{2N}(y)dy+\int_{0}^{x}\varphi_{2N}(y)dy-\int_{x}^{0}\varphi_{2N}(y)dy-\int_{0}^{\infty}\varphi_{2N}(y)dy\right)\nonumber\\
&=&\frac{1}{2}\left(2\int_{0}^{x}\varphi_{2N}(y)dy+\int_{-\infty}^{0}\varphi_{2N}(y)dy-\int_{0}^{\infty}\varphi_{2N}(y)dy\right)\nonumber\\
&=&\frac{1}{\pi^{\frac{1}{4}}2^{N+1}\sqrt{\Gamma(2N+1)}}\left(2\int_{0}^{x}H_{2N}(y)\mathrm{e}^{-\frac{y^{2}}{2}}dy+\int_{-\infty}^{0}H_{2N}(y)\mathrm{e}^{-\frac{y^{2}}{2}}dy
-\int_{0}^{\infty}H_{2N}(y)\mathrm{e}^{-\frac{y^{2}}{2}}dy\right)\nonumber\\
&=&\frac{1}{\pi^{\frac{1}{4}}2^{N}\sqrt{\Gamma(2N+1)}}\int_{0}^{x}H_{2N}(y)\mathrm{e}^{-\frac{y^{2}}{2}}dy.\nonumber
\eea
So we find, for large $N$,
\bea
\varepsilon\varphi_{2N}\left(\frac{x}{\sqrt{4N}}\right)
&=&\frac{1}{\pi^{\frac{1}{4}}2^{N}\sqrt{\Gamma(2N+1)}}\int_{0}^{\frac{x}{\sqrt{4N}}}H_{2N}(y)\mathrm{e}^{-\frac{y^{2}}{2}}dy\nonumber\\
&=&\frac{1}{\pi^{\frac{1}{4}}2^{N}\sqrt{\Gamma(2N+1)}}\cdot\frac{1}{\sqrt{4N}}\int_{0}^{x}H_{2N}\left(\frac{y}{\sqrt{4N}}\right)\mathrm{e}^{-\frac{y^{2}}{8N}}dy\nonumber\\
&=&\frac{1}{\pi^{\frac{1}{4}}2^{N}\sqrt{\Gamma(2N+1)}}\cdot\frac{1}{\sqrt{4N}}\cdot\frac{\Gamma(2N+1)}{\Gamma(N+1)}
\int_{0}^{x}\left[(-1)^{N}\cos y+O\left(N^{-\frac{1}{2}}\right)\right]dy\nonumber\\
&=&\frac{\sqrt{\Gamma(2N+1)}}{\pi^{\frac{1}{4}}2^{N}\sqrt{4N}\:\Gamma(N+1)}\left[(-1)^{N}\sin x+O\left(N^{-\frac{1}{2}}\right)\right]\nonumber\\
&=&2^{-1}(-1)^{N}\pi^{-\frac{1}{2}}N^{-\frac{3}{4}}\sin x+O\left(N^{-\frac{5}{4}}\right),\;\;N\rightarrow\infty,\nonumber
\eea
where we have used the Stirling's formula (\ref{stirling}) in the last step.

Finally, a straightforward computation gives,
\bea
\varepsilon\varphi_{2N+1}(x)
&=&\frac{1}{2}\left[\int_{-\infty}^{x}\varphi_{2N+1}(y)dy-\int_{x}^{\infty}\varphi_{2N+1}(y)dy\right]\nonumber\\
&=&\frac{1}{\pi^{\frac{1}{4}}2^{N+\frac{3}{2}}\sqrt{\Gamma(2N+2)}}\left[\int_{-\infty}^{x}H_{2N+1}(y)\mathrm{e}^{-\frac{y^{2}}{2}}dy
-\int_{x}^{\infty}H_{2N+1}(y)\mathrm{e}^{-\frac{y^{2}}{2}}dy\right]\nonumber\\
&=&\frac{1}{\pi^{\frac{1}{4}}2^{N+\frac{3}{2}}\sqrt{\Gamma(2N+2)}}\left[\int_{-\infty}^{0}H_{2N+1}(y)\mathrm{e}^{-\frac{y^{2}}{2}}dy
+\int_{0}^{x}H_{2N+1}(y)\mathrm{e}^{-\frac{y^{2}}{2}}dy\right.\nonumber\\
&-&\left.\int_{x}^{0}H_{2N+1}(y)\mathrm{e}^{-\frac{y^{2}}{2}}dy-\int_{0}^{\infty}H_{2N+1}(y)\mathrm{e}^{-\frac{y^{2}}{2}}dy\right]\nonumber\\
&=&\frac{1}{\pi^{\frac{1}{4}}2^{N+\frac{3}{2}}\sqrt{\Gamma(2N+2)}}\left[2\int_{0}^{x}H_{2N+1}(y)\mathrm{e}^{-\frac{y^{2}}{2}}dy
-2\int_{0}^{\infty}H_{2N+1}(y)\mathrm{e}^{-\frac{y^{2}}{2}}dy\right]\nonumber\\
&=&\frac{1}{\pi^{\frac{1}{4}}2^{N+\frac{1}{2}}\sqrt{\Gamma(2N+2)}}\left[\int_{0}^{x}H_{2N+1}(y)\mathrm{e}^{-\frac{y^{2}}{2}}dy
-\int_{0}^{\infty}H_{2N+1}(y)\mathrm{e}^{-\frac{y^{2}}{2}}dy\right],\nonumber
\eea
continuing, we see that,
\bea
\varepsilon\varphi_{2N+1}\left(\frac{x}{\sqrt{4N}}\right)
&=&\frac{1}{\pi^{\frac{1}{4}}2^{N+\frac{1}{2}}\sqrt{\Gamma(2N+2)}}\int_{0}^{\frac{x}{\sqrt{4N}}}H_{2N+1}(y)\mathrm{e}^{-\frac{y^{2}}{2}}dy\nonumber\\
&-&\frac{1}{\pi^{\frac{1}{4}}2^{N+\frac{1}{2}}\sqrt{\Gamma(2N+2)}}\int_{0}^{\infty}H_{2N+1}(y)\mathrm{e}^{-\frac{y^{2}}{2}}dy\nonumber\\
&=&\frac{1}{\pi^{\frac{1}{4}}2^{N+\frac{1}{2}}\sqrt{\Gamma(2N+2)}}\cdot\frac{1}{\sqrt{4N}}\int_{0}^{x}H_{2N+1}\left(\frac{y}{\sqrt{4N}}\right)\mathrm{e}^{-\frac{y^{2}}{8N}}dy
\nonumber\\
&-&\frac{1}{\pi^{\frac{1}{4}}2^{N+\frac{1}{2}}\sqrt{\Gamma(2N+2)}}\int_{0}^{\infty}H_{2N+1}(y)\mathrm{e}^{-\frac{y^{2}}{2}}dy.\nonumber
\eea
By (\ref{asy}), we have
$$
H_{2N+1}\left(\frac{y}{\sqrt{4N}}\right)\mathrm{e}^{-\frac{y^{2}}{8N}}=\frac{\Gamma(2N+3)}{\Gamma(N+2)}(4N+3)^{-\frac{1}{2}}
\left[(-1)^{N}\sin y+O\left(N^{-\frac{1}{2}}\right)\right],\;\;N\rightarrow\infty.
$$
It follows that
$$
\int_{0}^{x}H_{2N+1}\left(\frac{y}{\sqrt{4N}}\right)\mathrm{e}^{-\frac{y^{2}}{8N}}dy=\frac{\Gamma(2N+3)}{\Gamma(N+2)}(4N+3)^{-\frac{1}{2}}
\left[(-1)^{N}(1-\cos x)+O\left(N^{-\frac{1}{2}}\right)\right],\;\;N\rightarrow\infty.
$$
On the other hand, from \cite{Gradshteyn},
$$
\int_{0}^{\infty}H_{2N+1}(y)\mathrm{e}^{-\frac{y^{2}}{2}}dy=\frac{2\:(-1)^{N}\Gamma(2N+2)}{\Gamma(N+1)}
\:_{2}F_{1}\left(-N,1;\frac{3}{2};2\right).
$$
It follows from Lemma \ref{2f1}, that,
$$
\int_{0}^{\infty}H_{2N+1}(y)\mathrm{e}^{-\frac{y^{2}}{2}}dy=\frac{2\:(-1)^{N}\Gamma(2N+2)}{\Gamma(N+1)}\left[(-1)^{N}\sqrt{\frac{\pi}{8N}}
+O\left(N^{-1}\right)\right],\;\;N\rightarrow\infty.
$$
By the Stirling's formula (\ref{stirling}), we obtain
\bea
\varepsilon\varphi_{2N+1}\left(\frac{x}{\sqrt{4N}}\right)
&=&\left[2^{-1}(-1)^{N}\pi^{-\frac{1}{2}}N^{-\frac{3}{4}}(1-\cos x)+O\left(N^{-\frac{5}{4}}\right)\right]
-\left[2^{-\frac{1}{2}}N^{-\frac{1}{4}}+O\left(N^{-\frac{3}{4}}\right)\right]\nonumber\\
&=&-2^{-\frac{1}{2}}N^{-\frac{1}{4}}+O\left(N^{-\frac{3}{4}}\right),\;\;N\rightarrow\infty.\nonumber
\eea
\end{proof}

We are now in a position to compute $\mathrm{Tr}\:T$ and $\mathrm{Tr}\:T^{2}$ as $N\rightarrow\infty$, using Theorem \ref{sn4xx} and
Theorem \ref{gse}. The estimates provided by Theorem \ref{gse} are instrumental in the large $N$ computations that follows.

In what follows, we replace $f(x)$ by $f\left(\sqrt{4N}x\right)$, and note that
$$
f'(\sqrt{4N}x)=\frac{1}{\sqrt{4N}}\:\frac{d}{dx}f(\sqrt{4N}x).
$$
The $f'$ that appears in the trace will be accordingly interpreted.

We first consider $\mathrm{Tr}\:T$, which reads,
$$
\mathrm{Tr}\:T=\mathrm{Tr}\:S_{N}f-\mathrm{Tr}\: \frac{1}{2}S_{N}\varepsilon f'+\mathrm{Tr}\:\sqrt{N+\frac{1}{2}}
\left(\varepsilon\varphi_{2N+1}\otimes\varphi_{2N}f\right)
+\mathrm{Tr}\:\frac{1}{2}\sqrt{N+\frac{1}{2}}(\varepsilon\varphi_{2N+1}\otimes\varepsilon\varphi_{2N})f'.
$$
 $\mathrm{Tr}\:T$ has four parts.
 \\
First of all, we find
\bea
\mathrm{Tr}\:S_{N}f
&=&\int_{-\infty}^{\infty}S_{N}(x,x)f\left(\sqrt{4N}x\right)dx\nonumber\\
&=&\int_{-\infty}^{\infty}\frac{1}{\sqrt{4N}}S_{N}\left(\frac{x}{\sqrt{4N}},\frac{x}{\sqrt{4N}}\right)f(x)dx\nonumber\\
&\rightarrow&\frac{1}{\pi}\int_{-\infty}^{\infty}f(x)dx,\;\;N\to\infty.\nonumber
\eea
The second term reads,
\bea
\mathrm{Tr}\:\frac{1}{2}S_{N}\varepsilon f'
&=&\frac{1}{2}\int_{-\infty}^{\infty}\int_{-\infty}^{\infty}S_{N}(x,y)\varepsilon(y,x)f'\left(\sqrt{4N}x\right)dx dy\nonumber\\
&=&\frac{1}{4}\int_{-\infty}^{\infty}\left[\int_{x}^{\infty}S_{N}(x,y)dy-\int_{-\infty}^{x}S_{N}(x,y)dy\right]f'\left(\sqrt{4N}x\right)dx.
\nonumber
\eea
To proceed further, let
$$
u=\sqrt{4N}x,\;\;v=\sqrt{4N}y,
$$
it follows that,
\bea
\mathrm{Tr}\:\frac{1}{2}S_{N}\varepsilon f'
&=&\frac{1}{4\sqrt{4N}}\int_{-\infty}^{\infty}\left[\int_{u}^{\infty}\frac{1}{\sqrt{4N}}S_{N}\left(\frac{u}{\sqrt{4N}},\frac{v}{\sqrt{4N}}\right)dv
-\int_{-\infty}^{u}\frac{1}{\sqrt{4N}}S_{N}\left(\frac{u}{\sqrt{4N}},\frac{v}{\sqrt{4N}}\right)dv\right]f'(u)du\nonumber\\
&\rightarrow&\frac{1}{4\sqrt{4N}\pi}\int_{-\infty}^{\infty}\left[\int_{u}^{\infty}\frac{\sin(u-v)}{u-v}dv-\int_{-\infty}^{u}\frac{\sin(u-v)}{u-v}dv\right]f'(u)du,
\;\;N\to\infty.\nonumber
\eea
Now let $t=u-v$, it follows that,
\bea
\mathrm{Tr}\:\frac{1}{2}S_{N}\varepsilon f'
&\rightarrow&\frac{1}{4\sqrt{4N}\pi}\int_{-\infty}^{\infty}\left[\int_{-\infty}^{0}\frac{\sin t}{t}dt
-\int_{0}^{\infty}\frac{\sin t}{t}dt\right]f'(u)du\nonumber\\
&=&\frac{1}{4\sqrt{4N}\pi}\int_{-\infty}^{\infty}\left(\frac{\pi}{2}-\frac{\pi}{2}\right)f'(u)du\nonumber\\
&=&0,\;\;N\rightarrow\infty.\nonumber
\eea
For the third term, we have,
\bea
\mathrm{Tr}\:\sqrt{N+\frac{1}{2}}\left(\varepsilon\varphi_{2N+1}\otimes\varphi_{2N}f\right)
&=&\sqrt{N+\frac{1}{2}}\int_{-\infty}^{\infty}\varepsilon\varphi_{2N+1}(x)\varphi_{2N}(x)f\left(\sqrt{4N}x\right)dx\nonumber\\
&=&\sqrt{N+\frac{1}{2}}\cdot\frac{1}{\sqrt{4N}}\int_{-\infty}^{\infty}\varepsilon\varphi_{2N+1}\left(\frac{x}{\sqrt{4N}}\right)
\varphi_{2N}\left(\frac{x}{\sqrt{4N}}\right)f(x)dx
\nonumber\\
&=&-\frac{(-1)^{N}}{2\sqrt{2\pi N}}\int_{-\infty}^{\infty}\cos x\: f(x)dx+O\left(N^{-1}\right),\;\;N\to\infty,\nonumber
\eea
where use has been made of Theorem \ref{gse}.

Finally to the fourth term, and take note of Theorem \ref{gse},
\bea
\mathrm{Tr}\:\frac{1}{2}\sqrt{N+\frac{1}{2}}(\varepsilon\varphi_{2N+1}\otimes\varepsilon\varphi_{2N})f'
&=&\frac{1}{2}\sqrt{N+\frac{1}{2}}\int_{-\infty}^{\infty}\varepsilon\varphi_{2N+1}(x)\varepsilon\varphi_{2N}(x)
f'\left(\sqrt{4N}x\right)dx\nonumber\\
&=&\frac{1}{2}\sqrt{N+\frac{1}{2}}\cdot\frac{1}{\sqrt{4N}}\int_{-\infty}^{\infty}\varepsilon\varphi_{2N+1}\left(\frac{x}{\sqrt{4N}}\right)
\varepsilon\varphi_{2N}\left(\frac{x}{\sqrt{4N}}\right)f'(x)dx\nonumber\\
&=&O\left(N^{-1}\right),\;\;N\to\infty.\nonumber
\eea

Therefore, the large $N$ expansion of ${\mathrm{Tr}}T$, reads,
$$
\mathrm{Tr}\:T=\frac{1}{\pi}\int_{-\infty}^{\infty}f(x)dx
-\frac{(-1)^{N}}{2\sqrt{2\pi N}}\int_{-\infty}^{\infty}\cos x\: f(x)dx+O\left(N^{-1}\right),\;\;N\rightarrow\infty.
$$

Working out $\mathrm{Tr}\:T^{2}$, with $T$ given by, (\ref{ct}), there are 10 traces:
\bea
\mathrm{Tr}\:T^{2}
&=&\mathrm{Tr}\:S_{N}f S_{N}f-\mathrm{Tr}\:S_{N}f S_{N}\varepsilon f'+\mathrm{Tr}\:\sqrt{4N+2}S_{N}f(\varepsilon\varphi_{2N+1}
\otimes\varphi_{2N}f)\nonumber\\
&+&\mathrm{Tr}\:\sqrt{N+\frac{1}{2}}S_{N}f(\varepsilon\varphi_{2N+1}\otimes\varepsilon\varphi_{2N})f'
+\mathrm{Tr}\:\frac{1}{4}S_{N}\varepsilon f'S_{N}\varepsilon f'\nonumber\\
&-&\mathrm{Tr}\:\sqrt{N+\frac{1}{2}}S_{N}\varepsilon f'(\varepsilon\varphi_{2N+1}\otimes\varphi_{2N}f)
-\mathrm{Tr}\:\frac{1}{2}\sqrt{N+\frac{1}{2}}S_{N}\varepsilon f'(\varepsilon\varphi_{2N+1}\otimes\varepsilon\varphi_{2N})f'\nonumber\\
&+&\mathrm{Tr}\:\left(N+\frac{1}{2}\right)(\varepsilon\varphi_{2N+1}\otimes\varphi_{2N}f)(\varepsilon\varphi_{2N+1}\otimes\varphi_{2N}f)
\nonumber\\
&+&\mathrm{Tr}\:\left(N+\frac{1}{2}\right)(\varepsilon\varphi_{2N+1}\otimes\varphi_{2N}f)(\varepsilon\varphi_{2N+1}\otimes\varepsilon
\varphi_{2N})f'\nonumber\\
&+&\mathrm{Tr}\:\frac{1}{4}\left(N+\frac{1}{2}\right)(\varepsilon\varphi_{2N+1}\otimes\varepsilon\varphi_{2N})f'(\varepsilon\varphi_{2N+1}
\otimes\varepsilon\varphi_{2N})f'.\label{t2}
\eea
In the following, we calculate the trace on the right side of (\ref{t2}), term by term.
\\
The first term:
\bea
\mathrm{Tr}\:S_{N}f S_{N}f
&=&\int_{-\infty}^{\infty}\int_{-\infty}^{\infty}S_{N}(x,y)f\left(\sqrt{4N}y\right)S_{N}(y,x)f\left(\sqrt{4N}x\right)dy dx\nonumber\\
&=&\int_{-\infty}^{\infty}\int_{-\infty}^{\infty}\left[\frac{1}{\sqrt{4N}}S_{N}\left(\frac{x}{\sqrt{4N}},\frac{y}{\sqrt{4N}}\right)\right]^{2}
f(x)f(y)dx dy\nonumber\\
&\rightarrow&\frac{1}{\pi^{2}}\int_{-\infty}^{\infty}\int_{-\infty}^{\infty}\left[\frac{\sin(x-y)}{x-y}\right]^{2}f(x)f(y)dx dy,
\;\;N\to\infty.\nonumber
\eea
The second term reads,
\bea
\mathrm{Tr}\:S_{N}f S_{N}\varepsilon f'
&=&\int_{-\infty}^{\infty}\int_{-\infty}^{\infty}\int_{-\infty}^{\infty}S_{N}(x,y)f\left(\sqrt{4N}y\right)S_{N}(y,z)\varepsilon(z,x)
f'\left(\sqrt{4N}x\right)dx dy dz\nonumber\\
&=&\frac{1}{2}\int_{-\infty}^{\infty}\int_{-\infty}^{\infty}S_{N}(x,y)\left[\int_{x}^{\infty}S_{N}(y,z)dz-\int_{-\infty}^{x}S_{N}(y,z)dz\right]
f'\left(\sqrt{4N}x\right)f\left(\sqrt{4N}y\right)dx dy.\nonumber
\eea
A change of variables,
$$
u=\sqrt{4N}x,\;\;v=\sqrt{4N}y,\;\;w=\sqrt{4N}z,
$$
give
\bea
\mathrm{Tr}\:S_{N}f S_{N}\varepsilon f'
&=&\frac{1}{2\sqrt{4N}}\int_{-\infty}^{\infty}\int_{-\infty}^{\infty}\frac{1}{\sqrt{4N}}S_{N}
\left(\frac{u}{\sqrt{4N}},\frac{v}{\sqrt{4N}}
\right)\nonumber\\
&&\left[\int_{u}^{\infty}\frac{1}{\sqrt{4N}}S_{N}\left(\frac{v}{\sqrt{4N}},\frac{w}{\sqrt{4N}}\right)dw
-\int_{-\infty}^{u}\frac{1}{\sqrt{4N}}S_{N}\left(\frac{v}{\sqrt{4N}},\frac{w}{\sqrt{4N}}\right)dw\right]f'(u)f(v)du dv\nonumber\\
&\rightarrow&\frac{1}{2\sqrt{4N}\pi^{2}}\int_{-\infty}^{\infty}\int_{-\infty}^{\infty}\frac{\sin(u-v)}{u-v}
\left[\int_{u}^{\infty}\frac{\sin(v-w)}{v-w}dw
-\int_{-\infty}^{u}\frac{\sin(v-w)}{v-w}dw\right]\nonumber\\
&&f'(u)f(v)du dv,\;\;N\to\infty.\nonumber
\eea
To proceed further, let $t=v-w$, we see that,
\bea
&&\int_{u}^{\infty}\frac{\sin(v-w)}{v-w}dw-\int_{-\infty}^{u}\frac{\sin(v-w)}{v-w}dw
=\int_{-\infty}^{v-u}\frac{\sin t}{t}dt-\int_{v-u}^{\infty}\frac{\sin t}{t}dt
=2\int_{0}^{v-u}\frac{\sin t}{t}dt\nonumber\\
&=&2\:\mathrm{Si}(v-u),\nonumber
\eea
where $\mathrm{Si}(x)$ is the sine integral
$$
\mathrm{Si}(x):=\int_{0}^{x}\frac{\sin t}{t}dt.
$$
Since $\mathrm{Si}(-x)=-\mathrm{Si}(x),$
it follows that
\bea
\mathrm{Tr}\:S_{N}f S_{N}\varepsilon f'
&\rightarrow&\frac{1}{\sqrt{4N}\pi^{2}}\int_{-\infty}^{\infty}\int_{-\infty}^{\infty}\frac{\sin(u-v)}{u-v}\:\mathrm{Si}(v-u)f'(u)f(v)dudv\nonumber\\
&=&-\frac{1}{2\pi^{2}\sqrt{N}}\int_{-\infty}^{\infty}\int_{-\infty}^{\infty}\frac{\sin(u-v)}{u-v}\:\mathrm{Si}(u-v)f'(u)f(v)du dv,\;N\to\infty.\nonumber
\eea
The third term:
\bea
&&\mathrm{Tr}\:\sqrt{4N+2}S_{N}f(\varepsilon\varphi_{2N+1}\otimes\varphi_{2N}f)\nonumber\\
&=&\sqrt{4N+2}\int_{-\infty}^{\infty}\int_{-\infty}^{\infty}S_{N}(x,y)f\left(\sqrt{4N}y\right)\varepsilon\varphi_{2N+1}(y)
\varphi_{2N}(x)f\left(\sqrt{4N}x\right)dxdy\nonumber\\
&=&\sqrt{4N+2}\cdot\frac{1}{\sqrt{4N}}\int_{-\infty}^{\infty}\int_{-\infty}^{\infty}\frac{1}{\sqrt{4N}}S_{N}
\left(\frac{x}{\sqrt{4N}},\frac{y}{\sqrt{4N}}\right)
f(y)\varepsilon\varphi_{2N+1}\left(\frac{y}{\sqrt{4N}}\right)\varphi_{2N}\left(\frac{x}{\sqrt{4N}}\right)f(x)dxdy\nonumber\\
&=&-\frac{(-1)^{N}}{\pi\sqrt{2\pi N}}\int_{-\infty}^{\infty}\int_{-\infty}^{\infty}\frac{\sin(x-y)}{x-y}\cos x\:f(x)f(y)dx dy
+O\left(N^{-1}\right),\;N\to\infty.\nonumber
\eea
The fourth term:
\bea
&&\mathrm{Tr}\:\sqrt{N+\frac{1}{2}}S_{N}f(\varepsilon\varphi_{2N+1}\otimes\varepsilon\varphi_{2N})f'\nonumber\\
&=&\sqrt{N+\frac{1}{2}}\int_{-\infty}^{\infty}\int_{-\infty}^{\infty}S_{N}(x,y)f\left(\sqrt{4N}y\right)\varepsilon\varphi_{2N+1}(y)
\varepsilon\varphi_{2N}(x)f'\left(\sqrt{4N}x\right)dx dy\nonumber\\
&=&\sqrt{N+\frac{1}{2}}\cdot\frac{1}{\sqrt{4N}}\int_{-\infty}^{\infty}\int_{-\infty}^{\infty}\frac{1}{\sqrt{4N}}S_{N}
\left(\frac{x}{\sqrt{4N}},\frac{y}{\sqrt{4N}}\right)f(y)
\varepsilon\varphi_{2N+1}\left(\frac{y}{\sqrt{4N}}\right)\varepsilon\varphi_{2N}\left(\frac{x}{\sqrt{4N}}\right)f'(x)dx dy\nonumber\\
&=&O\left(N^{-1}\right),\;N\to\infty.\nonumber
\eea
The fifth term, with the change of variables,
$$
u=\sqrt{4N}x,\;\;v=\sqrt{4N}y,\;\;w=\sqrt{4N}z,\;\;\tau=\sqrt{4N}t,
$$
we see that,
\bea
&&\mathrm{Tr}\:\frac{1}{4}S_{N}\varepsilon f'S_{N}\varepsilon f'\nonumber\\
&=&\frac{1}{64N}\int_{-\infty}^{\infty}\int_{-\infty}^{\infty}\left[\int_{w}^{\infty}\frac{1}{\sqrt{4N}}S_{N}\left(\frac{u}{\sqrt{4N}},\frac{v}{\sqrt{4N}}\right)dv
-\int_{-\infty}^{w}\frac{1}{\sqrt{4N}}S_{N}\left(\frac{u}{\sqrt{4N}},\frac{v}{\sqrt{4N}}\right)dv\right]\nonumber\\
&&\left[\int_{u}^{\infty}\frac{1}{\sqrt{4N}}S_{N}\left(\frac{w}{\sqrt{4N}},\frac{\tau}{\sqrt{4N}}\right)d\tau
-\int_{-\infty}^{u}\frac{1}{\sqrt{4N}}S_{N}\left(\frac{w}{\sqrt{4N}},\frac{\tau}{\sqrt{4N}}\right)d\tau\right]f'(u)f'(w)dudw\nonumber\\
&=&O\left(N^{-1}\right),\;N\to\infty.\nonumber
\eea
The sixth term,
\bea
&&\mathrm{Tr}\:\sqrt{N+\frac{1}{2}}S_{N}\varepsilon f'(\varepsilon\varphi_{2N+1}\otimes\varphi_{2N}f)\nonumber\\
&=&\frac{1}{8N}\sqrt{N+\frac{1}{2}}\int_{-\infty}^{\infty}\int_{-\infty}^{\infty}\left[\int_{w}^{\infty}\frac{1}{\sqrt{4N}}S_{N}\left(\frac{u}{\sqrt{4N}},\frac{v}{\sqrt{4N}}\right)dv
-\int_{-\infty}^{w}\frac{1}{\sqrt{4N}}S_{N}\left(\frac{u}{\sqrt{4N}},\frac{v}{\sqrt{4N}}\right)dv\right]\nonumber\\
&&\varphi_{2N}\left(\frac{u}{\sqrt{4N}}\right)\varepsilon\varphi_{2N+1}\left(\frac{w}{\sqrt{4N}}\right)f(u)f'(w)du dw\nonumber\\
&=&O\left(N^{-1}\right),\;N\to\infty.\nonumber
\eea
The seventh term, becomes,
\bea
&&\mathrm{Tr}\:\frac{1}{2}\sqrt{N+\frac{1}{2}}S_{N}\varepsilon f'(\varepsilon\varphi_{2N+1}\otimes\varepsilon\varphi_{2N})f'\nonumber\\
&=&\frac{1}{16N}\sqrt{N+\frac{1}{2}}\int_{-\infty}^{\infty}\int_{-\infty}^{\infty}\left[\int_{w}^{\infty}\frac{1}{\sqrt{4N}}S_{N}\left(\frac{u}{\sqrt{4N}},\frac{v}{\sqrt{4N}}\right)dv
-\int_{-\infty}^{w}\frac{1}{\sqrt{4N}}S_{N}\left(\frac{u}{\sqrt{4N}},\frac{v}{\sqrt{4N}}\right)dv\right]\nonumber\\
&&\varepsilon\varphi_{2N}\left(\frac{u}{\sqrt{4N}}\right)\varepsilon\varphi_{2N+1}\left(\frac{w}{\sqrt{4N}}\right)f'(u)f'(w)dudw\nonumber\\
&=&O\left(N^{-\frac{3}{2}}\right),\;N\to\infty.\nonumber
\eea
The eighth term can be computed in a similar manner, and we have,
\bea
&&\mathrm{Tr}\:\left(N+\frac{1}{2}\right)(\varepsilon\varphi_{2N+1}\otimes\varphi_{2N}f)(\varepsilon\varphi_{2N+1}\otimes\varphi_{2N}f)\nonumber\\
&=&\frac{N+\frac{1}{2}}{4N}\int_{-\infty}^{\infty}\int_{-\infty}^{\infty}\varepsilon\varphi_{2N+1}\left(\frac{u}{\sqrt{4N}}\right)
\varepsilon\varphi_{2N+1}\left(\frac{v}{\sqrt{4N}}\right)\varphi_{2N}\left(\frac{u}{\sqrt{4N}}\right)\varphi_{2N}\left(\frac{v}{\sqrt{4N}}\right)
f(u)f(v)du dv\nonumber\\
&=&O\left(N^{-1}\right),\;N\to\infty.\nonumber
\eea
Proceeding in a similar manner with the ninth term, we have,
\bea
&&\mathrm{Tr}\:\left(N+\frac{1}{2}\right)(\varepsilon\varphi_{2N+1}\otimes\varphi_{2N}f)(\varepsilon\varphi_{2N+1}\otimes\varepsilon\varphi_{2N})f'\nonumber\\
&=&\frac{N+\frac{1}{2}}{4N}\int_{-\infty}^{\infty}\int_{-\infty}^{\infty}\varepsilon\varphi_{2N+1}\left(\frac{u}{\sqrt{4N}}\right)\varepsilon\varphi_{2N+1}
\left(\frac{v}{\sqrt{4N}}\right)\varepsilon\varphi_{2N}\left(\frac{u}{\sqrt{4N}}\right)\varphi_{2N}\left(\frac{v}{\sqrt{4N}}\right)f'(u)f(v)du dv\nonumber\\
&=&O\left(N^{-\frac{3}{2}}\right),\;N\to\infty.\nonumber
\eea
The tenth and last term in ${\rm Tr}T^2$, becomes
\bea
&&\mathrm{Tr}\:\frac{1}{4}\left(N+\frac{1}{2}\right)(\varepsilon\varphi_{2N+1}\otimes\varepsilon\varphi_{2N})f'(\varepsilon\varphi_{2N+1}\otimes\varepsilon\varphi_{2N})f'\nonumber\\
&=&\frac{N+\frac{1}{2}}{16N}\int_{-\infty}^{\infty}\int_{-\infty}^{\infty}\varepsilon\varphi_{2N+1}\left(\frac{u}{\sqrt{4N}}\right)
\varepsilon\varphi_{2N+1}\left(\frac{v}{\sqrt{4N}}\right)\varepsilon\varphi_{2N}\left(\frac{u}{\sqrt{4N}}\right)
\varepsilon\varphi_{2N}\left(\frac{v}{\sqrt{4N}}\right)f'(u)f'(v)du dv\nonumber\\
&=&O\left(N^{-2}\right),\;N\to\infty.\nonumber
\eea
Hence, the large $N$ behavior of (\ref{t2}), reads,
\bea
\mathrm{Tr}\:T^{2}
&=&\frac{1}{\pi^{2}}\int_{-\infty}^{\infty}\int_{-\infty}^{\infty}\left[\frac{\sin(x-y)}{x-y}\right]^{2}f(x)f(y)dx dy\nonumber\\
&+&\frac{1}{2\pi^{2}\sqrt{N}}\int_{-\infty}^{\infty}\int_{-\infty}^{\infty}\frac{\sin(x-y)}{x-y}\:\mathrm{Si}(x-y)f'(x)f(y)dx dy\nonumber\\
&-&\frac{(-1)^{N}}{\pi\sqrt{2\pi N}}\int_{-\infty}^{\infty}\int_{-\infty}^{\infty}\frac{\sin(x-y)}{x-y}\cos x\:f(x)f(y)dx dy
+O\left(N^{-1}\right),\;N\to\infty.\nonumber
\eea
We are now in a position to compute the mean and variance of the (scaled) linear statistics $\sum_{j=1}^{N}F\left(\sqrt{4N}x_{j}\right)$,
which are obtained as the coefficients of $\lambda$ and $\lambda^{2},$ of $\log\det(I+T).$

Since
$$
f\left(\sqrt{4N}x\right)\approx-\lambda F\left(\sqrt{4N}x\right)+\frac{\lambda^{2}}{2}F^{2}\left(\sqrt{4N}x\right),
$$
we replace $f$ with $-\lambda F+\frac{\lambda^{2}}{2}F^{2}$ in the expression of $\mathrm{Tr}\:T$ and $\mathrm{Tr}\:T^{2}$. A minor rearrangement
gives,
\bea
&&\log\det\left(I+T\right)\nonumber\\
&=&-\lambda\Bigg\{\frac{1}{\pi}\int_{-\infty}^{\infty}F(x)dx-\frac{(-1)^{N}}{2\sqrt{2\pi N}}\int_{-\infty}^{\infty}\cos x\: F(x)dx
+O\left(N^{-1}\right)\Bigg\}\nonumber\\
&+&\frac{\lambda^{2}}{2}\Bigg\{\frac{1}{\pi}\int_{-\infty}^{\infty}F^{2}(x)dx
-\frac{1}{\pi^{2}}\int_{-\infty}^{\infty}\int_{-\infty}^{\infty}\left[\frac{\sin(x-y)}{x-y}\right]^{2}F(x)F(y)dx dy\nonumber\\
&-&\frac{1}{2\pi^{2}\sqrt{N}}\int_{-\infty}^{\infty}\int_{-\infty}^{\infty}\frac{\sin(x-y)}{x-y}\:\mathrm{Si}(x-y)F'(x)F(y)dx dy\nonumber\\
&-&\frac{(-1)^{N}}{2\sqrt{2\pi N}}\left[\int_{-\infty}^{\infty}\cos x\: F^{2}(x)dx-
\frac{2}{\pi}\int_{-\infty}^{\infty}\int_{-\infty}^{\infty}\frac{\sin(x-y)}{x-y}\cos x\:F(x)F(y)dx dy\right]+O\left(N^{-1}\right)\Bigg\},\;\;N\rightarrow\infty.\nonumber
\eea
Denote by $\mu_{N}^{(GSE)}$ and $\mathcal{V}_{N}^{(GSE)}$ the mean and variance of the linear statistics
$\sum_{j=1}^{N}F\left(\sqrt{4N}x_{j}\right)$, then we have obtained,
the large $N$ corrections of these quantities.
\begin{theorem}
As $N\rightarrow\infty$,
$$
\mu_{N}^{(GSE)}=\frac{1}{2}\:\mu_{N}^{(GUE)}-\frac{(-1)^{N}}{4\sqrt{2\pi N}}\int_{-\infty}^{\infty}\cos x\: F(x)dx+O\left(N^{-1}\right),
$$
\bea
\mathcal{V}_{N}^{(GSE)}
&=&\frac{1}{2}\:\mathcal{V}_{N}^{(GUE)}-\frac{1}{4\pi^{2}\sqrt{N}}\int_{-\infty}^{\infty}\int_{-\infty}^{\infty}\frac{\sin(x-y)}{x-y}\:\mathrm{Si}(x-y)F'(x)F(y)dx dy\nonumber\\
&-&\frac{(-1)^{N}}{4\sqrt{2\pi N}}\left[\int_{-\infty}^{\infty}\cos x\: F^{2}(x)dx-
\frac{2}{\pi}\int_{-\infty}^{\infty}\int_{-\infty}^{\infty}\frac{\sin(x-y)}{x-y}\cos x\:F(x)F(y)dx dy\right]+O\left(N^{-1}\right),\nonumber
\eea
where $\mu_{N}^{(GUE)}$ and $\mathcal{V}_{N}^{(GUE)}$ for $N\rightarrow\infty$ are given in (\ref{guem}) and (\ref{guev}) respectively.
\end{theorem}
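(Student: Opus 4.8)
The plan is to read off the mean and variance directly from the expansion of $\log\det(I+T)$ established just above the theorem, using the defining identity $\left[G_N^{(4)}(f)\right]^2=\det(I+T)$ together with the interpretation of $\log G_N^{(4)}(f)$ as a cumulant generating function.

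First I would take logarithms of $\left[G_N^{(4)}(f)\right]^2=\det(I+T)$, obtaining $\log G_N^{(4)}(f)=\frac{1}{2}\log\det(I+T)$. Since $G_N^{(4)}(f)=\mathbb{E}_4\!\left(\mathrm{e}^{-\lambda\sum_j F(\sqrt{4N}x_j)}\right)$ is the moment generating function of the linear statistic $X:=\sum_j F(\sqrt{4N}x_j)$, its logarithm is the associated cumulant generating function, so that
\[
\log G_N^{(4)}(f)=-\lambda\,\mu_N^{(GSE)}+\frac{\lambda^2}{2}\,\mathcal{V}_N^{(GSE)}+O(\lambda^3),
\]
the first cumulant being the mean and the second the variance. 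Hence $\mu_N^{(GSE)}$ and $\mathcal{V}_N^{(GSE)}$ are obtained by matching powers of $\lambda$ in $\frac{1}{2}\log\det(I+T)$.

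Next I would perform the $\lambda$-bookkeeping. Writing $f=\mathrm{e}^{-\lambda F}-1=-\lambda F+\frac{\lambda^2}{2}F^2+O(\lambda^3)$ and using that $\mathrm{Tr}\,T$ is linear and $\mathrm{Tr}\,T^2$ quadratic in $f$, the coefficient of $\lambda$ in $\log\det(I+T)=\mathrm{Tr}\,T-\frac{1}{2}\mathrm{Tr}\,T^2+\cdots$ arises only from the $-\lambda F$ part of $\mathrm{Tr}\,T$, while the coefficient of $\lambda^2$ collects the $\frac{\lambda^2}{2}F^2$ part of $\mathrm{Tr}\,T$ together with the $(-\lambda F)^2$ part of $-\frac{1}{2}\mathrm{Tr}\,T^2$. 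This is precisely the rearrangement already displayed for $\log\det(I+T)$ immediately above the theorem, of the form $-\lambda\{\cdots\}+\frac{\lambda^2}{2}\{\cdots\}$; I would simply divide each braced quantity by $2$ and identify the results with $\mu_N^{(GSE)}$ and $\mathcal{V}_N^{(GSE)}$ respectively.

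Finally I would recognize the leading pieces as the GUE quantities: $\frac{1}{\pi}\int_{-\infty}^{\infty}F\,dx$ is $\mu_N^{(GUE)}$ by (\ref{guem}), and $\frac{1}{\pi}\int_{-\infty}^{\infty}F^2\,dx-\frac{1}{\pi^2}\int_{-\infty}^{\infty}\int_{-\infty}^{\infty}\left[\frac{\sin(x-y)}{x-y}\right]^2 F(x)F(y)\,dx\,dy$ is $\mathcal{V}_N^{(GUE)}$ by (\ref{guev}). Substituting these identifications into the halved expressions converts the $O(1)$ parts into $\frac{1}{2}\mu_N^{(GUE)}$ and $\frac{1}{2}\mathcal{V}_N^{(GUE)}$, while the $N^{-1/2}$ pieces become the stated corrections, completing the proof. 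There is no serious obstacle here, the computation being essentially bookkeeping; the points requiring care are the overall factor $\frac{1}{2}$ coming from the squared generating function, the correct attribution of the two distinct sources of the $\lambda^2$ coefficient, and the tacit appeal to the term-by-term continuation of the trace-log series from $\nu<1$ to $\nu=1$ noted earlier.
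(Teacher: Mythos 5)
Your proposal is correct and follows essentially the same route as the paper: expand $\log\det(I+T)=\mathrm{Tr}\,T-\frac{1}{2}\mathrm{Tr}\,T^2+\cdots$ with $f=-\lambda F+\frac{\lambda^2}{2}F^2+O(\lambda^3)$, collect the coefficients of $\lambda$ and $\lambda^2$, halve them because $\left[G_N^{(4)}(f)\right]^2=\det(I+T)$, and identify the leading pieces with $\mu_N^{(GUE)}$ and $\mathcal{V}_N^{(GUE)}$ via (\ref{guem})--(\ref{guev}). Your explicit attention to the overall factor $\frac{1}{2}$ and to the two sources of the $\lambda^2$ coefficient makes precise what the paper leaves somewhat implicit, but the argument is the same.
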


\subsection{LSE}
We study the case with the Laguerre background, namely, the weight,\\
$w(x)=x^{\alpha}\mathrm{e}^{-x},\;\;\alpha>0,\;\;x\in \mathbb{R}^+.$
The idea is to choose special $\psi_{j}$ so that  $M^{(4)}$ takes on the simplest possible form. To this end, let
\be
\psi_{2j+1}(x):=\frac{1}{\sqrt{2}}\varphi_{2j+1}^{(\alpha-1)}(x),\;\;\;j=0,1,2,\ldots,\label{psia1}
\ee
\be
\psi_{2j}(x):=-\frac{1}{\sqrt{2}}\varepsilon\widetilde{\varphi}_{2j+1}^{(\alpha-1)}(x),\;\;\;j=0,1,2,\ldots,\label{psia2}
\ee
where $\varphi_{j}^{(\alpha-1)}(x)$ and $\widetilde{\varphi}_{j}^{(\alpha-1)}(x)$ are given by
\be
\varphi_{j}^{(\alpha-1)}(x)=\frac{L_{j}^{(\alpha-1)}(x)}{c_{j}^{(\alpha-1)}}
x^{\frac{\alpha}{2}}\mathrm{e}^{-\frac{x}{2}},\;\;j=0,1,2,\ldots,\label{phij}
\ee
$$
\widetilde{\varphi}_{j}^{(\alpha-1)}(x)=
\frac{L_{j}^{(\alpha-1)}(x)}{c_{j}^{(\alpha-1)}}x^{\frac{\alpha}{2}-1}\mathrm{e}^{-\frac{x}{2}},
\;\;j=0,1,2,\ldots.
$$
Here $L_{j}^{(\alpha)}(x),\:j=0,1,2,\ldots, $ are the Laguerre polynomials, with the orthogonality condition,
$$
\int_{0}^{\infty}L_{j}^{(\alpha)}(x)L_{k}^{(\alpha)}(x)x^{\alpha}\mathrm{e}^{-x}dx=\left(c_{j}^{(\alpha)}\right)^{2}\delta_{jk},
\;\;\;\;c_{j}^{(\alpha)}=\sqrt{\frac{\Gamma(j+\alpha+1)}{\Gamma(j+1)}}.
$$
It is easy to see that
$$
\int_{0}^{\infty}\varphi_{j}^{(\alpha-1)}(x)\widetilde{\varphi}_{k}^{(\alpha-1)}(x)dx=\delta_{jk},\;\;j,k=0,1,2,\ldots.
$$
We now prove that (\ref{psia1}) and (\ref{psia2}) satisfy (\ref{psij4}), i.e.,
$\psi_{j}(x)=\pi_{j}(x)x^{\frac{\alpha}{2}}\mathrm{e}^{-\frac{x}{2}},\: j=0,1,2,\ldots$, where $\pi_{j}(x)$ is a polynomial of degree $j$.

\begin{theorem}
$\psi_{j}(x)=\pi_{j}(x)x^{\frac{\alpha}{2}}\mathrm{e}^{-\frac{x}{2}},\: j=0,1,2,\ldots$, where $\pi_{j}(x)$ is a polynomial of degree $j$.
\end{theorem}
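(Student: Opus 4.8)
The plan is to treat the odd- and even-indexed cases separately; the odd case is immediate and the even case carries all the content. For odd index, the definition gives directly $\psi_{2j+1}(x)=\frac{1}{\sqrt2}\varphi_{2j+1}^{(\alpha-1)}(x)=\frac{1}{\sqrt2\,c_{2j+1}^{(\alpha-1)}}L_{2j+1}^{(\alpha-1)}(x)\,x^{\alpha/2}\mathrm{e}^{-x/2}$, so since $L_{2j+1}^{(\alpha-1)}$ has degree $2j+1$ we may take $\pi_{2j+1}=\frac{1}{\sqrt2\,c_{2j+1}^{(\alpha-1)}}L_{2j+1}^{(\alpha-1)}$. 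For even index I must show that $\psi_{2j}(x)=-\frac{1}{\sqrt2}\varepsilon\widetilde{\varphi}_{2j+1}^{(\alpha-1)}(x)$ has the form $\pi_{2j}(x)\,x^{\alpha/2}\mathrm{e}^{-x/2}$ with $\deg\pi_{2j}=2j$. The strategy is to seek an antiderivative of $\widetilde{\varphi}_{2j+1}^{(\alpha-1)}(x)=\frac{1}{c_{2j+1}^{(\alpha-1)}}L_{2j+1}^{(\alpha-1)}(x)\,x^{\alpha/2-1}\mathrm{e}^{-x/2}$ in the closed form $Q(x)\,x^{\alpha/2}\mathrm{e}^{-x/2}$ with $Q$ a polynomial. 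Differentiating this ansatz and cancelling the common factor $x^{\alpha/2-1}\mathrm{e}^{-x/2}$ reduces the requirement to the first-order linear ODE
$$x\,Q'(x)+\Big(\frac{\alpha}{2}-\frac{x}{2}\Big)Q(x)=\frac{1}{c_{2j+1}^{(\alpha-1)}}L_{2j+1}^{(\alpha-1)}(x).$$

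Writing $Q(x)=\sum_{m=0}^{2j}q_m x^m$ and comparing coefficients turns this ODE into a triangular recursion: the constant term fixes $q_0$, each relation for $1\le m\le 2j$ determines $q_m$ from $q_{m-1}$, and the coefficient of $x^{2j+1}$ yields one extra, overdetermined equation $-\tfrac12 q_{2j}=\frac{1}{c_{2j+1}^{(\alpha-1)}}\cdot\frac{(-1)^{2j+1}}{(2j+1)!}$. Inserting the explicit Laguerre coefficients $\ell_m=\frac{(-1)^m}{m!}\binom{2j+\alpha}{2j+1-m}$ and unwinding the recursion gives $q_{2j}=\frac{2}{c_{2j+1}^{(\alpha-1)}}\sum_{m=0}^{2j}\frac{(-1)^m}{m!\prod_{l=m}^{2j}(\alpha+2l)}\binom{2j+\alpha}{2j-m+1}$, so the solvability condition is exactly $\sum_{m=0}^{2j}\frac{(-1)^m}{m!\prod_{l=m}^{2j}(\alpha+2l)}\binom{2j+\alpha}{2j-m+1}=\frac{1}{(2j+1)!}$, which is precisely the binomial identity of Lemma \ref{bi} with $n=j$. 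Hence a polynomial $Q$ of degree exactly $2j$ exists and solves the ODE.

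Granting this, the proof finishes quickly. Since $\alpha>0$ the function $Q(x)\,x^{\alpha/2}\mathrm{e}^{-x/2}$ vanishes at both $x=0$ and $x=\infty$, so $\int_0^\infty\widetilde{\varphi}_{2j+1}^{(\alpha-1)}(t)\,dt=0$; this forces $\int_x^\infty\widetilde{\varphi}_{2j+1}^{(\alpha-1)}=-\int_0^x\widetilde{\varphi}_{2j+1}^{(\alpha-1)}$, so the operator $\varepsilon$ collapses to a single integral, $\varepsilon\widetilde{\varphi}_{2j+1}^{(\alpha-1)}(x)=\int_0^x\widetilde{\varphi}_{2j+1}^{(\alpha-1)}(t)\,dt=\frac{1}{c_{2j+1}^{(\alpha-1)}}Q(x)\,x^{\alpha/2}\mathrm{e}^{-x/2}$ (the boundary value at $0$ vanishing for the same reason $\alpha>0$). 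Therefore $\psi_{2j}(x)=\pi_{2j}(x)\,x^{\alpha/2}\mathrm{e}^{-x/2}$ with $\pi_{2j}=-\frac{1}{\sqrt2\,c_{2j+1}^{(\alpha-1)}}Q$ of degree $2j$, which completes the argument. This mirrors the GSE even-index computation, where the role now played by $\int_0^\infty\widetilde{\varphi}_{2j+1}^{(\alpha-1)}=0$ was played there by the oddness of $H_{2j+1}(y)\mathrm{e}^{-y^2/2}$.

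The main obstacle, and the reason the Laguerre case is genuinely harder than the Gaussian one, is the solvability of the ODE at the top degree. Term by term, each monomial $\int_0^x t^{m+\alpha/2-1}\mathrm{e}^{-t/2}\,dt$ is a lower incomplete gamma function that does \emph{not} terminate as a polynomial times $x^{\alpha/2}\mathrm{e}^{-x/2}$; the reduction to a clean polynomial occurs only for the precise Laguerre combination, and establishing that cancellation is exactly the content of Lemma \ref{bi} (equivalently, the vanishing ${}_2F_1(-2j-1,\frac{\alpha}{2};\alpha;2)=0$ extracted in its proof). Everything else—the triangular recursion, the boundary estimates, and the collapse of $\varepsilon$—is routine.
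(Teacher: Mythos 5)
Your proposal is correct and follows essentially the same route as the paper: odd indices are immediate, and for even indices both arguments reduce $\varepsilon\widetilde{\varphi}_{2j+1}^{(\alpha-1)}$ to finding a polynomial antiderivative factor $Q$ satisfying the same first-order ODE, solved by the same triangular coefficient recursion whose one overdetermined top-degree equation is resolved by Lemma \ref{bi}. The only (cosmetic) difference is ordering: the paper asserts $\int_{0}^{\infty}L_{2j+1}^{(\alpha-1)}(y)y^{\alpha/2-1}\mathrm{e}^{-y/2}dy=0$ up front to collapse $\varepsilon$ before setting up the ODE, whereas you derive that vanishing afterwards from the boundary values of $Q(x)x^{\alpha/2}\mathrm{e}^{-x/2}$, which is a slightly more self-contained presentation of the same argument.
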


\begin{proof}
We prove this by considering two cases, $j$ odd, and $j$ even. If $j=2n+1$, then by (\ref{psia1}),
$$
\pi_{2n+1}(x)=\frac{1}{\sqrt{2}c_{2n+1}^{(\alpha-1)}}L_{2n+1}^{(\alpha-1)}(x).
$$

Let $j=2n$, then
$$
\psi_{2n}(x)=-\frac{1}{\sqrt{2}}\varepsilon\widetilde{\varphi}_{2n+1}(x)
=-\frac{1}{\sqrt{2}c_{2n+1}^{(\alpha-1)}}\int_{0}^{x}L_{2n+1}^{(\alpha-1)}(y)y^{\frac{\alpha}{2}-1}\mathrm{e}^{-\frac{y}{2}}dy,
$$
and we have used the fact $\int_{0}^{\infty}L_{2n+1}^{(\alpha-1)}(y)y^{\frac{\alpha}{2}-1}\mathrm{e}^{-\frac{y}{2}}dy=0,\: n=0,1,2,\ldots.$

Let us rewrite the above as
\be
\int_{0}^{x}L_{2n+1}^{(\alpha-1)}(y)y^{\frac{\alpha}{2}-1}\mathrm{e}^{-\frac{y}{2}}dy
=\widehat{\pi}_{2n}(x)x^{\frac{\alpha}{2}}\mathrm{e}^{-\frac{x}{2}},\;\;n=0,1,2,\ldots,\label{eq2}
\ee
where
$$
\widehat{\pi}_{2n}(x):=-\sqrt{2}c_{2n+1}^{(\alpha-1)}\pi_{2n}(x).
$$
Take a derivative on both sides,
$$
L_{2n+1}^{(\alpha-1)}(x)x^{\frac{\alpha}{2}-1}\mathrm{e}^{-\frac{x}{2}}=\left(x\widehat{\pi}_{2n}'(x)+\frac{\alpha}{2}\widehat{\pi}_{2n}(x)
-\frac{1}{2}x\widehat{\pi}_{2n}(x)\right)x^{\frac{\alpha}{2}-1}\mathrm{e}^{-\frac{x}{2}},
$$
which becomes,
\be
L_{2n+1}^{(\alpha-1)}(x)=x\widehat{\pi}_{2n}'(x)+\frac{\alpha}{2}\widehat{\pi}_{2n}(x)-\frac{1}{2}x\widehat{\pi}_{2n}(x).\label{pe}
\ee
Note that equation (\ref{eq2}) is equivalent to (\ref{pe}). Now we seek to solve (\ref{pe}).

Suppose
$$
\widehat{\pi}_{2n}(x):=a_{0}(n)+a_{1}(n)x+a_{2}(n)x^{2}+\cdots+a_{2n-1}(n)x^{2n-1}+a_{2n}(n)x^{2n},
$$
we see that the right side of (\ref{pe}) is equal to
\bea
&&x\widehat{\pi}_{2n}'(x)+\frac{\alpha}{2}\widehat{\pi}_{2n}(x)-\frac{1}{2}x\widehat{\pi}_{2n}(x)\nonumber\\
&=&\frac{\alpha}{2}a_{0}(n)+\left[\left(1+\frac{\alpha}{2}\right)a_{1}(n)-\frac{1}{2}a_{0}(n)\right]x+\left[\left(2+\frac{\alpha}{2}\right)a_{2}(n)
-\frac{1}{2}a_{1}(n)\right]x^{2}+\cdots\nonumber\\
&+&\left[\left(2n-1+\frac{\alpha}{2}\right)a_{2n-1}(n)-\frac{1}{2}a_{2n-2}(n)\right]x^{2n-1}+\left[\left(2n+\frac{\alpha}{2}\right)a_{2n}(n)
-\frac{1}{2}a_{2n-1}(n)\right]x^{2n}\nonumber\\
&-&\frac{1}{2}a_{2n}(n)x^{2n+1}.\label{rhs}
\eea
On the other hand, the left side of (\ref{pe}) is equal to
\bea
L_{2n+1}^{(\alpha-1)}(x)
&=&{2n+\alpha \choose 2n+1}-{2n+\alpha \choose 2n}x+\frac{1}{2!}{2n+\alpha \choose 2n-1}x^{2}-\frac{1}{3!}{2n+\alpha \choose 2n-2}x^{3}\nonumber\\
&+&\cdots+\frac{1}{(2n)!}{2n+\alpha \choose 1}x^{2n}-\frac{1}{(2n+1)!}{2n+\alpha \choose 0}x^{2n+1}.\label{lhs}
\eea
Compare the coefficients of (\ref{rhs}) and (\ref{lhs}), we have the equations
\be
\begin{cases}
\frac{\alpha}{2}a_{0}(n)={2n+\alpha \choose 2n+1}\\
\left(1+\frac{\alpha}{2}\right)a_{1}(n)-\frac{1}{2}a_{0}(n)=-{2n+\alpha \choose 2n}\\
\left(2+\frac{\alpha}{2}\right)a_{2}(n)-\frac{1}{2}a_{1}(n)=\frac{1}{2!}{2n+\alpha \choose 2n-1}\\
\left(3+\frac{\alpha}{2}\right)a_{3}(n)-\frac{1}{2}a_{2}(n)=-\frac{1}{3!}{2n+\alpha \choose 2n-2}\\
\qquad\qquad\qquad\vdots\\
\left(2n-1+\frac{\alpha}{2}\right)a_{2n-1}(n)-\frac{1}{2}a_{2n-2}(n)=-\frac{1}{(2n-1)!}{2n+\alpha \choose 2}\\
\left(2n+\frac{\alpha}{2}\right)a_{2n}(n)-\frac{1}{2}a_{2n-1}(n)=-\frac{1}{(2n)!}{2n+\alpha \choose 1}\\
-\frac{1}{2}a_{2n}(n)=-\frac{1}{(2n+1)!}{2n+\alpha \choose 0}.
\end{cases}\label{eq}
\ee
By solving the first $2n+1$ equations in (\ref{eq}), we find,
$$
\begin{cases}
\frac{1}{2}a_{0}(n)=\frac{1}{\alpha}{2n+\alpha \choose 2n+1}\\
\frac{1}{2}a_{1}(n)=\frac{1}{\alpha(\alpha+2)}{2n+\alpha \choose 2n+1}-\frac{1}{\alpha+2}{2n+\alpha \choose 2n}\\
\frac{1}{2}a_{2}(n)=\frac{1}{\alpha(\alpha+2)(\alpha+4)}{2n+\alpha \choose 2n+1}-\frac{1}{(\alpha+2)(\alpha+4)}{2n+\alpha \choose 2n}
+\frac{1}{2!}\frac{1}{\alpha+4}{2n+\alpha \choose 2n-1}\\
\frac{1}{2}a_{3}(n)=\frac{1}{\alpha(\alpha+2)(\alpha+4)(\alpha+6)}{2n+\alpha \choose 2n+1}-\frac{1}{(\alpha+2)(\alpha+4)(\alpha+6)}{2n+\alpha \choose 2n}
+\frac{1}{2!}\frac{1}{(\alpha+4)(\alpha+6)}{2n+\alpha \choose 2n-1}-\frac{1}{3!}\frac{1}{\alpha+6}{2n+\alpha \choose 2n-2}\\
\qquad\qquad\qquad\vdots\\
\frac{1}{2}a_{2n}(n)=\frac{1}{\alpha(\alpha+2)(\alpha+4)\cdots(\alpha+4n)}{2n+\alpha \choose 2n+1}-\frac{1}{(\alpha+2)(\alpha+4)\cdots(\alpha+4n)}{2n+\alpha \choose 2n}+\frac{1}{2!}\frac{1}{(\alpha+4)(\alpha+6)\cdots(\alpha+4n)}{2n+\alpha \choose 2n-1}\\
\qquad\quad-\frac{1}{3!}\frac{1}{(\alpha+6)\cdots(\alpha+4n)}{2n+\alpha \choose 2n-2}+\cdots-\frac{1}{(2n-1)!}\frac{1}{(\alpha+4n-2)(\alpha+4n)}{2n+\alpha \choose 2}+\frac{1}{(2n)!}\frac{1}{\alpha+4n}{2n+\alpha \choose 1}.
\end{cases}
$$
The last equation of (\ref{eq}), simplifies to,
$$
a_{2n}(n)=\frac{2}{(2n+1)!}.
$$
By Lemma \ref{bi}, we see that linear system (\ref{eq}) is solvable.
Hence $\widehat{\pi}_{2n}(x)$ is a polynomial of degree $2n.$
The proof is complete.
\end{proof}

With (\ref{psia1}) and (\ref{psia2}), we compute
$M^{(4)}:=\left(\int_{0}^{\infty}\left(\psi_{j}(x)\psi_{k}'(x)-\psi_{j}'(x)\psi_{k}(x)\right)dx\right)_{j,k=0}^{2N-1}$, resulting
in the following theorem.
\begin{theorem}
$$
M^{(4)}=\begin{pmatrix}
0&1&0&0&\cdots&0&0\\
-1&0&0&0&\cdots&0&0\\
0&0&0&1&\cdots&0&0\\
0&0&-1&0&\cdots&0&0\\
\vdots&\vdots&\vdots&\vdots&&\vdots&\vdots\\
0&0&0&0&\cdots&0&1\\
0&0&0&0&\cdots&-1&0
\end{pmatrix}_{2N\times 2N}.
$$
\end{theorem}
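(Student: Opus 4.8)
The plan is to compute the entries $M^{(4)}_{jk}=\int_{0}^{\infty}\big(\psi_j\psi_k'-\psi_j'\psi_k\big)\,dx$ directly, reducing everything to the biorthogonality $\int_{0}^{\infty}\varphi_m^{(\alpha-1)}\widetilde{\varphi}_n^{(\alpha-1)}\,dx=\delta_{mn}$ together with a parity/ladder property of $D$. Throughout I abbreviate $\varphi_m\equiv\varphi_m^{(\alpha-1)}$, $\widetilde{\varphi}_m\equiv\widetilde{\varphi}_m^{(\alpha-1)}$ and $c_m\equiv c_m^{(\alpha-1)}$. First I would reduce each entry to a single integral: by the preceding theorem $\psi_j(x)=\pi_j(x)x^{\alpha/2}\mathrm{e}^{-x/2}$ vanishes at $x=0$ (since $\alpha>0$) and at $x=+\infty$, so the boundary term in integration by parts drops out and $M^{(4)}_{jk}=2\int_{0}^{\infty}\psi_j\psi_k'\,dx=-2\int_{0}^{\infty}\psi_j'\psi_k\,dx$. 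I then record the two differentiation identities that drive the argument: $\psi_{2j+1}'=\tfrac{1}{\sqrt2}\varphi_{2j+1}'$, and, using $D\varepsilon=I$ (Lemma \ref{de}), $\psi_{2j}'=-\tfrac{1}{\sqrt2}D\varepsilon\widetilde{\varphi}_{2j+1}=-\tfrac{1}{\sqrt2}\widetilde{\varphi}_{2j+1}$.

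The even--odd block is then immediate: $M^{(4)}_{2i,2j+1}=-2\int_{0}^{\infty}\psi_{2i}'\psi_{2j+1}\,dx=\int_{0}^{\infty}\widetilde{\varphi}_{2i+1}\varphi_{2j+1}\,dx=\delta_{ij}$ by biorthogonality, and the antisymmetry of the integrand gives $M^{(4)}_{2i+1,2j}=-\delta_{ij}$. These furnish exactly the $\pm1$ entries along the $2\times2$ blocks, and show in particular that the only nonzero even--odd entries are the adjacent ones. It remains to annihilate the two diagonal blocks, and the crux here is a Laguerre ladder formula analogous to (\ref{phid}). Combining the three-term recurrence for $L_m^{(\alpha-1)}$ with the identity $x\frac{d}{dx}L_m^{(\alpha-1)}=mL_m^{(\alpha-1)}-(m+\alpha-1)L_{m-1}^{(\alpha-1)}$, I expect the $L_m^{(\alpha-1)}$ contribution to cancel and leave
\[
\varphi_m'=\frac{1}{2c_m}\Big[(m+1)\,c_{m+1}\,\widetilde{\varphi}_{m+1}-(m+\alpha-1)\,c_{m-1}\,\widetilde{\varphi}_{m-1}\Big],
\]
so that $D$ carries $\varphi_m$ into a combination of $\widetilde{\varphi}_{m\pm1}$, whose index has parity opposite to $m$.

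This at once disposes of the odd--odd block: $M^{(4)}_{2i+1,2j+1}=-\int_{0}^{\infty}\varphi_{2i+1}'\varphi_{2j+1}\,dx$ is a linear combination of $\int_{0}^{\infty}\widetilde{\varphi}_{2i}\varphi_{2j+1}\,dx$ and $\int_{0}^{\infty}\widetilde{\varphi}_{2i+2}\varphi_{2j+1}\,dx$, both of which vanish since an even index never equals the odd index $2j+1$. For the even--even block I would apply $\varepsilon$ to the ladder formula and use $\varepsilon D=I$ (so $\varepsilon\varphi_{2i}'=\varphi_{2i}$, as $\varphi_{2i}$ vanishes at both ends), obtaining the recursion
\[
(2i+1)\,c_{2i+1}\,\varepsilon\widetilde{\varphi}_{2i+1}=2\,c_{2i}\,\varphi_{2i}+(2i+\alpha-1)\,c_{2i-1}\,\varepsilon\widetilde{\varphi}_{2i-1},
\]
with base case $\varepsilon\widetilde{\varphi}_1=\tfrac{2c_0}{c_1}\varphi_0$ (here $\widetilde{\varphi}_{-1}\equiv0$). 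By induction this exhibits $\varepsilon\widetilde{\varphi}_{2i+1}$ as a linear combination of the even-index functions $\varphi_0,\varphi_2,\dots,\varphi_{2i}$, which is the Laguerre analog of (\ref{phi2jy}). Consequently $M^{(4)}_{2i,2j}=\int_{0}^{\infty}\varepsilon\widetilde{\varphi}_{2i+1}\,\widetilde{\varphi}_{2j+1}\,dx$ is a combination of integrals $\int_{0}^{\infty}\varphi_{2l}\widetilde{\varphi}_{2j+1}\,dx=\delta_{2l,2j+1}=0$, again purely by parity.

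The main obstacle is the ladder formula itself: it is the one genuinely computational step, and its success depends on pairing the Laguerre recurrences so that the diagonal $L_m^{(\alpha-1)}$ term cancels exactly, leaving only $\widetilde{\varphi}_{m\pm1}$; once this is secured, both diagonal blocks vanish from parity and biorthogonality alone, in complete parallel with the Gaussian computation. A minor technical point to verify is the use of $\varepsilon D=I$ when $0<\alpha<2$, where $\varphi_{2i}'\sim x^{\alpha/2-1}$ is singular at the origin: this causes no difficulty, since the defining integrals of $\varepsilon$ remain convergent (as $\alpha/2-1>-1$) and $\varphi_{2i}(0)=0$, so that $\varepsilon\varphi_{2i}'=\varphi_{2i}$ follows directly from the fundamental theorem of calculus even though the hypothesis $\varphi_{2i}\in C^1[0,\infty)$ of Lemma \ref{de} is not literally met.
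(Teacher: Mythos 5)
Your proof is correct, and for the two diagonal blocks it takes a genuinely different route from the paper's. The $({\rm even},{\rm odd})$ block is handled identically in both arguments: integration by parts with vanishing boundary terms (valid since $\alpha>0$), followed by the biorthogonality $\int_{0}^{\infty}\varphi_{m}^{(\alpha-1)}\widetilde{\varphi}_{n}^{(\alpha-1)}\,dx=\delta_{mn}$. For the $({\rm even},{\rm even})$ and $({\rm odd},{\rm odd})$ blocks the paper writes the integrand out in Laguerre polynomials, notes that for $j>k$ one factor is $L_{2j+1}^{(\alpha-1)}$ and the other a polynomial of degree at most $2k+2<2j+1$, and invokes orthogonality of $L_{2j+1}^{(\alpha-1)}$ to lower-degree polynomials with respect to $x^{\alpha-1}\mathrm{e}^{-x}$, finishing the case $j<k$ by antisymmetry of $M^{(4)}$. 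You instead kill both blocks by parity: your ladder identity is exactly the paper's (\ref{phijpx}) (derived there only \emph{after} this theorem, for the kernel $K_{N,4}^{(2,2)}$), showing that $D$ carries $\varphi_{m}^{(\alpha-1)}$ into the span of $\widetilde{\varphi}_{m\pm1}^{(\alpha-1)}$, and your $\varepsilon$-recursion is the Laguerre analogue of (\ref{phi2jy}), exhibiting $\varepsilon\widetilde{\varphi}_{2i+1}^{(\alpha-1)}$ as a combination of even-index $\varphi$'s; every surviving integral then pairs an even index against an odd one and vanishes. What your route buys is uniformity in the indices (no case split on $j\gtrless k$, no appeal to antisymmetry for the diagonal blocks) together with front-loading two identities the paper needs immediately afterwards anyway; what the paper's route buys is that at this point in the text it uses only classical orthogonality and degree counting. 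Your explicit treatment of the endpoint $x=0$ for $0<\alpha<2$, where $\varphi_{2i}'\sim x^{\alpha/2-1}$ so that the hypotheses of Lemma \ref{de} are not literally met but the conclusion $\varepsilon\varphi_{2i}'=\varphi_{2i}$ still holds by convergence of the defining integrals and $\varphi_{2i}(0)=0$, addresses a point the paper passes over in silence, and your resolution is the correct one.
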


\begin{proof}
Let $m_{jk}$ be the $(j,k)$-entry of $M^{(4)}$, i.e.,
$$
m_{jk}:=\int_{0}^{\infty}\left(\psi_{j}(x)\psi_{k}'(x)-\psi_{j}'(x)\psi_{k}(x)\right)dx,\;\; j,k=0,1,\ldots,2N-1.
$$
We compute $m_{j,k}$ by considering four cases: $(j,k)=({\rm even},{\rm odd}),\;\;({\rm odd},{\rm even}),
\;\;({\rm even},{\rm even}),\;\;({\rm odd},{\rm odd}).$

For the $({\rm even},{\rm odd})$ case,
\bea
m_{2j, 2k+1}
&=&\int_{0}^{\infty}\left(\psi_{2j}(x)\psi_{2k+1}'(x)-\psi_{2j}'(x)\psi_{2k+1}(x)\right)dx\nonumber\\
&=&\int_{0}^{\infty}\psi_{2j}(x)\psi_{2k+1}'(x)dx-\int_{0}^{\infty}\psi_{2j}'(x)\psi_{2k+1}(x)dx\nonumber\\
&=&\left(\psi_{2j}(x)\psi_{2k+1}(x)|_{0}^{\infty}-\int_{0}^{\infty}\psi_{2j}'(x)\psi_{2k+1}(x)dx\right)
-\int_{0}^{\infty}\psi_{2j}'(x)\psi_{2k+1}(x)dx\nonumber\\
&=&-2\int_{0}^{\infty}\psi_{2j}'(x)\psi_{2k+1}(x)dx\nonumber\\
&=&-2\int_{0}^{\infty}\left(-\frac{1}{\sqrt{2}}\widetilde{\varphi}_{2j+1}^{(\alpha-1)}(x)\right)
\left(\frac{1}{\sqrt{2}}\varphi_{2k+1}^{(\alpha-1)}(x)\right)dx\nonumber\\
&=&\int_{0}^{\infty}\widetilde{\varphi}_{2j+1}^{(\alpha-1)}(x)\varphi_{2k+1}^{(\alpha-1)}(x)dx\nonumber\\
&=&\delta_{jk}.\nonumber
\eea
For the $({\rm odd},{\rm even})$ case, since $M^{(4)}$ is antisymmetric, we have
\bea
m_{2j+1,2k}
&=&-m_{2k, 2j+1}\nonumber\\
&=&-\delta_{j,k}.\nonumber
\eea
For the $({\rm even},{\rm even})$ case and $j>k$,
\bea
m_{2j,2k}
&=&\int_{0}^{\infty}\left(\psi_{2j}(x)\psi_{2k}'(x)-\psi_{2j}'(x)\psi_{2k}(x)\right)dx\nonumber\\
&=&\int_{0}^{\infty}\psi_{2j}(x)\psi_{2k}'(x)dx-\int_{0}^{\infty}\psi_{2j}'(x)\psi_{2k}(x)dx\nonumber\\
&=&\psi_{2j}(x)\psi_{2k}(x)|_{0}^{\infty}-2\int_{0}^{\infty}\psi_{2j}'(x)\psi_{2k}(x)dx\nonumber\\
&=&-2\int_{0}^{\infty}\psi_{2j}'(x)\psi_{2k}(x)dx\nonumber\\
&=&-2\int_{0}^{\infty}\left(-\frac{1}{\sqrt{2}c_{2j+1}^{(\alpha-1)}}L_{2j+1}^{(\alpha-1)}(x)x^{\frac{\alpha}{2}-1}\mathrm{e}^{-\frac{x}{2}}\right)
\left(\pi_{2k}(x)x^{\frac{\alpha}{2}}\mathrm{e}^{-\frac{x}{2}}\right)dx\nonumber\\
&=&\frac{\sqrt{2}}{c_{2j+1}^{(\alpha-1)}}\int_{0}^{\infty}L_{2j+1}^{(\alpha-1)}(x)\pi_{2k}(x)x^{\alpha-1}\mathrm{e}^{-x}dx\nonumber\\
&=&0,\nonumber
\eea
since $\pi_{2k}(x)$ is a polynomial of degree $2k$ which is less than $2j+1$.

For the $({\rm odd},{\rm odd})$ case and $j>k$,
\bea
m_{2j+1,2k+1}
&=&\int_{0}^{\infty}\left(\psi_{2j+1}(x)\psi_{2k+1}'(x)-\psi_{2j+1}'(x)\psi_{2k+1}(x)\right)dx\nonumber\\
&=&\int_{0}^{\infty}\psi_{2j+1}(x)\psi_{2k+1}'(x)dx-\int_{0}^{\infty}\psi_{2j+1}'(x)\psi_{2k+1}(x)dx\nonumber\\
&=&\int_{0}^{\infty}\psi_{2j+1}(x)\psi_{2k+1}'(x)dx-\left(\psi_{2j+1}(x)\psi_{2k+1}(x)|_{0}^{\infty}
-\int_{0}^{\infty}\psi_{2j+1}(x)\psi_{2k+1}'(x)dx\right)\nonumber\\
&=&2\int_{0}^{\infty}\psi_{2j+1}(x)\psi_{2k+1}'(x)dx\nonumber\\
&=&\frac{1}{c_{2j+1}^{(\alpha-1)}c_{2k+1}^{(\alpha-1)}}\int_{0}^{\infty}L_{2j+1}^{(\alpha-1)}(x)x^{\frac{\alpha}{2}}\mathrm{e}^{-\frac{x}{2}}
\left(x\left(L_{2k+1}^{(\alpha-1)}(x)\right)'+\frac{\alpha}{2}L_{2k+1}^{(\alpha-1)}(x)-\frac{1}{2}x L_{2k+1}^{(\alpha-1)}(x)\right)x^{\frac{\alpha}{2}-1}
\mathrm{e}^{-\frac{x}{2}}dx\nonumber\\
&=&\frac{1}{c_{2j+1}^{(\alpha-1)}c_{2k+1}^{(\alpha-1)}}\int_{0}^{\infty}L_{2j+1}^{(\alpha-1)}(x)\left(x\left(L_{2k+1}^{(\alpha-1)}(x)\right)'
+\frac{\alpha}{2}L_{2k+1}^{(\alpha-1)}(x)-\frac{1}{2}x L_{2k+1}^{(\alpha-1)}(x)\right)x^{\alpha-1}\mathrm{e}^{-x}dx\nonumber\\
&=&0,\nonumber
\eea
since $x\left(L_{2k+1}^{(\alpha-1)}(x)\right)'+\frac{\alpha}{2}L_{2k+1}^{(\alpha-1)}(x)-\frac{1}{2}x L_{2k+1}^{(\alpha-1)}(x)$ is a polynomial of degree $2k+2$ which is less than $2j+1$.

If $j<k$, due to the fact that $M^{(4)}$ is antisymmetric,
\bea
m_{2j,2k}
&=&-m_{2k,2j}\nonumber\\
&=&0,\nonumber
\eea
\bea
m_{2j+1,2k+1}
&=&-m_{2k+1,2j+1}\nonumber\\
&=&0.\nonumber
\eea
Thus
$$
m_{2j,2k}=m_{2j+1,2k+1}=0,\;\;j,k=0,1,\ldots,N-1.
$$
This is just the desired form of $M^{(4)}$.
\end{proof}

It's clear that $\left(M^{(4)}\right)^{-1}=-M^{(4)}$, so $\mu_{2j,2j+1}=-1, \mu_{2j+1,2j}=1$, and $\mu_{jk}=0$ for other cases. The rest of this subsection is devoted to the determination
of $K_{N,4}^{(2,2)}(x,y)$,
\bea
K_{N,4}^{(2,2)}(x,y)
&=&-\sum_{j,k=0}^{2N-1}\psi_{j}(x)\mu_{jk}\psi_{k}'(y)\nonumber\\
&=&\sum_{j=0}^{N-1}\psi_{2j}(x)\psi_{2j+1}'(y)-\sum_{j=0}^{N-1}\psi_{2j+1}(x)\psi_{2j}'(y)\nonumber\\
&=&\frac{1}{2}\sum_{j=0}^{N-1}\varphi_{2j+1}^{(\alpha-1)}(x)\widetilde{\varphi}_{2j+1}^{(\alpha-1)}(y)
-\frac{1}{2}\sum_{j=0}^{N-1}\varepsilon\widetilde{\varphi}_{2j+1}^{(\alpha-1)}(x)\left[\varphi_{2j+1}^{(\alpha-1)}(y)\right]'.\nonumber
\eea
From (\ref{phij}), we see that,
\be
\left[\varphi_{j}^{(\alpha-1)}(x)\right]'=\frac{1}{c_{j}^{(\alpha-1)}}x^{\frac{\alpha}{2}-1}\mathrm{e}^{-\frac{x}{2}}\left[x\left(L_{j}^{(\alpha-1)}(x)\right)'
+\frac{\alpha-x}{2}
L_{j}^{(\alpha-1)}(x)\right].\label{sign}
\ee
Recall that the Laguerre polynomials $L_{j}^{(\alpha-1)}(x),$ satisfy the differentiation formulas \cite{Gradshteyn},
\be
x\left(L_{j}^{(\alpha-1)}(x)\right)'=j L_{j}^{(\alpha-1)}(x)-(j+\alpha-1)L_{j-1}^{(\alpha-1)}(x),\;\;j=0,1,2,\ldots,\label{ldf1}
\ee
\be
x\left(L_{j}^{(\alpha-1)}(x)\right)'=(j+1)L_{j+1}^{(\alpha-1)}(x)+(x-j-\alpha)L_{j}^{(\alpha-1)}(x),\;\;j=0,1,2,\ldots.\label{ldf2}
\ee
Summing (\ref{ldf1}) and  (\ref{ldf2}), and divide by 2, gives,
$$
x\left(L_{j}^{(\alpha-1)}(x)\right)'=\frac{j+1}{2}L_{j+1}^{(\alpha-1)}(x)-\frac{j+\alpha-1}{2}L_{j-1}^{(\alpha-1)}(x)
+\frac{x-\alpha}{2}L_{j}^{(\alpha-1)}(x),\;\;j=0,1,2,\ldots,
$$
or
$$
x\left(L_{j}^{(\alpha-1)}(x)\right)'+\frac{\alpha-x}{2}L_{j}^{(\alpha-1)}(x)=\frac{j+1}{2}L_{j+1}^{(\alpha-1)}(x)-\frac{j+\alpha-1}{2}L_{j-1}^{(\alpha-1)}(x)
,\;\;j=0,1,2,\ldots.
$$
Hence (\ref{sign}) becomes,
\bea
\left[\varphi_{j}^{(\alpha-1)}(x)\right]'
&=&\frac{1}{c_{j}^{(\alpha-1)}}x^{\frac{\alpha}{2}-1}\mathrm{e}^{-\frac{x}{2}}\left[\frac{j+1}{2}L_{j+1}^{(\alpha-1)}(x)-\frac{j+\alpha-1}{2}
L_{j-1}^{(\alpha-1)}(x)\right]\nonumber\\
&=&\frac{1}{2}\sqrt{(j+1)(j+\alpha)}\:\widetilde{\varphi}_{j+1}^{(\alpha-1)}(x)-\frac{1}{2}\sqrt{j(j+\alpha-1)}\:\widetilde{\varphi}_{j-1}^{(\alpha-1)}(x).\label{phijpx}
\eea
We see that $\left[\varphi_{j}^{(\alpha-1)}\right]'$ is a linear combination of $\widetilde{\varphi}_{j+1}^{(\alpha-1)}$ and $\widetilde{\varphi}_{j-1}^{(\alpha-1)}$, just like the GSE case studied in the last section. Replacing $j$ by $2j+1$ in (\ref{phijpx}), to find,
$$
\left[\varphi_{2j+1}^{(\alpha-1)}(y)\right]'=\sqrt{(j+1)\left(j+\frac{\alpha+1}{2}\right)}\widetilde{\varphi}_{2j+2}^{(\alpha-1)}(y)-\sqrt{\left(j+\frac{1}{2}\right)
\left(j+\frac{\alpha}{2}\right)}\widetilde{\varphi}_{2j}^{(\alpha-1)}(y).
$$
A straightforward computation shows that,
\bea
&&\sum_{j=0}^{N-1}\varepsilon\widetilde{\varphi}_{2j+1}^{(\alpha-1)}(x)\left[\varphi_{2j+1}^{(\alpha-1)}(y)\right]'\nonumber\\
&=&\sum_{j=0}^{N-1}\varepsilon\widetilde{\varphi}_{2j+1}^{(\alpha-1)}(x)\sqrt{(j+1)\left(j+\frac{\alpha+1}{2}\right)}\widetilde{\varphi}_{2j+2}^{(\alpha-1)}(y)
-\sum_{j=0}^{N-1}\varepsilon\widetilde{\varphi}_{2j+1}^{(\alpha-1)}(x)\sqrt{\left(j+\frac{1}{2}\right)\left(j+\frac{\alpha}{2}\right)}
\widetilde{\varphi}_{2j}^{(\alpha-1)}(y)\nonumber\\
&=&\sum_{j=0}^{N}\sqrt{j\left(j+\frac{\alpha-1}{2}\right)}\varepsilon\widetilde{\varphi}_{2j-1}^{(\alpha-1)}(x)\widetilde{\varphi}_{2j}^{(\alpha-1)}(y)
-\sum_{j=0}^{N}\sqrt{\left(j+\frac{1}{2}\right)\left(j+\frac{\alpha}{2}\right)}\varepsilon\widetilde{\varphi}_{2j+1}^{(\alpha-1)}(x)\widetilde{\varphi}_{2j}^{(\alpha-1)}(y)
\nonumber\\
&+&\sqrt{\left(N+\frac{1}{2}\right)\left(N+\frac{\alpha}{2}\right)}\varepsilon\widetilde{\varphi}_{2N+1}^{(\alpha-1)}(x)\widetilde{\varphi}_{2N}^{(\alpha-1)}(y)\nonumber\\
&=&\sum_{j=0}^{N}\left[\sqrt{j\left(j+\frac{\alpha-1}{2}\right)}\varepsilon\widetilde{\varphi}_{2j-1}^{(\alpha-1)}(x)
-\sqrt{\left(j+\frac{1}{2}\right)\left(j+\frac{\alpha}{2}\right)}\varepsilon\widetilde{\varphi}_{2j+1}^{(\alpha-1)}(x)\right]
\widetilde{\varphi}_{2j}^{(\alpha-1)}(y)\nonumber\\
&+&\sqrt{\left(N+\frac{1}{2}\right)\left(N+\frac{\alpha}{2}\right)}\varepsilon\widetilde{\varphi}_{2N+1}^{(\alpha-1)}(x)\widetilde{\varphi}_{2N}^{(\alpha-1)}(y).\nonumber
\eea
From (\ref{phijpx}) and Theorem \ref{de},
\bea
\varphi_{2j}^{(\alpha-1)}(x)
&=&\varepsilon\:D\:\varphi_{2j}^{(\alpha-1)}(x)\nonumber\\
&=&\varepsilon\left[\varphi_{2j}^{(\alpha-1)}(x)\right]'\nonumber\\
&=&\varepsilon\left[\frac{1}{2}\sqrt{(2j+1)(2j+\alpha)}\widetilde{\varphi}_{2j+1}^{(\alpha-1)}(x)-\frac{1}{2}\sqrt{2j(2j+\alpha-1)}
\widetilde{\varphi}_{2j-1}^{(\alpha-1)}(x)\right]\nonumber\\
&=&\sqrt{\left(j+\frac{1}{2}\right)\left(j+\frac{\alpha}{2}\right)}\varepsilon\widetilde{\varphi}_{2j+1}^{(\alpha-1)}(x)
-\sqrt{j\left(j+\frac{\alpha-1}{2}\right)}\varepsilon\widetilde{\varphi}_{2j-1}^{(\alpha-1)}(x).\nonumber
\eea
Hence, the sum, $\sum_{j=0}^{N-1}\varepsilon\widetilde{\varphi}_{2j+1}^{(\alpha-1)}(x)\left[\varphi_{2j+1}^{(\alpha-1)}(y)\right]'$ simplifies immediately, and leads to,
$$
\sum_{j=0}^{N-1}\varepsilon\widetilde{\varphi}_{2j+1}^{(\alpha-1)}(x)\left[\varphi_{2j+1}^{(\alpha-1)}(y)\right]'
=-\sum_{j=0}^{N}\varphi_{2j}^{(\alpha-1)}(x)\widetilde{\varphi}_{2j}^{(\alpha-1)}(y)
+\sqrt{\left(N+\frac{1}{2}\right)\left(N+\frac{\alpha}{2}\right)}\:\varepsilon\widetilde{\varphi}_{2N+1}^{(\alpha-1)}(x)\widetilde{\varphi}_{2N}^{(\alpha-1)}(y).
$$
It follows that,
\bea
K_{N,4}^{(2,2)}(x,y)
&=&\frac{1}{2}\sum_{j=0}^{N-1}\varphi_{2j+1}^{(\alpha-1)}(x)\widetilde{\varphi}_{2j+1}^{(\alpha-1)}(y)+\frac{1}{2}\sum_{j=0}^{N}\varphi_{2j}^{(\alpha-1)}(x)
\widetilde{\varphi}_{2j}^{(\alpha-1)}(y)\nonumber\\
&-&\frac{1}{2}\sqrt{\left(N+\frac{1}{2}\right)\left(N+\frac{\alpha}{2}\right)}\:\varepsilon\widetilde{\varphi}_{2N+1}^{(\alpha-1)}(x)\widetilde{\varphi}_{2N}^{(\alpha-1)}(y)\nonumber\\
&=&\frac{1}{2}\sum_{j=0}^{2N}\varphi_{j}^{(\alpha-1)}(x)\widetilde{\varphi}_{j}^{(\alpha-1)}(y)-\frac{1}{2}\sqrt{\left(N+\frac{1}{2}\right)\left(N+\frac{\alpha}{2}\right)}
\:\varepsilon\widetilde{\varphi}_{2N+1}^{(\alpha-1)}(x)\widetilde{\varphi}_{2N}^{(\alpha-1)}(y)\nonumber\\
&=&\frac{1}{2}S_{N}(x,y)-\frac{1}{2}\sqrt{\left(N+\frac{1}{2}\right)\left(N+\frac{\alpha}{2}\right)}
\:\varepsilon\widetilde{\varphi}_{2N+1}^{(\alpha-1)}(x)\widetilde{\varphi}_{2N}^{(\alpha-1)}(y),\nonumber
\eea
where
$$
S_{N}(x,y):=\sum_{j=0}^{2N}\varphi_{j}^{(\alpha-1)}(x)\widetilde{\varphi}_{j}^{(\alpha-1)}(y)
=-\sqrt{(2N+1)(2N+\alpha)}\:\frac{\varphi_{2N+1}^{(\alpha-1)}(x)\widetilde{\varphi}_{2N}^{(\alpha-1)}(y)
-\widetilde{\varphi}_{2N+1}^{(\alpha-1)}(y)\varphi_{2N}^{(\alpha-1)}(x)}{x-y}.
$$
Here we used the Christoffel-Darboux formula in the last equality.

By Theorem \ref{gn4s}, we have the following theorem.
\begin{theorem}
\bea
\left[G_{N}^{(4)}(f)\right]^{2}
&=&\det\Bigg(I+S_{N}f-\frac{1}{2}S_{N}\varepsilon f'-\sqrt{\left(N+\frac{1}{2}\right)\left(N+\frac{\alpha}{2}\right)}
\left(\varepsilon\widetilde{\varphi}_{2N+1}^{(\alpha-1)}\otimes\widetilde{\varphi}_{2N}^{(\alpha-1)}f\right)\nonumber\\
&-&\frac{1}{2}\sqrt{\left(N+\frac{1}{2}\right)\left(N+\frac{\alpha}{2}\right)}
\left(\varepsilon\widetilde{\varphi}_{2N+1}^{(\alpha-1)}\otimes\varepsilon\widetilde{\varphi}_{2N}^{(\alpha-1)}\right)f'\Bigg).\nonumber
\eea
\end{theorem}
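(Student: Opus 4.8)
The plan is to substitute the kernel of $K_{N,4}^{(2,2)}$ just computed into the master formula (\ref{gn4s}), namely $\left[G_{N}^{(4)}(f)\right]^{2}=\det\left(I+2K_{N,4}^{(2,2)}f-K_{N,4}^{(2,2)}\varepsilon f'\right)$, and then invoke the operator identities of Lemma \ref{ab} together with the antisymmetry $\varepsilon^{t}=-\varepsilon$ to recast the two rank-one pieces into the tensor-product form displayed in the theorem. As an operator, the preceding computation reads
$$
K_{N,4}^{(2,2)}=\tfrac{1}{2}S_{N}-\tfrac{1}{2}\sqrt{\left(N+\tfrac{1}{2}\right)\left(N+\tfrac{\alpha}{2}\right)}\,\varepsilon\widetilde{\varphi}_{2N+1}^{(\alpha-1)}\otimes\widetilde{\varphi}_{2N}^{(\alpha-1)},
$$
so the whole argument amounts to careful bookkeeping of the two contributions $2K_{N,4}^{(2,2)}f$ and $-K_{N,4}^{(2,2)}\varepsilon f'$.

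For the first contribution I would write $2K_{N,4}^{(2,2)}=S_{N}-\sqrt{\left(N+\tfrac{1}{2}\right)\left(N+\tfrac{\alpha}{2}\right)}\,\varepsilon\widetilde{\varphi}_{2N+1}^{(\alpha-1)}\otimes\widetilde{\varphi}_{2N}^{(\alpha-1)}$ and compose on the right with $f$. The $S_{N}$ piece yields $S_{N}f$ at once, while for the rank-one piece the second identity of Lemma \ref{ab}, applied with $A=f$ (a multiplication operator, so that $A^{t}=f$), gives $(\varepsilon\widetilde{\varphi}_{2N+1}^{(\alpha-1)}\otimes\widetilde{\varphi}_{2N}^{(\alpha-1)})f=\varepsilon\widetilde{\varphi}_{2N+1}^{(\alpha-1)}\otimes(\widetilde{\varphi}_{2N}^{(\alpha-1)}f)$, which is precisely the third term of the theorem.

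For the second contribution, $-K_{N,4}^{(2,2)}\varepsilon f'$, the $S_{N}$ piece produces $-\tfrac{1}{2}S_{N}\varepsilon f'$. For the rank-one piece I would again apply the second identity of Lemma \ref{ab}, now with $A=\varepsilon$; the decisive point is that $\varepsilon^{t}=-\varepsilon$, so that $(\varepsilon\widetilde{\varphi}_{2N+1}^{(\alpha-1)}\otimes\widetilde{\varphi}_{2N}^{(\alpha-1)})\varepsilon=\varepsilon\widetilde{\varphi}_{2N+1}^{(\alpha-1)}\otimes(\varepsilon^{t}\widetilde{\varphi}_{2N}^{(\alpha-1)})=-\,\varepsilon\widetilde{\varphi}_{2N+1}^{(\alpha-1)}\otimes\varepsilon\widetilde{\varphi}_{2N}^{(\alpha-1)}$. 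Carrying the overall prefactor through gives exactly the fourth term, with the minus sign as stated. Collecting the four contributions reproduces the claimed identity.

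The only genuinely subtle step — and the single place where a sign error could slip in — is this interaction between right composition with $\varepsilon$ and the transpose rule of Lemma \ref{ab}: it is precisely the antisymmetry $\varepsilon^{t}=-\varepsilon$ that flips the sign of the final tensor and forces the last term to appear with a minus rather than a plus. Everything else is a direct transcription of the operator kernel into the master formula (\ref{gn4s}), and involves no further estimates.
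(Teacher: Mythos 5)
Your proposal is correct and follows exactly the route the paper takes: the paper's entire proof is the remark ``By Theorem \ref{gn4s}, we have the following theorem,'' i.e.\ substituting the computed kernel $K_{N,4}^{(2,2)}=\tfrac{1}{2}S_{N}-\tfrac{1}{2}\sqrt{(N+\tfrac{1}{2})(N+\tfrac{\alpha}{2})}\,\varepsilon\widetilde{\varphi}_{2N+1}^{(\alpha-1)}\otimes\widetilde{\varphi}_{2N}^{(\alpha-1)}$ into $\det(I+2K_{N,4}^{(2,2)}f-K_{N,4}^{(2,2)}\varepsilon f')$ and simplifying the rank-one pieces via Lemma \ref{ab}. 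You have merely made explicit the bookkeeping (including the sign flip from $\varepsilon^{t}=-\varepsilon$) that the paper leaves implicit, and it is all correct.
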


\subsection{Large $N$ behavior of the LSE moment generating function}
Now consider the scaling limit of $\left[G_{N}^{(4)}(f)\right]^{2}$, write
$$
\left[G_{N}^{(4)}(f)\right]^{2}=:\det(I+T),
$$
where
\bea
T:&=&S_{N}f-\frac{1}{2}S_{N}\varepsilon f'-\sqrt{\left(N+\frac{1}{2}\right)\left(N+\frac{\alpha}{2}\right)}
\left(\varepsilon\widetilde{\varphi}_{2N+1}^{(\alpha-1)}\otimes\widetilde{\varphi}_{2N}^{(\alpha-1)}f\right)\nonumber\\
&-&\frac{1}{2}\sqrt{\left(N+\frac{1}{2}\right)\left(N+\frac{\alpha}{2}\right)}
\left(\varepsilon\widetilde{\varphi}_{2N+1}^{(\alpha-1)}\otimes\varepsilon\widetilde{\varphi}_{2N}^{(\alpha-1)}\right)f'.\nonumber
\eea

\begin{theorem}
$$
\lim_{N\rightarrow\infty}\frac{y}{4N}S_{N}\left(\frac{x^{2}}{8N},\frac{y^{2}}{8N}\right)=B^{(\alpha-1)}(x,y),
$$

$$
\lim_{N\rightarrow\infty}\frac{x}{4N}S_{N}\left(\frac{x^{2}}{8N},\frac{x^{2}}{8N}\right)=B^{(\alpha-1)}(x,x),\label{ba4xx}
$$
where $B^{(\alpha-1)}(x,y)$ and $B^{(\alpha-1)}(x,x)$ are given by (\ref{bxy}) and (\ref{bxx}) with $\alpha$ replaced by $\alpha-1$.
\end{theorem}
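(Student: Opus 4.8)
The plan is to begin from the Christoffel--Darboux closed form of $S_N$ obtained just above,
\[
S_N(x,y)=-\sqrt{(2N+1)(2N+\alpha)}\;
\frac{\varphi_{2N+1}^{(\alpha-1)}(x)\widetilde{\varphi}_{2N}^{(\alpha-1)}(y)-\widetilde{\varphi}_{2N+1}^{(\alpha-1)}(y)\varphi_{2N}^{(\alpha-1)}(x)}{x-y},
\]
to substitute the hard-edge scaling $x\mapsto x^2/(8N)$, $y\mapsto y^2/(8N)$, and to insert the Mehler--Heine asymptotics for Laguerre polynomials, $\lim_{n\to\infty}n^{-(\alpha-1)}L_n^{(\alpha-1)}(z/n)=z^{-(\alpha-1)/2}J_{\alpha-1}(2\sqrt{z})$, valid uniformly (with its $z$-derivatives) on compact subsets of $(0,\infty)$. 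The essential observation is that the effective Bessel argument attached to a degree-$n$ factor evaluated at $x^2/(8N)$ is $2\sqrt{n\,x^2/(8N)}$: for the even index $n=2N$ this equals $x$ exactly, whereas for the odd index $n=2N+1$ it equals $x\sqrt{1+1/(2N)}=x+\tfrac{x}{4N}+O(N^{-2})$. Taylor expanding $J_{\alpha-1}$ about $x$ thus produces the correction $J_{\alpha-1}(x)+\tfrac{x}{4N}J_{\alpha-1}'(x)$ precisely for the odd-index polynomials, while the even-index ones carry no shift. Combining this with (\ref{phij}), the identity $\widetilde{\varphi}_j^{(\alpha-1)}(t)=\varphi_j^{(\alpha-1)}(t)/t$, and Stirling (\ref{stirling}) for $c_n^{(\alpha-1)}=\sqrt{\Gamma(n+\alpha)/\Gamma(n+1)}\sim n^{(\alpha-1)/2}$, I would record the leading behaviour of all four scaled functions as a common $N$-power times a common $x$- (or $y$-) prefactor times $J_{\alpha-1}$, with the $\tfrac{x}{4N}J_{\alpha-1}'$ correction present only on the odd-index factors.

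The mechanism that creates $B^{(\alpha-1)}$ is a cancellation forced by the even/odd pairing in the numerator: each of the two products $\varphi_{2N+1}(x)\widetilde{\varphi}_{2N}(y)$ and $\varphi_{2N}(x)\widetilde{\varphi}_{2N+1}(y)$ contains exactly one even- and one odd-index factor, so they share the same leading constant, and their leading Bessel parts $J_{\alpha-1}(x)J_{\alpha-1}(y)$ cancel. What survives is exactly the $O(N^{-1})$ antisymmetric combination
\[
\tfrac{x}{4N}J_{\alpha-1}'(x)J_{\alpha-1}(y)-\tfrac{y}{4N}J_{\alpha-1}'(y)J_{\alpha-1}(x),
\]
which is the numerator of (\ref{bxy}) with $\alpha\to\alpha-1$. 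It then remains to verify the bookkeeping of $N$-powers: the factor $\sqrt{(2N+1)(2N+\alpha)}\sim 2N$, the factor $1/(x-y)=8N/(x^2-y^2)$ from the scaled denominator, the exact factor $8N/y^2$ carried by $\widetilde{\varphi}_{2N}(y^2/(8N))$, the $N^{-1}$ from the surviving numerator, and the overall prefactor $y/(4N)$ all combine so that every power of $N$ cancels and the surviving constants collapse to $B^{(\alpha-1)}(x,y)$. Since the convergence is uniform on compact subsets of $(0,\infty)^2$, the apparent pole at $x=y$ is removable.

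For the diagonal limit I would simply let $y\to x$ in the off-diagonal result and use continuity of $B^{(\alpha-1)}$, which is an application of l'H\^opital's rule requiring only Bessel's equation $x^2J_{\alpha-1}''+xJ_{\alpha-1}'+(x^2-(\alpha-1)^2)J_{\alpha-1}=0$ and the recurrences $J_{\alpha-2}=J_{\alpha-1}'+\tfrac{\alpha-1}{x}J_{\alpha-1}$, $J_{\alpha}=-J_{\alpha-1}'+\tfrac{\alpha-1}{x}J_{\alpha-1}$; these give $J_{\alpha-1}^2-J_{\alpha-2}J_{\alpha}=(1-(\alpha-1)^2/x^2)J_{\alpha-1}^2+(J_{\alpha-1}')^2$ and reduce $\lim_{y\to x}B^{(\alpha-1)}(x,y)$ to $\tfrac{x}{2}\big(J_{\alpha-1}^2(x)-J_{\alpha-2}(x)J_{\alpha}(x)\big)$, i.e. (\ref{bxx}) with $\alpha\to\alpha-1$. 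I expect the main obstacle to be entirely in the first two paragraphs: because the leading Mehler--Heine terms for degrees $2N$ and $2N+1$ coincide, the $O(N^{-1})$ level is where the answer lives, and one must confirm that all other $O(N^{-1})$ effects --- the normalization ratio $c_{2N+1}/c_{2N}$, the prefactor $[1+1/(2N)]^{-(\alpha-1)/2}$, the factor $\mathrm{e}^{-t/2}$, and the Mehler--Heine remainder --- are either $x$-independent or differ between the two consecutive degrees only at order $N^{-2}$, so that the single genuine $O(N^{-1})$ survivor is the derivative term generated by the argument shift. Keeping these factors under uniform control, rather than the algebra itself, is the delicate step.
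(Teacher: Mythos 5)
The paper itself offers no proof of this theorem: it is stated bare, as a quoted form of the classical hard-edge (Bessel-kernel) limit of the Laguerre Christoffel--Darboux kernel (cf.\ \cite{Basor1993}), so there is nothing in the text to compare your argument against line by line. Your proposal is, in substance, a correct derivation, and the mechanism you identify is the right one: in the Christoffel--Darboux numerator each of the two products pairs one even-degree with one odd-degree function, so the leading Bessel parts $J_{\alpha-1}(x)J_{\alpha-1}(y)$ carry identical constants and cancel, and the survivor is governed solely by the \emph{difference} of the Bessel-argument shifts between degrees $2N$ and $2N+1$, which is $\tfrac{1}{4N}$ whether one uses the argument $2\sqrt{nz}$ or the more accurate $\sqrt{(4n+2\alpha)z}$ (any common shift drops out of the antisymmetric combination). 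Your power counting and your reduction of the diagonal case via Bessel's equation and the three-term recurrences both check out; I verified that $J_{\alpha-1}^{2}-J_{\alpha-2}J_{\alpha}=(1-(\alpha-1)^{2}/x^{2})J_{\alpha-1}^{2}+(J_{\alpha-1}')^{2}$ does reproduce (\ref{bxx}) with $\alpha\to\alpha-1$.

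The one point you must tighten is the asymptotic input. Because the answer lives at relative order $N^{-1}$ after the leading cancellation, the bare Mehler--Heine \emph{limit} (even with locally uniform convergence of derivatives) is not sufficient: it gives no rate, so it cannot distinguish the $O(N^{-1})$ survivor from the unquantified remainder. You need the quantitative Szeg\H{o} estimate
$L_{n}^{(\alpha-1)}(z)z^{\frac{\alpha-1}{2}}\mathrm{e}^{-z/2}=\frac{\Gamma(n+\alpha)}{\Gamma(n+1)}\left(n+\frac{\alpha}{2}\right)^{-\frac{\alpha-1}{2}}J_{\alpha-1}\big(\sqrt{(4n+2\alpha)z}\big)+z^{5/4}O\big(n^{\frac{\alpha-1}{2}-\frac{3}{4}}\big)$,
which the paper records (for general index) in the proof of the very next theorem. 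Under the scaling $z=x^{2}/(8N)$ its remainder is $O(N^{\frac{\alpha-1}{2}-2})$ against a main term of size $N^{\frac{\alpha-1}{2}}$, i.e.\ a relative error $O(N^{-2})$, comfortably below the $O(N^{-1})$ level you are extracting; with that substitution your ``delicate step'' closes and the argument is complete. For the diagonal statement, passing $y\to x$ after $N\to\infty$ does require the locally uniform control near the diagonal that you invoke; alternatively one can apply the same expansion directly to the confluent Christoffel--Darboux form to avoid the interchange of limits altogether.
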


\begin{theorem}
$$
\widetilde{\varphi}_{2N}^{(\alpha-1)}\left(\frac{x^{2}}{8N}\right)= 2\:(2N)^{\frac{1}{2}}\:\frac{J_{\alpha-1}(x)}{x}+O\left(N^{-\frac{3}{2}}\right),\;\;N\rightarrow\infty,
$$

$$
\varepsilon\widetilde{\varphi}_{2N}^{(\alpha-1)}\left(\frac{x^{2}}{8N}\right)= (2N)^{-\frac{1}{2}}\left[\int_{0}^{x}J_{\alpha-1}(y)dy-1\right]+O\left(N^{-\frac{5}{2}}\right),
\;\;N\rightarrow\infty,
$$

$$
\varepsilon\widetilde{\varphi}_{2N+1}^{(\alpha-1)}\left(\frac{x^{2}}{8N}\right)= (2N)^{-\frac{1}{2}}\int_{0}^{x}J_{\alpha-1}(y)dy+O\left(N^{-\frac{5}{2}}\right),
\;\;N\rightarrow\infty.\label{lse}
$$
\end{theorem}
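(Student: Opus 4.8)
The plan is to reduce all three estimates to the behaviour of the scaled Laguerre functions $\widetilde{\varphi}_{2N}^{(\alpha-1)}$ and $\widetilde{\varphi}_{2N+1}^{(\alpha-1)}$ near the hard edge $x=0$, which is governed by a Bessel-type (Mehler--Heine) asymptotic for the Laguerre polynomials. I would first prove the pointwise estimate for $\widetilde{\varphi}_{2N}^{(\alpha-1)}$ directly from such an expansion together with Stirling's formula (\ref{stirling}), and then obtain the two $\varepsilon$-estimates by integrating the pointwise one, exploiting the representation $\varepsilon g(x)=\int_{0}^{x}g(t)\,dt-\tfrac12\int_{0}^{\infty}g(t)\,dt$ valid on $\mathbb{R}^{+}$.

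For the first estimate I start from $\widetilde{\varphi}_{2N}^{(\alpha-1)}(x)=\frac{1}{c_{2N}^{(\alpha-1)}}L_{2N}^{(\alpha-1)}(x)\,x^{\frac{\alpha}{2}-1}\mathrm{e}^{-\frac{x}{2}}$ evaluated at the scaled argument $\frac{x^{2}}{8N}$. The Mehler--Heine limit, which for Laguerre parameter $\alpha-1$ reads $n^{-(\alpha-1)}L_{n}^{(\alpha-1)}(z/n)\to z^{-(\alpha-1)/2}J_{\alpha-1}(2\sqrt{z})$, applied with $n=2N$ and $z=x^{2}/4$, produces the Bessel function $J_{\alpha-1}(x)$ with argument \emph{exactly} $x$. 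Using Stirling on $c_{2N}^{(\alpha-1)}=\sqrt{\Gamma(2N+\alpha)/\Gamma(2N+1)}\sim(2N)^{(\alpha-1)/2}$, expanding the monomial $\left(\frac{x^{2}}{8N}\right)^{\frac{\alpha}{2}-1}$ and $\mathrm{e}^{-x^{2}/(16N)}=1+O(N^{-1})$, and collecting the powers of $2$, $N$ and $x$, the leading term assembles to $2\,(2N)^{\frac12}\,J_{\alpha-1}(x)/x$, as claimed.

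For the odd $\varepsilon$-estimate the crucial simplification is that the half-line integral vanishes, $\int_{0}^{\infty}\widetilde{\varphi}_{2N+1}^{(\alpha-1)}(t)\,dt=0$, which is exactly the identity ${}_{2}F_{1}(-2N-1,\tfrac{\alpha}{2};\alpha;2)=0$ established in the proof of Lemma \ref{bi}. Hence $\varepsilon\widetilde{\varphi}_{2N+1}^{(\alpha-1)}(x)=\int_{0}^{x}\widetilde{\varphi}_{2N+1}^{(\alpha-1)}(t)\,dt$; substituting $t=u^{2}/(8N)$, $dt=\frac{u}{4N}\,du$, and inserting the first estimate (with $2N+1$ in place of $2N$, the difference being subleading) turns the prefactor $2(2N)^{\frac12}\cdot\frac{1}{4N}$ into $(2N)^{-\frac12}$ and yields $(2N)^{-\frac12}\int_{0}^{x}J_{\alpha-1}(y)\,dy$. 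In the even case the same substitution on $\int_{0}^{x}$ reproduces $(2N)^{-\frac12}\int_{0}^{x}J_{\alpha-1}(y)\,dy$, while the surviving $-\tfrac12\int_{0}^{\infty}\widetilde{\varphi}_{2N}^{(\alpha-1)}$ is now nonzero; I would evaluate it in closed form via the standard Laplace-transform formula $\int_{0}^{\infty}\mathrm{e}^{-t/2}t^{\frac{\alpha}{2}-1}L_{2N}^{(\alpha-1)}(t)\,dt=\frac{\Gamma(\frac{\alpha}{2})\Gamma(2N+\alpha)}{\Gamma(\alpha)\,(2N)!}\,2^{\alpha/2}\,{}_{2}F_{1}(-2N,\tfrac{\alpha}{2};\alpha;2)$, reduce the hypergeometric to a Beta integral $B(\tfrac{\alpha}{2},N+\tfrac12)$ exactly as in the proof of Lemma \ref{2f1} (now with an even power of $\cos\theta$, so it does not vanish), and apply Stirling to get $\int_{0}^{\infty}\widetilde{\varphi}_{2N}^{(\alpha-1)}\sim 2(2N)^{-\frac12}$. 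The factor $-\tfrac12$ then delivers the constant $-1$ inside the bracket, giving $(2N)^{-\frac12}\big[\int_{0}^{x}J_{\alpha-1}(y)\,dy-1\big]$.

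The \emph{main obstacle} is the sharpness of the error terms. The bare Mehler--Heine limit only yields a relative error $O(N^{-1})$, whereas the stated $O(N^{-3/2})$ for the pointwise estimate is a relative $O(N^{-2})$ assertion; matching it requires a genuine second-order Bessel-type expansion of $L_{2N}^{(\alpha-1)}$ with explicit next term, and one must verify that all the relative $O(N^{-1})$ contributions --- from that next term, from $\mathrm{e}^{-x^{2}/(16N)}$, and from the subleading Stirling corrections to $c_{2N}^{(\alpha-1)}$ and to the Gamma ratios --- cancel. This cancellation is presumably what dictates the precise choice of scaling $x^{2}/(8N)$ and of the index shift $\alpha\to\alpha-1$. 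Granting the pointwise estimate with error $O(N^{-3/2})$, the two $\varepsilon$-estimates gain an extra factor $N^{-1}$ from the Jacobian $\frac{u}{4N}$ in the substitution; reconciling this with the separate Stirling expansion of the constant $-\tfrac12\int_{0}^{\infty}\widetilde{\varphi}_{2N}^{(\alpha-1)}$ in the even case, and justifying the interchange of the asymptotics with the $\int_{0}^{x}$ integration by a uniform bound on the integrand, is the remaining delicate bookkeeping that produces the improved order $O(N^{-5/2})$.
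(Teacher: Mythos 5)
Your proposal follows essentially the same route as the paper: the pointwise estimate comes from Szeg\H{o}'s Hilb-type (quantitative Mehler--Heine) asymptotic for $L_{2N}^{(\alpha-1)}$ at argument $x^{2}/(8N)$ combined with Stirling, and the two $\varepsilon$-estimates come from writing $\varepsilon g(x)=\int_{0}^{x}g-\tfrac12\int_{0}^{\infty}g$, integrating the pointwise asymptotic over $[0,x]$, and evaluating the half-line moment in closed form (zero for the odd index, $\sim 2(2N)^{-1/2}$ for the even index, which produces the $-1$ in the bracket). The "main obstacle" you flag about matching the $O(N^{-3/2})$ error is a fair concern, but the paper does not carry out the second-order analysis you describe either --- it simply inserts the Hilb error term and replaces the Bessel argument $x\sqrt{1+\alpha/(4N)}$, the factor $\mathrm{e}^{-x^{2}/(16N)}$, and the Gamma ratios by their leading values without tracking the resulting relative $O(N^{-1})$ corrections.
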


\begin{proof}
Recall the asymptotic formula of the Laguerre polynomials \cite{Szego},
$$
L_{N}^{(\alpha)}(x)x^{\frac{\alpha}{2}}\mathrm{e}^{-\frac{x}{2}}=\frac{\Gamma(N+\alpha+1)}{\Gamma(N+1)}\left(N+\frac{\alpha+1}{2}\right)^{-\frac{\alpha}{2}}
J_{\alpha}\left(\sqrt{(4N+2\alpha+2)x}\right)+x^{\frac{5}{4}}O\left(N^{\frac{\alpha}{2}-\frac{3}{4}}\right),\;\;N\rightarrow\infty.
$$
We find,
\bea
\widetilde{\varphi}_{2N}^{(\alpha-1)}\left(\frac{x^{2}}{8N}\right)
&=&\sqrt{\frac{\Gamma(2N+1)}{\Gamma(2N+\alpha)}}L_{2N}^{(\alpha-1)}\left(\frac{x^{2}}{8N}\right)\left(\frac{x^{2}}{8N}\right)^{\frac{\alpha}{2}-1}\mathrm{e}^{-\frac{x^{2}}{16N}}
\nonumber\\
&=&\sqrt{\frac{\Gamma(2N+1)}{\Gamma(2N+\alpha)}}\cdot\left(\frac{x^{2}}{8N}\right)^{-\frac{1}{2}}\left[\frac{\Gamma(2N+\alpha)}{\Gamma(2N+1)}
\left(2N+\frac{\alpha}{2}\right)^{-\frac{\alpha-1}{2}}J_{\alpha-1}(x)+O\left(N^{\frac{\alpha}{2}-\frac{5}{2}}\right)\right]\nonumber\\
&=&2\:(2N)^{\frac{1}{2}}\:\frac{J_{\alpha-1}(x)}{x}+O\left(N^{-\frac{3}{2}}\right),\;\;N\rightarrow\infty,\nonumber
\eea
where we have used the formula,
$$
\frac{\Gamma(n+a)}{\Gamma(n+b)}= n^{a-b}\left[1+O\left(n^{-1}\right)\right],\;\;n\rightarrow\infty.
$$
Proceeding to $\varepsilon\widetilde{\varphi}_{2N}^{(\alpha-1)}(x)$, we have,
\bea
&&\varepsilon\widetilde{\varphi}_{2N}^{(\alpha-1)}(x)\nonumber\\
&=&\frac{1}{2}\left[\int_{0}^{x}\widetilde{\varphi}_{2N}^{(\alpha-1)}(y)dy-\int_{x}^{\infty}\widetilde{\varphi}_{2N}^{(\alpha-1)}(y)dy\right]\nonumber\\
&=&\frac{1}{2}\sqrt{\frac{\Gamma(2N+1)}{\Gamma(2N+\alpha)}}\left[\int_{0}^{x}L_{2N}^{(\alpha-1)}(y)y^{\frac{\alpha}{2}-1}\mathrm{e}^{-\frac{y}{2}}dy
-\int_{x}^{\infty}L_{2N}^{(\alpha-1)}(y)y^{\frac{\alpha}{2}-1}\mathrm{e}^{-\frac{y}{2}}dy\right]\nonumber\\
&=&\frac{1}{2}\sqrt{\frac{\Gamma(2N+1)}{\Gamma(2N+\alpha)}}\left[2\int_{0}^{x}L_{2N}^{(\alpha-1)}(y)y^{\frac{\alpha}{2}-1}\mathrm{e}^{-\frac{y}{2}}dy
-\int_{0}^{\infty}L_{2N}^{(\alpha-1)}(y)y^{\frac{\alpha}{2}-1}\mathrm{e}^{-\frac{y}{2}}dy\right]\nonumber\\
&=&\frac{1}{2}\sqrt{\frac{\Gamma(2N+1)}{\Gamma(2N+\alpha)}}\left[2\int_{0}^{x}L_{2N}^{(\alpha-1)}(y)y^{\frac{\alpha}{2}-1}\mathrm{e}^{-\frac{y}{2}}dy
-\frac{2^{\frac{\alpha}{2}}\Gamma\left(N+\frac{\alpha}{2}\right)}{\Gamma(N+1)}\right],\nonumber
\eea
where we have used the fact that
$$
\int_{0}^{\infty}L_{2N}^{(\alpha-1)}(y)y^{\frac{\alpha}{2}-1}\mathrm{e}^{-\frac{y}{2}}dy=\frac{2^{\frac{\alpha}{2}}\Gamma\left(N+\frac{\alpha}{2}\right)}{\Gamma(N+1)}.
$$
Continuing,
\bea
&&\varepsilon\widetilde{\varphi}_{2N}^{(\alpha-1)}\left(\frac{x^{2}}{8N}\right)\nonumber\\
&=&\frac{1}{2}\sqrt{\frac{\Gamma(2N+1)}{\Gamma(2N+\alpha)}}\left[\frac{1}{2N}\int_{0}^{x}L_{2N}^{(\alpha-1)}\left(\frac{y^{2}}{8N}\right)
\left(\frac{y^{2}}{8N}\right)^{\frac{\alpha}{2}-1}\mathrm{e}^{-\frac{y^{2}}{16N}}ydy
-\frac{2^{\frac{\alpha}{2}}\Gamma\left(N+\frac{\alpha}{2}\right)}{\Gamma(N+1)}\right]\nonumber\\
&=&(2N)^{-\frac{1}{2}}\left(2N+\frac{\alpha}{2}\right)^{-\frac{\alpha-1}{2}}\sqrt{\frac{\Gamma(2N+\alpha)}{\Gamma(2N+1)}}
\int_{0}^{x}J_{\alpha-1}(y)dy-2^{\frac{\alpha}{2}-1}\sqrt{\frac{\Gamma(2N+1)}{\Gamma(2N+\alpha)}}\cdot\frac{\Gamma\left(N+\frac{\alpha}{2}\right)}{\Gamma(N+1)}
+O\left(N^{-\frac{5}{2}}\right)\nonumber\\
&=&(2N)^{-\frac{1}{2}}\left[\int_{0}^{x}J_{\alpha-1}(y)dy-1\right]+O\left(N^{-\frac{5}{2}}\right),\;\;N\rightarrow\infty.\nonumber
\eea
Similarly,
\bea
&&\varepsilon\widetilde{\varphi}_{2N+1}^{(\alpha-1)}(x)\nonumber\\
&=&\frac{1}{2}\left[\int_{0}^{x}\widetilde{\varphi}_{2N+1}^{(\alpha-1)}(y)dy-\int_{x}^{\infty}\widetilde{\varphi}_{2N+1}^{(\alpha-1)}(y)dy\right]\nonumber\\
&=&\frac{1}{2}\sqrt{\frac{\Gamma(2N+2)}{\Gamma(2N+\alpha+1)}}\left[\int_{0}^{x}L_{2N+1}^{(\alpha-1)}(y)y^{\frac{\alpha}{2}-1}\mathrm{e}^{-\frac{y}{2}}dy
-\int_{x}^{\infty}L_{2N+1}^{(\alpha-1)}(y)y^{\frac{\alpha}{2}-1}\mathrm{e}^{-\frac{y}{2}}dy\right]\nonumber\\
&=&\frac{1}{2}\sqrt{\frac{\Gamma(2N+2)}{\Gamma(2N+\alpha+1)}}\left[2\int_{0}^{x}L_{2N+1}^{(\alpha-1)}(y)y^{\frac{\alpha}{2}-1}\mathrm{e}^{-\frac{y}{2}}dy
-\int_{0}^{\infty}L_{2N+1}^{(\alpha-1)}(y)y^{\frac{\alpha}{2}-1}\mathrm{e}^{-\frac{y}{2}}dy\right]\nonumber\\
&=&\sqrt{\frac{\Gamma(2N+2)}{\Gamma(2N+\alpha+1)}}\int_{0}^{x}L_{2N+1}^{(\alpha-1)}(y)y^{\frac{\alpha}{2}-1}\mathrm{e}^{-\frac{y}{2}}dy,\nonumber
\eea
where we have used the fact that
$$
\int_{0}^{\infty}L_{2N+1}^{(\alpha-1)}(y)y^{\frac{\alpha}{2}-1}\mathrm{e}^{-\frac{y}{2}}dy=0.
$$
It follows that
\bea
&&\varepsilon\widetilde{\varphi}_{2N+1}^{(\alpha-1)}\left(\frac{x^{2}}{8N}\right)\nonumber\\
&=&\sqrt{\frac{\Gamma(2N+2)}{\Gamma(2N+\alpha+1)}}\int_{0}^{\frac{x^{2}}{8N}}L_{2N+1}^{(\alpha-1)}(y)y^{\frac{\alpha}{2}-1}\mathrm{e}^{-\frac{y}{2}}dy\nonumber\\
&=&\frac{1}{4N}\sqrt{\frac{\Gamma(2N+2)}{\Gamma(2N+\alpha+1)}}\int_{0}^{x}L_{2N+1}^{(\alpha-1)}\left(\frac{y^{2}}{8N}\right)\left(\frac{y^{2}}{8N}\right)^{\frac{\alpha}{2}-1}
\mathrm{e}^{-\frac{y^{2}}{16N}}ydy\nonumber\\
&=&(2N)^{-\frac{1}{2}}\left(2N+1+\frac{\alpha}{2}\right)^{-\frac{\alpha-1}{2}}\sqrt{\frac{\Gamma(2N+\alpha+1)}{\Gamma(2N+2)}}\int_{0}^{x}J_{\alpha-1}(y)dy
+O\left(N^{-\frac{5}{2}}\right)\nonumber\\
&=&(2N)^{-\frac{1}{2}}\int_{0}^{x}J_{\alpha-1}(y)dy+O\left(N^{-\frac{5}{2}}\right),\;\;N\rightarrow\infty.\nonumber
\eea
\end{proof}

We now use Theorem \ref{ba4xx} and Theorem \ref{lse} to compute $\mathrm{Tr}\:T$ and $\mathrm{Tr}\:T^{2}$ as $N\rightarrow\infty$. In the computations below, we replace $f(x)$ by $f\left(\sqrt{8Nx}\right)$ and $f'(x)$ by
$$
f'\left(\sqrt{8Nx}\right)=\frac{d}{\sqrt{8N}d\sqrt{x}}f\left(\sqrt{8Nx}\right).
$$

Consider $\mathrm{Tr}\:T$, which reads,
\bea
\mathrm{Tr}\:T&=&\mathrm{Tr}\:S_{N}f-\mathrm{Tr}\:\frac{1}{2}S_{N}\varepsilon f'-\mathrm{Tr}\:\sqrt{\left(N+\frac{1}{2}\right)\left(N+\frac{\alpha}{2}\right)}
\left(\varepsilon\widetilde{\varphi}_{2N+1}^{(\alpha-1)}\otimes\widetilde{\varphi}_{2N}^{(\alpha-1)}f\right)\nonumber\\
&-&\mathrm{Tr}\:\frac{1}{2}\sqrt{\left(N+\frac{1}{2}\right)\left(N+\frac{\alpha}{2}\right)}
\left(\varepsilon\widetilde{\varphi}_{2N+1}^{(\alpha-1)}\otimes\varepsilon\widetilde{\varphi}_{2N}^{(\alpha-1)}\right)f'.\nonumber
\eea
So we compute $\mathrm{Tr}\:T$ by calculating the four terms in the right side. The first term,
\bea
\mathrm{Tr}\:S_{N}f
&=&\int_{0}^{\infty}S_{N}(x,x)f\left(\sqrt{8Nx}\right)dx\nonumber\\
&=&\int_{0}^{\infty}\frac{x}{4N}S_{N}\left(\frac{x^{2}}{8N},\frac{x^{2}}{8N}\right)f(x)dx\nonumber\\
&\rightarrow&\int_{0}^{\infty}B^{(\alpha-1)}(x,x)f(x)dx,\;\;N\rightarrow\infty.\nonumber
\eea

The second term,
\bea
\mathrm{Tr}\:\frac{1}{2}S_{N}\varepsilon f'
&=&\frac{1}{2}\int_{0}^{\infty}\int_{0}^{\infty}S_{N}(x,y)\varepsilon(y,x)f'\left(\sqrt{8Nx}\right)dx dy\nonumber\\
&=&\frac{1}{4}\int_{0}^{\infty}\left[\int_{x}^{\infty}S_{N}(x,y)dy-\int_{0}^{x}S_{N}(x,y)dy\right]f'\left(\sqrt{8Nx}\right)dx.\nonumber
\eea
Let
$$
u=\sqrt{8Nx},\;\;v=\sqrt{8Ny},
$$
then
\bea
\mathrm{Tr}\:\frac{1}{2}S_{N}\varepsilon f'
&=&\frac{1}{16N}\int_{0}^{\infty}\left[\int_{u}^{\infty}\frac{1}{4N}S_{N}\left(\frac{u^{2}}{8N},\frac{v^{2}}{8N}\right)vdv
-\int_{0}^{u}\frac{1}{4N}S_{N}\left(\frac{u^{2}}{8N},\frac{v^{2}}{8N}\right)vdv\right]u\:f'(u)du\nonumber\\
&\rightarrow&\frac{1}{16N}\int_{0}^{\infty}\left[\int_{u}^{\infty}B^{(\alpha-1)}(u,v)dv
-\int_{0}^{u}B^{(\alpha-1)}(u,v)dv\right]u\:f'(u)du,\;\;N\rightarrow\infty.\nonumber
\eea

The third term,
\bea
&&\mathrm{Tr}\:\sqrt{\left(N+\frac{1}{2}\right)\left(N+\frac{\alpha}{2}\right)}
\left(\varepsilon\widetilde{\varphi}_{2N+1}^{(\alpha-1)}\otimes\widetilde{\varphi}_{2N}^{(\alpha-1)}f\right)\nonumber\\
&=&\sqrt{\left(N+\frac{1}{2}\right)\left(N+\frac{\alpha}{2}\right)}
\int_{0}^{\infty}\varepsilon\widetilde{\varphi}_{2N+1}^{(\alpha-1)}(x)\widetilde{\varphi}_{2N}^{(\alpha-1)}(x)f\left(\sqrt{8Nx}\right)dx\nonumber\\
&=&\frac{\sqrt{\left(N+\frac{1}{2}\right)\left(N+\frac{\alpha}{2}\right)}}{4N}
\int_{0}^{\infty}\varepsilon\widetilde{\varphi}_{2N+1}^{(\alpha-1)}\left(\frac{x^{2}}{8N}\right)\widetilde{\varphi}_{2N}^{(\alpha-1)}\left(\frac{x^{2}}{8N}\right)x f(x)dx\nonumber\\
&=&\frac{1}{2}\int_{0}^{\infty}\left[\int_{0}^{x}J_{\alpha-1}(y)dy\right]J_{\alpha-1}(x)f(x)dx+O\left(N^{-2}\right),\;\;N\rightarrow\infty.\nonumber
\eea

The fourth term,
\bea
&&\mathrm{Tr}\:\frac{1}{2}\sqrt{\left(N+\frac{1}{2}\right)\left(N+\frac{\alpha}{2}\right)}
\left(\varepsilon\widetilde{\varphi}_{2N+1}^{(\alpha-1)}\otimes\varepsilon\widetilde{\varphi}_{2N}^{(\alpha-1)}\right)f'\nonumber\\
&=&\frac{1}{2}\sqrt{\left(N+\frac{1}{2}\right)\left(N+\frac{\alpha}{2}\right)}
\int_{0}^{\infty}\varepsilon\widetilde{\varphi}_{2N+1}^{(\alpha-1)}(x)\varepsilon\widetilde{\varphi}_{2N}^{(\alpha-1)}(x)f'\left(\sqrt{8Nx}\right)dx\nonumber\\
&=&\frac{\sqrt{\left(N+\frac{1}{2}\right)\left(N+\frac{\alpha}{2}\right)}}{8N}
\int_{0}^{\infty}\varepsilon\widetilde{\varphi}_{2N+1}^{(\alpha-1)}\left(\frac{x^{2}}{8N}\right)\varepsilon\widetilde{\varphi}_{2N}^{(\alpha-1)}\left(\frac{x^{2}}{8N}\right)
x\:f'(x)dx\nonumber\\
&=&\frac{1}{16N}\int_{0}^{\infty}\left[\int_{0}^{x}J_{\alpha-1}(y)dy\right]\left[\int_{0}^{x}J_{\alpha-1}(y)dy-1\right]x\:f'(x)dx+O\left(N^{-3}\right),\;\;N\rightarrow\infty.\nonumber
\eea

Therefore,
\bea
\mathrm{Tr}\:T
&=&\int_{0}^{\infty}B^{(\alpha-1)}(x,x)f(x)dx-\frac{1}{2}\int_{0}^{\infty}\left[\int_{0}^{x}J_{\alpha-1}(y)dy\right]J_{\alpha-1}(x)f(x)dx\nonumber\\
&-&\frac{1}{16N}\int_{0}^{\infty}\left[\int_{x}^{\infty}B^{(\alpha-1)}(x,y)dy-\int_{0}^{x}B^{(\alpha-1)}(x,y)dy\right]x\:f'(x)dx\nonumber\\
&-&\frac{1}{16N}\int_{0}^{\infty}\left[\int_{0}^{x}J_{\alpha-1}(y)dy\right]\left[\int_{0}^{x}J_{\alpha-1}(y)dy-1\right]x\:f'(x)dx+O\left(N^{-2}\right),\;\;N\rightarrow\infty.\nonumber
\eea

Next, we compute $\mathrm{Tr}\:T^{2}$, where there are 10 traces,
\bea
\mathrm{Tr}\:T^{2}
&=&\mathrm{Tr}\:S_{N}f S_{N}f-\mathrm{Tr}\:S_{N}f S_{N}\varepsilon f'-\mathrm{Tr}\:2\sqrt{\left(N+\frac{1}{2}\right)\left(N+\frac{\alpha}{2}\right)}
S_{N}f\left(\varepsilon\widetilde{\varphi}_{2N+1}^{(\alpha-1)}\otimes\widetilde{\varphi}_{2N}^{(\alpha-1)}f\right)\nonumber\\
&-&\mathrm{Tr}\:\sqrt{\left(N+\frac{1}{2}\right)\left(N+\frac{\alpha}{2}\right)}S_{N}f\left(\varepsilon\widetilde{\varphi}_{2N+1}^{(\alpha-1)}
\otimes\varepsilon\widetilde{\varphi}_{2N}^{(\alpha-1)}\right)f'+\mathrm{Tr}\:\frac{1}{4}S_{N}\varepsilon f'S_{N}\varepsilon f'\nonumber\\
&+&\mathrm{Tr}\:\sqrt{\left(N+\frac{1}{2}\right)\left(N+\frac{\alpha}{2}\right)}
S_{N}\varepsilon f'\left(\varepsilon\widetilde{\varphi}_{2N+1}^{(\alpha-1)}\otimes\widetilde{\varphi}_{2N}^{(\alpha-1)}f\right)\nonumber\\
&+&\mathrm{Tr}\:\frac{1}{2}\sqrt{\left(N+\frac{1}{2}\right)\left(N+\frac{\alpha}{2}\right)}
S_{N}\varepsilon f'\left(\varepsilon\widetilde{\varphi}_{2N+1}^{(\alpha-1)}\otimes\varepsilon\widetilde{\varphi}_{2N}^{(\alpha-1)}\right)f'\nonumber\\
&+&\mathrm{Tr}\:\left(N+\frac{1}{2}\right)\left(N+\frac{\alpha}{2}\right)
\left(\varepsilon\widetilde{\varphi}_{2N+1}^{(\alpha-1)}\otimes\widetilde{\varphi}_{2N}^{(\alpha-1)}f\right)
\left(\varepsilon\widetilde{\varphi}_{2N+1}^{(\alpha-1)}\otimes\widetilde{\varphi}_{2N}^{(\alpha-1)}f\right)\nonumber\\
&+&\mathrm{Tr}\:\left(N+\frac{1}{2}\right)\left(N+\frac{\alpha}{2}\right)
\left(\varepsilon\widetilde{\varphi}_{2N+1}^{(\alpha-1)}\otimes\widetilde{\varphi}_{2N}^{(\alpha-1)}f\right)
\left(\varepsilon\widetilde{\varphi}_{2N+1}^{(\alpha-1)}\otimes\varepsilon\widetilde{\varphi}_{2N}^{(\alpha-1)}\right)f'\nonumber\\
&+&\mathrm{Tr}\:\frac{1}{4}\left(N+\frac{1}{2}\right)\left(N+\frac{\alpha}{2}\right)
\left(\varepsilon\widetilde{\varphi}_{2N+1}^{(\alpha-1)}\otimes\varepsilon\widetilde{\varphi}_{2N}^{(\alpha-1)}\right)f'
\left(\varepsilon\widetilde{\varphi}_{2N+1}^{(\alpha-1)}\otimes\varepsilon\widetilde{\varphi}_{2N}^{(\alpha-1)}\right)f'.\nonumber
\eea
In the following, we need to calculate the traces on the right side term by term. The first term,
\bea
\mathrm{Tr}\:S_{N}f S_{N}f
&=&\int_{0}^{\infty}\int_{0}^{\infty}S_{N}(x,y)f\left(\sqrt{8Ny}\right)S_{N}(y,x)f\left(\sqrt{8Nx}\right)dxdy\nonumber\\
&=&\int_{0}^{\infty}\int_{0}^{\infty}\frac{y}{4N}S_{N}\left(\frac{x^{2}}{8N},\frac{y^{2}}{8N}\right)\frac{x}{4N}S_{N}\left(\frac{y^{2}}{8N},\frac{x^{2}}{8N}\right) f(x)f(y)dxdy\nonumber\\
&\rightarrow&\int_{0}^{\infty}\int_{0}^{\infty}B^{(\alpha-1)}(x,y)B^{(\alpha-1)}(y,x)f(x)f(y)dxdy,\;\;N\rightarrow\infty.\nonumber
\eea

The second term,
\bea
&&\mathrm{Tr}\:S_{N}f S_{N}\varepsilon f'\nonumber\\
&=&\frac{1}{8N}\int_{0}^{\infty}\int_{0}^{\infty}\frac{v}{4N}S_{N}\left(\frac{u^{2}}{8N},\frac{v^{2}}{8N}\right)
\left[\int_{u}^{\infty}\frac{w}{4N}S_{N}\left(\frac{v^{2}}{8N},\frac{w^{2}}{8N}\right)dw
-\int_{0}^{u}\frac{w}{4N}S_{N}\left(\frac{v^{2}}{8N},\frac{w^{2}}{8N}\right)dw\right]\nonumber\\
&&u\:f'(u)f(v)du dv\nonumber\\
&\rightarrow&\frac{1}{8N}\int_{0}^{\infty}\int_{0}^{\infty}B^{(\alpha-1)}(u,v)
\left[\int_{u}^{\infty}B^{(\alpha-1)}(v,w)dw-\int_{0}^{u}B^{(\alpha-1)}(v,w)dw\right]u\:f'(u)f(v)du dv,\;\;N\rightarrow\infty.\nonumber
\eea

The third term,
\bea
&&\mathrm{Tr}\:2\sqrt{\left(N+\frac{1}{2}\right)\left(N+\frac{\alpha}{2}\right)}
S_{N}f\left(\varepsilon\widetilde{\varphi}_{2N+1}^{(\alpha-1)}\otimes\widetilde{\varphi}_{2N}^{(\alpha-1)}f\right)\nonumber\\
&=&\frac{1}{2N}\sqrt{\left(N+\frac{1}{2}\right)\left(N+\frac{\alpha}{2}\right)}\int_{0}^{\infty}\int_{0}^{\infty}
\frac{1}{4N}S_{N}\left(\frac{u^{2}}{8N},\frac{v^{2}}{8N}\right)v\:\widetilde{\varphi}_{2N}^{(\alpha-1)}\left(\frac{u^{2}}{8N}\right)
\varepsilon\widetilde{\varphi}_{2N+1}^{(\alpha-1)}\left(\frac{v^{2}}{8N}\right)u f(u)f(v)dudv\nonumber\\
&=&\int_{0}^{\infty}\int_{0}^{\infty}B^{(\alpha-1)}(u,v)J_{\alpha-1}(u)\left[\int_{0}^{v}J_{\alpha-1}(t)dt\right]f(u)f(v)dudv+O\left(N^{-2}\right),\;\;N\rightarrow\infty.\nonumber
\eea

The fourth term,
\bea
&&\mathrm{Tr}\:\sqrt{\left(N+\frac{1}{2}\right)\left(N+\frac{\alpha}{2}\right)}S_{N}f\left(\varepsilon\widetilde{\varphi}_{2N+1}^{(\alpha-1)}
\otimes\varepsilon\widetilde{\varphi}_{2N}^{(\alpha-1)}\right)f'\nonumber\\
&=&\frac{1}{4N}\sqrt{\left(N+\frac{1}{2}\right)\left(N+\frac{\alpha}{2}\right)}\int_{0}^{\infty}\int_{0}^{\infty}\frac{1}{4N}S_{N}\left(\frac{u^{2}}{8N},\frac{v^{2}}{8N}\right)v\:f(v)
\varepsilon\widetilde{\varphi}_{2N+1}^{(\alpha-1)}\left(\frac{v^{2}}{8N}\right)\varepsilon\widetilde{\varphi}_{2N}^{(\alpha-1)}\left(\frac{u^{2}}{8N}\right)u\:f'(u)dudv\nonumber\\
&=&\frac{1}{8N}\int_{0}^{\infty}\int_{0}^{\infty}B^{(\alpha-1)}(u,v)\left[\int_{0}^{u}J_{\alpha-1}(t)dt-1\right]\left[\int_{0}^{v}J_{\alpha-1}(t)dt\right]u\:f'(u)f(v)dudv
+O\left(N^{-3}\right),\;\;N\rightarrow\infty.\nonumber
\eea

The fifth term,
\bea
&&\mathrm{Tr}\:\frac{1}{4}S_{N}\varepsilon f'S_{N}\varepsilon f'\nonumber\\
&=&\frac{1}{4}\int_{0}^{\infty}\int_{0}^{\infty}\int_{0}^{\infty}\int_{0}^{\infty}S_{N}(x,y)\varepsilon(y,z)f'\left(\sqrt{8Nz}\right)S_{N}(z,t)\varepsilon(t,x) f'\left(\sqrt{8Nx}\right)dxdydzdt\nonumber\\
&=&\frac{1}{16}\int_{0}^{\infty}\int_{0}^{\infty}\left[\int_{z}^{\infty}S_{N}(x,y)dy-\int_{0}^{z}S_{N}(x,y)dy\right]
\left[\int_{x}^{\infty}S_{N}(z,t)dt-\int_{0}^{x}S_{N}(z,t)dt\right]\nonumber\\
&&f'\left(\sqrt{8Nx}\right)f'\left(\sqrt{8Nz}\right)dxdz\nonumber\\
&=&\frac{1}{256N^{2}}\int_{0}^{\infty}\int_{0}^{\infty}\left[\int_{w}^{\infty}\frac{1}{4N}S_{N}\left(\frac{u^{2}}{8N},\frac{v^{2}}{8N}\right)vdv
-\int_{0}^{w}\frac{1}{4N}S_{N}\left(\frac{u^{2}}{8N},\frac{v^{2}}{8N}\right)vdv\right]\nonumber\\
&&\left[\int_{u}^{\infty}\frac{1}{4N}S_{N}\left(\frac{w^{2}}{8N},\frac{\tau^{2}}{8N}\right)\tau d\tau
-\int_{0}^{u}\frac{1}{4N}S_{N}\left(\frac{w^{2}}{8N},\frac{\tau^{2}}{8N}\right)\tau d\tau\right]u\:w\:f'(u)f'(w)dudw\nonumber\\
&=&O\left(\frac{1}{N^{2}}\right),\;\;N\rightarrow\infty.\nonumber
\eea

The sixth term,
\bea
&&\mathrm{Tr}\:\sqrt{\left(N+\frac{1}{2}\right)\left(N+\frac{\alpha}{2}\right)}
S_{N}\varepsilon f'\left(\varepsilon\widetilde{\varphi}_{2N+1}^{(\alpha-1)}\otimes\widetilde{\varphi}_{2N}^{(\alpha-1)}f\right)\nonumber\\
&=&\frac{1}{32N^{2}}\sqrt{\left(N+\frac{1}{2}\right)\left(N+\frac{\alpha}{2}\right)}\int_{0}^{\infty}\int_{0}^{\infty}
\left[\int_{w}^{\infty}\frac{v}{4N}S_{N}\left(\frac{u^{2}}{8N},\frac{v^{2}}{8N}\right)dv-\int_{0}^{w}\frac{v}{4N}S_{N}\left(\frac{u^{2}}{8N},\frac{v^{2}}{8N}\right)dv\right]
\nonumber\\
&&f'(w)\varepsilon\widetilde{\varphi}_{2N+1}^{(\alpha-1)}\left(\frac{w^{2}}{8N}\right)\widetilde{\varphi}_{2N}^{(\alpha-1)}\left(\frac{u^{2}}{8N}\right)u\:w\:f(u)dudw\nonumber\\
&=&\frac{1}{16N}\int_{0}^{\infty}\int_{0}^{\infty}
\left[\int_{w}^{\infty}B^{(\alpha-1)}(u,v)dv-\int_{0}^{w}B^{(\alpha-1)}(u,v)dv\right]\left[\int_{0}^{w}J_{\alpha-1}(t)dt\right]J_{\alpha-1}(u)w\:f'(w)f(u)dudw\nonumber\\
&+&O\left(N^{-3}\right),\;\;N\rightarrow\infty.\nonumber
\eea

The seventh term,
\bea
&&\mathrm{Tr}\:\frac{1}{2}\sqrt{\left(N+\frac{1}{2}\right)\left(N+\frac{\alpha}{2}\right)}
S_{N}\varepsilon f'\left(\varepsilon\widetilde{\varphi}_{2N+1}^{(\alpha-1)}\otimes\varepsilon\widetilde{\varphi}_{2N}^{(\alpha-1)}\right)f'\nonumber\\
&=&\frac{1}{64N^{2}}\sqrt{\left(N+\frac{1}{2}\right)\left(N+\frac{\alpha}{2}\right)}\int_{0}^{\infty}\int_{0}^{\infty}\left[\int_{w}^{\infty}
\frac{v}{4N}S_{N}\left(\frac{u^{2}}{8N},\frac{v^{2}}{8N}\right)dv-\int_{0}^{w}\frac{v}{4N}S_{N}\left(\frac{u^{2}}{8N},\frac{v^{2}}{8N}\right)dv\right]
\nonumber\\
&&f'(w)\varepsilon\widetilde{\varphi}_{2N+1}^{(\alpha-1)}\left(\frac{w^{2}}{8N}\right)\varepsilon\widetilde{\varphi}_{2N}^{(\alpha-1)}\left(\frac{u^{2}}{8N}\right)
f'(u)uwdudw\nonumber\\
&=&O\left(\frac{1}{N^{2}}\right),\;\;N\rightarrow\infty.\nonumber
\eea

The eighth term,
\bea
&&\mathrm{Tr}\:\left(N+\frac{1}{2}\right)\left(N+\frac{\alpha}{2}\right)
\left(\varepsilon\widetilde{\varphi}_{2N+1}^{(\alpha-1)}\otimes\widetilde{\varphi}_{2N}^{(\alpha-1)}f\right)
\left(\varepsilon\widetilde{\varphi}_{2N+1}^{(\alpha-1)}\otimes\widetilde{\varphi}_{2N}^{(\alpha-1)}f\right)\nonumber\\
&=&\frac{\left(N+\frac{1}{2}\right)\left(N+\frac{\alpha}{2}\right)}{16N^{2}}\int_{0}^{\infty}\int_{0}^{\infty}
\varepsilon\widetilde{\varphi}_{2N+1}^{(\alpha-1)}\left(\frac{u^{2}}{8N}\right)\varepsilon\widetilde{\varphi}_{2N+1}^{(\alpha-1)}\left(\frac{v^{2}}{8N}\right)
\widetilde{\varphi}_{2N}^{(\alpha-1)}\left(\frac{u^{2}}{8N}\right)\widetilde{\varphi}_{2N}^{(\alpha-1)}\left(\frac{v^{2}}{8N}\right)
uvf(u)f(v)dudv\nonumber\\
&=&\frac{1}{4}\int_{0}^{\infty}\int_{0}^{\infty}\left[\int_{0}^{u}J_{\alpha-1}(t)dt\right]\left[\int_{0}^{v}J_{\alpha-1}(t)dt\right]J_{\alpha-1}(u)J_{\alpha-1}(v)f(u)f(v)dudv
+O\left(N^{-2}\right),\;\;N\rightarrow\infty.\nonumber
\eea

The ninth term,
\bea
&&\mathrm{Tr}\:\left(N+\frac{1}{2}\right)\left(N+\frac{\alpha}{2}\right)
\left(\varepsilon\widetilde{\varphi}_{2N+1}^{(\alpha-1)}\otimes\widetilde{\varphi}_{2N}^{(\alpha-1)}f\right)
\left(\varepsilon\widetilde{\varphi}_{2N+1}^{(\alpha-1)}\otimes\varepsilon\widetilde{\varphi}_{2N}^{(\alpha-1)}\right)f'\nonumber\\
&=&\frac{\left(N+\frac{1}{2}\right)\left(N+\frac{\alpha}{2}\right)}{16N^{2}}\int_{0}^{\infty}\int_{0}^{\infty}
\varepsilon\widetilde{\varphi}_{2N+1}^{(\alpha-1)}\left(\frac{u^{2}}{8N}\right)\varepsilon\widetilde{\varphi}_{2N+1}^{(\alpha-1)}\left(\frac{v^{2}}{8N}\right)
\varepsilon\widetilde{\varphi}_{2N}^{(\alpha-1)}\left(\frac{u^{2}}{8N}\right)\widetilde{\varphi}_{2N}^{(\alpha-1)}\left(\frac{v^{2}}{8N}\right)\nonumber\\
&&uv\: f'(u)f(v)dudv\nonumber\\
&=&\frac{1}{16N}\int_{0}^{\infty}\int_{0}^{\infty}\left[\int_{0}^{u}J_{\alpha-1}(t)dt\right]\left[\int_{0}^{v}J_{\alpha-1}(t)dt\right]
\left[\int_{0}^{u}J_{\alpha-1}(t)dt-1\right]J_{\alpha-1}(v)u\:f'(u)f(v)dudv\nonumber\\
&+&O\left(N^{-3}\right),\;\;N\rightarrow\infty.\nonumber
\eea

The tenth term,
\bea
&&\mathrm{Tr}\:\frac{1}{4}\left(N+\frac{1}{2}\right)\left(N+\frac{\alpha}{2}\right)
\left(\varepsilon\widetilde{\varphi}_{2N+1}^{(\alpha-1)}\otimes\varepsilon\widetilde{\varphi}_{2N}^{(\alpha-1)}\right)f'
\left(\varepsilon\widetilde{\varphi}_{2N+1}^{(\alpha-1)}\otimes\varepsilon\widetilde{\varphi}_{2N}^{(\alpha-1)}\right)f'\nonumber\\
&=&\frac{1}{64N^{2}}\left(N+\frac{1}{2}\right)\left(N+\frac{\alpha}{2}\right)\int_{0}^{\infty}\int_{0}^{\infty}
\varepsilon\widetilde{\varphi}_{2N+1}^{(\alpha-1)}\left(\frac{u^{2}}{8N}\right)\varepsilon\widetilde{\varphi}_{2N+1}^{(\alpha-1)}\left(\frac{v^{2}}{8N}\right)
\varepsilon\widetilde{\varphi}_{2N}^{(\alpha-1)}\left(\frac{u^{2}}{8N}\right)\varepsilon\widetilde{\varphi}_{2N}^{(\alpha-1)}\left(\frac{v^{2}}{8N}\right)\nonumber\\
&&uv\:f'(u)f'(v)dudv\nonumber\\
&=&O\left(\frac{1}{N^{2}}\right),\;\;N\rightarrow\infty.\nonumber
\eea

Therefore,
\bea
&&\mathrm{Tr}\:T^{2}\nonumber\\
&=&\int_{0}^{\infty}\int_{0}^{\infty}B^{(\alpha-1)}(x,y)B^{(\alpha-1)}(y,x)f(x)f(y)dxdy\nonumber\\
&-&\int_{0}^{\infty}\int_{0}^{\infty}B^{(\alpha-1)}(x,y)J_{\alpha-1}(x)\left[\int_{0}^{y}J_{\alpha-1}(z)dz\right]f(x)f(y)dxdy\nonumber\\
&+&\frac{1}{4}\int_{0}^{\infty}\int_{0}^{\infty}\left[\int_{0}^{x}J_{\alpha-1}(z)dz\right]\left[\int_{0}^{y}J_{\alpha-1}(z)dz\right]J_{\alpha-1}(x)J_{\alpha-1}(y)f(x)f(y)dxdy\nonumber\\
&-&\frac{1}{8N}\int_{0}^{\infty}\int_{0}^{\infty}B^{(\alpha-1)}(x,y)\left[\int_{x}^{\infty}B^{(\alpha-1)}(y,z)dz-\int_{0}^{x}B^{(\alpha-1)}(y,z)dz\right]x\:f'(x)f(y)dx dy\nonumber\\
&-&\frac{1}{8N}\int_{0}^{\infty}\int_{0}^{\infty}B^{(\alpha-1)}(x,y)\left[\int_{0}^{x}J_{\alpha-1}(z)dz-1\right]\left[\int_{0}^{y}J_{\alpha-1}(z)dz\right]x\:f'(x)f(y)dxdy\nonumber\\
&+&\frac{1}{16N}\int_{0}^{\infty}\int_{0}^{\infty}
\left[\int_{x}^{\infty}B^{(\alpha-1)}(y,z)dz-\int_{0}^{x}B^{(\alpha-1)}(y,z)dz\right]\left[\int_{0}^{x}J_{\alpha-1}(z)dz\right]J_{\alpha-1}(y)x\:f'(x)f(y)dxdy\nonumber\\
&+&\frac{1}{16N}\int_{0}^{\infty}\int_{0}^{\infty}\left[\int_{0}^{x}J_{\alpha-1}(z)dz\right]\left[\int_{0}^{y}J_{\alpha-1}(z)dz\right]
\left[\int_{0}^{x}J_{\alpha-1}(z)dz-1\right]J_{\alpha-1}(y)x\:f'(x)f(y)dxdy\nonumber\\
&+&O\left(N^{-2}\right),\;\;N\rightarrow\infty.\nonumber
\eea

Now we want to see the mean and variance of the linear statistics $\sum_{j=1}^{N}F\left(\sqrt{8Nx_{j}}\right)$, so we need to obtain the coefficients of $\lambda$ and $\lambda^{2}$,
firstly we know
$$
f\left(\sqrt{8Nx}\right)\approx-\lambda F\left(\sqrt{8Nx}\right)+\frac{\lambda^{2}}{2}F^{2}\left(\sqrt{8Nx}\right),
$$
then we replace $f$ with $-\lambda F+\frac{\lambda^{2}}{2}F^{2}$ in the expression of $\mathrm{Tr}\:T$ and $\mathrm{Tr}\:T^{2}$,
similar to previous\\
discussions, denote by $\mu_{N}^{(LSE,\:\alpha)}$ and $\mathcal{V}_{N}^{(LSE,\:\alpha)}$ the mean and variance of the linear statistics $\sum_{j=1}^{N}F\left(\sqrt{8Nx_{j}}\right)$, we have the following theorem.
\begin{theorem}
As $N\rightarrow\infty$,
\bea
\mu_{N}^{(LSE,\:\alpha)}
&=&\frac{1}{2}\:\mu_{N}^{(LUE,\:\alpha-1)}
-\frac{1}{4}\int_{0}^{\infty}\left[\int_{0}^{x}J_{\alpha-1}(y)dy\right]J_{\alpha-1}(x)F(x)dx\nonumber\\
&-&\frac{1}{32N}\int_{0}^{\infty}\left[\int_{x}^{\infty}B^{(\alpha-1)}(x,y)dy-\int_{0}^{x}B^{(\alpha-1)}(x,y)dy\right]x\:F'(x)dx\nonumber\\
&-&\frac{1}{32N}\int_{0}^{\infty}\left[\int_{0}^{x}J_{\alpha-1}(y)dy\right]\left[\int_{0}^{x}J_{\alpha-1}(y)dy-1\right]x\:F'(x)dx+O\left(N^{-2}\right),\nonumber
\eea
\bea
\mathcal{V}_{N}^{(LSE,\:\alpha)}
&=&\frac{1}{2}\:\mathcal{V}_{N}^{(LUE,\:\alpha-1)}-\frac{1}{4}\int_{0}^{\infty}\left[\int_{0}^{x}J_{\alpha-1}(y)dy\right]J_{\alpha-1}(x)F^{2}(x)dx\nonumber\\
&+&\frac{1}{2}\int_{0}^{\infty}\int_{0}^{\infty}B^{(\alpha-1)}(x,y)J_{\alpha-1}(x)\left[\int_{0}^{y}J_{\alpha-1}(z)dz\right]F(x)F(y)dxdy\nonumber\\
&-&\frac{1}{8}\int_{0}^{\infty}\int_{0}^{\infty}\left[\int_{0}^{x}J_{\alpha-1}(z)dz\right]\left[\int_{0}^{y}J_{\alpha-1}(z)dz\right]J_{\alpha-1}(x)J_{\alpha-1}(y)F(x)F(y)dxdy\nonumber\\
&-&\frac{1}{16N}\int_{0}^{\infty}\left[\int_{x}^{\infty}B^{(\alpha-1)}(x,y)dy-\int_{0}^{x}B^{(\alpha-1)}(x,y)dy\right]x\:F(x)F'(x)dx\nonumber\\
&-&\frac{1}{16N}\int_{0}^{\infty}\left[\int_{0}^{x}J_{\alpha-1}(y)dy\right]\left[\int_{0}^{x}J_{\alpha-1}(y)dy-1\right]x\:F(x)F'(x)dx\nonumber\\
&+&\frac{1}{16N}\int_{0}^{\infty}\int_{0}^{\infty}B^{(\alpha-1)}(x,y)\left[\int_{x}^{\infty}B^{(\alpha-1)}(y,z)dz-\int_{0}^{x}B^{(\alpha-1)}(y,z)dz\right]x\:F'(x)F(y)dx dy\nonumber\\
&+&\frac{1}{16N}\int_{0}^{\infty}\int_{0}^{\infty}B^{(\alpha-1)}(x,y)\left[\int_{0}^{x}J_{\alpha-1}(z)dz-1\right]\left[\int_{0}^{y}J_{\alpha-1}(z)dz\right]x\:F'(x)F(y)dxdy\nonumber\\
&-&\frac{1}{32N}\int_{0}^{\infty}\int_{0}^{\infty}
\left[\int_{x}^{\infty}B^{(\alpha-1)}(y,z)dz-\int_{0}^{x}B^{(\alpha-1)}(y,z)dz\right]\left[\int_{0}^{x}J_{\alpha-1}(z)dz\right]J_{\alpha-1}(y)\nonumber\\
&&x\:F'(x)F(y)dxdy\nonumber\\
&-&\frac{1}{32N}\int_{0}^{\infty}\int_{0}^{\infty}\left[\int_{0}^{x}J_{\alpha-1}(z)dz\right]\left[\int_{0}^{y}J_{\alpha-1}(z)dz\right]
\left[\int_{0}^{x}J_{\alpha-1}(z)dz-1\right]J_{\alpha-1}(y)x\:F'(x)F(y)dxdy\nonumber\\
&+&O\left(N^{-2}\right),\nonumber
\eea
where $\mu_{N}^{(LUE,\:\alpha-1)}$ and $\mathcal{V}_{N}^{(LUE,\:\alpha-1)}$ for $N\rightarrow\infty$ are given in (\ref{luem}) and (\ref{luev}) respectively with $\alpha$ replaced by $\alpha-1$.
\end{theorem}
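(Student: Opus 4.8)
The plan is to read off $\mu_N^{(LSE,\alpha)}$ and $\mathcal{V}_N^{(LSE,\alpha)}$ as the coefficients of $\lambda$ and $\lambda^2$ in the cumulant generating function, using the expansions of $\mathrm{Tr}\,T$ and $\mathrm{Tr}\,T^2$ already obtained above. Starting from $[G_N^{(4)}(f)]^2=\det(I+T)$ and the trace-log expansion, I would write $2\log G_N^{(4)}(f)=\mathrm{Tr}\,T-\tfrac12\mathrm{Tr}\,T^2+\tfrac13\mathrm{Tr}\,T^3-\cdots$. Since $G_N^{(4)}(f)=\mathbb{E}_{4}\big(\mathrm{e}^{-\lambda\sum_j F(\sqrt{8Nx_j})}\big)$ is the moment generating function of the linear statistic, its logarithm is the cumulant generating function, so $\log G_N^{(4)}=-\lambda\,\mu_N^{(LSE,\alpha)}+\tfrac{\lambda^2}{2}\mathcal{V}_N^{(LSE,\alpha)}+O(\lambda^3)$, and the whole task is to expose these two coefficients.

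The key structural observation is that $T$ is affine-linear in the pair $(f,f')$, so $\mathrm{Tr}\,T$ is a linear and $\mathrm{Tr}\,T^2$ a quadratic functional of $f$. Substituting $f\mapsto-\lambda F+\tfrac{\lambda^2}{2}F^2$ renders $T=O(\lambda)$, whence $\mathrm{Tr}\,T^k=O(\lambda^k)$ for $k\ge 3$, so the higher traces cannot affect the coefficients of $\lambda$ and $\lambda^2$; this is precisely what legitimizes truncating at $\mathrm{Tr}\,T^2$ (together with the $\nu\to 1$ continuation already assumed). Collecting powers of $\lambda$ and using the factor $\tfrac12$ coming from the square on the left then yields $\mu_N^{(LSE,\alpha)}=\tfrac12\,\mathrm{Tr}\,T|_{f=F}$ and $\mathcal{V}_N^{(LSE,\alpha)}=\tfrac12\,\mathrm{Tr}\,T|_{f=F^2}-\tfrac12\,\mathrm{Tr}\,T^2|_{f=F}$. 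I would sanity-check these two formulas against the GSE theorem, whose stated corrections are reproduced by exactly the same bookkeeping.

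Finally I would substitute the large-$N$ expansions of $\mathrm{Tr}\,T$ and $\mathrm{Tr}\,T^2$. In $\tfrac12\,\mathrm{Tr}\,T|_{f=F^2}$ the leading term $\tfrac12\int_0^\infty B^{(\alpha-1)}(x,x)F^2(x)\,dx$ combines with the leading term $-\tfrac12\int_0^\infty\int_0^\infty B^{(\alpha-1)}(x,y)B^{(\alpha-1)}(y,x)F(x)F(y)\,dx\,dy$ of $-\tfrac12\,\mathrm{Tr}\,T^2|_{f=F}$ to produce $\tfrac12\,\mathcal{V}_N^{(LUE,\alpha-1)}$ through (\ref{luev}) with $\alpha$ replaced by $\alpha-1$; similarly the leading $B^{(\alpha-1)}(x,x)$ term of $\tfrac12\,\mathrm{Tr}\,T|_{f=F}$ is $\tfrac12\,\mu_N^{(LUE,\alpha-1)}$ by (\ref{luem}). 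Every remaining $O(1)$ and $O(1/N)$ contribution is then carried over with the prefactor $\tfrac12$, giving the displayed correction terms.

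The main obstacle is organizational rather than conceptual: one must track signs and the $\tfrac12$ prefactors across the four traces in $\mathrm{Tr}\,T$ and the ten in $\mathrm{Tr}\,T^2$, and verify that each surviving integral lands with the exact coefficient asserted (e.g.\ the $\tfrac{1}{16N}\to\tfrac{1}{32N}$ halving in the mean, and the relative factor $2$ in the $\int\!\int B^{(\alpha-1)}J_{\alpha-1}[\int J_{\alpha-1}]$ variance term). A secondary point deserving an explicit sentence is that the quadratic piece $\tfrac{\lambda^2}{2}F^2$ of the substitution feeds only into $\mathrm{Tr}\,T$ and never into $\mathrm{Tr}\,T^2$ at order $\lambda^2$, which is exactly what cleanly separates the $\tfrac12\,\mathrm{Tr}\,T|_{f=F^2}$ contribution from the $-\tfrac12\,\mathrm{Tr}\,T^2|_{f=F}$ contribution in the variance.
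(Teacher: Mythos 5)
Your proposal is correct and follows essentially the same route as the paper: compute $\mathrm{Tr}\,T$ and $\mathrm{Tr}\,T^{2}$ term by term, substitute $f\mapsto-\lambda F+\tfrac{\lambda^{2}}{2}F^{2}$, and read off the coefficients of $\lambda$ and $\lambda^{2}$ in $\tfrac12\log\det(I+T)$, the higher traces being $O(\lambda^{3})$. Your explicit bookkeeping identities $\mu_{N}^{(LSE,\alpha)}=\tfrac12\,\mathrm{Tr}\,T|_{f=F}$ and $\mathcal{V}_{N}^{(LSE,\alpha)}=\tfrac12\,\mathrm{Tr}\,T|_{f=F^{2}}-\tfrac12\,\mathrm{Tr}\,T^{2}|_{f=F}$ (with $(F^{2})'=2FF'$ accounting for the $\tfrac{1}{32N}\to\tfrac{1}{16N}$ coefficients) reproduce exactly the stated constants, merely making explicit what the paper leaves as "similar to previous discussions."
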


\section{The orthogonal ensembles}
\subsection{General case}
For the orthogonal ensembles, $\beta=1$, the equation (\ref{gnb}), becomes,
$$
G_{N}^{(1)}(f)=C_{N}^{(1)}\int_{[a,b]^n}\prod_{1\leq j<k\leq N}|x_{j}-x_{k}|
\prod_{j=1}^{N}w(x_{j})\left[1+f(x_{j})\right]dx_{j},
$$
and we assume $N$ is even. Here,
$$
C_{N}^{(1)}=\frac{1}{\int_{(a,b)^N}\prod_{1\leq j<k\leq N}\left|x_{j}-x_{k}\right|\prod_{j=1}^{N}w(x_{j})dx_{j}}
$$
depends on $N$.

We also follow the treatment of \cite{Dieng, Tracy1998}, firstly using Theorem \ref{pf} and some computations, we find
$$
\left[G_{N}^{(1)}(f)\right]^{2}=\widehat{C_{N}^{(1)}}\det\left(\int_{a}^{b}\int_{a}^{b}\varepsilon(x,y)\pi_{j}(x)\pi_{k}(y)w(x)w(y)
(1+f(x))(1+f(y))dxdy\right)_{j,k=0}^{N-1},
$$
where $\widehat{C_{N}^{(1)}}$ is a constant depending on $N$ and $\pi_{j}(x)$ is an arbitrary polynomial of degree $j$. Let
\be
\psi_{j}(x)=\pi_{j}(x)w(x),\label{psijx}
\ee
it follows that
$$
\left[G_{N}^{(1)}(f)\right]^{2}=\det\left(I+\left(M^{(1)}\right)^{-1}L^{(1)}\right),
$$
where
$$
M^{(1)}=\left(\int_{a}^{b}\psi_{j}(x)\varepsilon\psi_{k}(x)dx\right)_{j,k=0}^{N-1},
$$
$$
L^{(1)}=\left(\int_{a}^{b}\left(f(x)\psi_{j}(x)\varepsilon\psi_{k}(x)-f(x)\psi_{k}(x)\varepsilon\psi_{j}(x)
-f(x)\psi_{k}(x)\varepsilon(f\psi_{j})(x)\right)dx\right)_{j,k=0}^{N-1}.
$$
If $\left(M^{(1)}\right)^{-1}=(\mu_{jk})_{j,k=0}^{N-1}$, then
$$
\left[G_{N}^{(1)}(f)\right]^{2}=\det(I+K_{N}^{(1)}f),
$$
where $K_{N}^{(1)}$ is an integral operator
$$
K_{N}^{(1)}=
\begin{pmatrix}
-\sum_{j,k=0}^{N-1}\mu_{jk}\psi_{j}\otimes\varepsilon\psi_{k}&\sum_{j,k=0}^{N-1}\mu_{jk}\psi_{j}\otimes\psi_{k}\\
-\sum_{j,k=0}^{N-1}\mu_{jk}\varepsilon\psi_{j}\otimes\varepsilon\psi_{k}-\varepsilon&\sum_{j,k=0}^{N-1}\mu_{jk}\varepsilon\psi_{j}\otimes\psi_{k}\end{pmatrix}
=:\begin{pmatrix}
K_{N,1}^{(1,1)}&K_{N,1}^{(1,2)}\\
K_{N,1}^{(2,1)}&K_{N,1}^{(2,2)}
\end{pmatrix}.
$$
In the next theorem, we obtain relations on $K_{N,1}^{(i,j)}, i,j=1,2$.

\begin{theorem}
$$
K_{N,1}^{(1,2)}=DK_{N,1}^{(2,2)},\;\;K_{N,1}^{(2,1)}=K_{N,1}^{(2,2)}\varepsilon-\varepsilon,\;\;K_{N,1}^{(1,1)}=D K_{N,1}^{(2,2)}\varepsilon. \label{re1}
$$
\end{theorem}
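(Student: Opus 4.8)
The plan is to verify each of the three operator identities by substituting the explicit rank-reduced expressions for the blocks $K_{N,1}^{(i,j)}$ displayed above and manipulating the dyads $\psi_j\otimes\psi_k$, $\varepsilon\psi_j\otimes\psi_k$, $\varepsilon\psi_j\otimes\varepsilon\psi_k$. The only tools needed are Lemma \ref{de} (the fact $D\varepsilon=I$), Lemma \ref{ab} (the rules $A(u\otimes v)=(Au)\otimes v$ and $(u\otimes v)A=u\otimes(A^{t}v)$), and the antisymmetry $\varepsilon^{t}=-\varepsilon$ recorded just before Lemma \ref{de}. This parallels the symplectic computation already carried out in Theorem \ref{re}, so no new machinery is required; the argument is purely formal.

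First I would establish $K_{N,1}^{(1,2)}=DK_{N,1}^{(2,2)}$. Starting from $K_{N,1}^{(2,2)}=\sum_{j,k=0}^{N-1}\mu_{jk}\,\varepsilon\psi_j\otimes\psi_k$ and applying $D$ on the left, the first rule of Lemma \ref{ab} with $A=D$ differentiates only the left factor, yielding $\sum_{j,k}\mu_{jk}\,(D\varepsilon\psi_j)\otimes\psi_k$. By Lemma \ref{de} one has $D\varepsilon\psi_j=\psi_j$ (the direction $D\varepsilon=I$ needs no boundary condition, as is clear from the proof of that lemma), so the sum collapses to $\sum_{j,k}\mu_{jk}\,\psi_j\otimes\psi_k=K_{N,1}^{(1,2)}$.

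Next I would treat $K_{N,1}^{(2,1)}=K_{N,1}^{(2,2)}\varepsilon-\varepsilon$ by multiplying $K_{N,1}^{(2,2)}$ on the right by $\varepsilon$ and invoking the second rule of Lemma \ref{ab}, $(u\otimes v)A=u\otimes(A^{t}v)$. Each dyad becomes $(\varepsilon\psi_j\otimes\psi_k)\varepsilon=\varepsilon\psi_j\otimes(\varepsilon^{t}\psi_k)$, and the antisymmetry $\varepsilon^{t}=-\varepsilon$ supplies the crucial minus sign, $=-\,\varepsilon\psi_j\otimes\varepsilon\psi_k$. Summing gives $K_{N,1}^{(2,2)}\varepsilon=-\sum_{j,k}\mu_{jk}\,\varepsilon\psi_j\otimes\varepsilon\psi_k$, which is precisely $K_{N,1}^{(2,1)}+\varepsilon$ in view of the defining expression $K_{N,1}^{(2,1)}=-\sum_{j,k}\mu_{jk}\,\varepsilon\psi_j\otimes\varepsilon\psi_k-\varepsilon$.

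Finally, for $K_{N,1}^{(1,1)}=DK_{N,1}^{(2,2)}\varepsilon$ I would simply compose the previous two steps: starting from $K_{N,1}^{(2,2)}\varepsilon=-\sum_{j,k}\mu_{jk}\,\varepsilon\psi_j\otimes\varepsilon\psi_k$ and applying $D$ on the left, Lemma \ref{ab} together with $D\varepsilon\psi_j=\psi_j$ reduces this to $-\sum_{j,k}\mu_{jk}\,\psi_j\otimes\varepsilon\psi_k=K_{N,1}^{(1,1)}$. I do not expect any genuine analytic obstacle here; the only bookkeeping hazards are ensuring that $D$ acts on the left tensor factor and that the minus sign from $\varepsilon^{t}=-\varepsilon$ is tracked correctly. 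The one feature distinguishing this from the symplectic Theorem \ref{re} is the inhomogeneous term $-\varepsilon$ in $K_{N,1}^{(2,1)}$, which is carried along transparently and does not affect the derivative computation producing $K_{N,1}^{(1,1)}$.
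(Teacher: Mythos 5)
Your proposal is correct and follows essentially the same route as the paper: both arguments expand $K_{N,1}^{(2,2)}$ as the dyadic sum $\sum_{j,k}\mu_{jk}\,\varepsilon\psi_j\otimes\psi_k$, use $D\varepsilon\psi_j=\psi_j$ from Lemma \ref{de} for the first and third identities, and use $(u\otimes v)\varepsilon=-u\otimes\varepsilon v$ (via $\varepsilon^{t}=-\varepsilon$ and Lemma \ref{ab}) for the second. The paper phrases the derivative step by acting on a test function $g$ rather than citing Lemma \ref{ab} directly, but this is only a presentational difference.
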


\begin{proof}
For any integrable function $g(x)$ supported on $[a,b]$, we have
\bea
D K_{N,1}^{(2,2)}g(x)
&=&\frac{d}{dx}\int_{a}^{b}K_{N,1}^{(2,2)}(x,y)g(y)dy\nonumber\\
&=&\int_{a}^{b}\frac{\partial}{\partial x}K_{N,1}^{(2,2)}(x,y)g(y)dy\nonumber\\
&=&\int_{a}^{b}\frac{\partial}{\partial x}\sum_{j,k=0}^{N-1}\varepsilon\psi_{j}(x)\mu_{jk}\psi_{k}(y)g(y)dy\nonumber\\
&=&\int_{a}^{b}\sum_{j,k=0}^{N-1}(D\varepsilon\psi_{j}(x))\mu_{jk}\psi_{k}(y)g(y)dy\nonumber\\
&=&\int_{a}^{b}\sum_{j,k=0}^{N-1}\psi_{j}(x)\mu_{jk}\psi_{k}(y)g(y)dy\nonumber\\
&=&\int_{a}^{b}K_{N,1}^{(1,2)}(x,y)g(y)dy\nonumber\\
&=&K_{N,1}^{(1,2)}g(x),\nonumber
\eea
which implies,
$$
K_{N,1}^{(1,2)}=D K_{N,1}^{(2,2)}.
$$
Note that
\bea
K_{N,1}^{(2,2)}\varepsilon
&=&\sum_{j,k=0}^{N-1}\mu_{jk}\varepsilon\psi_{j}\otimes\psi_{k}\varepsilon\nonumber\\
&=&-\sum_{j,k=0}^{N-1}\mu_{jk}\varepsilon\psi_{j}\otimes\varepsilon\psi_{k} \label{ot}\\
&=&K_{N,1}^{(2,1)}+\varepsilon,\nonumber
\eea
that is,
$$
K_{N,1}^{(2,1)}=K_{N,1}^{(2,2)}\varepsilon-\varepsilon.
$$
Moreover, from (\ref{ot}),
\bea
D K_{N,1}^{(2,2)}\varepsilon
&=&-\sum_{j,k=0}^{N-1}\mu_{jk}(D\varepsilon\psi_{j})\otimes\varepsilon\psi_{k}\nonumber\\
&=&-\sum_{j,k=0}^{N-1}\mu_{jk}\psi_{j}\otimes\varepsilon\psi_{k}\nonumber\\
&=&K_{N,1}^{(1,1)}.\nonumber
\eea
The proof is complete.
\end{proof}

The series of computations presented below takes the determinant into the form of identity plus scalar operators. From Theorem \ref{re1}, $K_{N}^{(1)}$ can be written as
\bea
K_{N}^{(1)}
&=&\begin{pmatrix}
D K_{N,1}^{(2,2)}\varepsilon&D K_{N,1}^{(2,2)}\\
K_{N,1}^{(2,2)}\varepsilon-\varepsilon&K_{N,1}^{(2,2)}
\end{pmatrix}\nonumber\\
&=&
\begin{pmatrix}
D&0\\
0&I
\end{pmatrix}
\begin{pmatrix}
K_{N,1}^{(2,2)}\varepsilon&K_{N,1}^{(2,2)}\\
K_{N,1}^{(2,2)}\varepsilon-\varepsilon&K_{N,1}^{(2,2)}
\end{pmatrix}\nonumber\\
&=&:\widetilde{A}\widetilde{B},\nonumber
\eea
where
$$
\widetilde{A}:=\begin{pmatrix}
D&0\\
0&I
\end{pmatrix},\;\;\;\;
\widetilde{B}:=\begin{pmatrix}
K_{N,1}^{(2,2)}\varepsilon&K_{N,1}^{(2,2)}\\
K_{N,1}^{(2,2)}\varepsilon-\varepsilon&K_{N,1}^{(2,2)}
\end{pmatrix}.
$$
By Theorem \ref{hs}, we have
$$
\left[G_{N}^{(1)}(f)\right]^{2}=\det\Big(I+\widetilde{A}\widetilde{B}f\Big)=\det\Big(I+\widetilde{B}f\widetilde{A}\Big).
$$
Since
\bea
\widetilde{B}f\widetilde{A}
&=&
\begin{pmatrix}
K_{N,1}^{(2,2)}\varepsilon&K_{N,1}^{(2,2)}\\
K_{N,1}^{(2,2)}\varepsilon-\varepsilon&K_{N,1}^{(2,2)}
\end{pmatrix}
f
\begin{pmatrix}
D&0\\
0&I
\end{pmatrix}
\nonumber\\
&=&
\begin{pmatrix}
K_{N,1}^{(2,2)}\varepsilon&K_{N,1}^{(2,2)}\\
K_{N,1}^{(2,2)}\varepsilon-\varepsilon&K_{N,1}^{(2,2)}
\end{pmatrix}
\begin{pmatrix}
f D&0\\
0&f
\end{pmatrix}
\nonumber\\
&=&\begin{pmatrix}
K_{N,1}^{(2,2)}\varepsilon f D&K_{N,1}^{(2,2)}f\\
K_{N,1}^{(2,2)}\varepsilon f D-\varepsilon f D&K_{N,1}^{(2,2)}f
\end{pmatrix},\nonumber
\eea
then
$$
\left[G_{N}^{(1)}(f)\right]^{2}=\det\begin{pmatrix}
I+K_{N,1}^{(2,2)}\varepsilon f D&K_{N,1}^{(2,2)}f\\
K_{N,1}^{(2,2)}\varepsilon f D-\varepsilon f D&I+K_{N,1}^{(2,2)}f
\end{pmatrix}.
$$

The computation below reduces the above into a determinant of scalar operators. We subtract row 1 from row 2,
$$
\left[G_{N}^{(1)}(f)\right]^{2}=\det
\begin{pmatrix}
I+K_{N,1}^{(2,2)}\varepsilon f D&K_{N,1}^{(2,2)}f\\
-I-\varepsilon f D&I
\end{pmatrix}.
$$
Next, add column 2 times $I+\varepsilon f D$ to column 1,
\bea
\left[G_{N}^{(1)}(f)\right]^{2}
&=&\det
\begin{pmatrix}
I+K_{N,1}^{(2,2)}\varepsilon f D+K_{N,1}^{(2,2)}f(I+\varepsilon f D)&K_{N,1}^{(2,2)}f\\
0&I
\end{pmatrix}\nonumber\\
&=&\det\left(I+K_{N,1}^{(2,2)}(\varepsilon f D+f+f\varepsilon f D)\right).\nonumber
\eea
$\mathbf{Remark}.$ The above result agrees with \cite{Dieng} for the GOE case
if we take $f=-\mu\:\chi_{J}$, where $\chi_{J}$ is the characteristic function of the interval $J$.

Using (\ref{fd}), Lemma \ref{de} and note that $f$ is a Schwartz function, then we have the following theorem.
\begin{theorem}
$$
\left[G_{N}^{(1)}(f)\right]^{2}
=\det\left(I+K_{N,1}^{(2,2)}\left(f^{2}+2f\right)-K_{N,1}^{(2,2)}\varepsilon f'-K_{N,1}^{(2,2)}f\varepsilon f'\right), \label{sca1}
$$
where the kernel of $K_{N,1}^{(2,2)}$ reads
$$
K_{N,1}^{(2,2)}(x,y)=\sum_{j,k=0}^{N-1}\mu_{jk}\varepsilon\psi_{j}(x)\psi_{k}(y).
$$
\end{theorem}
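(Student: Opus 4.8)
The starting point is the expression
$$
\left[G_{N}^{(1)}(f)\right]^{2}=\det\left(I+K_{N,1}^{(2,2)}\left(\varepsilon f D+f+f\varepsilon f D\right)\right)
$$
already derived just above the statement; the entire task is to rewrite the operator $\varepsilon fD+f+f\varepsilon fD$ into the claimed form. The plan is purely algebraic, resting on the two facts assembled in the preliminaries: the commutator identity (\ref{fd}), namely $fD=Df-f'$, and Lemma \ref{de}, which gives $\varepsilon D=I$ on functions that are $C^{1}$ and vanish at the endpoints of the interval of support.

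First I would treat the term $\varepsilon fD$. Substituting (\ref{fd}) gives $\varepsilon fD=\varepsilon Df-\varepsilon f'$. Applied to an arbitrary test function $g$, the operator $\varepsilon Df$ produces $\varepsilon D(fg)$; since $f$ is Schwartz and hence $fg$ decays at the endpoints, Lemma \ref{de} applies and yields $\varepsilon D(fg)=fg$, i.e.\ $\varepsilon Df=f$ as a multiplication operator. Thus $\varepsilon fD=f-\varepsilon f'$. The term $f\varepsilon fD$ is handled in the same way: $f\varepsilon fD=f\varepsilon(Df-f')=f\varepsilon Df-f\varepsilon f'=f^{2}-f\varepsilon f'$, using once more $\varepsilon Df=f$ from Lemma \ref{de}. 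Collecting the three pieces,
$$
\varepsilon fD+f+f\varepsilon fD=\left(f-\varepsilon f'\right)+f+\left(f^{2}-f\varepsilon f'\right)=f^{2}+2f-\varepsilon f'-f\varepsilon f',
$$
and multiplying on the left by $K_{N,1}^{(2,2)}$ reproduces exactly the operator inside the determinant in the statement. The kernel formula for $K_{N,1}^{(2,2)}$ is simply its definition, read off from the block decomposition of $K_{N}^{(1)}$, so no extra argument is required there.

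The one point that needs care — and the only genuine obstacle — is the legitimacy of replacing $\varepsilon D$ by the identity. Lemma \ref{de} was proved for $g\in C^{1}[a,b]$ with $g(a)=g(b)=0$, whereas here I invoke it on the product $fg$. I would justify this by recalling that in the two cases of interest the interval is $\mathbb{R}$ (Gaussian) or $\mathbb{R}^{+}$ (Laguerre), that $f$ lies in the Schwartz space so that $f$ and all products formed from it decay at infinity, and that the relevant functions carry weight factors vanishing at any finite endpoint. Consequently the boundary term $\tfrac12\left[(fg)(a)+(fg)(b)\right]$ appearing in the proof of Lemma \ref{de} vanishes identically. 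This is precisely the role of the phrase ``note that $f$ is a Schwartz function,'' and it is the step I would write out most carefully, since the clean scalar form obtained here is the foundation for the entire large-$N$ analysis of the orthogonal ensembles that follows.
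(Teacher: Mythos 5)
Your proposal is correct and takes essentially the same route as the paper, whose entire justification for this theorem is the one-line remark that it follows from (\ref{fd}), Lemma \ref{de}, and the fact that $f$ is Schwartz. Your computation $\varepsilon fD+f+f\varepsilon fD=f^{2}+2f-\varepsilon f'-f\varepsilon f'$ simply writes out that algebra explicitly, and you correctly identify the only delicate point, namely that $\varepsilon Df=f$ requires the endpoint-vanishing hypothesis of Lemma \ref{de} to hold for the products $fg$ involved.
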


\subsection{GOE}
It is convenient in this case to choose
$$
w(x)=\mathrm{e}^{-\frac{x^{2}}{2}},\;\;x\in \mathbb{R},
$$
following Dieng and Tracy-Widom's discussion \cite{Dieng, Tracy1998}. We want to choose $\psi_{j}$ to make $M^{(1)}$ simplest possible. Define
\be
\psi_{2n+1}(x):=\frac{d}{dx}\varphi_{2n}(x),\;\;\psi_{2n}(x):=\varphi_{2n}(x),\;\;n=0,1,2,\ldots,\label{psi3}
\ee
where $\varphi_{j}(x)$ is given by (\ref{gue}),
$$
\varphi_{j}(x)=\frac{H_{j}(x)}{c_{j}}\mathrm{e}^{-\frac{x^{2}}{2}},\;\;c_{j}=\pi^{\frac{1}{4}}2^{\frac{j}{2}}\sqrt{\Gamma(j+1)}.
$$

We can check that this definition satisfies (\ref{psijx}), i.e., $\psi_{j}(x)=\pi_{j}(x)\mathrm{e}^{-\frac{x^{2}}{2}}, j=0,1,2,\ldots$, where
$\pi_{j}(x)$ is a polynomial of degree $j$.
The matrix $M^{(1)}:=\left(\int_{-\infty}^{\infty}\psi_{j}(x)\varepsilon\psi_{k}(x)dx\right)_{j,k=0}^{N-1}$ is computed below
from (\ref{psi3}).
\begin{lemma}
$\mathrm{\mathbf{(Dieng, Tracy-Widom)}}$
$$
M^{(1)}=\begin{pmatrix}
0&1&0&0&\cdots&0&0\\
-1&0&0&0&\cdots&0&0\\
0&0&0&1&\cdots&0&0\\
0&0&-1&0&\cdots&0&0\\
\vdots&\vdots&\vdots&\vdots&&\vdots&\vdots\\
0&0&0&0&\cdots&0&1\\
0&0&0&0&\cdots&-1&0
\end{pmatrix}_{N\times N}.
$$
\end{lemma}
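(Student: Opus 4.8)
The plan is to exploit two structural features of the definition (\ref{psi3}): the action of $\varepsilon$ on $\psi_{2n+1}$, and the definite parity of the functions $\varphi_j$. First I would record that $M^{(1)}$ is antisymmetric: writing $m_{jk}=\int_{-\infty}^{\infty}\psi_j(x)(\varepsilon\psi_k)(x)\,dx=\iint\psi_j(x)\varepsilon(x,y)\psi_k(y)\,dy\,dx$ and using $\varepsilon(x,y)=-\varepsilon(y,x)$ gives $m_{kj}=-m_{jk}$. In particular all diagonal entries vanish, and it suffices to compute the entries with even row index; the odd--even entries then follow by antisymmetry.

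The key identity is that, since $\psi_{2n+1}=D\varphi_{2n}$ and $\varphi_{2n}(\pm\infty)=0$, Lemma \ref{de} yields $\varepsilon\psi_{2n+1}=\varepsilon D\varphi_{2n}=\varphi_{2n}=\psi_{2n}$. Feeding this into the even--odd entries gives at once $m_{2m,2n+1}=\int_{-\infty}^{\infty}\varphi_{2m}(x)\varphi_{2n}(x)\,dx=\delta_{mn}$ by the orthonormality of the $\varphi_j$; thus the even--odd block contributes $1$ on the diagonal $m=n$ and nothing off it, and by antisymmetry $m_{2n+1,2n}=-1$.

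For the two remaining cases I would invoke parity. From (\ref{gue}) and the parity of the Hermite polynomials, $\varphi_{2n}$ is even and $\varphi_{2n+1}$ is odd, so $\psi_{2n}=\varphi_{2n}$ is even while $\psi_{2n+1}=\varphi_{2n}'$ is odd. Moreover $\varepsilon$ reverses parity: if $g$ is even then $(\varepsilon g)'=g$ is even and $\varepsilon g(0)=\tfrac12\bigl(\int_{-\infty}^0 g-\int_0^\infty g\bigr)=0$, so $\varepsilon g$ is odd. Hence in the even--even case the integrand $\varphi_{2m}\cdot\varepsilon\varphi_{2n}$ is even times odd, i.e. odd, giving $m_{2m,2n}=0$; and in the odd--odd case, using $\varepsilon\psi_{2n+1}=\varphi_{2n}$, the integrand $\psi_{2m+1}\cdot\varphi_{2n}$ is odd times even, again odd, giving $m_{2m+1,2n+1}=0$. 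Assembling the four cases produces exactly the claimed block-diagonal matrix whose diagonal $2\times2$ blocks are $\left(\begin{smallmatrix}0&1\\-1&0\end{smallmatrix}\right)$.

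The main obstacle is the even--even block: the obvious route, integrating by parts in $\int\varphi_{2m}\,\varepsilon\varphi_{2n}$, merely reproduces the antisymmetry relation and is circular, and one cannot argue by decay since $\varepsilon\varphi_{2n}$ does \emph{not} vanish at infinity (its limits are $\pm\tfrac12\int_{-\infty}^{\infty}\varphi_{2n}\neq0$). The clean resolution is the observation that $\varepsilon$ flips parity, so that the integrand is odd and its integral over $\mathbb{R}$ vanishes; recognizing this is the crux of the argument.
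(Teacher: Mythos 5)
Your proof is correct. Note first that the paper does not actually prove this lemma: it is stated with the attribution (Dieng, Tracy--Widom) and no argument is supplied, so your write-up fills a gap rather than duplicating one. All four cases check out: the antisymmetry of $M^{(1)}$ follows from $\varepsilon^{t}=-\varepsilon$; the identity $\varepsilon\psi_{2n+1}=\varepsilon D\varphi_{2n}=\varphi_{2n}$ is a legitimate application of Lemma \ref{de} since $\varphi_{2n}$ has Gaussian decay; the even--odd entries then reduce to the orthonormality of the $\varphi_{j}$; and your observation that $\varepsilon$ reverses parity (an even integrable $g$ has $\varepsilon g$ odd, because $\varepsilon g(0)=0$ and $(\varepsilon g)'=g$) correctly kills the even--even and odd--odd entries as integrals of odd functions over $\mathbb{R}$. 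It is worth comparing your route with the paper's proof of the analogous statement for $M^{(1)}$ in the LOE case (Section 4.4), where parity is unavailable on $\mathbb{R}^{+}$: there the even--even entries are computed by explicitly evaluating $\int_{0}^{\infty}L_{2j}^{(\alpha+1)}(x)x^{\alpha/2}\mathrm{e}^{-x/2}dx$ and watching two terms cancel, and the odd--odd entries are handled by a degree-counting orthogonality argument. Your parity argument is shorter and cleaner, but it is genuinely tied to the symmetry of the Gaussian weight about the origin and would not transfer to the Laguerre setting; the paper's computational strategy is the one that generalizes. You are also right to flag that integration by parts in the even--even case only reproduces antisymmetry and that $\varepsilon\varphi_{2n}$ does not vanish at infinity, so the parity observation really is the essential input.
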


It's obvious that $\left(M^{(1)}\right)^{-1}=-M^{(1)}$, so $\mu_{2j,2j+1}=-1, \mu_{2j+1,2j}=1$, and
$\mu_{jk}=0$ for other cases, hence
\bea
K_{N,1}^{(2,2)}(x,y)
&=&\sum_{j,k=0}^{N-1}\mu_{jk}\varepsilon\psi_{j}(x)\psi_{k}(y)\nonumber\\
&=&-\sum_{j=0}^{\frac{N}{2}-1}\varepsilon\psi_{2j}(x)\psi_{2j+1}(y)+\sum_{j=0}^{\frac{N}{2}-1}\varepsilon\psi_{2j+1}(x)\psi_{2j}(y)\nonumber\\
&=&\sum_{j=0}^{\frac{N}{2}-1}\varphi_{2j}(x)\varphi_{2j}(y)-\sum_{j=0}^{\frac{N}{2}-1}\varepsilon\varphi_{2j}(x)\varphi_{2j}'(y).\nonumber
\eea
By (\ref{phid}), we have
\bea
\sum_{j=0}^{\frac{N}{2}-1}\varepsilon\varphi_{2j}(x)\varphi_{2j}'(y)
&=&\sum_{j=0}^{\frac{N}{2}-1}\varepsilon\varphi_{2j}(x)\left(\sqrt{j}\varphi_{2j-1}(y)-\sqrt{j+\frac{1}{2}}\varphi_{2j+1}(y)\right)\nonumber\\
&=&\sum_{j=0}^{\frac{N}{2}-1}\sqrt{j}\varepsilon\varphi_{2j}(x)\varphi_{2j-1}(y)-\sum_{j=0}^{\frac{N}{2}-1}
\sqrt{j+\frac{1}{2}}\varepsilon\varphi_{2j}(x)\varphi_{2j+1}(y)\nonumber\\
&=&\sum_{j=1}^{\frac{N}{2}-1}\sqrt{j}\varepsilon\varphi_{2j}(x)\varphi_{2j-1}(y)-\sum_{j=1}^{\frac{N}{2}}
\sqrt{j-\frac{1}{2}}\varepsilon\varphi_{2j-2}(x)\varphi_{2j-1}(y)\nonumber\\
&=&\sum_{j=1}^{\frac{N}{2}}\sqrt{j}\varepsilon\varphi_{2j}(x)\varphi_{2j-1}(y)-\sqrt{\frac{N}{2}}\varepsilon\varphi_{N}(x)\varphi_{N-1}(y)
-\sum_{j=1}^{\frac{N}{2}}\sqrt{j-\frac{1}{2}}\varepsilon\varphi_{2j-2}(x)\varphi_{2j-1}(y)\nonumber\\
&=&-\sum_{j=1}^{\frac{N}{2}}\left(\sqrt{j-\frac{1}{2}}\varepsilon\varphi_{2j-2}(x)-\sqrt{j}\varepsilon\varphi_{2j}(x)\right)\varphi_{2j-1}(y)
-\sqrt{\frac{N}{2}}\varepsilon\varphi_{N}(x)\varphi_{N-1}(y).\nonumber
\eea
Using (\ref{phid}) again, we find
$$
\varphi_{2j-1}'(x)=\sqrt{j-\frac{1}{2}}\varphi_{2j-2}(x)-\sqrt{j}\varphi_{2j}(x),
$$
then according to Lemma \ref{de},
$$
\varphi_{2j-1}(x)=\varepsilon\varphi_{2j-1}'(x)=\sqrt{j-\frac{1}{2}}\varepsilon\varphi_{2j-2}(x)-\sqrt{j}\varepsilon\varphi_{2j}(x).
$$
Hence
$$
\sum_{j=0}^{\frac{N}{2}-1}\varepsilon\varphi_{2j}(x)\varphi_{2j}'(y)
=-\sum_{j=1}^{\frac{N}{2}}\varphi_{2j-1}(x)\varphi_{2j-1}(y)-\sqrt{\frac{N}{2}}\varepsilon\varphi_{N}(x)\varphi_{N-1}(y).
$$
It follows that
\bea
K_{N,1}^{(2,2)}(x,y)
&=&\sum_{j=0}^{\frac{N}{2}-1}\varphi_{2j}(x)\varphi_{2j}(y)+\sum_{j=1}^{\frac{N}{2}}\varphi_{2j-1}(x)\varphi_{2j-1}(y)
+\sqrt{\frac{N}{2}}\varepsilon\varphi_{N}(x)\varphi_{N-1}(y)\nonumber\\
&=&\sum_{j=0}^{N-1}\varphi_{j}(x)\varphi_{j}(y)+\sqrt{\frac{N}{2}}\varepsilon\varphi_{N}(x)\varphi_{N-1}(y)\nonumber\\
&=&S_{N}(x,y)+\sqrt{\frac{N}{2}}\varepsilon\varphi_{N}(x)\varphi_{N-1}(y),\nonumber
\eea
where
$$
S_{N}(x,y):=\sum_{j=0}^{N-1}\varphi_{j}(x)\varphi_{j}(y)=\sqrt{\frac{N}{2}}\:\frac{\varphi_{N}(x)\varphi_{N-1}(y)-\varphi_{N}(y)\varphi_{N-1}(x)}{x-y}.
$$
Here we have used the Christoffel-Darboux formula in the last equality. Note that\\ $S_{N}(x,y)=K_{N}^{(2)}(x,y)$.

By Theorem \ref{sca1}, we have the following theorem.
\begin{theorem}
\bea
\left[G_{N}^{(1)}(f)\right]^{2}
&=&\det\bigg(I+S_{N}(f^{2}+2f)-S_{N}\varepsilon f'-S_{N}f\varepsilon f'+\sqrt{\frac{N}{2}}\varepsilon\varphi_{N}\otimes\varphi_{N-1}(f^{2}+2f)\nonumber\\
&+&\sqrt{\frac{N}{2}}(\varepsilon\varphi_{N}\otimes\varepsilon\varphi_{N-1})f'+\sqrt{\frac{N}{2}}\left(\varepsilon\varphi_{N}\otimes\varepsilon(\varphi_{N-1}f)\right) f'\bigg).\nonumber
\eea
\end{theorem}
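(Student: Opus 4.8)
The plan is to specialize the scalar reduction~(\ref{sca1}) to the GOE weight by inserting the explicit kernel just computed. Concretely, I would start from
$$\left[G_{N}^{(1)}(f)\right]^{2}=\det\left(I+K_{N,1}^{(2,2)}\left(f^{2}+2f\right)-K_{N,1}^{(2,2)}\varepsilon f'-K_{N,1}^{(2,2)}f\varepsilon f'\right),$$
and substitute the operator identity $K_{N,1}^{(2,2)}=S_{N}+\sqrt{N/2}\,\varepsilon\varphi_{N}\otimes\varphi_{N-1}$ read off from the kernel displayed above. Each of the three terms then splits into an $S_{N}$-piece and a rank-one piece. The $S_{N}$-pieces reproduce $S_{N}(f^{2}+2f)-S_{N}\varepsilon f'-S_{N}f\varepsilon f'$ verbatim, so all the work is in rewriting the three rank-one contributions into the form asserted in the statement.

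For the rank-one pieces I would invoke the identity~(\ref{ab}), namely $(u\otimes v)A=u\otimes(A^{t}v)$, together with the antisymmetry $\varepsilon^{t}=-\varepsilon$ and the fact that a multiplication operator is its own transpose. The first piece is immediate: $(\varepsilon\varphi_{N}\otimes\varphi_{N-1})(f^{2}+2f)=\varepsilon\varphi_{N}\otimes\varphi_{N-1}(f^{2}+2f)$, since $(f^{2}+2f)^{t}=f^{2}+2f$. For the second, $(\varepsilon\varphi_{N}\otimes\varphi_{N-1})\varepsilon=\varepsilon\varphi_{N}\otimes(\varepsilon^{t}\varphi_{N-1})=-\varepsilon\varphi_{N}\otimes\varepsilon\varphi_{N-1}$, so after multiplying by $f'$ the overall minus sign in $-K_{N,1}^{(2,2)}\varepsilon f'$ flips and yields $+\sqrt{N/2}\,(\varepsilon\varphi_{N}\otimes\varepsilon\varphi_{N-1})f'$. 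For the third, I first move $f$ onto the second slot, $(\varepsilon\varphi_{N}\otimes\varphi_{N-1})f=\varepsilon\varphi_{N}\otimes(\varphi_{N-1}f)$, then apply $\varepsilon$ to obtain $-\varepsilon\varphi_{N}\otimes\varepsilon(\varphi_{N-1}f)$; composing with $f'$ and absorbing the sign from $-K_{N,1}^{(2,2)}f\varepsilon f'$ produces $+\sqrt{N/2}\,(\varepsilon\varphi_{N}\otimes\varepsilon(\varphi_{N-1}f))f'$. Collecting the six resulting terms gives exactly the stated determinant.

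The computation is essentially bookkeeping, so the only real hazard is keeping track of signs and of which factor each right multiplication lands on. The single conceptual point is that every $\varepsilon$ appearing to the right of a tensor operator must be passed through~(\ref{ab}) and therefore contributes a sign because $\varepsilon^{t}=-\varepsilon$; this is precisely what converts the two subtracted rank-one terms of~(\ref{sca1}) into the added terms in the statement. I would also note in passing that the convention $(u\otimes v)g=u\otimes(vg)$ for right multiplication by a function, consistent with~(\ref{ab}), is what lets me write $\varphi_{N-1}(f^{2}+2f)$ and $\varepsilon(\varphi_{N-1}f)$ as the pointwise products appearing in the final formula.
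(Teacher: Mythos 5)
Your proposal is correct and is exactly the argument the paper intends: it states the result as an immediate consequence of Theorem \ref{sca1} after substituting $K_{N,1}^{(2,2)}=S_{N}+\sqrt{N/2}\,\varepsilon\varphi_{N}\otimes\varphi_{N-1}$, and your sign bookkeeping via Lemma \ref{ab} and $\varepsilon^{t}=-\varepsilon$ correctly converts the two subtracted rank-one terms into the added ones. Nothing is missing.
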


\subsection{Large $N$ behavior of the GOE moment generating function}
Now we consider the scaling limit of $\left[G_{N}^{(1)}(f)\right]^{2}$. Write
$$
\left[G_{N}^{(1)}(f)\right]^{2}=:\det(I+T),
$$
where
\bea
T:
&=&S_{N}(f^{2}+2f)-S_{N}\varepsilon f'-S_{N}f\varepsilon f'+\sqrt{\frac{N}{2}}\varepsilon\varphi_{N}\otimes\varphi_{N-1}(f^{2}+2f)\nonumber\\
&+&\sqrt{\frac{N}{2}}(\varepsilon\varphi_{N}\otimes\varepsilon\varphi_{N-1})f'+\sqrt{\frac{N}{2}}\left(\varepsilon\varphi_{N}\otimes\varepsilon(\varphi_{N-1}f)\right)f'.\nonumber
\eea
In the computations below, we replace $f(x)$ by $f\left(\sqrt{2N}x\right)$ and $f'(x)$ by
$$
f'\left(\sqrt{2N}x\right)=\frac{d}{\sqrt{2N}dx}f\left(\sqrt{2N}x\right).
$$
Similarly to the GSE case, we have the following theorems.
\begin{theorem}
$$
\lim_{N\rightarrow\infty}\frac{1}{\sqrt{2N}}S_{N}\left(\frac{x}{\sqrt{2N}},\frac{y}{\sqrt{2N}}\right)=\frac{\sin(x-y)}{\pi(x-y)},
$$

$$
\lim_{N\rightarrow\infty}\frac{1}{\sqrt{2N}}S_{N}\left(\frac{x}{\sqrt{2N}},\frac{x}{\sqrt{2N}}\right)=\frac{1}{\pi}.\label{sn1}
$$
\end{theorem}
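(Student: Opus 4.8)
The plan is to exploit the Christoffel--Darboux representation of $S_N$ recorded just above the statement, namely $S_N(x,y)=\sqrt{N/2}\,[\varphi_N(x)\varphi_{N-1}(y)-\varphi_N(y)\varphi_{N-1}(x)]/(x-y)$, and to reduce the scaling limit to the bulk asymptotics of the two Hermite functions $\varphi_N$ and $\varphi_{N-1}$. After substituting $x\mapsto x/\sqrt{2N}$ and $y\mapsto y/\sqrt{2N}$, the factor $(x-y)/\sqrt{2N}$ in the denominator cancels against the outer $1/\sqrt{2N}$, so that
$$\frac{1}{\sqrt{2N}}S_N\!\left(\frac{x}{\sqrt{2N}},\frac{y}{\sqrt{2N}}\right)=\sqrt{\frac{N}{2}}\,\frac{\varphi_N\!\left(\frac{x}{\sqrt{2N}}\right)\varphi_{N-1}\!\left(\frac{y}{\sqrt{2N}}\right)-\varphi_N\!\left(\frac{y}{\sqrt{2N}}\right)\varphi_{N-1}\!\left(\frac{x}{\sqrt{2N}}\right)}{x-y},$$
and everything hinges on the leading behaviour of $\varphi_N(x/\sqrt{2N})$ and $\varphi_{N-1}(x/\sqrt{2N})$.

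These two asymptotics I would obtain exactly as in the proof of Theorem~\ref{gse}, starting from the Plancherel--Rotach expansion (\ref{asy}) together with Stirling's formula (\ref{stirling}). The one point requiring care is that, since $N$ is even, the index $N$ is even while $N-1$ is odd, so the two branches of $\lambda_n$ must be used separately; a short Stirling computation shows that both normalising prefactors $\lambda_N/c_N$ and $\lambda_{N-1}/c_{N-1}$ are asymptotically equal to $\pi^{-1/2}2^{1/4}N^{-1/4}$. Tracking the phase $\sqrt{2n+1}\,(x/\sqrt{2N})-n\pi/2$, using $\sqrt{(2n+1)/(2N)}\to1$ and $N$ even, then yields
$$\varphi_N\!\left(\frac{x}{\sqrt{2N}}\right)=\pi^{-\frac12}2^{\frac14}N^{-\frac14}(-1)^{N/2}\cos x+O\!\left(N^{-\frac34}\right),$$
$$\varphi_{N-1}\!\left(\frac{x}{\sqrt{2N}}\right)=-\pi^{-\frac12}2^{\frac14}N^{-\frac14}(-1)^{N/2}\sin x+O\!\left(N^{-\frac34}\right),$$
the sign and the passage from $\cos$ to $\sin$ coming from the half-integer shift $(N-1)\pi/2=(N/2)\pi-\pi/2$.

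Substituting these into the displayed quotient, the antisymmetric combination in the numerator collapses to $\pi^{-1}\sqrt2\,N^{-1/2}(\cos y\sin x-\cos x\sin y)=\pi^{-1}\sqrt2\,N^{-1/2}\sin(x-y)$, where $(-1)^{N}=1$ has been used. Multiplying by $\sqrt{N/2}$ cancels the $N$-dependence precisely and leaves $\sin(x-y)/[\pi(x-y)]$, which proves the first limit. The diagonal statement then follows either by letting $y\to x$, since $\sin(x-y)/[\pi(x-y)]\to1/\pi$, or by feeding the same two asymptotics into the confluent Christoffel--Darboux form of $S_N(x,x)=\sum_{j=0}^{N-1}\varphi_j^2(x)$.

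I expect the principal obstacle to be bookkeeping rather than analysis. One must verify that the two prefactors coincide to leading order despite coming from different parity branches of $\lambda_n$, and that the parity-dependent phases combine to give exactly $\sin(x-y)$ with the constant $1/\pi$. One must also confirm that the $O(N^{-3/4})$ errors, once multiplied by the $\sqrt{N/2}$ standing in front of the quotient, contribute only $O(N^{-1/2})$ and hence disappear in the limit; this is the step where uniform (in $x,y$ on compact sets) control of the remainder in (\ref{asy}) is genuinely needed.
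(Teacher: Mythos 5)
Your argument is correct and is exactly the route the paper intends: the paper states this theorem without proof (``Similarly to the GSE case\ldots''), and its GSE analogue rests on precisely the ingredients you use, namely the Christoffel--Darboux form of $S_N$, the Plancherel--Rotach expansion (\ref{asy}) with the two parity branches of $\lambda_n$, and Stirling's formula (\ref{stirling}); your prefactor $2^{1/4}\pi^{-1/2}N^{-1/4}$ and the sign/phase bookkeeping agree with the asymptotics of $\varphi_{N-1}$ recorded in Theorem \ref{goe}. The only point to make explicit is the diagonal case, where one should either invoke locally uniform control of the remainder to justify $y\to x$, or use the confluent Christoffel--Darboux form together with the derivative relation (\ref{phid}), as you indicate.
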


\begin{theorem}
$$
\varphi_{N-1}\left(\frac{x}{\sqrt{2N}}\right)=-(-1)^{\frac{N}{2}}2^{\frac{1}{4}}\pi^{-\frac{1}{2}}N^{-\frac{1}{4}}\sin x+O\left(N^{-\frac{3}{4}}\right),\;\;N\rightarrow\infty,
$$

$$
\varepsilon\varphi_{N}\left(\frac{x}{\sqrt{2N}}\right)=(-1)^{\frac{N}{2}}2^{-\frac{1}{4}}\pi^{-\frac{1}{2}}N^{-\frac{3}{4}}\sin x+O\left(N^{-\frac{5}{4}}\right),
\;\;N\rightarrow\infty,
$$

$$
\varepsilon\varphi_{N-1}\left(\frac{x}{\sqrt{2N}}\right)=-2^{-\frac{1}{4}}N^{-\frac{1}{4}}+O\left(N^{-\frac{3}{4}}\right),\;\;N\rightarrow\infty,
$$

$$
\varepsilon\left(\varphi_{N-1}f\right)\left(\frac{x}{\sqrt{2N}}\right)=-(-1)^{\frac{N}{2}}2^{-\frac{5}{4}}\pi^{-\frac{1}{2}}N^{-\frac{3}{4}}
\left[\int_{-\infty}^{x}\sin y\: f(y)dy-\int_{x}^{\infty}\sin y\: f(y)dy\right]+O\left(N^{-\frac{5}{4}}\right),\;\;N\rightarrow\infty.\label{goe}
$$
\end{theorem}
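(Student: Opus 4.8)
The plan is to treat all four estimates exactly as in the GSE computation that produced (\ref{gse}), reducing each one to the Plancherel--Rotach type asymptotic (\ref{asy}) together with Stirling's formula (\ref{stirling}) and the hypergeometric evaluation of Lemma (\ref{2f1}). The only genuinely new feature is that $N$ is even, so the indices $N$ and $N-1$ have opposite parities; keeping track of the resulting $(-1)^{N/2}$ phases is where most of the care is needed. I would first establish the pointwise estimate for $\varphi_{N-1}(x/\sqrt{2N})$. Writing $\varphi_{N-1}=H_{N-1}/c_{N-1}\cdot\mathrm{e}^{-x^{2}/2}$ from (\ref{gue}) and inserting (\ref{asy}), the relevant phase is $\cos(\sqrt{(2N-1)/(2N)}\,x-(N-1)\pi/2)$; since $N$ is even, $(N-1)\pi/2=(N/2)\pi-\pi/2$ and this equals $-(-1)^{N/2}\sin x+O(N^{-1})$ once the correction $\sqrt{(2N-1)/(2N)}\to 1$ is absorbed into the error. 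The amplitude $\lambda_{N-1}/c_{N-1}$ is handled by Stirling (\ref{stirling}) and collapses to $2^{1/4}\pi^{-1/2}N^{-1/4}$, which with the phase gives the first line.

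The fourth estimate then follows immediately. After the substitution $f\mapsto f(\sqrt{2N}\,\cdot)$ and the change of variable $t=s/\sqrt{2N}$ in $\varepsilon(\varphi_{N-1}f)(x/\sqrt{2N})=\frac{1}{2}(\int_{-\infty}^{\,\cdot}-\int_{\cdot}^{\,\infty})\varphi_{N-1}(t)f(\sqrt{2N}t)\,dt$, I insert the asymptotic for $\varphi_{N-1}(s/\sqrt{2N})$ under the integral sign; the Jacobian factor $1/\sqrt{2N}$ combines with the $N^{-1/4}$ amplitude to produce the stated $N^{-3/4}$ and the two sine-weighted integrals $\int_{-\infty}^{x}\sin y\,f(y)dy-\int_{x}^{\infty}\sin y\,f(y)dy$, with constant $2^{-5/4}\pi^{-1/2}$.

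For $\varepsilon\varphi_{N}$ I would exploit that $\varphi_{N}$ has even index and is therefore an even function, so the two tail integrals in $\varepsilon\varphi_{N}(x)=\frac{1}{2}(\int_{-\infty}^{x}-\int_{x}^{\infty})\varphi_{N}$ cancel and $\varepsilon\varphi_{N}(x)=\int_{0}^{x}\varphi_{N}(y)dy$, just as in the $\varepsilon\varphi_{2N}$ step preceding (\ref{gse}). Deriving the companion estimate $\varphi_{N}(x/\sqrt{2N})=(-1)^{N/2}2^{1/4}\pi^{-1/2}N^{-1/4}\cos x+O(N^{-3/4})$ by the same method as for $\varphi_{N-1}$ (now the phase is $\cos(x-N\pi/2)=(-1)^{N/2}\cos x$, using the even-index amplitude $\lambda_{N}$), then scaling the integral by $1/\sqrt{2N}$ and integrating $\cos\mapsto\sin$, gives the second line.

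The third estimate, for $\varepsilon\varphi_{N-1}$, is the delicate one and the main obstacle. Since $\varphi_{N-1}$ has odd index it is an odd function, so the tail integrals do \emph{not} cancel; instead, as in the $\varepsilon\varphi_{2N+1}$ computation, one is left with a multiple of $\int_{0}^{x}H_{N-1}(y)\mathrm{e}^{-y^{2}/2}dy-\int_{0}^{\infty}H_{N-1}(y)\mathrm{e}^{-y^{2}/2}dy$. The $\int_{0}^{x}$ piece contributes only at order $N^{-3/4}$ and is swept into the error term, while the constant $\int_{0}^{\infty}H_{N-1}(y)\mathrm{e}^{-y^{2}/2}dy$ must be evaluated exactly through the Gradshteyn formula \cite{Gradshteyn} expressing it as a multiple of ${}_{2}F_{1}(-(N/2-1),1;\frac{3}{2};2)$, whose large-$N$ behaviour is supplied by Lemma (\ref{2f1}). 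The product of the $(-1)^{N/2-1}$ coming from that lemma with the explicit sign prefactor is exactly what yields the $x$-independent leading term $-2^{-1/4}N^{-1/4}$; the hardest bookkeeping is verifying that these signs together with the Stirling ratios conspire to cancel all $(-1)^{N/2}$ dependence and leave precisely the stated constant, and that the neglected $\int_{0}^{x}$ contribution is indeed of the claimed lower order $O(N^{-3/4})$.
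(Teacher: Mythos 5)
Your proposal is correct and follows essentially the route the paper intends: the paper omits the proof of this theorem, stating only ``Similarly to the GSE case,'' and your argument is precisely the even-$N$ analogue of the written-out GSE proof — Plancherel--Rotach asymptotics plus Stirling for $\varphi_{N-1}$ and $\varphi_{N}$, parity of the Hermite index to reduce $\varepsilon\varphi_{N}$ to $\int_{0}^{x}\varphi_{N}$, and the Gradshteyn/${}_{2}F_{1}$ evaluation with Lemma 2.2 for the constant $-\int_{0}^{\infty}\varphi_{N-1}$ term in $\varepsilon\varphi_{N-1}$, with the $(-1)^{N/2-1}$ phases cancelling exactly as you describe. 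All four constants and error orders check out.
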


Using Theorem \ref{sn1} and Theorem \ref{goe} to compute $\mathrm{Tr}\:T$ and $\mathrm{Tr}\:T^{2}$ as $N\rightarrow\infty$.  we have
\bea
\mathrm{Tr}\:T
&=&\frac{1}{\pi}\int_{-\infty}^{\infty}\left[f^{2}(x)+2f(x)\right]dx\nonumber\\
&-&\frac{1}{2\pi\sqrt{2N}}\int_{-\infty}^{\infty}\left[\int_{x}^{\infty}\frac{\sin(x-y)}{x-y}f(y)dy-\int_{-\infty}^{x}\frac{\sin(x-y)}{x-y}f(y)dy\right]f'(x)dx\nonumber\\
&-&\frac{1}{2\pi N}\int_{-\infty}^{\infty}\sin^{2}x\left[f^{2}(x)+2f(x)\right]dx
-\frac{(-1)^{\frac{N}{2}}}{2\sqrt{2\pi}N}\int_{-\infty}^{\infty}\sin x\:f'(x)dx+O\left(N^{-\frac{3}{2}}\right),\;\;N\rightarrow\infty,\nonumber
\eea
and
\bea
\mathrm{Tr}\:T^{2}
&=&\frac{1}{\pi^{2}}\int_{-\infty}^{\infty}\int_{-\infty}^{\infty}\left[\frac{\sin(x-y)}{x-y}\right]^{2}\left[f^{2}(x)+2f(x)\right]\left[f^{2}(y)+2f(y)\right]dxdy\nonumber\\
&+&\frac{2}{\pi^{2}\sqrt{2N}}\int_{-\infty}^{\infty}\int_{-\infty}^{\infty}\frac{\sin(x-y)}{x-y}\:\mathrm{Si}(x-y)f'(x)\left[f^{2}(y)+2f(y)\right]dxdy\nonumber\\
&-&\frac{1}{\pi^{2}\sqrt{2N}}\int_{-\infty}^{\infty}\int_{-\infty}^{\infty}
\left[\int_{x}^{\infty}\frac{\sin(y-z)}{y-z}f(z)dz-\int_{-\infty}^{x}\frac{\sin(y-z)}{y-z}f(z)dz\right]\frac{\sin(x-y)}{x-y}\nonumber\\
&&f'(x)\left[f^{2}(y)+2f(y)\right]dxdy+O\left(N^{-1}\right),\;\;N\rightarrow\infty.\nonumber
\eea

Now we want to find the mean and variance of the linear statistics $\sum_{j=1}^{N}F\left(\sqrt{2N}x_{j}\right)$, since
$$
f\left(\sqrt{2N}x\right)\approx-\lambda F\left(\sqrt{2N}x\right)+\frac{\lambda^{2}}{2}F^{2}\left(\sqrt{2N}x\right),
$$
we replace $f$ with $-\lambda F+\frac{\lambda^{2}}{2}F^{2}$ in the expression of $\mathrm{Tr}\:T$ and $\mathrm{Tr}\:T^{2}$, we have
\bea
&&\log\det\left(I+T\right)\nonumber\\
&=&-\lambda\Bigg\{\frac{2}{\pi}\int_{-\infty}^{\infty}F(x)dx
-\frac{1}{\pi N}\int_{-\infty}^{\infty}\sin^{2}x\:F(x)dx
-\frac{(-1)^{\frac{N}{2}}}{2\sqrt{2\pi}N}\int_{-\infty}^{\infty}\sin x\:F'(x)dx+O\left(N^{-\frac{3}{2}}\right)\Bigg\}\nonumber\\
&+&\frac{\lambda^{2}}{2}\Bigg\{\frac{4}{\pi}\int_{-\infty}^{\infty}F^{2}(x)dx
-\frac{4}{\pi^{2}}\int_{-\infty}^{\infty}\int_{-\infty}^{\infty}\left[\frac{\sin(x-y)}{x-y}\right]^{2}F(x)F(y)dxdy\nonumber\\
&-&\frac{1}{\pi\sqrt{2N}}\int_{-\infty}^{\infty}\left[\int_{x}^{\infty}\frac{\sin(x-y)}{x-y}F(y)dy-\int_{-\infty}^{x}\frac{\sin(x-y)}{x-y}F(y)dy\right]F'(x)dx\nonumber\\
&-&\frac{4}{\pi^{2}\sqrt{2N}}\int_{-\infty}^{\infty}\int_{-\infty}^{\infty}\frac{\sin(x-y)}{x-y}\:\mathrm{Si}(x-y)F'(x)F(y)dxdy+O\left(N^{-1}\right)\Bigg\},
\;\;N\rightarrow\infty.\nonumber
\eea
Denote by $\mu_{N}^{(GOE)}$ and $\mathcal{V}_{N}^{(GOE)}$ the mean and variance of the linear statistics $\sum_{j=1}^{N}F\left(\sqrt{2N}x_{j}\right)$, then we have the following theorem.
\begin{theorem}
As $N\rightarrow\infty$,
$$
\mu_{N}^{(GOE)}=\mu_{N}^{(GUE)}
-\frac{1}{2\pi N}\int_{-\infty}^{\infty}\sin^{2}x\:F(x)dx
-\frac{(-1)^{\frac{N}{2}}}{4\sqrt{2\pi}N}\int_{-\infty}^{\infty}\sin x\:F'(x)dx+O\left(N^{-\frac{3}{2}}\right),
$$
\bea
\mathcal{V}_{N}^{(GOE)}
&=&2\mathcal{V}_{N}^{(GUE)}
-\frac{1}{2\pi\sqrt{2N}}\int_{-\infty}^{\infty}\left[\int_{x}^{\infty}\frac{\sin(x-y)}{x-y}F(y)dy-\int_{-\infty}^{x}\frac{\sin(x-y)}{x-y}F(y)dy\right]F'(x)dx\nonumber\\
&-&\frac{2}{\pi^{2}\sqrt{2N}}\int_{-\infty}^{\infty}\int_{-\infty}^{\infty}\frac{\sin(x-y)}{x-y}\:\mathrm{Si}(x-y)F'(x)F(y)dxdy+O\left(N^{-1}\right),\nonumber
\eea
where $\mu_{N}^{(GUE)}$ and $\mathcal{V}_{N}^{(GUE)}$ for $N\rightarrow\infty$ are given in (\ref{guem}) and (\ref{guev}) respectively.
\end{theorem}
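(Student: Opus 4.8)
The plan is to read the mean and variance off directly from the logarithm of the moment generating function, using the cumulant interpretation together with the two trace expansions already in hand. The starting point is the identity $\left[G_{N}^{(1)}(f)\right]^{2}=\det(I+T)$ obtained above; taking logarithms gives $2\log G_{N}^{(1)}(f)=\log\det(I+T)$, which is legitimate since $1+f\neq 0$ and the generating function is positive. Because $G_{N}^{(1)}(f)=\mathbb{E}_{1}\!\left(\exp(-\lambda\sum_{j}F(\sqrt{2N}x_{j}))\right)$ is the moment generating function of $X:=\sum_{j}F(\sqrt{2N}x_{j})$, its logarithm is the cumulant generating function evaluated at $-\lambda$, so that
$$
\log G_{N}^{(1)}(f)=-\lambda\,\mu_{N}^{(GOE)}+\frac{\lambda^{2}}{2}\,\mathcal{V}_{N}^{(GOE)}+O(\lambda^{3}).
$$
Hence $\mu_{N}^{(GOE)}$ is minus the coefficient of $\lambda$, and $\mathcal{V}_{N}^{(GOE)}$ is twice the coefficient of $\lambda^{2}$, in $\tfrac12\log\det(I+T)$. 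The factor $\tfrac12$ from the square, combined with the factor $2$ multiplying $f$ in the block $f^{2}+2f$, is exactly what produces the coefficients $1$ and $2$ in front of $\mu_{N}^{(GUE)}$ and $\mathcal{V}_{N}^{(GUE)}$ respectively.

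First I would insert the trace--log expansion $\log\det(I+T)=\mathrm{Tr}\,T-\tfrac12\mathrm{Tr}\,T^{2}+\tfrac13\mathrm{Tr}\,T^{3}-\cdots$ and substitute $f=-\lambda F+\tfrac{\lambda^{2}}{2}F^{2}+O(\lambda^{3})$ into the large-$N$ forms of $\mathrm{Tr}\,T$ and $\mathrm{Tr}\,T^{2}$ already computed. The essential bookkeeping point is that $T$ is \emph{not} linear in $f$: the blocks $S_{N}(f^{2}+2f)$, $S_{N}f\varepsilon f'$, and the rank-one terms carrying $f^{2}+2f$ are quadratic, so $\mathrm{Tr}\,T$ contains both an $O(\lambda)$ part (the mean) and an $O(\lambda^{2})$ part (which feeds the variance), whereas $\mathrm{Tr}\,T^{2}$ is purely $O(\lambda^{2})$ at leading order and $\mathrm{Tr}\,T^{3}=O(\lambda^{3})$ is negligible. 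Consequently the coefficient of $\lambda$ draws only on $\mathrm{Tr}\,T$, while the coefficient of $\lambda^{2}$ combines the quadratic part of $\mathrm{Tr}\,T$ with $-\tfrac12\mathrm{Tr}\,T^{2}$; this is precisely the rearrangement displayed just above the theorem.

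Dividing by two and matching the leading terms against (\ref{guem}) and (\ref{guev}) then finishes the argument: the $\lambda$-coefficient reproduces $\mu_{N}^{(GUE)}$ plus the two $O(N^{-1})$ corrections, and the $\lambda^{2}$-coefficient reproduces $2\mathcal{V}_{N}^{(GUE)}$—using $\left[\tfrac{\sin(x-y)}{\pi(x-y)}\right]^{2}=\tfrac{1}{\pi^{2}}\left[\tfrac{\sin(x-y)}{x-y}\right]^{2}$—together with the stated $O(N^{-1/2})$ terms. I expect the main obstacle to be combinatorial rather than analytic, since all the genuinely hard estimates (the scaling limit of $S_{N}$ in Theorem \ref{sn1} and the asymptotics of the $\varepsilon\varphi$ quantities in Theorem \ref{goe}) are already available: the delicate part is keeping the powers of $\lambda$ and the powers of $N$ aligned simultaneously, and in particular not overlooking that the $f^{2}$-contributions hidden inside $\mathrm{Tr}\,T$ are exactly what upgrade $\mathcal{V}_{N}^{(GUE)}$ to $2\mathcal{V}_{N}^{(GUE)}$. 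A secondary care point is verifying that the $O(\lambda^{3})$ remainder and the subleading pieces of each trace do not contaminate the retained orders, so that the error terms truncate at the claimed $O(N^{-3/2})$ for the mean and $O(N^{-1})$ for the variance.
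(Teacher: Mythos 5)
Your proposal is correct and follows essentially the same route as the paper: expand $\log\det(I+T)=\mathrm{Tr}\,T-\tfrac12\mathrm{Tr}\,T^{2}+\cdots$ using the already-established asymptotics of $S_N$ and the $\varepsilon\varphi$ quantities, substitute $f=-\lambda F+\tfrac{\lambda^{2}}{2}F^{2}$, and read off the cumulants after halving to account for the square of $G_N^{(1)}$. Your observation that the quadratic-in-$f$ blocks of $T$ feed the $\lambda^{2}$ coefficient (and hence produce the factor $2$ in front of $\mathcal{V}_N^{(GUE)}$) is exactly the bookkeeping the paper carries out.
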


\subsection{LOE}
We now specialize results obtained in Section 4.1 to the situation  where $w$ is taken to be the {\it square root} of the Laguerre weight, namely,
$$
w(x)=x^{\frac{\alpha}{2}}\mathrm{e}^{-\frac{x}{2}},\;\;\alpha>-2,\;x\in \mathbb{R}^+.
$$
Again we choose a special  $\psi_{j}$ so that  $M^{(1)}$ is as simple as possible. Let
\be
\psi_{2n+1}(x):=\frac{d}{dx}\varphi_{2n}^{(\alpha+1)}(x),\;\;\psi_{2n}(x):=\widetilde{\varphi}_{2n}^{(\alpha+1)}(x),\;\;n=0,1,2,\ldots,\label{psila}
\ee
where $\varphi_{j}^{(\alpha+1)}(x)$ and $\widetilde{\varphi}_{j}^{(\alpha+1)}(x)$ are given by
\be
\varphi_{j}^{(\alpha+1)}(x):=\frac{L_{j}^{(\alpha+1)}(x)}{c_{j}^{(\alpha+1)}}x^{\frac{\alpha}{2}+1}\mathrm{e}^{-\frac{x}{2}},\;\;j=0,1,2,\ldots,\label{phijx}
\ee
$$
\widetilde{\varphi}_{j}^{(\alpha+1)}(x):=\frac{L_{j}^{(\alpha+1)}(x)}{c_{j}^{(\alpha+1)}}x^{\frac{\alpha}{2}}\mathrm{e}^{-\frac{x}{2}},\;\;j=0,1,2,\ldots.
$$
It's easy to see that
$$
\int_{0}^{\infty}\varphi_{j}^{(\alpha+1)}(x)\widetilde{\varphi}_{k}^{(\alpha+1)}(x)dx=\delta_{jk},\;\;j,k=0,1,2,\ldots.
$$
The next theorem shows that (\ref{psila}) satisfies (\ref{psijx}),
namely, $\psi_{j}(x)x^{-\frac{\alpha}{2}}\mathrm{e}^{\frac{x}{2}}$ is a polynomial of degree $j$.
\begin{theorem}
$\psi_{j}(x)x^{-\frac{\alpha}{2}}\mathrm{e}^{\frac{x}{2}},\: j=0,1,2,\ldots$ is a polynomial of degree $j$.
\end{theorem}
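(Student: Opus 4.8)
The plan is to verify the assertion separately for even and odd indices, paralleling the (unproven) GOE claim of the previous subsection. The key structural point is that here one applies the differentiation operator $D$ rather than the integration operator $\varepsilon$ used in the LSE construction, so that no inversion of a linear system (and no appeal to Lemma \ref{bi}) will be required.

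For an even index $j=2n$, the definition (\ref{psila}) gives $\psi_{2n}(x)=\widetilde{\varphi}_{2n}^{(\alpha+1)}(x)$, and by (\ref{phijx}) this equals $\frac{L_{2n}^{(\alpha+1)}(x)}{c_{2n}^{(\alpha+1)}}x^{\frac{\alpha}{2}}\mathrm{e}^{-\frac{x}{2}}$. Hence $\psi_{2n}(x)\,x^{-\frac{\alpha}{2}}\mathrm{e}^{\frac{x}{2}}=L_{2n}^{(\alpha+1)}(x)/c_{2n}^{(\alpha+1)}$ is immediately a polynomial of degree $2n$, and this case needs no computation.

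For an odd index $j=2n+1$, I would differentiate $\varphi_{2n}^{(\alpha+1)}(x)=\frac{1}{c_{2n}^{(\alpha+1)}}L_{2n}^{(\alpha+1)}(x)\,x^{\frac{\alpha}{2}+1}\mathrm{e}^{-\frac{x}{2}}$ by the product rule across its three factors. Factoring out the common $x^{\frac{\alpha}{2}}\mathrm{e}^{-\frac{x}{2}}$ then yields
\[
\psi_{2n+1}(x)\,x^{-\frac{\alpha}{2}}\mathrm{e}^{\frac{x}{2}}=\frac{1}{c_{2n}^{(\alpha+1)}}\left[x\left(L_{2n}^{(\alpha+1)}(x)\right)'+\left(\frac{\alpha}{2}+1\right)L_{2n}^{(\alpha+1)}(x)-\frac{1}{2}x\,L_{2n}^{(\alpha+1)}(x)\right],
\]
which is manifestly a polynomial.

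It then remains only to confirm that this polynomial has degree exactly $2n+1$. Since $L_{2n}^{(\alpha+1)}$ has degree $2n$, both $x\left(L_{2n}^{(\alpha+1)}\right)'$ and $\left(\frac{\alpha}{2}+1\right)L_{2n}^{(\alpha+1)}$ have degree at most $2n$, so the unique contribution of degree $2n+1$ is $-\frac{1}{2}x\,L_{2n}^{(\alpha+1)}(x)$; its top coefficient is $-\frac{1}{2}\cdot\frac{1}{(2n)!}\neq0$, because the leading coefficient of $L_{2n}^{(\alpha+1)}$ is $\frac{(-1)^{2n}}{(2n)!}=\frac{1}{(2n)!}$. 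Thus no cancellation occurs and the degree is precisely $2n+1$. This last non-cancellation check is the only (very mild) obstacle, and it is exactly where the present argument is easier than the LSE computation: there the $\varepsilon$-operator forced the solution of the linear system (\ref{eq}), whereas here applying $D$ keeps us within polynomials times $x^{\frac{\alpha}{2}}\mathrm{e}^{-\frac{x}{2}}$ directly.
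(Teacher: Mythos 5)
Your proposal is correct and follows essentially the same route as the paper's proof: the even case is immediate from the definition, and the odd case is handled by differentiating $\varphi_{2n}^{(\alpha+1)}$ via the product rule and factoring out $x^{\alpha/2}\mathrm{e}^{-x/2}$. Your explicit check that the leading coefficient $-\tfrac{1}{2}\cdot\tfrac{1}{(2n)!}$ of the degree-$(2n+1)$ term does not cancel is a small welcome addition the paper leaves implicit.
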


\begin{proof}
We prove this by considering  two cases, $j$ even and $j$  odd. It is clear for even $j$, that
$\psi_{2j}\:x^{-\alpha/2}\:{\rm e}^{x/2}$ is up to a constant multiple of $L_{2j}^{(\alpha+1)}(x).$

If $j$ is odd, then we find,
\bea
\psi_{2n+1}(x)x^{-\frac{\alpha}{2}}\mathrm{e}^{\frac{x}{2}}
&=&\left[\varphi_{2n}^{(\alpha+1)}(x)\right]'x^{-\frac{\alpha}{2}}\mathrm{e}^{\frac{x}{2}}\nonumber\\
&=&\left(\frac{1}{c_{2n}^{(\alpha+1)}}L_{2n}^{(\alpha+1)}(x)x^{\frac{\alpha}{2}+1}\mathrm{e}^{-\frac{x}{2}}\right)'x^{-\frac{\alpha}{2}}\mathrm{e}^{\frac{x}{2}}
\nonumber\\
&=&\frac{1}{c_{2n}^{(\alpha+1)}}\left[x \left(L_{2n}^{(\alpha+1)}(x)\right)'+\left(\frac{\alpha}{2}+1\right)L_{2n}^{(\alpha+1)}(x)-\frac{1}{2}x L_{2n}^{(\alpha+1)}(x)\right],\;\;n=0,1,2,\ldots.\nonumber
\eea
Thus $\psi_{2n+1}(x)x^{-\alpha/2}\:\mathrm{e}^{\frac{x^{2}}{2}}$ is a polynomial of degree $2n+1$.
The proof is complete.
\end{proof}
We use (\ref{psila}) to compute
$M^{(1)}:=\left(\int_{0}^{\infty}\psi_{j}(x)\varepsilon\psi_{k}(x)dx\right)_{j,k=0}^{N-1}$, resulting in the following theorem.
\begin{theorem}
$$
M^{(1)}=\begin{pmatrix}
0&1&0&0&\cdots&0&0\\
-1&0&0&0&\cdots&0&0\\
0&0&0&1&\cdots&0&0\\
0&0&-1&0&\cdots&0&0\\
\vdots&\vdots&\vdots&\vdots&&\vdots&\vdots\\
0&0&0&0&\cdots&0&1\\
0&0&0&0&\cdots&-1&0
\end{pmatrix}_{N\times N}.
$$
\end{theorem}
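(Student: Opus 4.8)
The plan is to compute the entries $m_{jk}=\int_{0}^{\infty}\psi_{j}(x)\,\varepsilon\psi_{k}(x)\,dx$ directly, splitting into the four parity cases $(j,k)=(\mathrm{even},\mathrm{odd}),(\mathrm{odd},\mathrm{even}),(\mathrm{odd},\mathrm{odd}),(\mathrm{even},\mathrm{even})$, exactly as in the LSE computation of $M^{(4)}$. Since $\varepsilon^{t}=-\varepsilon$, the form $\int \psi_{j}\,\varepsilon\psi_{k}$ is antisymmetric, so $M^{(1)}$ is antisymmetric; this immediately kills all diagonal entries and lets me treat the $(\mathrm{odd},\mathrm{even})$ block by negating the $(\mathrm{even},\mathrm{odd})$ block, and reduces the two ``same parity'' blocks to the cases $j\ge k$. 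Before starting I would record two tools. First, because $\psi_{2n+1}=\frac{d}{dx}\varphi_{2n}^{(\alpha+1)}=D\varphi_{2n}^{(\alpha+1)}$ and $\varphi_{2n}^{(\alpha+1)}$ carries the factor $x^{\alpha/2+1}\mathrm{e}^{-x/2}$ which vanishes at $0$ (here $\alpha>-2$ guarantees $\alpha/2+1>0$) and at $\infty$, Lemma \ref{de} gives the crucial simplification $\varepsilon\psi_{2n+1}=\varepsilon D\varphi_{2n}^{(\alpha+1)}=\varphi_{2n}^{(\alpha+1)}$. Second, I would write down the differentiation formula for $\varphi_{j}^{(\alpha+1)}$ that is the analog of (\ref{phijpx}) with $\alpha$ replaced by $\alpha+1$, namely $\big[\varphi_{j}^{(\alpha+1)}\big]'=\tfrac12\sqrt{(j+1)(j+\alpha+2)}\,\widetilde{\varphi}_{j+1}^{(\alpha+1)}-\tfrac12\sqrt{j(j+\alpha+1)}\,\widetilde{\varphi}_{j-1}^{(\alpha+1)}$, which follows by summing (\ref{ldf1}) and (\ref{ldf2}) for order $\alpha+1$, and which shows that the derivative of an even-indexed $\varphi$ is a combination of odd-indexed $\widetilde{\varphi}$'s. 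Throughout I use the biorthogonality $\int_{0}^{\infty}\varphi_{m}^{(\alpha+1)}\widetilde{\varphi}_{n}^{(\alpha+1)}dx=\delta_{mn}$ stated after (\ref{phijx}).

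The three easy blocks then go quickly. For $(\mathrm{even},\mathrm{odd})$, $m_{2j,2k+1}=\int_{0}^{\infty}\widetilde{\varphi}_{2j}^{(\alpha+1)}\,\varepsilon\psi_{2k+1}\,dx=\int_{0}^{\infty}\widetilde{\varphi}_{2j}^{(\alpha+1)}\varphi_{2k}^{(\alpha+1)}dx=\delta_{jk}$ by biorthogonality. The $(\mathrm{odd},\mathrm{even})$ block is $-\delta_{jk}$ by antisymmetry. For $(\mathrm{odd},\mathrm{odd})$, using $\varepsilon\psi_{2k+1}=\varphi_{2k}^{(\alpha+1)}$ I get $m_{2j+1,2k+1}=\int_{0}^{\infty}\big[\varphi_{2j}^{(\alpha+1)}\big]'\varphi_{2k}^{(\alpha+1)}dx$; expanding $\big[\varphi_{2j}^{(\alpha+1)}\big]'$ in the odd-indexed $\widetilde{\varphi}_{2j\pm1}^{(\alpha+1)}$ via the formula above and pairing against the even-indexed $\varphi_{2k}^{(\alpha+1)}$, biorthogonality forces the result to vanish on parity grounds ($\delta_{2j\pm1,\,2k}=0$).

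The $(\mathrm{even},\mathrm{even})$ block is where I expect the real work. In the Gaussian (GOE) analog this block vanishes by a clean parity argument ($\varepsilon$ of an even function is odd, and the product integrates to zero over $\mathbb{R}$), but on the half-line there is no such symmetry, so I would instead argue by a telescoping recurrence. Starting from $\varphi_{2l-1}^{(\alpha+1)}=\varepsilon\big[\varphi_{2l-1}^{(\alpha+1)}\big]'$ (again Lemma \ref{de}) and the differentiation formula, one obtains $\varphi_{2l-1}^{(\alpha+1)}=P_{l}\,\varepsilon\widetilde{\varphi}_{2l}^{(\alpha+1)}-Q_{l}\,\varepsilon\widetilde{\varphi}_{2l-2}^{(\alpha+1)}$ with explicit $P_{l},Q_{l}>0$; solving for $\varepsilon\widetilde{\varphi}_{2l}^{(\alpha+1)}$ and iterating down to $l=0$ expresses $\varepsilon\widetilde{\varphi}_{2k}^{(\alpha+1)}$ as a linear combination of the odd-indexed $\varphi_{1}^{(\alpha+1)},\dots,\varphi_{2k-1}^{(\alpha+1)}$ plus a single multiple of $\varepsilon\widetilde{\varphi}_{0}^{(\alpha+1)}$. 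Pairing with $\widetilde{\varphi}_{2j}^{(\alpha+1)}$, every $\varphi_{2l-1}^{(\alpha+1)}$ term dies by biorthogonality (odd index $\ne 2j$), leaving $m_{2j,2k}=c\,m_{2j,0}$ for a constant $c$. It remains to show $m_{2j,0}=0$ for $j\ge1$: by antisymmetry $m_{2j,0}=-m_{0,2j}$, and expanding $\varepsilon\widetilde{\varphi}_{2j}^{(\alpha+1)}$ by the same telescoping and pairing now against $\widetilde{\varphi}_{0}^{(\alpha+1)}$, all odd-$\varphi$ terms vanish and the remaining term is a multiple of $\langle\widetilde{\varphi}_{0}^{(\alpha+1)},\varepsilon\widetilde{\varphi}_{0}^{(\alpha+1)}\rangle=m_{0,0}=0$.

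The main obstacle is precisely this even--even block: the naive integration by parts merely re-derives antisymmetry (the boundary terms cancel against each other), and since $\varepsilon\widetilde{\varphi}_{0}^{(\alpha+1)}$ tends to nonzero constants at the endpoints it is not in $L^{2}(0,\infty)$ and cannot simply be expanded in the $\varphi^{(\alpha+1)}$ basis; this is what forces the two-stage telescoping-plus-antisymmetry reduction down to the base entry $m_{0,0}$. I would also be careful to verify at each use of Lemma \ref{de} that the relevant $\varphi_{m}^{(\alpha+1)}$ genuinely vanishes at both $0$ and $\infty$, which is where the hypothesis $\alpha>-2$ enters. Once all four blocks are in hand, assembling them yields the stated $2\times2$ block-diagonal antisymmetric form of $M^{(1)}$.
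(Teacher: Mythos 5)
Your proposal is correct, and on the one genuinely hard block it takes a different route from the paper. The $(\mathrm{even},\mathrm{odd})$ and $(\mathrm{odd},\mathrm{even})$ entries are handled exactly as in the paper (via $\varepsilon\psi_{2k+1}=\varepsilon D\varphi_{2k}^{(\alpha+1)}=\varphi_{2k}^{(\alpha+1)}$, biorthogonality, and antisymmetry), and your $(\mathrm{odd},\mathrm{odd})$ argument --- expand $\big[\varphi_{2j}^{(\alpha+1)}\big]'$ in $\widetilde{\varphi}_{2j\pm1}^{(\alpha+1)}$ and kill everything through the parity mismatch $\delta_{2j\pm1,\,2k}=0$ --- is an equivalent repackaging of the paper's degree-counting argument (the paper notes that $x\big(L_{2j}^{(\alpha+1)}\big)'+(\tfrac{\alpha}{2}+1)L_{2j}^{(\alpha+1)}-\tfrac12 xL_{2j}^{(\alpha+1)}$ has degree $2j+1<2k$ and invokes orthogonality). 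The divergence is in the $(\mathrm{even},\mathrm{even})$ block. Writing $A_j(x):=L_{2j}^{(\alpha+1)}(x)x^{\alpha/2}\mathrm{e}^{-x/2}$, the paper evaluates $m_{2j,2k}$ head-on using $\int_0^\infty A_k(x)dx=2^{\alpha/2+1}\Gamma(k+1+\tfrac{\alpha}{2})/k!$ together with the assertion that $2\int_0^\infty A_j(x)\big(\int_0^x A_k(y)dy\big)dx$ equals the product $\big(\int_0^\infty A_j\big)\big(\int_0^\infty A_k\big)$; that identity is immediate for $j=k$ but for $j\neq k$ it is precisely equivalent to the vanishing being proved, and the paper supplies no further justification for it. Your telescoping argument --- solve $\varphi_{2l-1}^{(\alpha+1)}=P_l\,\varepsilon\widetilde{\varphi}_{2l}^{(\alpha+1)}-Q_l\,\varepsilon\widetilde{\varphi}_{2l-2}^{(\alpha+1)}$ for $\varepsilon\widetilde{\varphi}_{2l}^{(\alpha+1)}$, iterate down to $\varepsilon\widetilde{\varphi}_{0}^{(\alpha+1)}$, annihilate the odd-indexed $\varphi$'s by biorthogonality, and close with $m_{2j,0}=-m_{0,2j}=-c_j\,m_{0,0}=0$ --- avoids every explicit special-function integral and actually establishes the identity the paper only asserts, so it is arguably the more self-contained proof of this block; the price is the extra recurrence bookkeeping. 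Your tracking of where $\alpha>-2$ enters (so that $\varphi_{m}^{(\alpha+1)}$ vanishes at the origin and Lemma \ref{de} applies) is also correct and consistent with the hypotheses of the section.
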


\begin{proof}
Let $m_{jk}$ be the $(j,k)$-entry of $M^{(1)}$,
again separating into  four cases: $(j,k)=({\rm even}, {\rm odd})$, $({\rm odd}, {\rm even})$,
  $(\rm {even}, {\rm even})$ and  $({\rm odd},{\rm odd})$. We find,
  for the $({\rm even},{\rm odd})$ case,
\bea
m_{2j,2k+1}
&=&\int_{0}^{\infty}\psi_{2j}(x)\varepsilon\psi_{2k+1}(x)dx\nonumber\\
&=&\int_{0}^{\infty}\widetilde{\varphi}_{2j}^{(\alpha+1)}(x)\left(\varepsilon D\varphi_{2k}^{(\alpha+1)}(x)\right)dx\nonumber\\
&=&\int_{0}^{\infty}\widetilde{\varphi}_{2j}^{(\alpha+1)}(x)\varphi_{2k}^{(\alpha+1)}(x)dx\nonumber\\
&=&\delta_{jk}.\nonumber
\eea
For the $({\rm odd},{\rm even})$ case, note that $M^{(1)}$ is antisymmetric, then
\bea
m_{2j+1,2k}
&=&-m_{2k,2j+1}\nonumber\\
&=&-\delta_{jk}.\nonumber
\eea
The $({\rm even},{\rm even})$ case,
\bea
m_{2j,2k}
&=&\int_{0}^{\infty}\psi_{2j}(x)\varepsilon\psi_{2k}(x)dx\nonumber\\
&=&\int_{0}^{\infty}\widetilde{\varphi}_{2j}^{(\alpha+1)}(x)\varepsilon\widetilde{\varphi}_{2k}^{(\alpha+1)}(x)dx\nonumber\\
&=&\frac{1}{2c_{2j}^{(\alpha+1)}c_{2k}^{(\alpha+1)}}\int_{0}^{\infty}L_{2j}^{(\alpha+1)}(x)x^{\frac{\alpha}{2}}\mathrm{e}^{-\frac{x}{2}}\left(\int_{0}^{x}
L_{2k}^{(\alpha+1)}(y)y^{\frac{\alpha}{2}}\mathrm{e}^{-\frac{y}{2}}dy-\int_{x}^{\infty}L_{2k}^{(\alpha+1)}(y)y^{\frac{\alpha}{2}}\mathrm{e}^{-\frac{y}{2}}dy\right)dx\nonumber\\
&=&\frac{1}{2c_{2j}^{(\alpha+1)}c_{2k}^{(\alpha+1)}}\int_{0}^{\infty}L_{2j}^{(\alpha+1)}(x)x^{\frac{\alpha}{2}}\mathrm{e}^{-\frac{x}{2}}\left(2\int_{0}^{x}
L_{2k}^{(\alpha+1)}(y)y^{\frac{\alpha}{2}}\mathrm{e}^{-\frac{y}{2}}dy-\int_{0}^{\infty}L_{2k}^{(\alpha+1)}(y)y^{\frac{\alpha}{2}}\mathrm{e}^{-\frac{y}{2}}dy\right)dx\nonumber\\
&=&\frac{1}{2c_{2j}^{(\alpha+1)}c_{2k}^{(\alpha+1)}}\bigg(2\int_{0}^{\infty}L_{2j}^{(\alpha+1)}(x)x^{\frac{\alpha}{2}}\mathrm{e}^{-\frac{x}{2}}dx\int_{0}^{x}
L_{2k}^{(\alpha+1)}(y)y^{\frac{\alpha}{2}}\mathrm{e}^{-\frac{y}{2}}dy\nonumber\\
&-&\int_{0}^{\infty}L_{2j}^{(\alpha+1)}(x)x^{\frac{\alpha}{2}}\mathrm{e}^{-\frac{x}{2}}dx\int_{0}^{\infty}L_{2k}^{(\alpha+1)}(y)y^{\frac{\alpha}{2}}
\mathrm{e}^{-\frac{y}{2}}dy\bigg)\nonumber\\
&=&\frac{1}{2c_{2j}^{(\alpha+1)}c_{2k}^{(\alpha+1)}}\left[\frac{2^{\alpha+2}\Gamma\left(j+1+\frac{\alpha}{2}\right)\Gamma\left(k+1+\frac{\alpha}{2}\right)}{j!\:k!}
-\frac{2^{\alpha+2}\Gamma\left(j+1+\frac{\alpha}{2}\right)\Gamma\left(k+1+\frac{\alpha}{2}\right)}{j!\:k!}\right]\nonumber\\
&=&0,\nonumber
\eea
where we have used the fact
$$
\int_{0}^{\infty}L_{2j}^{(\alpha+1)}(x)x^{\frac{\alpha}{2}}\mathrm{e}^{-\frac{x}{2}}dx=\frac{2^{\frac{\alpha}{2}+1}\Gamma\left(j+1+\frac{\alpha}{2}\right)}{j!},\;\;j=0,1,2,\ldots.
$$
Finally, the $({\rm odd},{\rm odd})$ case. If $j<k$,
\bea
m_{2j+1,2k+1}
&=&\int_{0}^{\infty}\psi_{2j+1}(x)\varepsilon\psi_{2k+1}(x)dx\nonumber\\
&=&\int_{0}^{\infty}\left[\varphi_{2j}^{(\alpha+1)}(x)\right]'\varepsilon D\varphi_{2k}^{(\alpha+1)}(x)dx\nonumber\\
&=&\int_{0}^{\infty}\left[\varphi_{2j}^{(\alpha+1)}(x)\right]'\varphi_{2k}^{(\alpha+1)}(x)dx\nonumber\\
&=&\int_{0}^{\infty}\frac{1}{c_{2j}^{(\alpha+1)}}\left[x \left(L_{2j}^{(\alpha+1)}(x)\right)'+\left(\frac{\alpha}{2}+1\right)L_{2j}^{(\alpha+1)}(x)-\frac{1}{2}x L_{2j}^{(\alpha+1)}(x)\right]x^{\frac{\alpha}{2}}\mathrm{e}^{-\frac{x}{2}}\nonumber\\
&&\frac{1}{c_{2k}^{(\alpha+1)}}L_{2k}^{(\alpha+1)}(x)x^{\frac{\alpha}{2}+1}\mathrm{e}^{-\frac{x}{2}}dx\nonumber\\
&=&\frac{1}{c_{2j}^{(\alpha+1)}c_{2k}^{(\alpha+1)}}\int_{0}^{\infty}L_{2k}^{(\alpha+1)}(x)\left[x \left(L_{2j}^{(\alpha+1)}(x)\right)'+\left(\frac{\alpha}{2}
+1\right)L_{2j}^{(\alpha+1)}(x)-\frac{1}{2}x L_{2j}^{(\alpha+1)}(x)\right]x^{\alpha+1}\mathrm{e}^{-x}dx\nonumber\\
&=&0,\nonumber
\eea
since, $2j+1$, the degree of the polynomials
$x \left(L_{2j}^{(\alpha+1)}(x)\right)'+\left(\frac{\alpha}{2}+1\right)L_{2j}^{(\alpha+1)}(x)-\frac{1}{2}x L_{2j}^{(\alpha+1)}
(x)$, is less than $2k$.

If $j>k$, due to the fact that $M^{(1)}$ is antisymmetric,
\bea
m_{2j+1,2k+1}
&=&-m_{2k+1,2j+1}\nonumber\\
&=&0,\nonumber
\eea
hence
$$
m_{2j+1,2k+1}=0,\;\;j,k=0,1,\ldots.
$$
It is the desired result for $M^{(1)}$. The proof is complete.
\end{proof}

We begin here a series of computations analogues to those in derivation of the LSE problem, ending up with
an expression for $\left[G_N^{(1)}(f)\right]^2$ as a scalar Fredholm determinant.

It's obvious that $\left(M^{(1)}\right)^{-1}=-M^{(1)}$, so $\mu_{2j,2j+1}=-1, \mu_{2j+1,2j}=1$, and $\mu_{jk}=0$ for other cases,
hence
\bea
K_{N,1}^{(2,2)}(x,y)
&=&\sum_{j,k=0}^{N-1}\mu_{jk}\varepsilon\psi_{j}(x)\psi_{k}(y)\nonumber\\
&=&-\sum_{j=0}^{\frac{N}{2}-1}\varepsilon\psi_{2j}(x)\psi_{2j+1}(y)+\sum_{j=0}^{\frac{N}{2}-1}\varepsilon\psi_{2j+1}(x)\psi_{2j}(y)\nonumber\\
&=&\sum_{j=0}^{\frac{N}{2}-1}\varphi_{2j}^{(\alpha+1)}(x)\widetilde{\varphi}_{2j}^{(\alpha+1)}(y)-\sum_{j=0}^{\frac{N}{2}-1}\varepsilon\widetilde{\varphi}_{2j}^{(\alpha+1)}(x)
\left[\varphi_{2j}^{(\alpha+1)}(y)\right]'.
\nonumber
\eea
Differentiating (\ref{phijx}) with respect to $x$, we find,
$$
\left[\varphi_{j}^{(\alpha+1)}(x)\right]'=\frac{1}{c_{j}^{(\alpha+1)}}
\left[x\left(L_{j}^{(\alpha+1)}(x)\right)'+\frac{\alpha+2-x}{2}L_{j}^{(\alpha+1)}(x)\right]x^{\frac{\alpha}{2}}
\mathrm{e}^{-\frac{x}{2}}.
$$
Recall that the Laguerre polynomials $L_{j}^{(\alpha+1)}$ satisfy the differentiation formulas \cite{Gradshteyn}
\be
x\left(L_{j}^{(\alpha+1)}(x)\right)'=j L_{j}^{(\alpha+1)}(x)-(j+\alpha+1)L_{j-1}^{(\alpha+1)}(x),\;\;j=0,1,2,\ldots,\label{la1}
\ee
\be
x\left(L_{j}^{(\alpha+1)}(x)\right)'=(j+1)L_{j+1}^{(\alpha+1)}(x)-(j+\alpha+2-x)L_{j}^{(\alpha+1)}(x),\;\;j=0,1,2,\ldots.\label{la2}
\ee
The sum of (\ref{la1}) and (\ref{la2}) divided by 2, gives,
$$
x\left(L_{j}^{(\alpha+1)}(x)\right)'=\frac{j+1}{2}L_{j+1}^{(\alpha+1)}(x)-\frac{j+\alpha+1}{2}
L_{j-1}^{(\alpha+1)}(x)-\frac{\alpha+2-x}{2}L_{j}^{(\alpha+1)}(x),
\;\;j=0,1,2,\ldots,
$$
which is the same as,
$$
x\left(L_j^{(\alpha+1)}(x)\right)'+\frac{\alpha+2-x}{2}L_j^{(\alpha+1)}(x)=\frac{j+1}{2}
\:L_{j+1}^{(\alpha+1)}(x)-\frac{j+\alpha+1}{2}\:L_{j-1}^{(\alpha+1)}(x).
$$
Hence the derivative of $\varphi_{j}^{(\alpha+1)}(x)$ becomes,
\bea
\left[\varphi_{j}^{(\alpha+1)}(x)\right]'
&=&\frac{1}{c_{j}^{(\alpha+1)}}\left[\frac{j+1}{2}L_{j+1}^{(\alpha+1)}(x)-\frac{j+\alpha+1}{2}L_{j-1}^{(\alpha+1)}(x)
\right]x^{\frac{\alpha}{2}}
\mathrm{e}^{-\frac{x}{2}}\nonumber\\
&=&\frac{1}{2}\sqrt{(j+1)(j+\alpha+2)}\:\widetilde{\varphi}_{j+1}^{(\alpha+1)}(x)-\frac{1}{2}\sqrt{j(j+\alpha+1)}\:\widetilde{\varphi}_{j-1}^{(\alpha+1)}(x).
\label{phijp}
\eea
So replacing $j$ by $2j$, we see that,
$$
\left[\varphi_{2j}^{(\alpha+1)}(y)\right]'=\sqrt{\left(j+\frac{1}{2}\right)\left(j+\frac{\alpha+2}{2}\right)}\:\widetilde{\varphi}_{2j+1}^{(\alpha+1)}(y)
-\sqrt{j\left(j+\frac{\alpha+1}{2}\right)}\:\widetilde{\varphi}_{2j-1}^{(\alpha+1)}(y).
$$
Continuing, we compute the summation to give,
\bea
&&\sum_{j=0}^{\frac{N}{2}-1}\varepsilon\widetilde{\varphi}_{2j}^{(\alpha+1)}(x)\left[\varphi_{2j}^{(\alpha+1)}(y)\right]'\nonumber\\
&=&\sum_{j=0}^{\frac{N}{2}-1}\sqrt{\left(j+\frac{1}{2}\right)\left(j+\frac{\alpha+2}{2}\right)}\:\varepsilon\widetilde{\varphi}_{2j}^{(\alpha+1)}(x)
\widetilde{\varphi}_{2j+1}^{(\alpha+1)}(y)\nonumber\\
&-&\sum_{j=1}^{\frac{N}{2}-1}\sqrt{j\left(j+\frac{\alpha+1}{2}\right)}\:\varepsilon\widetilde{\varphi}_{2j}^{(\alpha+1)}(x)\widetilde{\varphi}_{2j-1}^{(\alpha+1)}(y)\nonumber\\
&=&\sum_{j=1}^{\frac{N}{2}}\left[\sqrt{\left(j-\frac{1}{2}\right)\left(j+\frac{\alpha}{2}\right)}\:\varepsilon
\widetilde{\varphi}_{2j-2}^{(\alpha+1)}(x)-\sqrt{j\left(j+\frac{\alpha+1}{2}\right)}\:\varepsilon\widetilde{\varphi}_{2j}^{(\alpha+1)}(x)\right]
\widetilde{\varphi}_{2j-1}^{(\alpha+1)}(y)\nonumber\\
&+&\frac{1}{2}\sqrt{N(N+\alpha+1)}\:\varepsilon\widetilde{\varphi}_{N}^{(\alpha+1)}(x)\widetilde{\varphi}_{N-1}^{(\alpha+1)}(y).\nonumber
\eea
From (\ref{phijp}), and using Lemma \ref{de}, we have
\bea
\varphi_{2j-1}^{(\alpha+1)}(x)
&=&\varepsilon\left[\varphi_{2j-1}^{(\alpha+1)}(x)\right]'\nonumber\\
&=&\sqrt{j\left(j+\frac{\alpha+1}{2}\right)}\:\varepsilon\widetilde{\varphi}_{2j}^{(\alpha+1)}(x)-\sqrt{\left(j-\frac{1}{2}\right)
\left(j+\frac{\alpha}{2}\right)}\:\varepsilon\widetilde{\varphi}_{2j-2}^{(\alpha+1)}(x).\nonumber
\eea
The sum simplifies to
$$
\sum_{j=0}^{\frac{N}{2}-1}\varepsilon\widetilde{\varphi}_{2j}^{(\alpha+1)}(x)\left[\varphi_{2j}^{(\alpha+1)}(y)\right]'
=-\sum_{j=1}^{\frac{N}{2}}\varphi_{2j-1}^{(\alpha+1)}(x)\widetilde{\varphi}_{2j-1}^{(\alpha+1)}(y)
+\frac{1}{2}\sqrt{N(N+\alpha+1)}\:\varepsilon\widetilde{\varphi}_{N}^{(\alpha+1)}(x)\widetilde{\varphi}_{N-1}^{(\alpha+1)}(y).
$$
The computations above gives a compact form for $K_{N,1}^{(2,2)}(x,y):$
\bea
K_{N,1}^{(2,2)}(x,y)
&=&\sum_{j=0}^{\frac{N}{2}-1}\varphi_{2j}^{(\alpha+1)}(x)\widetilde{\varphi}_{2j}^{(\alpha+1)}(y)
+\sum_{j=1}^{\frac{N}{2}}\varphi_{2j-1}^{(\alpha+1)}(x)\widetilde{\varphi}_{2j-1}^{(\alpha+1)}(y)
-\frac{1}{2}\sqrt{N(N+\alpha+1)}\:\varepsilon\widetilde{\varphi}_{N}^{(\alpha+1)}(x)\widetilde{\varphi}_{N-1}^{(\alpha+1)}(y)\nonumber\\
&=&\sum_{j=0}^{N-1}\varphi_{j}^{(\alpha+1)}(x)\widetilde{\varphi}_{j}^{(\alpha+1)}(y)-\frac{1}{2}\sqrt{N(N+\alpha+1)}\:\varepsilon
\widetilde{\varphi}_{N}^{(\alpha+1)}(x)\widetilde{\varphi}_{N-1}^{(\alpha+1)}(y)\nonumber\\
&=&S_{N}(x,y)-\frac{1}{2}\sqrt{N(N+\alpha+1)}\:\varepsilon
\widetilde{\varphi}_{N}^{(\alpha+1)}(x)\widetilde{\varphi}_{N-1}^{(\alpha+1)}(y),\nonumber
\eea
where
$$
S_{N}(x,y):=\sum_{j=0}^{N-1}\varphi_{j}^{(\alpha+1)}(x)\widetilde{\varphi}_{j}^{(\alpha+1)}(y)
=-\sqrt{N(N+\alpha+1)}\frac{\varphi_{N}^{(\alpha+1)}(x)\widetilde{\varphi}_{N-1}^{(\alpha+1)}(y)-\widetilde{\varphi}_{N}^{(\alpha+1)}(y)\varphi_{N-1}^{(\alpha+1)}(x)}{x-y}.
$$
Here we have used the Christoffel-Darboux formula in the last equality.

By Theorem \ref{sca1}, we have the following theorem.
\begin{theorem}
\bea
&&\left[G_{N}^{(1)}(f)\right]^{2}\nonumber\\
&=&\det\bigg(I+S_{N}(f^{2}+2f)-S_{N}\varepsilon f'-S_{N}f\varepsilon f'
-\frac{1}{2}\sqrt{N(N+\alpha+1)}\Big[\varepsilon\widetilde{\varphi}_{N}^{(\alpha+1)}\otimes\widetilde{\varphi}_{N-1}^{(\alpha+1)}
\left(f^{2}+2f\right)\Big]\nonumber\\
&-&\frac{1}{2}\sqrt{N(N+\alpha+1)}\left(\varepsilon\widetilde{\varphi}_{N}^{(\alpha+1)}\otimes\varepsilon\widetilde{\varphi}_{N-1}^{(\alpha+1)}\right)f'
-\frac{1}{2}\sqrt{N(N+\alpha+1)}\left[\varepsilon\widetilde{\varphi}_{N}^{(\alpha+1)}\otimes\varepsilon\left(\widetilde{\varphi}_{N-1}^{(\alpha+1)}f\right)\right]f'\bigg).\nonumber
\eea
\end{theorem}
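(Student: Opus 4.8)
The plan is to insert the explicit form of $K_{N,1}^{(2,2)}$ established immediately above into the scalar-operator determinant (\ref{sca1}) and then reduce the three resulting rank-one contributions by repeated use of the tensor-product identity (\ref{ab}) together with $\varepsilon^{t}=-\varepsilon$.

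First I would record that the preceding Christoffel--Darboux computation expresses $K_{N,1}^{(2,2)}$ as a rank-one perturbation of $S_N$, namely
$$
K_{N,1}^{(2,2)}=S_{N}-\tfrac{1}{2}\sqrt{N(N+\alpha+1)}\,\bigl(\varepsilon\widetilde{\varphi}_{N}^{(\alpha+1)}\otimes\widetilde{\varphi}_{N-1}^{(\alpha+1)}\bigr),
$$
so that, writing $u:=\varepsilon\widetilde{\varphi}_{N}^{(\alpha+1)}$ and $v:=\widetilde{\varphi}_{N-1}^{(\alpha+1)}$, we have $K_{N,1}^{(2,2)}=S_N-\tfrac12\sqrt{N(N+\alpha+1)}\,(u\otimes v)$. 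Substituting this into $\det\bigl(I+K_{N,1}^{(2,2)}(f^{2}+2f)-K_{N,1}^{(2,2)}\varepsilon f'-K_{N,1}^{(2,2)}f\varepsilon f'\bigr)$ and using linearity, each of the three operator terms splits into an $S_N$ part and a rank-one part. The $S_N$ parts assemble at once into $S_{N}(f^{2}+2f)-S_{N}\varepsilon f'-S_{N}f\varepsilon f'$, which is the first line of the asserted formula.

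The content lies in simplifying the three rank-one parts via $(u\otimes v)A=u\otimes(A^{t}v)$ from (\ref{ab}). Since multiplication operators are symmetric, the first gives $(u\otimes v)(f^{2}+2f)=u\otimes\bigl((f^{2}+2f)v\bigr)$ with no sign change, producing the term $-\tfrac12\sqrt{N(N+\alpha+1)}\,\bigl[\varepsilon\widetilde{\varphi}_{N}^{(\alpha+1)}\otimes\widetilde{\varphi}_{N-1}^{(\alpha+1)}(f^{2}+2f)\bigr]$. For the other two the operator on the right contains $\varepsilon$, and here $\varepsilon^{t}=-\varepsilon$ is decisive: $(\varepsilon f')^{t}=-f'\varepsilon$ and $(f\varepsilon f')^{t}=-f'\varepsilon f$, whence $(u\otimes v)\varepsilon f'=-u\otimes(f'\varepsilon v)$ and $(u\otimes v)f\varepsilon f'=-u\otimes\bigl(f'\varepsilon(fv)\bigr)$. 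I would then track the overall sign in each case --- the internal $-\tfrac12$ of $K_{N,1}^{(2,2)}$, the explicit minus sign in front of the $\varepsilon f'$ and $f\varepsilon f'$ terms, and the minus produced by the transpose of $\varepsilon$ --- and observe that these conspire to leave every rank-one contribution with the common coefficient $-\tfrac12\sqrt{N(N+\alpha+1)}$. Rewriting $f'\varepsilon v$ and $f'\varepsilon(fv)$ back in tensor form as $(\varepsilon\widetilde{\varphi}_N^{(\alpha+1)}\otimes\varepsilon\widetilde{\varphi}_{N-1}^{(\alpha+1)})f'$ and, using $fv=\widetilde{\varphi}_{N-1}^{(\alpha+1)}f$, as $[\varepsilon\widetilde{\varphi}_N^{(\alpha+1)}\otimes\varepsilon(\widetilde{\varphi}_{N-1}^{(\alpha+1)}f)]f'$, reproduces the last two lines of the statement exactly.

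I do not expect a genuine obstacle here; the proof is a direct substitution. The one place demanding care is the sign bookkeeping just described, where the minus from $\varepsilon^{t}=-\varepsilon$ must be seen to combine correctly with the coefficients so that all three perturbative terms emerge with the same sign and magnitude. I would also note in passing that $u\otimes v$ is Hilbert--Schmidt, so that the Fredholm determinant manipulations and the application of (\ref{ab}) are legitimate.
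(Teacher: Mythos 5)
Your proposal is correct and follows essentially the same route as the paper, which simply substitutes the rank-one-perturbed kernel $K_{N,1}^{(2,2)}=S_{N}-\tfrac{1}{2}\sqrt{N(N+\alpha+1)}\,\varepsilon\widetilde{\varphi}_{N}^{(\alpha+1)}\otimes\widetilde{\varphi}_{N-1}^{(\alpha+1)}$ into the scalar determinant formula and invokes the tensor identity together with $\varepsilon^{t}=-\varepsilon$. Your sign bookkeeping checks out in all three terms, and you in fact spell out more of the cancellation than the paper does.
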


\subsection{Large $N$ behavior of the LOE moment generating function}
Now we consider the scaling limit of $\left[G_{N}^{(1)}(f)\right]^{2}$. We write
$$
\left[G_{N}^{(1)}(f)\right]^{2}=:\det(I+T),
$$
where
\bea
T:
&=&S_{N}(f^{2}+2f)-S_{N}\varepsilon f'-S_{N}f\varepsilon f'
-\frac{1}{2}\sqrt{N(N+\alpha+1)}\Big[\varepsilon\widetilde{\varphi}_{N}^{(\alpha+1)}\otimes\widetilde{\varphi}_{N-1}^{(\alpha+1)}
\left(f^{2}+2f\right)\Big]\nonumber\\
&-&\frac{1}{2}\sqrt{N(N+\alpha+1)}\left(\varepsilon\widetilde{\varphi}_{N}^{(\alpha+1)}\otimes\varepsilon\widetilde{\varphi}_{N-1}^{(\alpha+1)}\right)f'\nonumber\\
&-&\frac{1}{2}\sqrt{N(N+\alpha+1)}\left[\varepsilon\widetilde{\varphi}_{N}^{(\alpha+1)}\otimes\varepsilon\left(\widetilde{\varphi}_{N-1}^{(\alpha+1)}f\right)\right]f'.\nonumber
\eea
In the computations below, we replace $f(x)$ by $f\left(\sqrt{4Nx}\right)$ and $f'(x)$ by
$$f'\left(\sqrt{4Nx}\right)=\frac{d}{\sqrt{4N}d\sqrt{x}}f\left(\sqrt{4Nx}\right).$$
Similarly to the LSE case, we have the following two theorems.
\begin{theorem}
$$
\lim_{N\rightarrow\infty}\frac{y}{2N}S_{N}\left(\frac{x^{2}}{4N},\frac{y^{2}}{4N}\right)=B^{(\alpha+1)}(x,y),
$$

$$
\lim_{N\rightarrow\infty}\frac{x}{2N}S_{N}\left(\frac{x^{2}}{4N},\frac{x^{2}}{4N}\right)=B^{(\alpha+1)}(x,x),\label{loe1}
$$
where $B^{(\alpha+1)}(x,y)$ and $B^{(\alpha+1)}(x,x)$ are given by (\ref{bxy}) and (\ref{bxx}) with $\alpha$ replaced by $\alpha+1$.
\end{theorem}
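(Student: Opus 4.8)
The plan is to start from the Christoffel--Darboux representation of $S_{N}$ established just above the statement, namely
$$
S_{N}(x,y)=-\sqrt{N(N+\alpha+1)}\,\frac{\varphi_{N}^{(\alpha+1)}(x)\widetilde{\varphi}_{N-1}^{(\alpha+1)}(y)-\widetilde{\varphi}_{N}^{(\alpha+1)}(y)\varphi_{N-1}^{(\alpha+1)}(x)}{x-y},
$$
and to insert the hard-edge scaling $x\mapsto x^{2}/(4N)$, $y\mapsto y^{2}/(4N)$. Under this substitution the denominator $x-y$ becomes $(x^{2}-y^{2})/(4N)$, so that the explicit weight $y/(2N)$ in the statement, together with $\sqrt{N(N+\alpha+1)}\sim N$ and the $4N$ coming from the denominator, combine into an overall factor of order $N$; the surviving $N$-dependence must then be cancelled by the asymptotics of the four Laguerre functions. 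The whole computation runs exactly parallel to the LSE case behind Theorem \ref{ba4xx}, with $8N$ replaced by $4N$, the diagonal weight $x/(4N)$ replaced by $x/(2N)$, and the index $\alpha-1$ replaced by $\alpha+1$.

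The engine is the Laguerre-to-Bessel asymptotic formula already invoked in the proof of Theorem \ref{lse}. Observing that $\varphi_{j}^{(\alpha+1)}(x)=x\,\widetilde{\varphi}_{j}^{(\alpha+1)}(x)$ and that both are built from $L_{j}^{(\alpha+1)}(\cdot)(\cdot)^{(\alpha+1)/2}\mathrm{e}^{-(\cdot)/2}$ times an explicit half-integer power of the argument, I would apply that formula with index $\alpha+1$ to obtain, at the scaled point, a leading behaviour governed by $J_{\alpha+1}\big(\sqrt{(4N+2\alpha+4)x^{2}/(4N)}\big)=J_{\alpha+1}\big(x\sqrt{1+(2\alpha+4)/(4N)}\big)$, and analogously for the index $N-1$ factors. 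The normalization constants $c_{N}^{(\alpha+1)}=\sqrt{\Gamma(N+\alpha+2)/\Gamma(N+1)}$ and the $\Gamma$-ratios thrown off by the asymptotic formula are controlled by $\Gamma(n+a)/\Gamma(n+b)=n^{a-b}[1+O(n^{-1})]$, and one checks directly that they absorb the leftover powers of $N$.

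The crucial point, and the main obstacle, is that the leading Bessel terms cancel in the numerator: to leading order $\varphi_{N}^{(\alpha+1)}$ and $\varphi_{N-1}^{(\alpha+1)}$ both reduce to the same $J_{\alpha+1}(x)$, so the antisymmetric combination $\varphi_{N}(x)\widetilde{\varphi}_{N-1}(y)-\widetilde{\varphi}_{N}(y)\varphi_{N-1}(x)$ vanishes at leading order and is only $O(N^{-1})$. The finite limit therefore comes entirely from the next-order corrections, which have two sources: the argument shifts (index $N$ produces $J_{\alpha+1}(x\sqrt{1+(2\alpha+4)/4N})$ whereas $N-1$ produces $J_{\alpha+1}(x\sqrt{1+2\alpha/4N})$) and the differing $\Gamma$-normalizations between the $N$ and $N-1$ factors. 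Taylor expanding the Bessel function in these $O(N^{-1})$ argument differences is precisely what generates the derivatives $J_{\alpha+1}'$, so that after reinstating the $1/(x^{2}-y^{2})$ weight and the powers of $N$ one reads off
$$
\frac{J_{\alpha+1}(x)\,yJ_{\alpha+1}'(y)-J_{\alpha+1}(y)\,xJ_{\alpha+1}'(x)}{x^{2}-y^{2}}\,x=B^{(\alpha+1)}(x,y),
$$
i.e. the kernel (\ref{bxy}) with $\alpha\to\alpha+1$. The real labour lies in bookkeeping these sub-leading terms and confirming that the error is genuinely $o(1)$, uniformly on compact sets away from the diagonal.

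For the diagonal statement I would pass to the limit $y\to x$ in the off-diagonal result. Since the numerator $J_{\alpha+1}(x)yJ_{\alpha+1}'(y)-J_{\alpha+1}(y)xJ_{\alpha+1}'(x)$ vanishes at $y=x$, one application of L'Hôpital's rule in $y$, together with Bessel's equation and the recurrences relating $J_{\alpha+1}'$ to $J_{\alpha}$ and $J_{\alpha+2}$, collapses the expression to $\tfrac{1}{2}\big(J_{\alpha+1}^{2}(x)-J_{\alpha}(x)J_{\alpha+2}(x)\big)x$, which is exactly (\ref{bxx}) with $\alpha\to\alpha+1$. Because the entire derivation is formally identical to the LSE computation, the most economical write-up records only the changes of constants and scaling and otherwise cites the argument of Theorem \ref{ba4xx}.
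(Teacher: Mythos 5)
The paper offers no proof of this statement: it is asserted with the phrase ``similarly to the LSE case,'' and the LSE analogue (Theorem \ref{ba4xx}) is itself stated without proof, both being instances of the classical hard-edge Bessel scaling. So there is nothing in the paper to compare against, and your sketch supplies an argument the authors omit. The route you take is the standard one and the mechanism you identify is the right one: after writing $\varphi_j^{(\alpha+1)}=(\cdot)\,\widetilde{\varphi}_j^{(\alpha+1)}$ the Christoffel--Darboux numerator becomes $\frac{x^2}{4N}\big[\widetilde{\varphi}_N(u)\widetilde{\varphi}_{N-1}(v)-\widetilde{\varphi}_N(v)\widetilde{\varphi}_{N-1}(u)\big]$, the leading Bessel terms cancel, and the $O(N^{-1})$ remainder combined with the prefactor $\frac{y}{2N}\cdot\sqrt{N(N+\alpha+1)}\cdot\frac{4N}{x^2-y^2}$ and $\widetilde{\varphi}_{N}^{(\alpha+1)}(x^2/4N)\sim 2N^{1/2}J_{\alpha+1}(x)/x$ gives exactly $B^{(\alpha+1)}(x,y)$; I have checked that the powers of $N$ balance.

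Two points deserve correction or elaboration in a full write-up. First, of the two ``sources'' of the surviving $O(N^{-1})$ term that you list, only the argument shift actually contributes: the normalization constants attached to the indices $N$ and $N-1$ appear once each in both products of the antisymmetrized numerator, so their product factors out entirely and the discrepancy between them enters only at $O(N^{-2})$. The whole limit comes from $a_N-a_{N-1}=\tfrac{1}{2N}+O(N^{-2})$ in $J_{\alpha+1}(a_jx)$, whose Taylor expansion produces the combination $xJ_{\alpha+1}'(x)J_{\alpha+1}(y)-yJ_{\alpha+1}(x)J_{\alpha+1}'(y)$. Relatedly, one must verify that the error term in the Szeg\H{o} asymptotic formula is small enough to survive this differencing; under the scaling $x\mapsto x^2/(4N)$ it is $O(N^{-2})$ relative to the main term, so it is. Second, obtaining the diagonal statement by letting $y\to x$ in the off-diagonal limit silently interchanges $\lim_{N\to\infty}$ with $\lim_{y\to x}$, which requires uniformity of the convergence near the diagonal; either justify that uniformity (it does hold on compact sets) or compute the diagonal limit directly from the confluent form of Christoffel--Darboux. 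Your L'H\^opital-plus-Bessel-identity reduction of the off-diagonal kernel to $\tfrac{x}{2}\big(J_{\alpha+1}^2(x)-J_{\alpha}(x)J_{\alpha+2}(x)\big)$ is itself correct.
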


\begin{theorem}
$$
\widetilde{\varphi}_{N-1}^{(\alpha+1)}\left(\frac{x^{2}}{4N}\right)= 2\: N^{\frac{1}{2}}\:\frac{J_{\alpha+1}(x)}{x}+O\left(N^{-\frac{3}{2}}\right),\;\;N\rightarrow\infty,
$$

$$
\varepsilon\widetilde{\varphi}_{N}^{(\alpha+1)}\left(\frac{x^{2}}{4N}\right)= N^{-\frac{1}{2}}\left[\int_{0}^{x}J_{\alpha+1}(y)dy-1\right]+O\left(N^{-\frac{5}{2}}\right),
\;\;N\rightarrow\infty,
$$

$$
\varepsilon\widetilde{\varphi}_{N-1}^{(\alpha+1)}\left(\frac{x^{2}}{4N}\right)= N^{-\frac{1}{2}}\int_{0}^{x}J_{\alpha+1}(y)dy+O\left(N^{-\frac{5}{2}}\right),\;\;N\rightarrow\infty,
$$

$$
\varepsilon\left(\widetilde{\varphi}_{N-1}^{(\alpha+1)}f\right)\left(\frac{x^{2}}{4N}\right)= \frac{1}{2}N^{-\frac{1}{2}}
\left[\int_{0}^{x}J_{\alpha+1}(y)f(y)dy-\int_{x}^{\infty}J_{\alpha+1}(y)f(y)dy\right]+O\left(N^{-\frac{5}{2}}\right),\;\;N\rightarrow\infty.\label{loe2}
$$
\end{theorem}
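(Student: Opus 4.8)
The plan is to follow, almost verbatim, the structure of the LSE proof (Theorem~\ref{lse}), since the four LOE estimates are its exact structural analogues: the upper index is $\alpha+1$ rather than $\alpha-1$, the degrees are $N$ and $N-1$ rather than $2N$ and $2N\pm 1$, and the scaling is $x^{2}/(4N)$ rather than $x^{2}/(8N)$. The only external inputs are Szeg\H{o}'s Bessel-type asymptotic for $L_{m}^{(\beta)}(y)\,y^{\beta/2}\mathrm{e}^{-y/2}$ and the ratio expansion $\Gamma(n+a)/\Gamma(n+b)=n^{a-b}[1+O(n^{-1})]$. I would establish the pointwise estimate for $\widetilde{\varphi}_{N-1}^{(\alpha+1)}$ first. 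Writing $\widetilde{\varphi}_{N-1}^{(\alpha+1)}(x)=\sqrt{\Gamma(N)/\Gamma(N+\alpha+1)}\,L_{N-1}^{(\alpha+1)}(x)\,x^{\alpha/2}\mathrm{e}^{-x/2}$ and substituting $x\mapsto x^{2}/(4N)$, I split the power as $x^{\alpha/2}=x^{(\alpha+1)/2}\cdot x^{-1/2}$ so that Szeg\H{o}'s formula (index $\alpha+1$, degree $N-1$) applies to the $x^{(\alpha+1)/2}$ part; the leftover $(x^{2}/4N)^{-1/2}=2N^{1/2}/x$ produces the explicit $2N^{1/2}J_{\alpha+1}(x)/x$, while the Bessel argument $\sqrt{(4N+2\alpha)\,x^{2}/4N}=x\sqrt{1+\alpha/2N}\to x$. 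The surviving Gamma/power prefactor collapses to $1+O(N^{-1})$ by the ratio expansion, which is the first assertion.

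For the two $\varepsilon$-transforms without $f$ I would use the decomposition $\varepsilon g=\tfrac12\bigl[2\int_{0}^{x}g-\int_{0}^{\infty}g\bigr]$, so that the only tail one ever meets is the exact moment $\int_{0}^{\infty}L_{m}^{(\alpha+1)}(y)\,y^{\alpha/2}\mathrm{e}^{-y/2}\,dy$. The parity of $m$ is decisive. For even $m=N$ this moment equals $2^{\alpha/2+1}\Gamma(\tfrac{N}{2}+1+\tfrac{\alpha}{2})/(\tfrac{N}{2})!$ (the value already used in the $M^{(1)}$ computation); after normalization and the ratio expansion it contributes precisely $-N^{-1/2}$, i.e.\ the constant $-1$ inside $N^{-1/2}\bigl[\int_{0}^{x}J_{\alpha+1}-1\bigr]$. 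For odd $m=N-1$ the moment vanishes, which I would verify either through the $_{2}F_{1}\bigl(-m,\tfrac{\alpha}{2}+1;\alpha+2;2\bigr)=0$ mechanism of Lemma~\ref{bi} (the substitution $t=\cos^{2}(\theta/2)$ reduces it to $\int_{0}^{\pi}(\sin\theta)^{\alpha+1}(\cos\theta)^{\mathrm{odd}}\,d\theta=0$) or by a direct antisymmetry argument, leaving no constant term. In both cases the surviving piece $2\int_{0}^{x}$, after the change $t=y^{2}/(4N)$ and the Szeg\H{o} asymptotic, yields exactly $N^{-1/2}\int_{0}^{x}J_{\alpha+1}(y)\,dy$.

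The last estimate is the genuinely new one (absent in LSE, present in GOE). Here I would insert the leading form $\widetilde{\varphi}_{N-1}^{(\alpha+1)}(y^{2}/4N)\approx 2N^{1/2}J_{\alpha+1}(y)/y$ obtained above directly into $\varepsilon\bigl(\widetilde{\varphi}_{N-1}^{(\alpha+1)}f\bigr)(x^{2}/4N)=\tfrac12\bigl[\int_{0}^{x^{2}/4N}-\int_{x^{2}/4N}^{\infty}\bigr]\widetilde{\varphi}_{N-1}^{(\alpha+1)}(t)\,f(\sqrt{4Nt})\,dt$; the change $t=y^{2}/(4N)$, $dt=(y/2N)\,dy$, cancels the $1/y$ against the Jacobian and the $N^{1/2}/2N$, producing $\tfrac12 N^{-1/2}\bigl[\int_{0}^{x}-\int_{x}^{\infty}\bigr]J_{\alpha+1}(y)f(y)\,dy$, as claimed.

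The principal obstacle throughout is uniform error control: Szeg\H{o}'s remainder carries a factor $y^{5/4}$ that grows over the range of integration, so the $O$-terms cannot be integrated naively. For the moment-based estimates this is circumvented by replacing $\int_{x}^{\infty}$ with $\int_{0}^{\infty}-\int_{0}^{x}$ and using the exact value of $\int_{0}^{\infty}$; for the $f$-weighted estimate the Schwartz decay of $f$ suppresses the tail, and I would split the integration range at a slowly growing cutoff to confirm that the remainder is honestly $O(N^{-5/2})$. The single calculation demanding real care is the exact matching of the constant $-1$ in the even-degree case, which relies on the two competing $N^{-1/2}$ amplitudes — one from the $\int_{0}^{x}$ term, one from the normalized moment — each having coefficient exactly one, as in Theorem~\ref{loe1}.
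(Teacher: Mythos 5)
Your proposal is correct and coincides with the paper's intended argument: the paper proves the LSE analogue (Szeg\H{o}'s Bessel asymptotic plus the decomposition $\varepsilon g=\tfrac12[2\int_0^x-\int_0^\infty]$ with the exact even-degree moment supplying the $-1$ and the odd-degree moment vanishing) and simply asserts the LOE version ``similarly,'' which is exactly what you carry out, including the correct parity bookkeeping ($N$ even, $N-1$ odd) and the new $f$-weighted estimate. Your explicit attention to the uniformity of the $x^{5/4}O(N^{\alpha/2-3/4})$ remainder is, if anything, more careful than the paper's own treatment.
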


Using Theorem \ref{loe1} and Theorem \ref{loe2} to compute $\mathrm{Tr}\:T$ and $\mathrm{Tr}\:T^{2}$ as $N\rightarrow\infty$. We obtain
\bea
\mathrm{Tr}\:T
&=&\int_{0}^{\infty}B^{(\alpha+1)}(x,x)\left[f^{2}(x)+2f(x)\right]dx\nonumber\\
&-&\frac{1}{2}\int_{0}^{\infty}\left[\int_{0}^{x}J_{\alpha+1}(y)dy-1\right]J_{\alpha+1}(x)\left[f^{2}(x)+2f(x)\right]dx\nonumber\\
&-&\frac{1}{4N}\int_{0}^{\infty}\left[\int_{x}^{\infty}B^{(\alpha+1)}(x,y)dy-\int_{0}^{x}B^{(\alpha+1)}(x,y)dy\right]x\:f'(x)dx\nonumber\\
&-&\frac{1}{4N}\int_{0}^{\infty}\left[\int_{x}^{\infty}B^{(\alpha+1)}(x,y)f(y)dy-\int_{0}^{x}B^{(\alpha+1)}(x,y)f(y)dy\right]x\:f'(x)dx\nonumber\\
&-&\frac{1}{4N}\int_{0}^{\infty}\left[\int_{0}^{x}J_{\alpha+1}(y)dy-1\right]\left[\int_{0}^{x}J_{\alpha+1}(y)dy\right]x\:f'(x)dx\nonumber\\
&-&\frac{1}{8N}\int_{0}^{\infty}\left[\int_{0}^{x}J_{\alpha+1}(y)dy-1\right]\left[\int_{0}^{x}J_{\alpha+1}(y)f(y)dy-\int_{x}^{\infty}J_{\alpha+1}(y)f(y)dy\right]x\:f'(x)dx\nonumber\\
&+&O\left(N^{-2}\right),\;\;N\rightarrow\infty,\nonumber
\eea
and
\bea
&&\mathrm{Tr}\:T^{2}\nonumber\\
&=&\int_{0}^{\infty}\int_{0}^{\infty}B^{(\alpha+1)}(x,y)B^{(\alpha+1)}(y,x)\left[f^{2}(x)+2f(x)\right]\left[f^{2}(y)+2f(y)\right]dxdy\nonumber\\
&-&\int_{0}^{\infty}\int_{0}^{\infty}B^{(\alpha+1)}(x,y)J_{\alpha+1}(x)\left[\int_{0}^{y}J_{\alpha+1}(z)dz-1\right]\left[f^{2}(x)+2f(x)\right]
\left[f^{2}(y)+2f(y)\right]dxdy\nonumber\\
&+&\frac{1}{4}\int_{0}^{\infty}\int_{0}^{\infty}\left[\int_{0}^{x}J_{\alpha+1}(z)dz-1\right]\left[\int_{0}^{y}J_{\alpha+1}(z)dz-1\right]
J_{\alpha+1}(x)J_{\alpha+1}(y)\nonumber\\
&&\left[f^{2}(x)+2f(x)\right]\left[f^{2}(y)+2f(y)\right]dxdy\nonumber\\
&-&\frac{1}{2N}\int_{0}^{\infty}\int_{0}^{\infty}B^{(\alpha+1)}(x,y)\left[\int_{x}^{\infty}
B^{(\alpha+1)}(y,z)dz-\int_{0}^{x}B^{(\alpha+1)}(y,z)dz\right]x\:f'(x)\left[f^{2}(y)+2f(y)\right]dxdy\nonumber\\
&-&\frac{1}{2N}\int_{0}^{\infty}\int_{0}^{\infty}B^{(\alpha+1)}(x,y)\left[\int_{x}^{\infty}
B^{(\alpha+1)}(y,z)f(z)dz-\int_{0}^{x}B^{(\alpha+1)}(y,z)f(z)dz\right]\nonumber\\
&&x\:f'(x)\left[f^{2}(y)+2f(y)\right]dxdy\nonumber\\
&-&\frac{1}{2N}\int_{0}^{\infty}\int_{0}^{\infty}B^{(\alpha+1)}(x,y)\left[\int_{0}^{x}J_{\alpha+1}(z)dz\right]\left[\int_{0}^{y}J_{\alpha+1}(z)dz-1\right]
x\:f'(x)\left[f^{2}(y)+2f(y)\right]dxdy\nonumber\\
&-&\frac{1}{4N}\int_{0}^{\infty}\int_{0}^{\infty}B^{(\alpha+1)}(x,y)\left[\int_{0}^{x}J_{\alpha+1}(z)f(z)dz-\int_{x}^{\infty}J_{\alpha+1}(z)f(z)dz\right]
\left[\int_{0}^{y}J_{\alpha+1}(z)dz-1\right]\nonumber\\
&&x\:f'(x)\left[f^{2}(y)+2f(y)\right]dxdy\nonumber\\
&+&\frac{1}{4N}\int_{0}^{\infty}\int_{0}^{\infty}\left[\int_{x}^{\infty}B^{(\alpha+1)}(y,z)dz
-\int_{0}^{x}B^{(\alpha+1)}(y,z)dz\right]\left[\int_{0}^{x}J_{\alpha+1}(z)dz-1\right]J_{\alpha+1}(y)\nonumber\\
&&x\:f'(x)\left[f^{2}(y)+2f(y)\right]dxdy\nonumber\\
&+&\frac{1}{4N}\int_{0}^{\infty}\int_{0}^{\infty}\left[\int_{x}^{\infty}B^{(\alpha+1)}(y,z)f(z)dz
-\int_{0}^{x}B^{(\alpha+1)}(y,z)f(z)dz\right]\left[\int_{0}^{x}J_{\alpha+1}(z)dz-1\right]J_{\alpha+1}(y)\nonumber\\
&&x\:f'(x)\left[f^{2}(y)+2f(y)\right]dxdy\nonumber\\
&+&\frac{1}{4N}\int_{0}^{\infty}\int_{0}^{\infty}\left[\int_{0}^{x}J_{\alpha+1}(z)dz-1\right]\left[\int_{0}^{y}J_{\alpha+1}(z)dz-1\right]\left[\int_{0}^{x}J_{\alpha+1}(z)dz\right]
J_{\alpha+1}(y)\nonumber\\
&&x\:f'(x)\left[f^{2}(y)+2f(y)\right]dxdy\nonumber\\
&+&\frac{1}{8N}\int_{0}^{\infty}\int_{0}^{\infty}\left[\int_{0}^{x}J_{\alpha+1}(z)dz-1\right]\left[\int_{0}^{y}J_{\alpha+1}(z)dz-1\right]\nonumber\\
&&\left[\int_{0}^{x}J_{\alpha+1}(z)f(z)dz-\int_{x}^{\infty}J_{\alpha+1}(z)f(z)dz\right]J_{\alpha+1}(y)x\:f'(x)\left[f^{2}(y)+2f(y)\right]dxdy\nonumber\\
&+&O\left(N^{-2}\right),\;\;N\rightarrow\infty.\nonumber
\eea

Now we want to see the mean and variance of the linear statistics $\sum_{j=1}^{N}F\left(\sqrt{4Nx_{j}}\right)$, so we need to obtain the coefficients of $\lambda$ and $\lambda^{2}$,
firstly we know
$$
f\left(\sqrt{4Nx}\right)\approx-\lambda F\left(\sqrt{4Nx}\right)+\frac{\lambda^{2}}{2}F^{2}\left(\sqrt{4Nx}\right),
$$
then we replace $f$ with $-\lambda F+\frac{\lambda^{2}}{2}F^{2}$ in the expression of $\mathrm{Tr}\:T$ and $\mathrm{Tr}\:T^{2}$, similar to previous\\ discussions,
denote by $\mu_{N}^{(LOE,\:\alpha)}$ and $\mathcal{V}_{N}^{(LOE,\:\alpha)}$ the mean and variance of the linear statistics $\sum_{j=1}^{N}F\left(\sqrt{4Nx_{j}}\right)$, then we have the following theorem.
\begin{theorem}
As $N\rightarrow\infty$,
\bea
\mu_{N}^{(LOE,\:\alpha)}
&=&\mu_{N}^{(LUE,\:\alpha+1)}-\frac{1}{2}\int_{0}^{\infty}\left[\int_{0}^{x}J_{\alpha+1}(y)dy-1\right]J_{\alpha+1}(x)F(x)dx\nonumber\\
&-&\frac{1}{8N}\int_{0}^{\infty}\left[\int_{x}^{\infty}B^{(\alpha+1)}(x,y)dy-\int_{0}^{x}B^{(\alpha+1)}(x,y)dy\right]x\:F'(x)dx\nonumber\\
&-&\frac{1}{8N}\int_{0}^{\infty}\left[\int_{0}^{x}J_{\alpha+1}(y)dy-1\right]\left[\int_{0}^{x}J_{\alpha+1}(y)dy\right]x\:F'(x)dx
+O\left(N^{-2}\right),\nonumber
\eea
\bea
\mathcal{V}_{N}^{(LOE,\:\alpha)}
&=&2\:\mathcal{V}_{N}^{(LUE,\:\alpha+1)}-\int_{0}^{\infty}\left[\int_{0}^{x}J_{\alpha+1}(y)dy-1\right]J_{\alpha+1}(x)F^{2}(x)dx\nonumber\\
&+&2\int_{0}^{\infty}\int_{0}^{\infty}B^{(\alpha+1)}(x,y)J_{\alpha+1}(x)\left[\int_{0}^{y}J_{\alpha+1}(z)dz-1\right]F(x)F(y)dxdy\nonumber\\
&-&\frac{1}{2}\int_{0}^{\infty}\int_{0}^{\infty}\left[\int_{0}^{x}J_{\alpha+1}(z)dz-1\right]\left[\int_{0}^{y}J_{\alpha+1}(z)dz-1\right]
J_{\alpha+1}(x)J_{\alpha+1}(y)F(x)F(y)dxdy\nonumber\\
&-&\frac{1}{4N}\int_{0}^{\infty}\left[\int_{x}^{\infty}B^{(\alpha+1)}(x,y)dy-\int_{0}^{x}B^{(\alpha+1)}(x,y)dy\right]x\:F(x)F'(x)dx\nonumber\\
&-&\frac{1}{4N}\int_{0}^{\infty}\left[\int_{x}^{\infty}B^{(\alpha+1)}(x,y)F(y)dy-\int_{0}^{x}B^{(\alpha+1)}(x,y)F(y)dy\right]x\:F'(x)dx\nonumber\\
&-&\frac{1}{4N}\int_{0}^{\infty}\left[\int_{0}^{x}J_{\alpha+1}(y)dy-1\right]\left[\int_{0}^{x}J_{\alpha+1}(y)dy\right]x\:F(x)F'(x)dx\nonumber\\
&-&\frac{1}{8N}\int_{0}^{\infty}\left[\int_{0}^{x}J_{\alpha+1}(y)dy-1\right]\left[\int_{0}^{x}J_{\alpha+1}(y)F(y)dy-\int_{x}^{\infty}J_{\alpha+1}(y)F(y)dy\right]
x\:F'(x)dx\nonumber\\
&+&\frac{1}{2N}\int_{0}^{\infty}\int_{0}^{\infty}B^{(\alpha+1)}(x,y)\left[\int_{x}^{\infty}
B^{(\alpha+1)}(y,z)dz-\int_{0}^{x}B^{(\alpha+1)}(y,z)dz\right]x\:F'(x)F(y)dxdy\nonumber\\
&+&\frac{1}{2N}\int_{0}^{\infty}\int_{0}^{\infty}B^{(\alpha+1)}(x,y)\left[\int_{0}^{x}J_{\alpha+1}(z)dz\right]\left[\int_{0}^{y}J_{\alpha+1}(z)dz-1\right]
x\:F'(x)F(y)dxdy\nonumber\\
&-&\frac{1}{4N}\int_{0}^{\infty}\int_{0}^{\infty}\left[\int_{x}^{\infty}B^{(\alpha+1)}(y,z)dz
-\int_{0}^{x}B^{(\alpha+1)}(y,z)dz\right]\left[\int_{0}^{x}J_{\alpha+1}(z)dz-1\right]J_{\alpha+1}(y)\nonumber\\
&&x\:F'(x)F(y)dxdy\nonumber\\
&-&\frac{1}{4N}\int_{0}^{\infty}\int_{0}^{\infty}\left[\int_{0}^{x}J_{\alpha+1}(z)dz-1\right]\left[\int_{0}^{y}J_{\alpha+1}(z)dz-1\right]\left[\int_{0}^{x}J_{\alpha+1}(z)dz\right]
J_{\alpha+1}(y)\nonumber\\
&&x\:F'(x)F(y)dxdy+O\left(N^{-2}\right),\nonumber
\eea
where $\mu_{N}^{(LUE,\:\alpha+1)}$ and $\mathcal{V}_{N}^{(LUE,\:\alpha+1)}$ for $N\rightarrow\infty$ are given in (\ref{luem}) and (\ref{luev}) respectively
 with $\alpha$ replaced by $\alpha+1$.
\end{theorem}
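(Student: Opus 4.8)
The plan is to read off the mean and variance as the first two cumulants encoded in $\log G_N^{(1)}(f)$. Starting from the representation $[G_N^{(1)}(f)]^2 = \det(I+T)$ established just above, I take logarithms to get $\log G_N^{(1)}(f) = \tfrac12\log\det(I+T) = \tfrac12\mathrm{Tr}\,T - \tfrac14\mathrm{Tr}\,T^2 + \cdots$, the trace-log expansion being justified exactly as in the GSE and LSE discussions. The left-hand side is the cumulant generating function $\log\mathbb{E}_1(\mathrm{e}^{-\lambda\sum_j F(\sqrt{4Nx_j})})$, whose expansion in $\lambda$ has $-\mu_N^{(LOE,\alpha)}$ as the coefficient of $\lambda$ and $\tfrac12\mathcal{V}_N^{(LOE,\alpha)}$ as the coefficient of $\lambda^2$. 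It therefore suffices to expand the already-derived large-$N$ formulas for $\mathrm{Tr}\,T$ and $\mathrm{Tr}\,T^2$ in powers of $\lambda$ and match coefficients.

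The second step is the substitution $f(\sqrt{4Nx}) = \mathrm{e}^{-\lambda F(\sqrt{4Nx})} - 1 = -\lambda F + \tfrac{\lambda^2}{2}F^2 + O(\lambda^3)$, after which I collect the relevant orders. A term linear in $f$ in $\mathrm{Tr}\,T$ (such as the leading $\int_0^\infty B^{(\alpha+1)}(x,x)\,2f\,dx$) contributes $-\lambda$ times its value at $F$ at first order and $\tfrac{\lambda^2}{2}$ times its value at $F^2$ at second order, while a genuinely quadratic piece such as $\int_0^\infty B^{(\alpha+1)}(x,x)f^2\,dx$ feeds only the $\lambda^2$ coefficient through $f^2 = \lambda^2 F^2 + O(\lambda^3)$. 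Since $\mathrm{Tr}\,T^2$ is at least quadratic in $f$, it first enters at order $\lambda^2$: its leading double integral produces the $O(1)$ part of the variance, while its $1/N$ pieces feed the $1/N$ corrections. Using (\ref{luem}) and (\ref{luev}) with $\alpha$ replaced by $\alpha+1$ to recognize the two leading single and double integrals as $\mu_N^{(LUE,\alpha+1)}$ and $\mathcal{V}_N^{(LUE,\alpha+1)}$, the overall factor $\tfrac12$ from $\log G_N^{(1)} = \tfrac12\log\det(I+T)$ yields the announced prefactors, namely coefficient $1$ on the LUE mean and coefficient $2$ on the LUE variance, precisely as in the GOE theorem.

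The bulk of the remaining work is careful bookkeeping of the $O(1/N)$ corrections, and here the genuinely new feature of the orthogonal case is the presence of operators carrying $f$ inside the kernel $\varepsilon$, namely the pieces built from $\varepsilon(\widetilde{\varphi}_{N-1}^{(\alpha+1)}f)$, whose asymptotics in (\ref{loe2}) involve the $f$-weighted combination $\int_0^x J_{\alpha+1}(y)f(y)\,dy - \int_x^\infty J_{\alpha+1}(y)f(y)\,dy$. Under the substitution these generate both $\lambda$- and $\lambda^2$-contributions that must be tracked together with the ordinary $S_N\varepsilon f'$ and $S_N f\varepsilon f'$ corrections; the latter, after the change of variables $u=\sqrt{4Nx}$, $v=\sqrt{4Ny}$ and the scaling limits (\ref{loe1}), become the $1/N$ double integrals against $B^{(\alpha+1)}$ displayed in the theorem. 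I expect the main obstacle to be exactly this organization: verifying that the dozen distinct $1/N$ integrals in the variance appear with the stated coefficients and the correct inner and outer limits of integration, and confirming that every would-be contribution not displayed either cancels or is absorbed into the $O(N^{-2})$ remainder. No estimate beyond Theorems \ref{loe1} and \ref{loe2} is required; the difficulty is combinatorial rather than analytic.
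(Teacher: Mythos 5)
Your proposal follows exactly the paper's own route: take $\log G_N^{(1)}(f)=\tfrac12\log\det(I+T)=\tfrac12\mathrm{Tr}\,T-\tfrac14\mathrm{Tr}\,T^2+\cdots$, substitute $f\approx-\lambda F+\tfrac{\lambda^2}{2}F^2$ into the already-computed large-$N$ expansions of $\mathrm{Tr}\,T$ and $\mathrm{Tr}\,T^2$ from Theorems \ref{loe1} and \ref{loe2}, and read off $-\mu_N^{(LOE,\alpha)}$ and $\tfrac12\mathcal{V}_N^{(LOE,\alpha)}$ as the coefficients of $\lambda$ and $\lambda^2$, identifying the leading terms with $\mu_N^{(LUE,\alpha+1)}$ and $2\mathcal{V}_N^{(LUE,\alpha+1)}$. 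The prefactor accounting (coefficient $1$ on the LUE mean, $2$ on the LUE variance) and the remaining $O(1/N)$ bookkeeping are exactly as in the paper, so the proposal is correct and essentially identical in approach.
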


\section{Conclusion}
In this paper, we study the moment generating function of linear statistics of the form $\sum_{j=1}F(x_j),$ namely, the expectation value or average
of
$${\rm exp}\left[-\lambda\:\sum_{j=1}^{N}F(x_j)\right]$$ computed in the ``background"
of the symplectic and orthogonal ensembles. We then specialize to the Gaussian case, where the background weight is the
normal distribution and the Laguerre case, where the background weight is the gamma distribution.

Finally, we compute the large $N$ behavior of the moment generating, where $F(.)$ is suitably scaled  and obtained the
 mean and variance of the corresponding linear statistics in the four cases. The more complex situation with the Jacobi background will be studied
 in the future, in which case the polynomials depend on two parameters $a$ and $b$.

\section{Acknowledgment} 

The Authors would like to thank the Macau Science Foundation for generous support, awarding the grant FDCT/2012/A3.

\end{document}